\documentclass[12pt,english]{article}
\usepackage[T1]{fontenc}
\usepackage[utf8]{inputenc}
\usepackage{geometry}
\usepackage{babel}
\usepackage{units}
\usepackage{amsmath}
\usepackage{amsthm}
\usepackage{amssymb}
\usepackage{tikz}
\usepackage[unicode=true,pdfusetitle,
 bookmarks=true,bookmarksnumbered=false,bookmarksopen=false,
 breaklinks=false,pdfborder={0 0 1},backref=false,colorlinks=false]
 {hyperref}

\makeatletter

\newcommand*\samethanks[1][\value{footnote}]{\footnotemark[#1]}
\newcommand{\cP}{ \mathcal{P} }

\renewcommand{\Pr}{ \mathbb{P} }

\theoremstyle{plain}
\newtheorem{thm}{\protect\theoremname}
\theoremstyle{remark}
\newtheorem{claim}[thm]{\protect\claimname}
\theoremstyle{plain}
\newtheorem{lem}[thm]{\protect\lemmaname}
\theoremstyle{remark}
\newtheorem*{claim*}{\protect\claimname}
\theoremstyle{definition}
\newtheorem*{problem*}{\protect\problemname}
\newtheorem*{thm*}{\protect\theoremname}
\theoremstyle{definition}
\newtheorem{definition}{\protect\definitionname}
\theoremstyle{plain}
\newtheorem{cor}[thm]{\protect\corollaryname}
\theoremstyle{plain}

\theoremstyle{plain}
\newtheorem{proposition}[thm]{\protect\propositionname}
\providecommand{\claimname}{Claim}
\providecommand{\conjecturename}{Conjecture}
\providecommand{\corollaryname}{Corollary}
\providecommand{\lemmaname}{Lemma}
\providecommand{\problemname}{Problem}
\providecommand{\theoremname}{Theorem}
\providecommand{\propositionname}{Proposition}
\providecommand{\definitionname}{Definition}

\title{Logical isomorphism and logical limit laws for small permutation classes and regular languages}
\author{Klas Markstr\"om\thanks{Institutionen f\"or matematik och matematisk statistik, Ume\aa\text{ } Universitet, Sweden.\\	Emails: $\{$\texttt{klas.markstrom}, \texttt{eero.raty}$\}$\texttt{@umu.se}. E.R. is supported in part by a postdoctoral grant 213-0204 from the Olle Engkvist Foundation, by a postdoctoral grant from the Osk. Huttunen
Foundation, Institute for Basic Science (IBS-R029-C4), and by Swedish Research Council grant VR 2025-03758. Part of the writing was conducted while second author was a senior researcher in ECOPRO research group at the Institute for Basic Science.}  \and Eero R\"aty\samethanks[1] }

\date{}

\begin{document}

\maketitle

\begin{abstract}
    In this paper we show  that for each small permutation class $\mathcal{F}$ there exists a regular language $L_\mathcal{F}$ such that the monadic second-order TOTO logic on $\mathcal{F}$  is isomorphic  to classical monadic second-order logic on $L_\mathcal{F}$.

    Thanks to the fact that regular languages are closed under monadic second-order restrictions this shows that many subsets of $\mathcal{F}$ have a well-behaved structure.  In particular, this allows us to show that random permutations from any small permutation class satisfies convergence laws for both first-order logic and monadic second-order logic. In the latter case, the law is modular.

    Our proofs introduce a way of using the automaton description of regular languages to prove properties of generating functions which may be of independent interest. Potential applications include first-order limit laws for random words in some regular languages. 
\end{abstract}

\section{Introduction}
This paper has several objectives. We first demonstrate how the logical characterization of regular languages can be utilized in enumerative and probabilistic combinatorics. Next, we prove that with regard to monadic second-order logic, small permutation classes, among several other classes of permutations, are isomorphic to regular languages. Finally, this is used to prove  logical limit laws for random permutations in all such classes. 

One of the classic techniques in enumerative combinatorics is the use of bijective proofs. Here a bijection $\psi$  from some set under study $A$ to some well understood set $B$ is found and as a consequence we know that the set $A$ has the same size as $B$.  Two well known examples are given by pattern avoiding permutations, where both $\mathrm{Av}(123)$, the set of permutations avoiding the pattern $123$,  and $\mathrm{Av}(132)$, the set of those avoiding $132$, have bijections to the set of Dyck paths \cite{Krat}. As a consequence, we learn that the number of permutations avoiding these two patterns are the same, equal to the number of Dyck paths, and we get a bijection between $\mathrm{Av}(123)$ and $\mathrm{Av}(132)$. However, this bijection is far from unique, so much so that entire papers \cite{Claesson,Bloom} have been devoted to collecting such bijections and investigating which properties of a permutation they preserve.  It is known, see e.g.\ \cite{albertrationality}, that the two sets $\mathrm{Av}(123)$ and $\mathrm{Av}(132)$ have very different structure with respect to many properties, and \cite{MinerPak} argues that this is what lies behind the long list of distinct bijections.  

Here our aim  is to study something stronger than simply a bijection between two sets $A$ and $B$.  We are going to assume that both sets have a logic $\mathbb{L}$ defined on them, e.g.\ a version of first-order logic,  and then demand that each $\mathbb{L}$-sentence over $A$ has a $\mathbb{L}$-sentence over $B$ whose truth is preserved by our bijection, and similarly each $\mathbb{L}$-sentence over $B$ has a $\mathbb{L}$-sentence over $A$ whose truth is preserved by our bijection. When such a bijection exists we will say that $A$ and $B$ are $\mathbb{L}$-isomorphic. Given such an isomorphism,  we learn not only that the two sets have the same size, but also that the number of objects in $A$ which satisfy a property which can be formulated in $\mathbb{L}$  is the same as the number of objects in $B$ satisfying another property specifiable in $\mathbb{L}$. 

The concrete setting which we will focus on is that $A$ is a permutation class and $B$ is the set of words from a regular language. Our logic $\mathbb{L}$ is going to be either first-order logic or monadic second-order logic, as defined for $A$ and $B$. This setting is relevant for several reasons.  

First, there has recently been an interest in the existence of limit laws for first-order properties of random permutations from different permutation classes.  In~\cite{braunfeld2021logical},  it was shown that the class of layered permutations satisfies such a limit law, and in~\cite{albert2024logical} and~\cite{ozdemir2023first} limit laws for random permutations from $\mathrm{Av}(132)$ and $\mathrm{Av}(123)$  respectively were proven. As we shall show, all \emph{small} permutation classes satisfy a limit law, thereby settling a conjecture from~\cite{braunfeld2021logical} for these classes.  

Second, monadic and first-order logic have a close connection to regular languages which has consequences for any set $A$.  In particular, the B\"{u}chi-Elgot-Trakhtenbrot theorem, see e.g.\ \cite{Wolfgang}, shows that a language is regular if and only if it can be defined using monadic second-order logic. As a consequence, the set of words from a regular language which satisfy a first-order or monadic second-order property is also a regular language. Using generating functions for words in the language makes it possible to show that random objects from the set $A$ satisfy a limit law. Here the type of limit law can both be simple, showing that probabilities converge as the relevant size $n$ goes to infinity,  or modular, showing that the same is true when $n$ is restricted to a fixed residue class, modulo some number $p$. As a related work, we also remark that Braunfeld~\cite{braunfeld2025decidability} used monadic second-order logic to prove that the basis and generating function of certain geometric permutation classes are computable directly from certain simple parameters defining these classes. We revisit this result properly in Section~\ref{subsetion: albert construction} after introducing necessary background. 

Before moving on to formal definitions, we will give a brief overview of logic on combinatorial objects and permutation classes.

\subsection{Logics on combinatorial objects}
A key difference between the study of logical limit laws for random permutations and for random graphs is that, unlike in the graph case, there are at least two genuinely different and very natural logics for permutations, each of which has its own advantages. The first logic appeared in~\cite{foy1990probabilities} and is now known as the Theory of Two Orders (abbreviated as $\mathrm{TOTO}$). This logic concerns a set equipped with two independent linear orders. Informally speaking, one should view permutations in $\mathrm{TOTO}$ as a set of points in the plane with no two points placed on the same horizontal or vertical line, and the two orders can be chosen to be the usual orders comparing the $x$- and $y$-coordinates of the respective points. $\mathrm{TOTO}$ is suitable for e.g  problems about pattern-avoidance in permutations, and is the logic we shall consider in this paper; see Section~\ref{subsection: permutations} for a more detailed introduction to $\mathrm{TOTO}$. The second logic, which we will not define here, is the  Theory of One Bijection ($\mathrm{TOOB}$), which is more suitable when working with problems that concern the cycle-structure of permutations. For an excellent account of both of these, see e.g.\ the recent paper of Albert, Bouvel and F\'eray~\cite{albert2020two}.  We will also make use of monadic second-order logic. This can briefly be viewed as extending first-order logic  with the ability to quantify not only over elements  but also over sets of elements.

\subsubsection{Logical limit laws}
For uniform random permutations, Foy and Woods~\cite{foy1990probabilities} showed that in general $\mathrm{TOTO}$ formulae do not satisfy a limit law, meaning that as $n$ grows the probability that a given $\mathrm{TOTO}$ sentence is true  does not  always converge. In 2022 Braunfeld and Kukla~\cite{braunfeld2021logical} showed, among other results, that the class of so-called `layered' permutations  satisfies a first-order limit law in $\mathrm{TOTO}$. They also made the strong conjecture that every proper permutation class satisfies a first-order limit law in $\mathrm{TOTO}$. 

More recently, Albert, Bouvel, F\'eray and  Noy~\cite{albert2024logical} showed that a uniformly chosen random permutation from the class $\mathrm{Av}(231)$ of $231$-avoiding permutations satisfies a first-order logical limit law in $\mathrm{TOTO}$. Furthermore, they showed that the set $\mathrm{Lim}(\mathrm{Av}(231))$ of limits of probabilities that a fixed $\mathrm{TOTO}$ sentence is satisfied is dense in $[0,1]$.  Shortly thereafter, \"Ozdemir~\cite{ozdemir2023first} obtained a first-order logical limit law in $\mathrm{TOTO}$ for $\mathrm{Av}(321)$; using symmetries, these results combined imply that all six of the permutation classes $\mathrm{Av}(\sigma)$, $\sigma \in \mathfrak{S}_{3}$, satisfy a first-order convergence law.   

Throughout this paper, we only consider the uniform measures in $\cP_n$, where $\cP$ is a given permutation class. For convenience, we denote random permutations by bold symbols, and when the permutation sub-class $\cP$ is clear from the context, we write $\boldsymbol{\sigma}_n$ for the permutation chosen uniformly from $\cP_n$, and do not explicitly mention that the distribution chosen is uniform. 

\subsection{Overview of permutation classes}

Permutation classes have been extensively studied since the pioneering work of Knuth~\cite[Section 2.2.1]{knuth1968art} in the late 1960s. A permutation class $\cP$ is a set of finite permutations $\cP$ which is closed under taking subpatterns. One of the most fundamental questions in the area is to ask what can one say about the growth-rate of $\vert \cP_n\vert =\vert \cP\cap\mathfrak{S}_{n}\vert $ as $n \rightarrow \infty$? Here, we are restricting our attention to \textit{proper} permutation classes, i.e.\ classes which do not contain all finite permutations. Indeed, while the collection $\mathfrak{S}_{n}$ of all permutations of $[n]$ has size $n!$, the celebrated Stanley--Wilf conjecture stated that any proper permutation class has at most exponential growth in $n$. After a series of work~\cite{alon2000number, klazar2000furedi}, the Stanley--Wilf conjecture was proved by Marcus and Tardos~\cite{marcus2004excluded}. 

Given Marcus and Tardos's result, we have an exponential upper bound for the size of a proper permutation class.  When $ \limsup_{n\rightarrow \infty} \vert \cP_n\vert^{1/n} = \liminf_{n\rightarrow \infty} \vert \cP_n\vert^{1/n}$ we refer to their common value as the \emph{Stanley--Wilf limit} or \emph{growth rate} $\mathrm{gr}(\cP)$ of the permutation class $\cP$.

The Stanley--Wilf limit is conjectured to exist for all permutation classes $\cP$ (see e.g.~\cite[Conjecture 1.1]{vatter2015permutation}, though this appears to be an older folklore conjecture). A natural question to ask is then: which values can be attained as the Stanley--Wilf limit of some permutation class? When $\cP$ is the set $\mathrm{Av}(\pi)$ of all permutations avoiding the pattern $\pi$, the Stanley--Wilf limit exists, by a remark of Arratia~\cite{arratia1999stanley} in combination with \cite{marcus2004excluded}, but finding the value is hard even for simple patterns.

Given the difficulty of determining growth rates even for some seemingly simple permutation classes, it is natural to approach the problem from a slightly different angle and ask instead: given a (small) constant $c > 0$, what can be said about the structural properties of permutation classes with growth-rate less than $c$, or to determine whether $c$ is the Stanley--Wilf limit of some class? 
Questions of this type have seen significantly more progress than the ones concerned with determining the growth rates. One of the first results in this direction was obtained by Kaiser and Klazar~\cite{kaiser2002growth}, who showed in 2002 that if for some arbitrary $n$ we have $\vert \cP_n\vert < F_n$, where $F_n$ denotes the $n$-th Fibonacci number, then $\vert \cP_n\vert $ grows at most polynomially in $n$. Thus for all proper permutation classes $\cP$, the growth rate $\mathrm{gr}(\cP)$ is either $0$, $1$ or at least $(1+\sqrt{5})/2$. They also obtained a precise classification of the possible growth rates/Stanley--Wilf limits below $2$ (which correspond to certain generalized Fibonacci numbers). 

After several improvements~\cite{huczynska2006grid, polygrowth}, a remarkable breakthrough was made by Vatter~\cite{vatter2011small}, who showed that a certain type of phase transition occurs at the growth-rate $\kappa \approx 2.205$, where $\kappa$ is the least positive root of $1+2X^2-X^3$. We say that a permutation class $\mathcal{P}$ is a \emph{small permutation class} if $\mathrm{\overline{gr}}(\mathcal{P}) := \limsup_{n\rightarrow \infty} |\mathcal{P}_n|^{1/n}< \kappa$. In his influential work, Vatter~\cite{vatter2011small} provided a rather specific structural classification for small permutation classes (which we will state in Section~\ref{sec: Inflations of geometric grid classes}), proved that there are only countably many small permutation classes, and also proved that there are already uncountably many permutation classes with growth-rate precisely $\kappa$ and  pairwise distinct generating functions.

Other natural phase transitions occurring at higher growth-rates were subsequently identified by Pantone and Vatter~\cite{pantonevatter}, and Vatter~\cite{vatter2010permutation}: Pantone and Vatter~\cite{pantonevatter} determined an explicit constant $\xi$ such that there are only countably many Stanley--Wilf limits below $\xi$, but such that for every $\varepsilon>0$ there are uncountably many Stanley--Wilf limits in every half-open interval $[\xi, \xi+\varepsilon)$. Vatter~\cite{vatter2010permutation}, on the other hand, showed that there exists a constant $\lambda \approx 2.48$ such that every real number greater than $\lambda$ is the Stanley--Wilf limit of some proper permutation class. There is a much broader literature on permutation classes and pattern-avoiding permutations - we refer the reader to the monographs and surveys of Bona~\cite{bona2008combinatorics}, Kitaev~\cite{kitaev2010survey}, Linton~\cite{linton2010permutation}, Steingrimsson~\cite{steingrimsson2013some} and Vatter~\cite{vatter2015permutation} for much more on this rich area of combinatorics.

Vatter~\cite{vatter2011small} also conjectured that every small permutation class has a rational generating function, and suggested that 'this is exactly the sort of problem one expects to be able to handle with regular languages'. The conjecture was later proved by Albert, Ru{\v{s}}kuc, and Vatter~\cite{albert2015inflations}; however, their proof does not establish a bijection with a regular language.

\subsection{Contributions of this paper}\label{subsection: results}
In this paper, we prove a first-order convergence law for small permutation classes, thus settling the conjecture of Braunfeld and Kukla~\cite{braunfeld2021logical} for such classes. We do so by proving the following theorem, which proves that for all small permutation classes $\mathcal{F}$ there exists a regular language $L_\mathcal{F}$ and a length-preserving bijection $f_\mathcal{F}:L_\mathcal{F}\rightarrow \mathcal{F}$ so that for any  first-order sentence $\Psi$ on $\mathcal{F}$ there exists a corresponding first-order sentence $\Phi$ on $L_\mathcal{F}$ so that $f_\mathcal{F}$ induces a bijection between their truth-sets. In addition, an analogous statement holds for monadic second-order sentences. In a sense, this can be viewed as a 'half' of the proof that $\mathcal{F}$ and $L_\mathcal{F}$ are first-order or monadic second-order isomorphic. 
\begin{thm}\label{theorem: eeroklas}
    Let $\mathcal{F}$ be a small permutation class. 
    \begin{enumerate}
        \item Then there exists a regular language $L_{\mathcal{F}}$ and a length-preserving bijection
        \[f_{\mathcal{F}}: L_{\mathcal{F}} \rightarrow \mathcal{F}\]
        satisfying the following property: for every first-order logic sentence $\Psi$ on $\mathcal{F}$ there exists a first-order logic  sentence $\Phi$ on $L_{\mathcal{F}}$ so that 
        \[\{w \in L_\mathcal{F}: w \models \Phi\} = \{w \in L_\mathcal{F}: f_\mathcal{F}(w) \models \Psi\}.\]
        The same conclusion also holds for sentences in monadic second-order logic.
        \item{(sketched)} The bijection $f_\mathcal{F}$ is in fact a monadic second-order isomorphism: that is, for every sentence $\Phi$ in monadic second-order logic on $L_\mathcal{F}$ there exists a corresponding sentence in monadic second-order logic on $\mathcal{F}$ for which the conclusion above holds. In addition, it is also possible to construct a regular language $L_\mathcal{F}'$ and a length-preserving bijection $f'_\mathcal{F}: L_\mathcal{F}' \rightarrow \mathcal{F}$ which is a first-order isomorphism. 
        \item For the last three assertions, suppose that $\mathcal{F}$ is not finite. There exist a positive real number $\alpha$, a real number $\beta \geq 1$, and a non-negative integer $c$ so that 
        \[\vert \mathcal{F}_n\vert = (\alpha+o(1))\cdot \beta ^n\cdot n^c.\]
        \item Given a monadic second-order sentence $\Psi$ on $\mathcal{F}$, write \[\mathcal{F}^\Psi_n := \{\sigma \in \mathcal{F}_n: \sigma \models \Psi\}.\]
        Then the sets $\mathcal{F}^\Psi_n$ have a rational generating function.
        \item For every first-order $\mathrm{TOTO}$ sentence $\Phi$ the limit  $\lim_{n\rightarrow \infty}\Pr(\pi \models \Phi)$ exists, where $\pi$  is taken uniformly at random from $\mathcal{F}_n$. 
    \end{enumerate}
\end{thm}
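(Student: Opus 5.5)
The plan is to route everything through Vatter's structural classification of small permutation classes, to be stated in Section~\ref{sec: Inflations of geometric grid classes}. Every small class $\mathcal{F}$ is contained in a class of the form ``geometric grid class inflated by finite/monotone-type pieces'', and by Albert--Ru\v{s}kuc--Vatter these classes admit regular encodings. Concretely, I would start from the standard encoding of a geometric grid class $\mathrm{Geom}(M)$ with a consistent orientation. Each permutation is represented by a word over the cell alphabet, and the set of all encodings is regular; the encoding map is length-preserving but many-to-one. I would then extend this to inflations by letting some letters carry markers for the block structure. Since the inflating classes are finite or monotone, this is still regular.

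To get a bijection, I would pick a canonical representative, e.g.\ the lexicographically least word among those encoding each permutation. The crucial point is that ``$u$ and $w$ encode the same permutation'' is expressible in MSO over pairs of words; the paper presumably proves this via the explicit trace-monoid-like commutation rules. Granting this, lexicographic minimality is MSO-definable, so by B\"uchi--Elgot--Trakhtenbrot the set of canonical words is regular. Restricting further to words whose image lies in $\mathcal{F}$ is also possible. Because $\mathcal{F}$ is a subclass of a geometrically griddable-type class, it has a finite basis in that setting, so avoidance of finitely many patterns must be MSO-expressible on words. This gives $L_{\mathcal{F}}$ and $f_{\mathcal{F}}$.

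The translation of sentences in part 1 is then the key step. The two TOTO orders on the points of $f_{\mathcal{F}}(w)$ must be MSO-definable (ideally FO-definable) relations between positions of $w$. For a letter at position $i$ in cell $(a,b)$, its $x$-coordinate is determined by the number of earlier letters in its column, read in the column's orientation. The relative order of two points $i<j$ in the same column therefore depends only on the orientation and on $i<j$, and this is FO. For different columns, the column order decides. Inflated blocks add a bounded amount of local information. Replacing each point variable by a position variable and each order atom by its defining formula gives $\Phi$ from $\Psi$, and since set quantifiers translate directly, the same works for MSO. The main obstacle I expect is making canonicity and the translation work together with inflations, where several letters form one block, and showing that the ``same permutation'' relation is MSO-definable. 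Part 2 (sketched) would reverse the translation by MSO-defining the canonical word's letter positions inside the permutation, using the griddings, which are MSO-definable.

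Parts 3--5 then follow from regular-language analysis. By part 1, $\{w\in L_{\mathcal{F}}: w\models\Phi\}$ is regular, so its generating function is rational, which gives part 4. Part 3 needs more than rationality: periodic dominant poles must be excluded. I would use the automaton: the dominant eigenvalue comes from strongly connected components of maximal spectral radius, and $c+1$ is the maximal number of such components on one path. Aperiodicity would come from closure of $\mathcal{F}$ under one-point deletion, which gives monotone-type comparisons $|\mathcal{F}_{n}|\le C|\mathcal{F}_{n+1}|$. For part 5 I would use that FO on words corresponds to star-free languages, recognized by counter-free automata. This forces the relevant components to contribute with aperiodic leading behaviour, so $|\mathcal{F}^\Phi_n|/|\mathcal{F}_n|$ converges. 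For MSO one obtains only convergence along residue classes modulo the automaton's period, matching the modular law in the abstract.
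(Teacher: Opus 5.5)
\textbf{Parts 3 and 5: the aperiodicity argument does not work.}

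Closure under one-point deletion only gives $|\mathcal{F}_{n+1}|\le (n+1)|\mathcal{F}_n|$ (delete the last entry). The inequality $|\mathcal{F}_n|\le C|\mathcal{F}_{n+1}|$ would require almost every element to extend, and nothing guarantees that.

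More fundamentally, no bounded-ratio comparison can yield $(\alpha+o(1))\beta^n n^c$. The regular language $\{a,b\}^*\cup(\{c,d\}^2)^*$ has $2^{n+1}$ words of each even length $n\ge 2$ and $2^n$ of each odd length. Its ratios are bounded in both directions, yet the normalised counts oscillate.

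What is needed is a structural use of downward closure through the specific encoding. Suppose a dominant component $C$ of $\mathcal{A}_{\mathcal{F},\#}$ were periodic. The paper then does the following.
\begin{enumerate}
\item It enlarges $C$ to a strongly connected $C'$. In the $\mathcal{A}_\pm$ part this is an extra path reading $v^N$, which is legitimate because of the self-loop property in Lemma~\ref{lemma: directed edge creates self-loop}. In the $\mathcal{A}_d$ part it is an extra cycle encoding $\mu=21\oplus 12\cdots s\oplus 21$.
\item It gets $\rho(A_{C'})>\rho(A_C)$ from Lemma~\ref{lemma: first spectral drops lemma}.
\item It shows that closed walks of $C'$ inject length-preservingly into $\mathcal{F}^\#$, respectively $\mathcal{F}$. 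This uses order-preservation of $g_{M_{\pm 1}}$, respectively the left-greedy direct-sum structure together with the absence of long monotone intervals (Corollary~\ref{cor: no long monotone subintervals for hs}). This contradicts dominance.
\item Lemma~\ref{lemma: gridded of by poly factor} then passes from $\mathcal{F}^\#$ to $\mathcal{F}$.
\end{enumerate}

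For part 5, star-freeness alone does not give graph-theoretic aperiodicity: $(ab)^*$ is FO-definable but has counts $1,0,1,0,\dots$. The missing ingredient is Lemma~\ref{lemma: loose radius or maintain aperiodicity}. Consider a component of $\mathcal{A}_\mathcal{F}\times\mathcal{B}$, where $\delta_{\mathcal{B}}(r,v^N)=\delta_{\mathcal{B}}(r,v^{N+1})$. Either it has strictly smaller spectral radius than its projection, in which case the limit is $0$. Or it projects onto that whole component and lifts every closed walk to one of the same length, so it inherits the aperiodicity established in part 3.

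\textbf{Part 1: canonical representatives.}

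The relation ``$u$ and $w$ encode the same permutation'' is not MSO-definable on pairs of words in the synchronous sense that B\"uchi--Elgot--Trakhtenbrot would require. Take two unrelated cells, $a$ increasing in the lower left and $b$ decreasing in the upper right. The letters commute and $f(a^pb^q)=12\cdots p\oplus q(q-1)\cdots 1$. So on convolutions in $(a,b)^*(b,a)^*$ the relation cuts out $\{(a,b)^n(b,a)^n\}$ up to finitely many exceptions, which is not regular. The step you ``grant'' is therefore false, and regularity of the lexicographically least encodings is left unproved.

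The paper avoids this as follows.
\begin{itemize}
\item It selects the left-greedy gridding (Definition~\ref{def: left greedy}) and the left-greedy inflation decomposition (Definition~\ref{def: LG order}).
\item Each is FO-definable on the permutation side, using finite bases and Lemma~\ref{lemma: albert left greedy}.
\item Each is then pulled back to words through Theorems~\ref{thm: FO correspondence} and~\ref{thm: FO correspondence for M}.
\item Lexicographic normal forms are used only for trace equivalence inside a fixed gridding. There minimality is the local FO condition of Lemma~\ref{lemma: no stars preimages given by equivalence relation}.
\end{itemize}

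Note also that the $\star$-cells carry arbitrary members of $\widetilde{\mathcal{O}}_k^{[d]}$, not finite or monotone pieces. So the block structure is not a bounded amount of local information: comparing two points requires locating block boundaries in FO via the $L/R$ markers, as the formulas $\xi_{s,d}$ do.
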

Before moving on, we briefly explain what we mean by the second point. In this paper, we construct a regular language $L_\mathcal{F}$ and a bijection $f_\mathcal{F}:L_\mathcal{F}\rightarrow \mathcal{F}$, and we prove that it satisfies all the properties of Theorem~\ref{theorem: eeroklas} except the second one. In Section~\ref{sec: conclusion}, we also check that for sentences in monadic second-order logic the correspondence works both ways as stated in the second part of the theorem. However, it turns out that for our construction of $f_\mathcal{F}$, the reverse correspondence need not preserve first-order definability due to technical reasons. In Section~\ref{sec: conclusion}, we sketch a slightly different construction of a regular language and a bijection for which the correspondence of sentences in first-order logic works both ways. We remark that given that the first-order convergence law can already be proved using our function, and due to the length of the paper, we chose not to write down the last two constructions in full detail. 

In \cite{ourotherpaper} it is also shown that in the range of exponential growth the set of limit probabilities is dense in the interval $[0,1]$, while in the polynomial growth region the set of limits is finite for each permutation class. For the simplest classes in the polynomial region, there also exist infinite models, leading to 0/1-laws.

Given that we have a first-order convergence law, it is natural to ask what happens for sentences in monadic second-order logic. By using the fact that small permutation classes and their monadic second-order restrictions are in bijection with a regular language, and so can be described by a finite automaton, we may prove a \textit{modular convergence law}. 
\begin{thm} \label{thm: Monadic modular convergence}
    Let $\mathcal F$ be an infinite small permutation class,  and let $\boldsymbol\sigma_n$ denote a permutation chosen uniformly at random from $\mathcal F_n$. For every $\mathrm{MSO}$-$\mathrm{TOTO}$ sentence $\Psi$ there exists an integer $\ell\geq1$ such that for every $0\leq r<\ell$, the limit 
    \[\lim_{n\rightarrow \infty} \mathbb{P}\left( \mathbf{\sigma_{n\ell+r}}\models \Psi\right)\] 
    exists. 
\end{thm}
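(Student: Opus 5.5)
We reduce to counting words in two regular languages and then read off the asymptotics from rational generating functions. Fix an $\mathrm{MSO}$-$\mathrm{TOTO}$ sentence $\Psi$. By part 1 of Theorem~\ref{theorem: eeroklas} there is an $\mathrm{MSO}$ sentence $\Phi$ on $L_\mathcal{F}$ with $f_\mathcal{F}^{-1}(\mathcal{F}^\Psi)=\{w\in L_\mathcal{F}: w\models\Phi\}$. Since $f_\mathcal{F}$ is length-preserving, $|\mathcal{F}^\Psi_n|$ is the number of length-$n$ words of
\[
L^\Psi := L_\mathcal{F}\cap\{w: w\models\Phi\},
\]
and $|\mathcal{F}_n|$ is the number of length-$n$ words of $L_\mathcal{F}$. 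By the B\"uchi--Elgot--Trakhtenbrot theorem, $L^\Psi$ is regular. So the probability in question is $a_n/b_n$, where $a_n$ and $b_n$ count length-$n$ words in the regular languages $L^\Psi\subseteq L_\mathcal{F}$.

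\textbf{Structure of the counts.} For a regular language with a deterministic automaton having transfer matrix $A$, the count is $u^T A^n v$. Hence
\[
a_n=\sum_{\lambda} P_\lambda(n)\lambda^n
\]
for finitely many eigenvalues $\lambda$ and polynomials $P_\lambda$. The analogous formula holds for $b_n$. By part 3 of Theorem~\ref{theorem: eeroklas}, $b_n=(\alpha+o(1))\beta^n n^c$ with $\alpha>0$. Since $0\le a_n\le b_n$, every term of $a_n$ is $O(\beta^n n^c)$. We will argue this bound term by term: a nonzero exponential polynomial cannot be $o$ of its own dominant term along all $n$. It follows that every $\lambda$ in $a_n$ with $|\lambda|>\beta$ has zero coefficient, and any $\lambda$ with $|\lambda|=\beta$ has $\deg P_\lambda\le c$.

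\textbf{Periodicity of the dominant eigenvalues.} This is the key step. We would apply Perron--Frobenius to the nonnegative transfer matrix of the automaton for $L^\Psi$, decomposed into strongly connected components. Each component's eigenvalues of maximal modulus $\rho$ are $\rho$ times the $d$-th roots of unity, where $d$ is the period of that component. Take the automaton's components with spectral radius equal to $\beta$ and let $\ell$ be the least common multiple of all their periods. We may also include the periods from the automaton for $L_\mathcal{F}$, though the existence of $\lim b_n/(\beta^n n^c)$ makes this unnecessary.

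Every eigenvalue with $|\lambda|=\beta$ is then of the form $\beta\omega$ with $\omega^\ell=1$. Restricting to $n=m\ell+r$, we get
\[
a_{m\ell+r}=\beta^{m\ell+r}\bigl(Q_r(m)+o(m^{c})\bigr),
\]
where $Q_r$ is a polynomial of degree at most $c$ whose coefficients depend only on $r$; the $o(m^c)$ term collects the contributions of the eigenvalues with $|\lambda|<\beta$.

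\textbf{Conclusion.} Dividing by $b_{m\ell+r}$ gives
\[
\Pr(\boldsymbol\sigma_{m\ell+r}\models\Psi)\to \frac{[m^c]\,Q_r(m)\,\ell^c}{\alpha},
\]
so the limit exists. The main obstacle is the Perron--Frobenius step. The eigenvalues of modulus $\beta$ must be $\beta$ times roots of unity, which needs the nonnegativity of the transfer matrix and care with non-primitive components. Without this, the eigenvalues on the circle $|\lambda|=\beta$ could have irrational arguments and no limit would exist along any arithmetic progression.
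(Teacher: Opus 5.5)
Your argument is correct and follows the route the paper sketches: take $\ell$ as the lcm of the periods of the dominant strongly connected components, and read off the behaviour of $|\mathcal{F}^\Psi_n|$ along residue classes from the poles (equivalently, eigenvalues) of least modulus, with $|\mathcal{F}_n|$ controlled by part 3 of Theorem~\ref{theorem: eeroklas}. Two small fixes are needed: first, work explicitly with the trimmed automaton (productive states only), since only there is every component's spectral radius at most $\beta$, which makes every eigenvalue of modulus $\beta$ peripheral; second, the limit is $[m^c]Q_r(m)/(\alpha\ell^c)$, because $\ell^c$ belongs in the denominator, not the numerator.
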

In general the modular behaviour shown in the theorem cannot be excluded. For every pair of positive integers $\ell$ and $1 \leq a \leq \ell$,  we can construct a monadic second-order sentence $\Psi_{\ell,a}$ so that $\sigma \models \Psi_{\ell,a}$ if and only if $|\sigma| \equiv a \text{ (mod }\ell\text{)}$.  We discuss this briefly in Subsection\ref{msl}.

The proofs in this paper make use of formal languages, just like some of the earlier limit law papers. However, our methods  differ in several ways.  First, our bijection is not as well-behaved in terms of the connection between patterns and subwords as the bijections used for geometric grid classes. In an upcoming paper, we aim to show that this is necessary due to differences in the partial order structures of the permutation classes. Second,  where previous papers have relied on multivariate analysis we instead use the description of regular languages in terms of automata. The automaton theory for regular languages makes it possible for us to  show that a first-order restriction of a class either excludes periodic behavior, or lead to a subclass with density zero. We remark that our technique applies to certain collections of regular languages beyond those in bijection with a small permutation class. Indeed, given a regular language $L$ which has an accepting automaton $\mathcal{A}$ whose dominant components are aperiodic, then our techniques prove $L$ satisfies a first-order convergence law. 

\section{Preliminaries}\label{section: preliminaries}
\subsection{Permutations and $\mathrm{TOTO}$}\label{subsection: permutations}
We denote by $\mathfrak{S}_{n}$ the collection of permutations of $[n]:=\{1,2, \ldots, n\}$, and we write $\vert \sigma\vert =n$ if $\sigma \in \mathfrak{S}_{n}$. We often represent a permutation $\sigma \in \mathfrak{S}_{n}$  as the ordered sequence $\sigma(1)\sigma(2)\ldots \sigma(n)$. Given permutations $\tau\in \mathfrak{S}_{m}$ and $\sigma \in \mathfrak{S}_{n}$ for $n\geq m$, we say that $\tau$ is a \emph{subpattern} of $\sigma$ if there exists an order-preserving injection $f: \ [m]\rightarrow[n]$ such that for all $i,j\in [m]$,  $\tau(i)\leq \tau(j)$ if and only if $\sigma(f(i))\leq \sigma(f(j))$.  A \emph{permutation class} is a collection of permutations $\mathcal{F} \subseteq \bigcup_{n\in \mathbb{N}} \mathfrak{S}_{n}$ which is closed under taking subpatterns. Given a fixed permutation $\tau$, we denote by $\mathrm{Av}(\tau)$ the permutation class of all \emph{$\tau$-avoiding permutations}, that is, the collection of permutations $\sigma$ not containing $\tau$ as a subpattern.

For a general background in first-order logic we refer the reader to \cite{Rautenberg}. As already mentioned we will be working with $\mathrm{TOTO}$,  the Theory Of Two Orders.  In this theory, we think of permutations as finite sets of points in the plane with the additional property that no two points lie on the same horizontal or vertical line. Indeed, given such a set of points $\{(x_i, y_i): \ i \in [n]\}$ with $x_1<x_2<\ldots <x_n$, there is a unique associated permutation $\sigma \in\mathfrak{S}_{n}$ with $y_{\sigma^{-1}(1)}<y_{\sigma^{-1}(2)}<\ldots <y_{\sigma^{-1}(n)}$. Conversely, for any permutation $\sigma \in \mathfrak{S}_n$ the set $\{(i,\sigma(i)): 1 \leq i \leq |\sigma|\}$ naturally induces the permutation $\sigma$, and we sometimes refer to this as the \emph{canonical drawing} of $\sigma$. Thus, $\mathrm{TOTO}$ is concerned with sets equipped with two linear orders, one for each coordinate,  and isomorphism classes of finite $\mathrm{TOTO}$ models are in one-to-one correspondence with permutations, for a proof see e.g~\cite[Proposition 4]{albert2020two}. 

Formally, $\mathrm{TOTO}$ consists of a countably infinite collection of variables $x_{1}, x_{2}, \dots$ and a set of relation symbols\footnote{Elsewhere, the set of relations $R$ is often chosen to contain relations ${<_1, <_2}$. In our setting, certain technical arguments become slightly simpler if we instead work with the non-strict orders $\leq_1$ and $\leq_2$, where $x \leq_1 y$ if and only if $x <_1 y$ or $x = y$ (and similarly for $\leq_2$). These two formulations of $\mathrm{TOTO}$ are clearly equivalent.} $R = \{\leq_1,\leq_2\}$ interpreted as two linear orders. An \textit{atomic formula} in $\mathrm{TOTO}$ is a formula of the form $x_{i}=x_{j}$, $x_{i}\leq_1 x_{j}$ or $x_i \leq_2 x_j$. The set $\mathrm{FO}$ of \textit{first-order formulae} in $\mathrm{TOTO}$ is the least collection of expressions satisfying the following:
 \begin{enumerate}
 	\item[(i)] all atomic formulae lie in $\mathrm{FO}$;
    \item[(ii)] $\bot$ and $\top$ are in $\mathrm{FO}$
    \item[(iii)] if $p$ is in $\mathrm{FO}$, then $\neg p$ is in $\mathrm{FO}$
 	\item[(iv)] if $p$ and $q$ are in $\mathrm{FO}$, then so is $\alpha(p,q)$, where $\alpha$ is any of the standard logical connectives of arity $2$ (for example, this means that $p \lor q$, $p \land q$, $p \rightarrow q$ are in $\mathrm{FO}$)
 	\item[(v)] if $p\in\mathrm{FO}$ and $x$ is a free variable\footnote{Informally, $x$ is a \textit{free variable} in a sentence $p\left(x\right)$ if $p$ does not contain any quantifiers bounding the variable $x$.}  occurring in $p$,
 	then $\forall x: \ p\left(x\right)$ and $\exists x:\ p\left(x\right)$ also belong to $\mathrm{FO}$. 
 \end{enumerate}
A formula $p \in \mathrm{FO}$ is called a \emph{(first-order) sentence} if it contains no free variables. We denote by $\mathrm{FOS} \subseteq \mathrm{FO}$ the set of all first-order sentences.

We also define a monadic second-order extension of $\mathrm{TOTO}$, which we denote by $\mathrm{MSO}$-$\mathrm{TOTO}$. In addition to the countably infinite collection of first-order variables $x_{1}, x_{2}, \dots$, the language of $\mathrm{MSO}$-$\mathrm{TOTO}$ contains a countably infinite collection of second-order variables $y_{1}, y_{2}, \dots$. Informally, these second-order variables can be viewed as sets consisting of first-order elements. Besides the atomic formulae of $\mathrm{TOTO}$, $\mathrm{MSO}$-$\mathrm{TOTO}$ includes atomic formulae of the form $x \in y$ where $x$ is a first-order variable and $y$ is a second-order variable. 

In our proofs it is more convenient to work directly with formulae that are written in disjunctive normal form. A \textit{literal} $\ell$ is a formula that is either an atomic formula or the negation of an atomic formula. We say that a sentence $\Psi$ is written in \textit{disjunctive normal form} if $\Psi$ is given by
 \[
 Q_{1}x_{1}\dots Q_{k}x_{k}\left(\left(p_{1,1}\land p_{1,2}\land \dots\land p_{1,a_{1}}\right)\lor\left(p_{2,1}\land p_{2,2}\land\dots\land p_{2,a_{2}}\right) \lor \dots\lor\left(p_{b,1}\land p_{b,2}\land \dots\land p_{b,a_{b}}\right)\right),
 \]
 where each $Q_{i}\in\left\{ \forall,\exists\right\} $ is a quantifier,
 $x_{1},\dots,x_{k}$ are variables, and each $p_{i,j}=p_{i,j}\left(\underline{x}\right)$
 is a literal using a subset of the variables $x_{1},\dots,x_{k}$. In the first-order setting, the variables must be first-order variables, while in the monadic setting they are allowed to be either first-order or second-order variables. 

 Note that a sentence $\Psi$ as above is said to have \emph{quantifier depth} (see e.g.~\cite{albert2020two}) at most $k$. It is well-known that every sentence can be converted to one in disjunctive normal form, and thus is logically equivalent to a sentence in disjunctive normal form. 
 \begin{proposition}
 	\label{prop: every FOS can be written in DFN}  Given a monadic second-order sentence $\Psi$, there exists a monadic second-order sentence $\Psi_{DNF}$ that is written in disjunctive normal form, and for which $\Psi$ and $\Psi_{DNF}$ are logically equivalent. 
 \end{proposition}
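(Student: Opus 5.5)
The plan is the standard prenex-normal-form argument followed by a propositional normalization of the quantifier-free matrix; the same argument works verbatim for $\mathrm{FO}$ and for $\mathrm{MSO}$-$\mathrm{TOTO}$, because second-order quantifiers obey the same equivalences as first-order ones. Throughout, "logically equivalent" means equivalent in every structure.

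\textbf{Step 1 (renaming bound variables).} By induction on the construction of $\Psi$, I rename bound variables so that:
\begin{itemize}
\item distinct quantifier occurrences bind distinct variables;
\item no variable is bound by a quantifier occurrence that lies in the scope of another quantifier binding the same variable.
\end{itemize}
Renaming a bound variable to a fresh one of the same sort gives an equivalent formula, by the usual substitution lemma. Since $\Psi$ is a sentence, no free variables need to be avoided.

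\textbf{Step 2 (removing connectives other than $\neg,\land,\lor$).} Each remaining binary connective $\alpha(p,q)$ is replaced by its truth-table expression in $\neg,\land,\lor$. The constants $\bot,\top$ are replaced by, for instance, $\exists x\,\neg(x=x)$ and $\exists x\,(x=x)$. This uses nonempty domains; if the empty permutation is allowed, one can instead keep $\bot,\top$ as literals, or evaluate them away propositionally.

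\textbf{Step 3 (pulling quantifiers to the front).} Using De Morgan's laws for quantifiers, $\neg\forall x\,p\equiv\exists x\,\neg p$ and $\neg\exists x\,p\equiv\forall x\,\neg p$, I push negations inward through quantifiers. I then apply the prenex rules: $(Qx\,p)\land q\equiv Qx\,(p\land q)$ and likewise for $\lor$, valid whenever $x$ is not free in $q$, which Step 1 guarantees. Each application decreases the number of connectives lying above some quantifier, so the process terminates. The result is $Q_1x_1\dots Q_kx_k\,\theta$ with $\theta$ quantifier-free. Here $x_1,\dots,x_k$ may be of either sort, and the atoms of $\theta$ are $=,\leq_1,\leq_2$ and $\in$.

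\textbf{Step 4 (disjunctive normal form of the matrix).} Treating the atomic formulae of $\theta$ as propositional letters, I push negations down to the atoms via De Morgan's laws and double negation. I then distribute $\land$ over $\lor$ to obtain a disjunction of conjunctions of literals. Alternatively, I read the disjunction off the truth table of $\theta$, taking one conjunct for each satisfying assignment. Equivalence holds because the transformations are propositional tautologies, hence valid under every interpretation. The quantifier prefix is untouched, so the final sentence is in the required form.

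The only point needing care is the side condition in Step 3, that $x$ is not free in $q$. That is exactly why the renaming in Step 1 comes first; otherwise the argument is routine.
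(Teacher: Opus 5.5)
Your argument is correct. It is exactly the standard prenex-normal-form plus propositional-DNF argument that the paper invokes as ``well-known'' without giving a proof. One caveat: your remark about the empty permutation understates the issue, since the Step 3 rules $(\forall x\,p)\land q\equiv\forall x\,(p\land q)$ and $(\exists x\,p)\lor q\equiv\exists x\,(p\lor q)$ also fail on an empty domain for first-order $x$, so your proof should be read under the usual convention of nonempty structures, which is the setting in which the paper quotes this fact.
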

\noindent For a more comprehensive background on the first-order logic $\mathrm{TOTO}$, see~\cite{albert2020two, albert2024logical}. 

Finally, we introduce a useful notation. Given a finite set $J$, enumerate its elements as $J = \{j_1,\dots,j_m\}$. We write $Q_{j\in J} x_j: \varphi$ as a shorthand for $Qx_{j_1} \dots Qx_{j_m}: \varphi$, where $Q \in \{\forall,\exists \}$ is a quantifier. 
\subsection{Regular languages} 

Regular languages are a class of formal languages that can be recognized by simple computational models, such as finite state automata, which we will revisit in the next subsection. Since our primary interest in regular languages arises from their applications to logical limit laws, we provide only a brief introduction focusing on those aspects that are directly relevant to our work. For a more comprehensive background on regular languages, see for example \cite{hopcroft-motwani-ullman}. 

For our purposes, very few standard properties of regular languages are required. Let $\Sigma$ be a finite alphabet. The \emph{Kleene star} $\Sigma^*$ is defined as the set of finite words over $\Sigma$. It is a standard fact that $\Sigma^*$ is a regular language for any finite set (see e.g.\cite{hopcroft-motwani-ullman}). To state the second result of our interest, we first introduce the necessary logical background on regular languages. 

Let $\Sigma$ be a finite alphabet and let $K \subseteq \Sigma^*$ be a regular language. The language of the (monadic) first-order logic on $K$ consists of
\begin{enumerate}
    \item[(i)] an infinite set of first-order variables $x_1, x_2, \dots$, interpreted as positions in a word,
    \item[(ii)] for each symbol $a \in \Sigma$, a predicate $a(\cdot)$. One should view $a(x)$ as being true if the letter at position $x$ is $a$,
    \item[(iii)] the ordinary relations $<$ and $=$ between natural numbers, and
    \item[(iv)] the usual propositional connectives and first-order quantifiers.
\end{enumerate}
\noindent Analogously to $\mathrm{TOTO}$, first-order formulae are built from atomic formulae of the form $i < j$ or $i = j$ for variables $i$ and $j$, and from formulae of the form $a(i)$ for $a \in \Sigma$. For notational convenience, from now on we  write $w_i = a$ instead of $a(i)$. We also write $w_i = w_j$ for the first-order formula 
\[\bigvee_{a \in \Sigma} (w_i = a \land w_j = a),\]
which is satisfied if and only if the letters at positions $i$ and $j$ are equal. 

The monadic second-order logic on words is obtained from the first-order logic in a manner closely analogous to the construction described in section~\ref{subsection: permutations}. It consists of an infinite set of first-order variables $x_1, x_2, \dots$ and an infinite set of second-order variables $y_1, y_2, \dots$, together with the atomic formulae of the first-order language described above. In addition, monadic second-order logic includes atomic formulae of the form $x \in y$, where $x$ is a first-order variable and $y$ is a second-order variable.

A key structural property of regular languages is the following closure property with respect to monadic second-order logic.  
\begin{thm}[B\"{u}chi-Elgot-Trakhtenbrot]\label{thm: closure for regular languages}
    A language $L$ is a regular language iff there exists a monadic second-order sentence  $\Psi$ such that $L$ is the set of words from $\Sigma^*$ satisfying $\Psi$. That is, $L:= \left\{w \in \Sigma^*: w \models \Psi\right\}$. 
\end{thm}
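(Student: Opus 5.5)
The plan is to prove both directions of the Büchi--Elgot--Trakhtenbrot theorem (Theorem~\ref{thm: closure for regular languages}) via finite automata, using the standard equivalence between regular languages and languages accepted by deterministic finite automata.

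For the direction ``regular $\Rightarrow$ MSO-definable'', fix a deterministic automaton $\mathcal{A}=(Q,\Sigma,\delta,q_0,F)$ accepting $L$, with $Q=\{q_1,\dots,q_m\}$. The sentence $\Psi$ will existentially quantify second-order variables $Y_1,\dots,Y_m$. The intent is that position $x$ lies in $Y_i$ exactly when $\mathcal{A}$ is in state $q_i$ after reading the letter at $x$. We then express, in first-order terms, the following conditions:
\begin{itemize}
\item[(a)] the $Y_i$ partition the set of positions;
\item[(b)] the first position $x$ (characterised by $\neg\exists z\, z<x$) lies in $Y_i$ precisely when $\delta(q_0,w_x)=q_i$;
\item[(c)] whenever $y$ is the successor of $x$ (that is, $x<y$ and no $z$ lies strictly between them), $x\in Y_i$ and $w_y=a$, we have $y\in Y_j$ with $q_j=\delta(q_i,a)$;
\item[(d)] the last position lies in some $Y_i$ with $q_i\in F$.
\end{itemize}
The empty word is handled by adding the disjunct $\neg\exists x\,(x=x)$ exactly when $q_0\in F$. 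Since $\mathcal{A}$ is deterministic, the witnesses $Y_i$ are forced to be the actual run of $\mathcal{A}$, so $w\models\Psi$ if and only if $w\in L$.

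For the converse, we induct on formulae. It is convenient first to pass to a variant of the logic in which every variable is second-order: a first-order variable becomes a singleton set, and atomic formulae become $X\subseteq Y$, $\mathrm{Sing}(X)$, $X<Y$ (for singletons) and $X\subseteq P_a$. A formula with free set variables $X_1,\dots,X_k$ is read on words over the extended alphabet $\Sigma\times\{0,1\}^k$, where the $i$-th bit marks membership in $X_i$. We then show by induction that each formula defines a regular language over this alphabet.
\begin{itemize}
\item Atomic formulae are recognised by small explicit automata.
\item Negation corresponds to complementation (swap the accepting states of a DFA), intersected with the regular language of well-formed encodings.
\item Conjunction and disjunction correspond to the product construction.
\item Existential quantification $\exists X_k$ corresponds to projecting away the $k$-th track. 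This is a letter-to-letter homomorphism, and regularity is preserved by taking the image NFA and then applying the subset construction.
\end{itemize}
A sentence has no free variables, so it yields a regular language over $\Sigma$.

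The main obstacle is the bookkeeping in the quantifier step. One must check that projection commutes correctly with the encoding, in particular that extensions of a word by arbitrary $0/1$ tracks correspond exactly to choices of the quantified set. One must also keep the language of valid encodings (singleton tracks) intersected at each stage, so that complementation does not admit malformed words. Everything else is routine; alternatively, one may simply cite \cite{Wolfgang}, as the paper does.
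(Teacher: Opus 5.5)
The paper gives no proof of this theorem; it quotes it as a classical result with a reference to Thomas's survey \cite{Wolfgang}, and your argument is correct and is the standard proof from that source (existentially quantified state sets encoding the unique run in one direction; induction via product, complementation and projection plus subset construction in the other). Two details remain to pin down: condition (d) must be stated existentially (or conjoined with $\exists x\,(x=x)$), since otherwise $\varepsilon$ satisfies (a)--(d) vacuously and your empty-word disjunct cannot handle $q_0\notin F$; and for $\wedge$, $\vee$ you must first extend both languages to a common set of tracks by inverse projection before taking the product.
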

Note that since the conjunction of two sentences is a new sentence, this also implies that any monadic second-order restriction of a regular language is another regular language. 

\noindent The \emph{generating function} of a language $L$ is the formal power series 
\[f_L(x) = \sum_{n = 0}^{\infty} |L_n|x^n.\] 
We say that $L$ has a \emph{rational generating function} if there exists a positive real number $r$ such that $f_L$ has radius of convergence at least $r$ and it coincides inside the radius of convergence with a rational function. Recall that a function $g(x)$ is a rational function if it can be expressed as a quotient of two polynomials, that is, $g(x) = \frac{P(x)}{Q(x)}$ for some polynomials $P$ and $Q$. The final property of regular languages required for our purposes is the following classical result.
\begin{thm} \label{thm: reg languages have RGF}
Every regular language has a rational generating function.
\end{thm}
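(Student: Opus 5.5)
The plan is to go through a deterministic finite automaton and the transfer-matrix method. Since $L$ is regular, there is a deterministic finite automaton $\mathcal{A}=(S,\Sigma,\delta,s_0,T)$ that accepts exactly $L$; take this from the standard equivalence of regular languages with finite automata. Alternatively, one can first use Theorem~\ref{thm: closure for regular languages} to get a monadic second-order description and then convert it into an automaton. Determinism is the key point: every word $w\in\Sigma^*$ traces a \emph{unique} run from $s_0$. Hence $|L_n|$ is exactly the number of walks of length $n$ in the transition multigraph that start at $s_0$ and end in $T$, with no overcounting.

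Let $S=\{1,\dots,m\}$ with $s_0=1$. Let $M$ be the $m\times m$ matrix whose entry $M_{ij}$ counts the letters $a\in\Sigma$ with $\delta(i,a)=j$. Let $u=e_1$, and let $v$ be the indicator vector of $T$. Then
\[
|L_n| = u^{\top} M^n v \qquad (n\ge 0).
\]
Next bound the growth. Every entry of $M^n$ is at most $|\Sigma|^n$, so $|L_n|\le |\Sigma|^n$. The series $f_L(x)=\sum_n |L_n|x^n$ therefore converges absolutely for $|x|<1/|\Sigma|$, which gives a positive radius of convergence $r=1/|\Sigma|$ (or $r=\infty$ if $\Sigma=\emptyset$).

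For $|x|<1/|\Sigma|$, the operator norm of $xM$ in the $\ell^\infty$-induced norm is at most $|x|\,|\Sigma|<1$. So the Neumann series gives
\[
\sum_{n\ge 0} x^n M^n = (I-xM)^{-1},
\qquad\text{hence}\qquad
f_L(x)=u^{\top}(I-xM)^{-1}v .
\]
By Cramer's rule, $(I-xM)^{-1}=\operatorname{adj}(I-xM)/\det(I-xM)$. The entries of the adjugate are polynomials in $x$, and $\det(I-xM)$ is a polynomial with constant term $1$, so it is nonzero on the disc. Therefore $f_L(x)=P(x)/Q(x)$ with $Q(x)=\det(I-xM)$ and $P(x)=u^{\top}\operatorname{adj}(I-xM)v$. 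This equality holds on the whole disc of radius $1/|\Sigma|$, which is contained in the disc of convergence, as the definition requires.

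The main obstacle is really the first step, making sure the counting is exact. With a nondeterministic automaton, walks overcount words, so the subset construction must be invoked to obtain determinism; the rest is linear algebra. A secondary point is checking that the Neumann series identity holds inside the stated radius, but the crude bound $|\Sigma|$ on the row sums of $M$ handles this.
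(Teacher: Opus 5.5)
Your argument is correct. It is the standard transfer-matrix proof, and your $M$ is exactly the transition matrix $A_{p,q}=|\{a\in\Sigma:\delta(p,a)=q\}|$ of Section~\ref{sec: automata theory}. The paper gives no proof of this statement: it quotes it as a classical fact, and later uses the same matrix viewpoint through spectral radii of strongly connected components. So there is no competing route to compare with.

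There are two small points of rigour.

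First, $\det(I-xM)\neq 0$ on $|x|<1/|\Sigma|$ follows from the invertibility of $I-xM$ that your Neumann series already gives (equivalently, $\rho(M)\le\|M\|_\infty\le|\Sigma|$). It does not follow from the constant term being $1$, which only shows $Q\not\equiv 0$.

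Second, the paper's phrase ``coincides inside the radius of convergence'' can be read as requiring agreement on the whole disc of convergence $|x|<R$. Under that reading, note that $R$ may exceed $1/|\Sigma|$, and your particular $Q=\det(I-xM)$ may vanish inside that disc. For example, take $L=a^*$ over $\{a,b\}$ with a complete automaton that has a rejecting sink: then $Q=(1-x)(1-2x)$ while $R=1$. The fix is one line. Reduce $P/Q$ to lowest terms. By the identity theorem, $f_L=P/Q$ on the disc of convergence away from the zeros of the reduced denominator. No such zero can lie in that disc, because $f_L$ is bounded near it whereas $|P/Q|\to\infty$ there.
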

\noindent We conclude this section with a simple observation. Since many of the alphabets considered later are constructed inductively and naturally decompose as disjoint unions of structured subsets, the following lemma will be useful for simplifying first-order formulae in such settings.
\begin{lem}\label{lem:projection}
    Let $\Sigma_{1}$ and $\Sigma_2$ be two finite sets, and let $\pi: \Sigma_{1} \rightarrow \Sigma_2$ be any function. Then, for any subset $A \subseteq \Sigma_2$, the condition $\pi(w_i) \in A$ can be expressed as a  first-order formula. 
\end{lem}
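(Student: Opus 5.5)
The plan is to write the condition $\pi(w_i)\in A$ as a finite disjunction of letter predicates. Since $\Sigma_1$ is finite, the preimage $\pi^{-1}(A)=\{a\in\Sigma_1:\pi(a)\in A\}$ is a finite (possibly empty) subset of $\Sigma_1$. For a word $w\in\Sigma_1^*$ and a position $i$, the statement $\pi(w_i)\in A$ holds exactly when the letter $w_i$ lies in $\pi^{-1}(A)$. So I propose the formula
\[
\varphi_A(i) := \bigvee_{a\in \pi^{-1}(A)} (w_i = a),
\]
with the convention that the empty disjunction is $\bot$. This handles the case $\pi^{-1}(A)=\emptyset$, where the condition is never satisfied. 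It is a first-order formula in the language of words over $\Sigma_1$, since each $w_i=a$ is an atomic predicate $a(i)$ and finite disjunctions are allowed.

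To check correctness, fix a word $w$ and an assignment of $i$ to a position. Write $b=w_i$. Then $\varphi_A(i)$ is true if and only if $b\in\pi^{-1}(A)$, which holds if and only if $\pi(b)\in A$.

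There is no real obstacle here. The only points to state explicitly are the finiteness of $\Sigma_1$, which guarantees the disjunction is finite, and the empty-preimage convention. The same formula also works inside monadic second-order formulae, since it uses only first-order atoms.
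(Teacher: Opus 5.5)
Your proposal is correct and is the same argument as the paper's: take the finite preimage $\pi^{-1}(A)$ and express $\pi(w_i)\in A$ as the disjunction $\bigvee_{b\in\pi^{-1}(A)}(w_i=b)$. You also state explicitly that an empty preimage gives the empty disjunction $\bot$, which the paper leaves implicit.
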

\begin{proof}
     Since $\Sigma_1$ is a finite set, the pre-image $B = \pi^{-1}(A)$ is also a finite set. The claim follows by observing that $\pi(w_i) \in A$ is clearly equivalent to
    \[\bigvee_{b \in B} (w_i = b),\]
    which is a first-order formula.  
\end{proof}
\subsection{Automata theory}\label{sec: automata theory}

A \emph{finite deterministic automaton} $\mathcal{A}$ consists of a tuple $(Q,\Sigma,\delta,q_0,F)$ where $Q$ is a finite state set, $\Sigma$ is the finite alphabet,  $\delta:Q\times\Sigma\to Q$ is a partial transition function, $q_0$ is an initial state, $F\subseteq Q$ is the set of accepting states. We extend $\delta$ naturally to words by setting $\delta(q,\varepsilon) = q$ where $\varepsilon$ is the empty word, and $\delta(q,vw) = \delta(\delta(q,v),w)$ for a word $w$ and letter $v$. We say that a word $w$ is accepted by $\mathcal{A}$ if $\delta(q_0,w)\in F$.

We say that $\mathcal{A}$ is \emph{complete} if for every pair $(q,v) \in Q \times \Sigma$ the value $\delta(q,v)$ is defined. As we primarily only care about the properties of the language accepted by the automaton, we may always treat $\mathcal{A}$ as complete by adding a rejection state $\star$ to which all the undefined $\delta(q,v)$'s are mapped to, and which satisfies $\delta(\star,v) = \star$ for every $v$. 

A key property of finite deterministic automata $\mathcal{A}$ is that the language $L$ accepted by it is always a regular language. For a more detailed background on automata theory, again see for example~\cite{hopcroft-motwani-ullman}. For the purposes of our paper, we only require a few standard properties. The first one of these is the following property satisfied by first-order restrictions of regular languages, which is a consequence of \cite[Theorem~VI.1.1]{Straubing}. 
\begin{lem}\label{lemma: iterate}
Let $\Phi$ be a first-order sentence on words over $\Sigma$. There
are a complete finite deterministic automaton $\mathcal B$
recognizing $\{w:w\models\Phi\}$ and an integer $N\geq1$ such that
\begin{equation}\label{eq: iterate}
 \delta_{\mathcal B}(r,v^N)=\delta_{\mathcal B}(r,v^{N+1})
 \qquad(r\in Q_{\mathcal B},\ v\in\Sigma^*).
\end{equation}
\end{lem}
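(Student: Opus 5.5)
The plan is to combine two classical facts: first-order definable languages over finite words are exactly the star-free languages, and these are exactly the aperiodic ones (Schützenberger and McNaughton--Papert), in the form recorded in \cite[Theorem~VI.1.1]{Straubing}. Concretely, the set $K=\{w\in\Sigma^*: w\models\Phi\}$ is regular by Theorem~\ref{thm: closure for regular languages}, since a first-order sentence is in particular a monadic second-order sentence. The cited theorem then says that the syntactic monoid $M(K)$ is finite and aperiodic. Aperiodic means there is an $N\geq 1$ with $m^N=m^{N+1}$ for every $m\in M(K)$.

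Next, I would take $\mathcal{B}$ to be the minimal complete deterministic automaton of $K$; it exists because $K$ is regular, and it is complete after adding the sink state $\star$ if needed. Its transition monoid, the monoid of maps $Q_{\mathcal B}\to Q_{\mathcal B}$ of the form $r\mapsto\delta_{\mathcal B}(r,v)$ for $v\in\Sigma^*$, is isomorphic to $M(K)$. This is the standard fact that the syntactic congruence coincides with equality of the induced state transformations on the minimal automaton. Write $t_v$ for the transformation induced by $v$. Then $t_{v^k}=t_v^k$, since $\delta_{\mathcal B}$ extended to words is compatible with concatenation. Aperiodicity gives $t_v^N=t_v^{N+1}$, which is exactly \eqref{eq: iterate} for all $r$ and all $v\in\Sigma^*$.

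For a self-contained route avoiding the syntactic monoid, I would instead take any complete deterministic automaton $\mathcal{B}$ for $K$ and pass to its minimal quotient. Minimality is needed: a non-minimal automaton for a first-order language may have a permutation cycle, for example two states that are equivalent but swapped by some letter. So the choice of the minimal automaton is essential, and the only nontrivial input is the equivalence "first-order definable $\Rightarrow$ aperiodic syntactic monoid".

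The main obstacle is that this direction is a genuine theorem rather than a routine check, so I would cite it. If a proof were demanded, I would argue by induction on the formula, in the Ehrenfeucht--Fraïssé style. For a formula of quantifier depth $k$, the words $uv^{n}w$ and $uv^{n+1}w$ are $k$-equivalent once $n\geq 2^{k}$ or so. Hence membership of $uv^nw$ in $K$ stabilises uniformly in $u,w$, and in the minimal automaton this uniform stabilisation is precisely $t_{v}^N=t_{v}^{N+1}$. Here $N$ depends only on the quantifier depth of $\Phi$.
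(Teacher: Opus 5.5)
Your proposal is correct and is essentially the paper's argument. The paper simply invokes \cite[Theorem~VI.1.1]{Straubing}, and you spell out the standard deduction: aperiodicity of the syntactic monoid gives the stated identity once $\mathcal B$ is taken to be the minimal complete automaton, whose transition monoid is the syntactic monoid (your remark that minimality is genuinely needed is also right).
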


We also require certain structural properties of finite deterministic automata that affect the generating function of the accepted regular language. Roughly speaking, simply the fact that small permutation classes are in bijection with a regular language will not be enough to prove the first-order convergence law, as the sequence of  coefficients of the corresponding rational generating functions may have modular behaviour. However, we will prove that the fact that permutation classes are closed under taking sub-patterns together with the structure of the function $f_{\mathcal{F}}$ constructed imply that no such modular behaviour can occur. In order to do so, we need to introduce some necessary background.

For our purposes, it is convenient to view the automaton as a labelled multi-digraph. The vertices of the digraph $D_{\mathcal{A}}$ are the states $q\in Q$, and for each $q,q'\in Q$, we place a directed edge $q \rightarrow q'$ labelled with $v\in \Sigma$ if and only if $\delta(q,v) = q'$. A \emph{strongly connected component} of $\mathcal{A}$ is a maximal subset of states $S$ for which the induced digraph $D[S]$ is strongly connected, i.e.\ for every $u,v \in S$ there exists a directed path from $u$ to $v$ in $D[S]$. Note that while this definition allows a single state without a self-loop to be a strongly connected component, this is fine for our purposes as we only focus our attention on those which have an effect on the growth-speed of the generating function. In addition, we say that a state $q$ or a strongly connected component $C$ is \emph{productive} if it occurs on some walk from the initial state to an accepting state. 

Given an automaton $\mathcal{A}$, let $\mathcal{A'}$ denote the automaton obtained by removing all the states that are not productive. Furthermore, we write $\delta'$ for the restriction of $\delta$; note that if the walk starting from $q_0$ corresponding to a word $w$ would enter a removed state, we simply reject $w$ already at this stage. Since deleted states are not productive, we note that $w$ would have been rejected in $\mathcal{A}$ eventually, and thus $\mathcal{A}$ and $\mathcal{A'}$ both accept the same language. 

Given a finite deterministic automaton $\mathcal{A}$, its transition matrix $A$ is defined as 
\[
A_{p,q} = |\{a \in \Sigma: \delta(p,a) = q\}|.
\]
Given a strongly connected component $C$, we write $A_C$ for the restriction of $A$ on the states in $C$. Let $A'$ denote the transition matrix of the automaton $\mathcal{A'}$ from which all the non-productive states are removed. We write $\rho(\mathcal{A})$ for the spectral radius of $A'$, i.e.\ for the largest absolute value of an eigenvalue of $A'$. The reason behind defining $\rho(\mathcal{A})$ as the spectral radius of $A'$ instead of $A$ is the fact that the spectral radius of $A'$ naturally corresponds to the upper growth rate of the language accepted by $\mathcal{A}$ (while the spectral radius of $A$ can be controlled by the rejecting states). We say that a productive component $C$ is \emph{dominant} if $\rho(A_C) = \rho(\mathcal{A})$.

The \emph{period} of a strongly connected component $C$ containing a directed cycle is the greatest common divisor of the lengths of directed cycles contained in $C$. We say that a strongly connected component $C$ containing a cycle is \emph{periodic} if the period of $C$ is strictly greater than $1$, and otherwise it is aperiodic. We remark that this notion is purely graph-theoretic, and is different from the  aperiodicity of the transition monoid, which is a notion that is not required for our paper. 

While it may not be clear for the reader at this stage, the existence of a rational generating function on its own will not be enough to prove the third and fifth statements in Theorem~\ref{theorem: eeroklas}. One way to prove these statements is to verify that there exists an automaton accepting the language $L_{\mathcal{F}}$ whose dominant components are aperiodic, and this is indeed how our proof proceeds. To properly execute this strategy, we  require certain standard linear algebraic properties of transition matrices, together with some preliminary results which allow us to compare the spectral radius of a first-order restriction of a dominant aperiodic component to its original spectral radius. These results are presented in Section~\ref{sec:spectral-comparison}, and we introduce the relevant linear algebraic background also in the same subsection. 

\subsection{Geometrically griddable permutation classes and geometric grid classes}	\label{subsection: geom grid classes} 
In this subsection, we define geometric grid classes and geometrically griddable classes of permutations, which play a central role in the theory of permutation classes with polynomial growth. 

Given a matrix $M$ with entries in $\{-1, 0, 1\}$, we shall consider two specific permutation classes that can be associated to it, namely the class of permutations induced by $M$-compatible gridded figures, denoted by $\mathrm{Grid}\left(M\right)$, and the geometric grid class of $M$, which we denote by $\mathrm{Geom}\left(M\right)$.  As we are primarily interested in  geometric grid classes, we present here a slightly simplified definition of the gridded figures treated by Albert, Ru{\v{s}}kuc and Vatter in~\cite{albert2015inflations}.

Given an $\ell \times m$ matrix $M$, we index the rows from bottom to top, and the columns from left to right, and write $M_{i,j}$ for the entry in the $i^{th}$ column from the left and $j^{th}$ row from the bottom. This choice of indexing has a geometric motivation: given a unit square grid on the rectangular subset $\left[0,\ell\right]\times\left[0,m\right]$ of the plane $\mathbb{R}^2$, the unit square whose top-right corner is $\left(i,j\right)$ can be associated in a natural way with the entry $M_{i,j}$.

To make this association formal, we let the standard $\ell\times m$-\emph{gridding}  $G_{\ell\times m}$ be the partition of the rectangle $R=\left[0,\ell \right]\times\left[0,m\right]$ into $\ell m$ interior-disjoint unit squares whose corners are the lattice points of $\mathbb{Z}^{2}$. We write $C_{i,j}$ for the unit square (or \emph{cell}) whose top-right corner is $\left(i,j\right)$. 
\begin{definition}
    An $\ell\times m$ \textit{gridded figure }$\left(F,G_{\ell\times m}\right)$ consists of the standard $\ell\times m$ gridding $G_{\ell\times m}$ together with a finite set of points $F\subseteq\left[0, \ell\right]\times\left[0,m\right]$ such that no two points of $F$ lie on the same horizontal or vertical line, and no point of $F$ lies on the grid-lines of $G_{\ell\times m}$.

\end{definition}
\noindent Clearly each gridded figure $\left(F,G_{\ell\times m}\right)$ induces a permutation $\sigma$: let $x_1<x_2<\ldots < x_{\vert F\vert}$ and $y_1<y_2<\ldots <y_{\vert F\vert}$ denote the $x$- and $y$- coordinates of points in $F$. Then there is a unique permutation $\sigma \in \mathfrak{S}_{\vert F\vert}$ such that $F=\{(x_i, y_{\sigma(i)}): \ i\in [\vert F\vert ] \}$. 

\begin{definition}
    We say that an $\ell\times m$ gridded figure $\left(F,G_{\ell\times m}\right)$ is \textit{compatible} with a matrix $M$ if $M$ is an $\ell \times m$ matrix with entries in $\{-1,0,1\}$ such that for each $\left(i,j\right)\in\left[\ell \right]\times\left[m\right]$ we have 
    \begin{align*}
    F\cap C_{i,j} \textrm{ is }\left\{\begin{array}{ll}
	   \text{empty } & \text{if }M_{i,j}=0,\\
      \text{a (possibly empty) increasing segment} & \text{if }M_{i,j}=1,\\
	   \text{a (possibly empty) decreasing segment} & \text{if }M_{i,j}=-1.
    \end{array}\right.
    \end{align*}
    Here we say that a set of points $\{(x_i, y_i): i \in [t]\}$ in $\mathbb{R}^2$ with $x_1<x_2<\ldots <x_t$ forms an \emph{increasing segment} if $y_1<y_2<\ldots <y_t$, and a \emph{decreasing segment} if $y_1>y_2>\ldots > y_t$.
\end{definition}

\noindent We say that two gridded figures $(F,G)$ and $(F',G)$ with the same gridding $G$ are \emph{equivalent} if they induce the same permutation $\sigma$, and for each $i \in [|\sigma|]$, the points $p_i \in F$ and $p'_i \in F'$ corresponding to the $i$-th point in $\sigma$ belong to the same cells in $G$. We write $\textrm{Grid}^{\#}\left(M\right)$ for the set of equivalence classes of all gridded figures $\left(F,G_{\ell\times m}\right)$ that are compatible with $M$. Further, we let $\textrm{Grid}\left(M\right)$ denote the set of
all permutations $\sigma$ induced by the gridded figures $\left(F,G_{\ell\times m}\right)$ which belong to any equivalence class contained in $\textrm{Grid}^{\#}\left(M\right)$. It is easily checked that $\mathrm{Grid}(M)$ is a permutation class. We also write $\pi^\#$ for the natural map $\pi^\#: \mathrm{Geom}^\#(M) \rightarrow \mathrm{Geom}(M)$ which removes the gridlines. 

The \emph{standard figure} of $M$ is the subset of $\left[0,\ell\right]\times\left[0,m\right]$ consisting of the union of (a) the increasing diagonals $\left(i-1,j-1\right)+\{(t,t): \ t\in (0,1)\}$ in the cells $C_{i,j}$ with $M_{i,j}=1$, and (b) the decreasing diagonals $\left(i-1,j-1\right)+\{(t,1-t): \ t\in (0,1)\}$ in the cells $C_{i,j}$ with $M_{i,j}=-1$. Note that the standard figure thus contains no point inside the cells $C_{i,j}$ for which $M_{i,j}=0$.
\begin{definition}
    Let $M$ be an $\ell\times m$ matrix and let $G$ denote the standard $\ell\times m$ gridding. We write $\textrm{Geom}^{\#}\left(M\right)$ for the set of equivalence classes in $\textrm{Grid}^{\#}\left(M\right)$ which contain a gridded figure $\left(F,G\right)$ as a representative for which $F$ is a finite subset of the standard figure of $M$. The \emph{geometric grid class} of $M$, denoted by $\textrm{Geom}\left(M\right)$, is the collection of all permutations $\sigma$ induced by the equivalence classes in $\textrm{Geom}^{\#}\left(M\right)$. 
\end{definition}
 \noindent It is clear that $\textrm{Geom}\left(M\right)$ is a permutation class. We say that a permutation class $\mathcal{P}$ is \emph{geometrically griddable} if we have $\mathcal{P}\subseteq\mathrm{Geom}\left(M\right)$
for some matrix $M$ whose entries are in the set $\left\{ -1,0,1\right\} $.

We trivially have $\mathrm{Geom}^{\#}\left(M\right)\subseteq\mathrm{Grid}^{\#}\left(M\right)$
and $\mathrm{Geom}\left(M\right)\subseteq\mathrm{Grid}\left(M\right)$.
However, these inclusions are not always equalities. Indeed,
as pointed out in~\cite{albert2013geometric}, for $M=\left(\begin{array}{rr}
	-1 & 1\\
	1 & -1
\end{array}\right)$, the permutation $2413$ lies in
$\textrm{Grid}(M)$ but not in $\textrm{Geom}(M)$.
However the authors of~\cite{albert2013geometric} showed one does get equality in certain special cases, which turn out to be exactly the ones needed for our purposes. Given an $\ell\times m$ matrix $M$, define the  \emph{row-column graph} $H_{M}$ to be the graph with vertices $V\left(H_{M}\right):=\left[\ell \right]\sqcup\left[m\right]$ and edges consisting of those pairs $ij$ for which $M_{i,j}\neq0$.  
\begin{proposition}[Albert, Atkinson, Bouvel, Ru{\v{s}}kuc and Vatter~\cite{albert2013geometric}]\label{prop: equality grid and geometric grid when HM is a forest}
	\label{thm:Forest theorem}If $H_M$ is a forest, then $\textrm{Grid}^{\#}\left(M\right)=\mathrm{Geom}^{\#}\left(M\right)$ 	(and thus $\textrm{Grid}\left(M\right)=\mathrm{Geom}\left(M\right)$). 
\end{proposition}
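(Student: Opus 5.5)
The plan is to take an arbitrary gridded figure $(F,G)$ compatible with $M$ and build an equivalent figure $(F',G)$ whose points all lie on the standard figure of $M$. This proves $\mathrm{Grid}^{\#}(M)\subseteq \mathrm{Geom}^{\#}(M)$. The reverse inclusion is trivial.

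\textbf{Observation.} Equivalence of two figures on the same gridding depends only on three pieces of data:
\begin{itemize}
\item the cell of each point;
\item for each column $i$, the left-to-right order of the points of $F$ in column $i$;
\item for each row $j$, the bottom-to-top order of the points of $F$ in row $j$.
\end{itemize}
Indeed, the columns are linearly ordered by the grid and so are the rows. Hence the $x$-order on all of $F$ is the lexicographic combination of column index and within-column order, and likewise for the $y$-order. This recovers the induced permutation and the correspondence $p_i\leftrightarrow p'_i$. So it suffices to place each point $p$ of cell $C_{i,j}$ on the diagonal of that cell so that all these within-column and within-row orders are preserved.

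Such a placement amounts to choosing a parameter $t_p\in(0,1)$ for each point. The point then sits at $(i-1+t_p,\,j-1+t_p)$ if $M_{i,j}=1$, and at $(i-1+t_p,\,j-t_p)$ if $M_{i,j}=-1$.

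\textbf{Induction.} I would induct on the number of points of $F$ lying in cells indexed by edges of $H_M$; equivalently, induct on the number of edges of the forest, using a leaf. Isolated vertices of $H_M$ carry no points and can be ignored.

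Pick a leaf of the forest. By the symmetry between rows and columns, say it is row $j$, whose unique nonzero cell is $C_{i,j}$. Let $M'$ be $M$ with row $j$ replaced by zeros, so that $H_{M'}$ is still a forest. Let $F_0=F\setminus C_{i,j}$. Then $(F_0,G)$ is compatible with $M'$, so by induction it is equivalent to a figure $F_0'$ on the standard figure of $M'$, which is contained in that of $M$.

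Now insert the points of $F\cap C_{i,j}$ back into $F_0'$.
\begin{itemize}
\item \emph{Column $i$.} The points of column $i$ already in $F_0'$ have distinct $x$-coordinates. The original figure dictates, for each new point, which gap between consecutive old points it must fall into, and the relative order of the new points themselves. Gaps in $(i-1,i)$ are open intervals, so we can choose distinct $t_p$ strictly inside the required gaps, increasing in the original $x$-order of the new points. This preserves the within-column order of column $i$, and no other column is affected.
\item \emph{Row $j$.} Because $j$ is a leaf, row $j$ contains only the new points. Their $y$-order in $F'$ is the $t$-order if $M_{i,j}=1$ and the reverse if $M_{i,j}=-1$. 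In the original figure, $F\cap C_{i,j}$ is an increasing (respectively decreasing) segment, so its $y$-order is its $x$-order (respectively the reverse). Thus the row order is preserved as well.
\item \emph{Other rows.} These are untouched.
\end{itemize}
By the observation, the resulting figure is equivalent to $(F,G)$, which completes the induction.

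\textbf{Main obstacle.} The step needing most care is the observation that within-cell placement plus within-line orders determine the equivalence class. One must also check that choosing $t$-values inside prescribed open gaps never collides with coordinates already used. This holds because each point's $y$-coordinate lies in its own row, and the leaf row $j$ is otherwise empty. This is exactly where the forest hypothesis enters: it guarantees that at each step one of the two lines through the inserted cell is free.
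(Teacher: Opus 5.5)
Your argument is correct. It rests on two points: an equivalence class of gridded figures is determined by the cells of the points together with the within-column and within-row orders, and a leaf-removal induction using the fact that a leaf line leaves one coordinate unconstrained. This is essentially the standard argument from \cite{albert2013geometric}; the paper itself does not reprove the statement but imports it from there. One small fix: the induction is on the number of nonzero entries of $M$ (edges of $H_M$), not on the number of points, because the leaf cell $C_{i,j}$ may be empty.
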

\noindent From now on, we say that a matrix $M$ is a \emph{forest} if the associated row-column graph $H_M$ is a forest. 

In many parts of our arguments, the corresponding results (especially concerning bijectivity of certain functions) are substantially easier for gridded figures. Recall that each gridded figure induces a permutation, as we have already remarked. However, for a given matrix $M$ there may be many fundamentally different non-equivalent $M$-compatible gridded figures giving rise to the same permutation, which is a problem if we hope to use such gridded figures to prove any counting results (such as convergence laws). To resolve this difficulty, Albert, Atkinson, Bouvel, Ru{\v{s}}kuc and Vatter introduced the \textit{left-greedy order}, a linear order providing a way for comparing two gridded figures which gives rise to the same permutation. 
\begin{definition}\label{def: left greedy}
    Given a permutation $\sigma$ and a gridding $G$, we define the \emph{left-greedy order} $<_{LG}$ on equivalence classes of gridded figures with gridding $G$ that induce $\sigma$ by letting $(F, G)<_{LG}(F', G')$ if one of the following occurs:
    \begin{enumerate}
	   \item there exists an index $i_0$ such that for all $i<i_0$ and for all $j$, the number of points of $F$ and $F'$ in the $(i,j)$-th cells of their respective griddings are equal, but there are strictly more points of $F$ than of $F'$ in the cells located in the $i_0$-th column;
	   \item there exist indices $i_0$ and $j_0$ so that for every $(i,j)$ with either $i < i_0$, or $i = i_0$ and $j < j_0$,  the number of points of $F$ and $F'$ in the $(i,j)^{th}$ cell of their respective griddings are equal, the number of points of $F$ and $F'$ in the cells located in the $i_0$-th column are equal,     
       but there are strictly more points of $F$ than of $F'$ in the $(i_0,j_0)$-th cell.
    \end{enumerate}
\end{definition}

\noindent Intuitively, $(F, G)<_{LG} (F', G)$ if $(F,G)$ is `more compressed towards the left, or equally left-compressed and more compressed towards the bottom' of their respective griddings. It is easily seen that for any fixed gridding $G$, $<_{LG}$ interplays well with our notion of equivalent griddings (e.g.\ if $(F, G)$ and $(F',G)$ are equivalent and $(F',G)<_{LG}(F'', G)$ then $(F,G)<_{LG}(F'',G)$), and thus for each fixed permutation $\sigma$, $<_{LG}$ induces a total order on its $M$-compatible griddings. We also write $(F, G)\leq_{LG} (F', G)$  if and only if $(F, G) <_{LG} (F', G)$, or $(F, G)$ and $(F', G)$ belong to the same equivalence class. See Figure~\ref{fig:left-greedy} for an illustration. 

\begin{figure}
    \centering
\begin{tikzpicture}[x=.85cm,y=.85cm,font=\small]

  \foreach \s in {0,4,8,12} {
    \begin{scope}[shift={(\s,0)}]
      \draw[step=1,gray!60,thin] (0,0) grid (3,4);
      \draw[blue!50,thick] (0,2) -- (1,1);
      \draw[blue!50,thick] (0,3) -- (1,2);
      \draw[blue!50,thick] (1,3) -- (2,4);
      \draw[blue!50,thick] (2,0) -- (3,1);
    \end{scope}
  }
  \begin{scope}
    \foreach \x/\y in
      {.2/1.8,.4/1.6,.7/2.3,1.3/3.3,1.7/3.7,2.5/.5}
      \fill (\x,\y) circle (2.2pt);
  \end{scope}
  \begin{scope}[shift={(4,0)}]
    \foreach \x/\y in
      {.3/1.7,.7/1.3,1.25/3.25,1.5/3.5,1.75/3.75,2.5/.5}
      \fill (\x,\y) circle (2.2pt);
  \end{scope}
  \begin{scope}[shift={(8,0)}]
    \foreach \x/\y in
      {.3/2.7,.7/1.3,1.25/3.25,1.5/3.5,1.75/3.75,2.5/.5}
      \fill (\x,\y) circle (2.2pt);
  \end{scope}
  \begin{scope}[shift={(12,0)}]
    \foreach \x/\y in
      {.3/2.7,.7/2.3,1.25/3.25,1.5/3.5,1.75/3.75,2.5/.5}
      \fill (\x,\y) circle (2.2pt);
  \end{scope}

  \foreach \x in {3.5,7.5,11.5}
    \node at (\x,2) {$<_{LG}$};

  \node at (7.5,-.55) {$\sigma=324561$};
\end{tikzpicture}
    \caption{An example of gridded figures all inducing the permutation $\sigma=324561$ and their left-greedy ordering. Note that these are all $M$-compatible for $M$ with $M_{3,1}=M_{2,4}=1$, $M_{1,2}=M_{1,3}=-1$, and $M_{i,j} =0$ otherwise.}
    \label{fig:left-greedy}
\end{figure}

To simplify our notation, instead of writing everything in terms of gridded figures, we simply choose such representative of the equivalence class, and call it a gridded permutation. To highlight the distinction between ordinary permutations and gridded permutations, we use $\#$ in the subscript (such as $\sigma_\#$) when working with gridded permutations. One should view $\sigma_\#$ as the representative of the equivalence class containing the associated gridded figure. It turns out that such choice will not produce any ambiguity, as all of our arguments are independent of the chosen representative of the equivalence class. In particular, when writing $\sigma_\# = \theta_\#$ for gridded permutations $\sigma_\#$ and $\theta_\#$, one should interpret the equality as a statement that $\sigma_\#$ and $\theta_\#$ belong to the same equivalence class. 

Given a permutation $\sigma$, we write $G(\sigma)$ for the collection of all (equivalence classes of) gridded permutations in $\mathrm{Geom}^\#(M)$ which induce $\sigma$. We refer to the least element in $G(\sigma)$ under the order $<_{LG}$ as the \emph{left-greedy representation} of $\sigma$.

\subsection{Inflations of geometric grid classes}\label{sec: Inflations of geometric grid classes}
Next, we will define inflations of geometric grid classes, a generalization of the geometric grid classes based on  matrices $M$ having entries in $\{-1, 0, 1, \star\}$, satisfying certain additional properties. Recall that $M$ is said to be a forest if the associated row-column graph is a forest. In addition, we say that $M$ is \emph{$\star$-isolated} if  whenever $M_{i,j}=\star$, then $M_{i,j'}=0$ for all $j'\neq j$ and $M_{i',j}=0$ for all $i'\neq i$. That is, the $\star$-entries are unique non-zero entries in their respective rows and columns. Finally, we say that $M$ is a \emph{$\star$-isolated forest matrix} if $M_{i,j} \in \{-1,0,1,\star\}$ for all $(i,j)$, and $M$ is both $\star$-isolated and forest. We shall associate a certain class of griddable permutations to each $\star$-isolated forest matrix $M$. To define these properly, we need to make a number of technical definitions.

An \emph{interval} inside a permutation $\sigma$ is an integer interval $I=\left[a,b\right]\cap\mathbb{Z}_{>0}$ such that its image $\sigma\left(I\right):=\left\{ \sigma\left(i\right):\ i\in I\right\} $ under $\sigma$ is also an integer interval. If the restriction of $\sigma$ to $I$ is a monotone permutation, then we refer to $I$ as a \emph{monotone interval} of $\sigma$.
\begin{definition}
    Given a permutation $\tau \in \mathfrak{S}_k$ and permutations $\sigma_{1},\dots,\sigma_{k}$, we define the \emph{inflation} $\tau\left[\sigma_{1},\dots,\sigma_{k}\right]$ of $\tau$ by the $k$-tuple of permutations $(\sigma_{1},\ldots,\sigma_{k})$ to be the permutation obtained by replacing each entry $\tau\left(i\right)$ by an interval that is order-isomorphic to the permutation $\sigma_{i}$. 
\end{definition}
 \noindent As an example to illustrate this definition, the inflation of $\tau=4312$ by $(132, 21, 1,231)$ is the permutation
\[
4312[132,21,1,231] = 798\,65\,1\,342.
\]
where the spaces in the expression on the right hand-side have been added to highlight the transitions between each of the four intervals in the inflation of $\tau$. See Figure~\ref{fig:inflation} for an illustration. One may think of inflations in a natural way as permutations that can be decomposed into smaller  permutations.
\begin{figure}
    \centering
\begin{tikzpicture}[scale=1.2,font=\small]
  \draw[step=1,black,thin] (0,0) grid (4,4);
  \foreach \x/\y in {.25/3.25,.50/3.75,.75/3.50}
    \fill (\x,\y) circle (2pt);
  \foreach \x/\y in {1.33/2.67,1.67/2.33}
    \fill (\x,\y) circle (2pt);
  \fill (2.50,.50) circle (2pt);
  \foreach \x/\y in {3.25/1.50,3.50/1.75,3.75/1.25}
    \fill (\x,\y) circle (2pt);

\end{tikzpicture}
    \caption{Example of an inflation $4312[132,21,1,231]=798\,65\,1\,342$}
    \label{fig:inflation}
\end{figure}
\begin{definition}
	Given two permutation classes $\mathcal{P}$ and $\mathcal{Q}$, we
	define the \emph{inflation} $\mathcal{P}\left[\mathcal{Q}\right]$ of $\mathcal{P}$ by $\mathcal{Q}$
	to be:
	\[
	\mathcal{P}\left[\mathcal{Q}\right]:=\left\{ \tau\left[\sigma_{1},\dots,\sigma_{k}\right]:\ \tau\in\mathcal{P}_{k}, \ \sigma_{1},\ldots,\sigma_{k}\in\mathcal{Q}\right\} .
	\]
	Given an inflation of the form $\sigma=\tau\left[\sigma_{1},\dots,\sigma_{k}\right]\in\mathcal{P}\left[\mathcal{Q}\right]$ where $\tau \in \mathcal{P}$ and $\sigma_1, \ldots, \sigma_k \in \mathcal{Q}$, we say that $\tau\left[\sigma_{1},\dots,\sigma_{k}\right]$ is a \emph{$\mathcal{Q}$-decomposition} of $\sigma$, and we call $\tau$ a \emph{$\mathcal{P}$-profile} of the expression $\sigma = \tau\left[\sigma_{1},\dots,\sigma_{k}\right]$.
\end{definition}
\noindent It is easily checked that for any pair of permutation classes $\mathcal{P}, \mathcal{Q}$, the inflation $\mathcal{P}\left[\mathcal{Q}\right]$ also forms a permutation class. Also, note that a permutation $\sigma\in\mathcal{P}\left[\mathcal{Q}\right]$ may have multiple \emph{$\mathcal{Q}$}-decompositions; there is no uniqueness implied in our definition. However, for convenience reasons we assign the notion of $\mathcal{P}$-profiles into inflations, instead of permutations themselves, as in our applications, we often care about a particular choice of inflation, and the profile of that particular inflation.  

Despite having non-uniqueness of \emph{$\mathcal{Q}$}-decompositions, one can get around this by using a decomposition analogue of the left-greedy ordering.

\begin{definition}\label{def: LG order}
    Let $\mathcal{P}$ and $\mathcal{Q}$ be two permutation classes. Let $\tau\left[\sigma_{1},\dots,\sigma_{k}\right]$ and $\tau'\left[\sigma'_{1},\dots,\sigma'_{k'}\right]$ be two $\mathcal{Q}$-decompositions of the same permutation in $\mathcal{P}[\mathcal{Q}]$, we say that $\tau\left[\sigma_{1},\dots,\sigma_{k}\right]<_L\tau'\left[\sigma'_{1},\dots,\sigma'_{k'}\right]$ if one of the following holds:
    \begin{enumerate}
        \item $\vert \tau \vert =k <k' =\vert \tau'\vert$; or
        \item $k=k'$ and there exists $i_0\in [k]$ such that for all $i\in [i_0-1]$ we have $\vert \sigma_{i}\vert =\vert \sigma'_{i} \vert$ but $\vert \sigma_{i_0}\vert >\vert \sigma'_{i_0}\vert$.
    \end{enumerate}
\end{definition}

\noindent As an example for $\mathcal{Q}=\mathrm{Av}(12)$ and $\mathcal{P}$ being the set of all permutations, the $\mathcal{Q}$-decompositions of the permutation $1432$ satisfy \[ 12[1,321]<_L 132[1,21,1] <_L 132[1,1,21] <_L 1432[1,1,1,1] \]

For every $\sigma \in \mathcal{P}[\mathcal{Q}]$, it is easily seen that there exists a unique $<_L$-minimal $\mathcal{Q}$-decomposition of $\sigma$. We often refer to this $<_L$-minimal decomposition as the \emph{left-greedy decomposition}. This will turn out to be a very useful fact in our arguments.

\begin{definition}
Given a permutation class $\mathcal{P}$, we define the sequence of permutation classes $\left(\mathcal{P}^{\left[d\right]}\right)_{d\in \mathbb{Z}_{\geq1}}$ 
inductively by setting $\mathcal{P}^{\left[1\right]}:=\mathcal{P}$
and $\mathcal{P}^{\left[d+1\right]}:=\mathcal{P}\left[\mathcal{P}^{\left[d\right]}\right]$. 
\end{definition}
\noindent In our arguments, we shall be naturally led to consider a particular family of inflations, namely inflations of the identity permutation.
\begin{definition}
	Given permutations $\sigma_1, \ldots, \sigma_k$, their \emph{direct sum} is the permutation $\sigma_1 \oplus \sigma_2\oplus \cdots \oplus \sigma_k:= 123\ldots k[\sigma_1, \sigma_2, \ldots , \sigma_k]$. 
\end{definition} 
\noindent In addition to direct sums, we will also need to handle inflations of the monotone decreasing permutation, which we shall denote by $\sigma_1 \ominus \sigma_2\ominus \cdots \ominus \sigma_k:= k(k-1)(k-2)\ldots 21[\sigma_1, \sigma_2, \ldots , \sigma_k]$.

\begin{definition}
    The \emph{increasing oscillating sequence} is the integer sequence $(s_n)_{n\in \mathbb{Z}_{\geq 0}}$ defined by $s_{n}=n+4$ if $n$ is even and $s_n=n$ otherwise; thus its first few terms are $4, 1, 6, 3, 8, 5, \ldots$. We let $F_k:=\{(i, s_i): \ 0 \leq i < k\}$, and denote by $\omega_k$ the permutation associated to $F_k$. We define \emph{decreasing oscillating sequence} analogously as $G_k = \{(i,-s_i): 0 \leq i < k\}$, and denote $\varpi_k$ the permutation associated to $G_k$. 
    
    We let $\mathcal{O}_k$ denote the finite permutation class consisting of all subpermutations of $\omega_k$ and $\varpi_k$, and let $\widetilde{\mathcal{O}}_{k}$ denote the permutation class that is the union of $\mathcal{O}_k$, and the set containing all monotone increasing and decreasing permutations of finite length. That is, we set 
    \[\widetilde{\mathcal{O}}_{k} = \mathcal{O}_{k} \cup \mathrm{Av}(21) \cup \mathrm{Av}(12).\]
\end{definition}
\begin{definition}
    Given a  $\ell\times m$ $\star$-isolated forest matrix $M$, the standard $\ell \times m$ gridding $G$ and integers $d,k\in \mathbb{Z}_{+}$, we define $\mathrm{Geom}^{\#}(M)_{d,k}$ to be the collection of all equivalence classes which contain a gridded figure $(F,G)$ satisfying the following:
    \begin{enumerate}
        \item if $M_{i,j}=0$ then $F\cap C_{i,j}=\emptyset$;
        \item if $M_{i,j}=+1$, we have $F\cap C_{i,j} \subseteq \left\{(i-1, j-1)+(t,t): \ t\in (0,1)\right\}$, while if $M_{i,j}=-1$ we have $F\cap C_{i,j} \subseteq \left\{(i-1, j-1)+(t,1-t): \ t\in (0,1)\right\}$;
       \item if $M_{i,j}=\star$, then $F\cap C_{i,j}$ induces a permutation from ${\widetilde{\mathcal{O}}_{k}}^{[d]}$.
    \end{enumerate}
    Finally we let $\mathrm{Geom}(M)_{d,k}$ denote the collection of all permutations induced by an equivalence class in $\mathrm{Geom}^{\#}(M)_{d,k}$.
\end{definition}
\noindent As Definition~\ref{def: left greedy} of the left-greedy order $<_{LG}$ was given for gridded figures in general, without any reference to a matrix, it applies in this setting also, and given $\sigma \in \mathrm{Geom}(M)_{d,k}$ we can again write $\sigma^{\#}$ for the $<_{LG}$-minimal  equivalence class in $\mathrm{Geom}^{\#}(M)_{d,k}$ that induces $\sigma$. With these notions in place, we can now state the fundamental theorem of Vatter~\cite[Theorem 5.4]{vatter2011small} on the structure of small permutation classes; we state it here in its reformulation by Albert, Ru{\v{s}}kuc and Vatter~\cite[Theorem 8.4]{albert2015inflations}.
\begin{proposition}[Vatter]\label{prop: small implies contained inside Geom (M) dk}
    Let $\mathcal{F}$ be a small permutation class. Then there exists a $\star$-isolated forest matrix $M$ and integers $d,k\in \mathbb{Z}_{+}$ such that $\mathcal{F}\subseteq \mathrm{Geom}(M)_{d,k}$.
\end{proposition}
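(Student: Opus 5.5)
This is Vatter's structure theorem, so in the paper it is simply quoted from \cite[Theorem 5.4]{vatter2011small} in the form of \cite[Theorem 8.4]{albert2015inflations}. If I had to reprove it, I would follow Vatter's strategy: turn the growth bound $\mathrm{\overline{gr}}(\mathcal{F})<\kappa$ into a finite list of forbidden substructures, and then turn the absence of those substructures into a gridding.

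\textbf{Step 1: growth obstructions.} For each of the following configurations I would exhibit a family that it generates with upper growth rate at least $\kappa$:
\begin{itemize}
\item arbitrarily long sums $21\oplus 21\oplus\cdots$ or skew sums $12\ominus12\ominus\cdots$ (giving Fibonacci-type growth when combined with other features);
\item arbitrarily long increasing or decreasing oscillations $\omega_k,\varpi_k$ inflated nontrivially;
\item unbounded nesting of nontrivial inflations;
\item cycles in the row-column graph of a monotone gridding.
\end{itemize}
Here $\kappa$ is the root of $1+2X^2-X^3$, and it arises as the growth rate of a specific such family built from oscillations and sums. Since $\mathcal{F}$ is a class, each configuration contained in $\mathcal{F}$ would force the whole family into $\mathcal{F}$, giving $\mathrm{\overline{gr}}(\mathcal{F})\ge\kappa$. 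Hence a small class contains none of them beyond some bounded size.

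\textbf{Step 2: substitution decomposition.} I would use the substitution decomposition: every permutation is an inflation of a simple permutation, a sum, or a skew sum. By Step 1, the simple permutations of $\mathcal{F}$ avoid long alternations and long sums of $21$. A Huczynska--Vatter type gridding theorem then shows they are monotone griddable, and in fact geometrically griddable by a forest matrix. This uses the absence of cycles together with Proposition~\ref{prop: equality grid and geometric grid when HM is a forest}.

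\textbf{Step 3: assembling $\mathrm{Geom}(M)_{d,k}$.} Next I would show the following.
\begin{itemize}
\item Nontrivial inflations can only occur in cells whose row and column carry no other nonzero entry, since otherwise a forbidden growth configuration appears. This is the $\star$-isolation condition.
\item Inside such a cell the inflating blocks are built from monotone permutations and oscillations of bounded length $k$.
\item The nesting depth is bounded by some $d$.
\end{itemize}
Combining these with the forest gridding of the simples gives a $\star$-isolated forest matrix $M$ with $\mathcal{F}\subseteq\mathrm{Geom}(M)_{d,k}$.

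\textbf{Main obstacle.} I expect the hardest part to be Step 1: the precise enumeration showing that each forbidden configuration pushes the growth rate up to at least $\kappa$, since the threshold is sharp. A second difficulty is refining the gridding so that the $\star$-cells become isolated without destroying the forest property. For the first, I would begin by computing generating functions of the extremal families directly, in the spirit of Kaiser--Klazar, and comparing their dominant poles with $1/\kappa$.
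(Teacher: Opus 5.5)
Your first sentence matches the paper's treatment exactly. The proposition is not proved here; it is imported from Vatter's Theorem~5.4 in the form of Theorem~8.4 of Albert--Ru\v{s}kuc--Vatter, and as a proof by citation nothing more is needed.

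The reproof you sketch would not go through, and it already fails at Step~1, because two of your ``obstructions'' occur in small classes.
\begin{enumerate}
\item The class of all sums of $1$'s and $21$'s contains $21\oplus\cdots\oplus21$ of every length and has growth rate $(1+\sqrt5)/2<\kappa$. These permutations are $12\cdots m[21,\dots,21]\in\widetilde{\mathcal{O}}_k^{[2]}$, which is exactly what a $\star$-cell is designed to hold; the paper uses them in its final section.
\item The zigzags $n\,1\,(n-1)\,2\,(n-2)\,3\cdots$ have substitution depth of order $n$. Yet they lie in the geometric grid class of the one-column matrix with a $1$ below a $-1$, whose growth rate is at most $2<\kappa$. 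This unbounded nesting is absorbed by $\pm1$-cells, not by the parameter $d$.
\end{enumerate}
There is also an internal inconsistency: if Step~1 held, the Huczynska--Vatter theorem would make every small class monotone griddable, and $\star$-cells would never be needed. Finally, a cycle in the row-column graph is a property of a chosen gridding matrix, not a pattern contained in $\mathcal{F}$. It therefore cannot be excluded by a growth-rate count on a family of permutations.

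The missing idea is that the theorem does not say small classes avoid long sums, deep nestings and cycles. It says these features are allowed but \emph{confined}:
\begin{itemize}
\item arbitrarily long sums and skew sums occur only inside cells that are alone in their row and column;
\item iterated inflation by $\widetilde{\mathcal{O}}_k$ is needed only inside those cells, and only to bounded depth;
\item everything else, including unbounded nesting like the zigzags, is carried by a forest of monotone cells.
\end{itemize}
Your Step~3 asserts exactly these conclusions but gives no mechanism for them. Step~2 derives the behaviour of the simple permutations from the false Step~1. Its conclusion about simples is true a posteriori, since the content of a $\star$-cell is an interval, but it needs its own argument.

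In Vatter's paper the confinement is obtained through generalized grid classes and the corresponding gridding theorem. Roughly, a class is griddable by cells from $\mathcal{D}$ if and only if it avoids arbitrarily long sums and skew sums of basis elements of $\mathcal{D}$. This is followed by an analysis of which cells can carry inflations. That structural step, rather than the enumeration you single out as the main obstacle, is where the difficulty lies.
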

\noindent In fact, the result of Vatter states that one may require $M$ to satisfy a slightly stronger condition than only being $\star$-isolated forest matrix. However, for our purposes this form of a statement is sufficient, as we prove our main results for $\star$-isolated forest matrices. 

Finally, we say that a $\{-1,0,1,\star\}$-valued matrix $M$ is \emph{multiplicative} if there exists $\delta_1,\dots,\delta_\ell$ and $\varepsilon_1,\dots,\varepsilon_m \in \{\pm1\}$ so that $M_{i,j} = \delta_i \varepsilon_j$ whenever $M_{i,j} \in \{\pm1\}$. When $M$ is a $\star$-isolated forest matrix, a simple inductive argument on the number of non-zero entries of $M$ implies that $M$ is also multiplicative. Indeed, removing an edge in $H_M$ whose endpoint is a leaf corresponds to changing a non-zero entry that is the unique non-zero entry in its respective row or column to $0$ in $M$. Since that entry is the unique non-zero entry in its row or column, its removal does not affect the possibility of choosing signs $\delta_i$ and $\varepsilon_j$ consistently.

While there are matrices with entries in $\{-1,0,1\}$ that are not multiplicative, we point out that in \cite[Proposition 4.2]{albert2013geometric} it was proved that for any such $M$, there exists a multiplicative matrix $M'$ so that $\mathrm{Geom}^\#(M) \subseteq \mathrm{Geom}^\#(M')$. Thus, from the perspective of geometric grid classes, the restriction to multiplicative matrices would not be in general a substantial restriction. 
     
\subsection{Previous work on regular languages and small permutation classes} \label{subsetion: albert construction}

In this subsection, we present a construction of Albert, Atkinsson, Bouvel, Ru\v{s}kuc and Vatter \cite{albert2013geometric} which establishes a bijection between $\mathrm{Geom}(M)$ and $\mathrm{Geom}^\#(M)$ with a regular language when $M$ is a multiplicative forest matrix with entries in $\{-1,0,1\}$. 

Given such $\ell \times m$ matrix $M$, let $\delta_1,\dots,\delta_\ell,\varepsilon,\dots,\varepsilon_m \in \{\pm 1\}$ be chosen so that for every $(i,j)$ with $M_{i,j} \neq 0$ we have $M_{i,j} = \delta_i \cdot \varepsilon_j$. Let $G$ denote the standard unit-square gridding of $[0,\ell] \times [0,m]$, and let $\Sigma_M = \{a_{i,j}: M_{i,j} \neq 0\}$ be the alphabet indexed by the non-zero entries of $M$.

Define the function  $g_M: \Sigma_M^* \rightarrow \mathrm{Geom}^\#(M)$ as follows. Let $w = w_1\dots w_n$ be a word of length $n$, and fix arbitrary real numbers $0 < d_1 < \dots < d_n < 1$. Let $F_w$ denote the gridded figure which consists of points $p_1,\dots,p_n$, where for each $t \in [n]$, 
\begin{enumerate}
    \item If $w_t = a_{i,j}$ with $\delta_i = 1$, then $p_t$ is placed at the intersection of the diagonal of slope $M_{i,j}$ in the cell $C_{i,j}$ and the vertical line $x = i - 1 + d_t$.
    \item If $w_t = a_{i,j}$ with $\delta_i = -1$, then $p_t$ is placed at the intersection of the diagonal of slope $M_{i,j}$ in the cell $C_{i,j}$ and the vertical line $x = i - d_t$.
\end{enumerate}
\noindent Thus each point $p_t$ lies on the diagonal of slope $M_{i,j}$ in the cell $C_{i,j}$ determined by the letter $w_t$. The fractional part of the $x$-coordinate of $p_t$ is either $d_t$ or $1 - d_t$, depending on the sign of $\delta_i$. A straightforward computation shows that the fractional part of the $y$-coordinate is likewise $d_t$ or $1 - d_t$, depending on the sign of $\varepsilon_j$.

It is not difficult to verify that the equivalence class of the gridded figure $(F_w, G)$ does not depend on the particular choice of the parameters $d_1, \dots, d_n$, but only on the word $w$. We therefore define $g_M(w)$ to be the equivalence class containing $(F_w, G)$, and we define $f_M: \Sigma_M^* \rightarrow \mathrm{Geom}(M)$ by setting $f_M = \pi^\# \circ g_M$. 

A straightforward inductive argument shows that both $g_M$ and $f_M$ are surjective. In general, however, neither map is injective. In \cite{albert2013geometric}, the authors identified a natural equivalence relation on $\Sigma_M^*$ whose equivalence classes coincide with the fibres of $g_M$. Using standard structural results on trace monoids, they deduced (see \cite[Corollary 7.3]{albert2013geometric}) the existence of a regular language $L_1 \subseteq \Sigma_M^*$ such that the restriction of $g_M$ to $L_1$ is bijective onto $\mathrm{Geom}^\#(M)$ - we return to the technicalities in section~\ref{section: small classes} when we construct the corresponding finite deterministic automaton. 

To prove the existence of the second regular language $L_2$, the authors of \cite{albert2013geometric} introduced an elegant marking scheme for both letters and points in permutations. Using these marked letters and points, they proved that there exists a regular sublanguage $L_2 \subseteq L_1$ with the following property: for every permutation $\sigma \in \mathrm{Geom}(M)$, the set $L_2$ contains a unique word $w \in L_1$ such that $f_M(w) = \sigma$  and $g_M(w)$ is the left-greedy representative of $f_M(w)$.

A key ingredient in their argument is the natural correspondence between the letters of $w$ and the points in the associated gridded figure $g_M(w)$ (and hence in $f_M(w)$). For later purposes, it is convenient to formalize this correspondence. Define functions $\alpha_{w,M}:[|w|] \rightarrow f_M(w)$ and $\beta_{w,M}:[|w|] \rightarrow g_M(w)$ as follows. For each $t \in [|w|]$, we set $\beta_{w,M}(t) = p_t$, where $p_t$ is the point constructed in the definition of $g_M$. We then define $\alpha_{w,M}(t)$ to be the image of $\beta_{w,M}(t)$ under the map $\pi^\#$.
These maps will play an important role in establishing logical correspondences between small permutation classes and regular languages. In section~\ref{sect: construction}, we construct analogous maps to handle matrices containing $\star$-entries.

We say that a function $f: L \rightarrow \mathrm{Geom}(M)$ or $g: L \rightarrow \mathrm{Geom}^\#(M)$ is \emph{order preserving} if whenever $u \leq v$ in the sub-word order\footnote{that is, $u$ can be obtained from $v$ by deleting letters}, then $f(u)$ is a sub-pattern of $f(v)$, or $g(u)$ is a gridded sub-pattern of $g(v)$ respectively. We are now ready to state the relevant results from~\cite{albert2013geometric}. 
\begin{thm}\label{thm: Albert et al regular language}
    Let $M$ be a multiplicative forest matrix with entries in $\{-1,0,1\}$. There exist regular languages $L_1$ and $L_2$ and order preserving bijections $g_M:L_1 \rightarrow \mathrm{Geom}^\#(M)$ and $f_M:L_2\rightarrow \mathrm{Geom}(M)$. 
\end{thm}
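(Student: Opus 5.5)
The plan is to follow the route of Albert, Atkinson, Bouvel, Ru\v{s}kuc and Vatter, packaged as two successive regular restrictions of the free language $\Sigma_M^*$. Since $M$ is multiplicative, I fix signs $\delta_i,\varepsilon_j$ and use the map $g_M:\Sigma_M^*\to\mathrm{Geom}^\#(M)$ defined above. Surjectivity of $g_M$ is immediate by induction on the number of points: read the points of a figure on the standard figure in increasing order of their distance parameter $d_t$. Order preservation is also immediate, since deleting a letter deletes the corresponding point $p_t$ and the remaining figure is $F_{w'}$ for the shorter word with the same $d$'s. So the work is to choose canonical representatives inside the fibres, first of $g_M$ and then of $f_M=\pi^\#\circ g_M$, by regular conditions.

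\textbf{Step 1 (the language $L_1$).} Two letters $a_{i,j}$ and $a_{i',j'}$ \emph{commute} when $i\ne i'$ and $j\ne j'$. In that case swapping adjacent occurrences does not change the relative order of the two points in either coordinate, so it does not change $g_M(w)$. Conversely, if the letters share a column (resp. row), their relative $x$ (resp. $y$) order is determined by the order of the $d$'s, so they cannot be swapped. From this I would show that the fibres of $g_M$ are exactly the classes of the trace monoid with this independence relation. Fixing a total order on $\Sigma_M$, I take $L_1$ to be the set of lexicographically minimal (Foata/lex normal form) representatives. Membership is a regular condition: $w$ is lex-minimal iff it contains no factor $b\,u\,a$ with $a<b$, $a$ commuting with $b$ and with every letter of $u$. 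This is standard and gives regularity, and $g_M|_{L_1}$ is then a bijection onto $\mathrm{Geom}^\#(M)$.

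\textbf{Step 2 (the language $L_2$).} Among all gridded representatives of $\sigma$, I select the left-greedy one. Since the left-greedy order is lexicographic on the cell-count vector, a gridding fails to be minimal iff some local move exists: a move of points across a grid line that yields a valid $M$-compatible gridding inducing the same permutation and that is strictly smaller. The key claim to prove is that, because $H_M$ is a forest and the points in each cell are monotone, any non-minimal gridding admits such a move involving only boundedly many ``extreme'' points near gridlines. I would mark points via the marking scheme: enlarge the alphabet with marked letters recording which point is extreme in its row/column segment, so that the markings are MSO-definable in $w$. Then non-minimality becomes an MSO property of the (marked) word, which by Theorem~\ref{thm: closure for regular languages} defines a regular language; its complement within $L_1$ is $L_2$. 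Order preservation of $f_M$ on $L_2$ follows from that of $g_M$ composed with $\pi^\#$.

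The main obstacle is the claim in Step 2 that left-greedy minimality is witnessed locally, by finitely many configurations detectable in the word. The forest hypothesis must be used here, since for cyclic $H_M$ shifting a gridline can propagate around a cycle. I would first try to prove it by induction on leaves of $H_M$, peeling off a leaf row or column and showing that only the first or last few points of the corresponding monotone segment can be moved.
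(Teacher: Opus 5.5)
Your Step~1 and the order-preservation argument are fine and agree with \cite{albert2013geometric}: the fibres of $g_M$ are the trace classes (Lemma~\ref{lemma: albert equivalence relation}), and the standard characterisation of lexicographic normal forms is exactly the condition in Lemma~\ref{lemma: no stars preimages given by equivalence relation}. The gap is in Step~2, and the key claim is not merely unproved: it is false. Take the forest matrix with three columns and two rows whose nonzero entries are $M_{1,1}=M_{2,2}=1$ and $M_{2,1}=M_{3,2}=-1$ (its row-column graph is a path). Let $\sigma=1\,(s{+}2)\,(s{+}1)\cdots 2$. Let $G$ place the point of value $1$ in cell $(1,1)$, the point of value $s{+}2$ alone in cell $(2,2)$, and the decreasing run $s{+}1,\dots,2$ in cell $(3,2)$. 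Then $G$ is not left-greedy: column~$1=\{1,s{+}2\}$ with the run in cell $(2,1)$ is a valid gridding below it. However, every gridding strictly $<_{LG}$-below $G$ has all $s$ points of the run in cell $(2,1)$, for three reasons. First, column~$1$ can only grow by absorbing $s{+}2$, which forces the horizontal line to the top. Second, column~$2$ cannot take $s{+}1$ into the increasing cell $(2,2)$ to the right of $s{+}2$. Third, every other way to enlarge column~$2$ or cell $(2,1)$ puts the horizontal line above $s{+}1$, which pushes the entire run into row~$1$, where column~$3$ has no cell. So a whole monotone segment of unbounded length must jump. No induction on leaves of $H_M$ can show that only the first or last few points of a segment move.

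The missing idea is that what is bounded is the \emph{description} of a competing gridding, not its distance from the current one. A gridding of $\sigma$ is determined by the at most $4|\mathcal{S}|$ points that are extremal for $\le_1$ and $\le_2$ in its nonempty cells. With these as first-order variables, ``they define an $M$-compatible gridding'' is first-order in $\mathrm{TOTO}$: the rectangles cover $\sigma$, are ordered under $\le_1$ and $\le_2$ according to column and row indices, and are monotone in the direction of $M_{i,j}$, and Proposition~\ref{prop: equality grid and geometric grid when HM is a forest} makes the gridding geometric. The comparison in $<_{LG}$ is first-order as well. Equal counts in all earlier columns (resp.\ earlier cells of column $i_0$) force identical earlier blocks. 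Hence ``more points in column $i_0$'' (resp.\ in cell $(i_0,j_0)$) is just a comparison of $\le_1$-maxima (resp.\ $\le_2$-maxima), and no cardinality comparison is needed. Since $\le_1$ and $\le_2$ on $f_M(w)$ are first-order definable on the positions of $w$ (Lemma~\ref{lem: leq1 for M} without $\star$-entries), ``$g_M(w)$ is the left-greedy gridding of $f_M(w)$'' is a first-order, hence regular, property of $w$, and $L_2$ is its intersection with $L_1$. No marking is needed; in MSO you could even quantify over the new cells as set variables. The paper only cites \cite{albert2013geometric} for this theorem, but Section~\ref{section: M bijection restriction} carries out exactly this argument in Lemmas~\ref{lemma: cells indexed by A}--\ref{lemma: left-greedy drawing, which has its extremal points in A} and Proposition~\ref{prop: penultimate step}, specialised to matrices without $\star$-entries.
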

\noindent We remark that the fact that both of these functions are order preserving follows somewhat easily by considering the images of the points in the sub-word via the maps $\alpha_{u,M}$ and $\alpha_{v,M}$ introduced earlier. 

The connection between geometric grid classes and regular languages was further studied by Braunfeld~\cite{braunfeld2025decidability}. For every finite matrix $M$ with entries in $\{-1,0,1\}$, he proved that the basis and the rational generating function of $\mathrm{Geom}(M)$ are computable from $M$. More generally, given an $\mathrm{MSO}$-$\mathrm{TOTO}$ sentence $\Psi$, the set
\[
    \mathcal{C}_{M,\Psi}
    := \{\sigma\in\mathrm{Geom}(M):\sigma\models\Psi\}
\]
has a rational generating function that is computable from $M$ and $\Psi$. His results do not apply for matrices with $\star$-entries, which are behind the major technical challenges  in our results, and do not provide convergence laws. However, he does make use of monadic second-order logic as a way of describing geometric grid-classes. Using this, and the connection to regular languages, allows him to establish that the generating function and basis are computable for these classes. 

We conclude this subsection by briefly comparing our approach with that of \cite{albert2013geometric} at a conceptual level. As in \cite{albert2013geometric}, we begin by constructing an alphabet $\Sigma_M$ together with a function $g_M: \Sigma_M^* \rightarrow \mathrm{Geom}_{d,k}^\#(M)\cup \{\star \}$. However, our construction differs in two essential aspects.

First, we allow certain words to be mapped to the auxiliary symbol $\star$. This reflects a structural difference in the underlying alphabets. In our setting, the alphabet $\Sigma_M$ is not endowed with the same global combinatorial structure as in \cite{albert2013geometric}; rather, it decomposes as a disjoint union of several structured subsets. Words that mix letters originating from different components of this decomposition do not admit a natural interpretation as permutations in $\mathrm{Geom}(M)_{d,k}$. To overcome this issue, we choose to map such words to the special element $\star$. By contrast, in \cite{albert2013geometric} this issue does not arise, due to better structural properties of the classes $\mathrm{Geom}(M)$ when $M$ does not contain $\star$-entries. 

In our setting, however, the maps $g_M$ and $f_M$ constructed in section~\ref{section: small classes} are not order-preserving. To illustrate this phenomenon, let $i_n$ and $d_n$ denote the increasing and decreasing permutations of length $n$, respectively. One can construct a word $w$ of length $n^2$ in our language such that $f_M(w) = i_n[d_n, \dots, d_n]$ while there exists a subword $w'$ obtained by deleting two letters from $w$ for which $f_M(w') = i_{n-1}[d_{2n-2},d_n,\dots,d_n]$. In particular, $f_M(w')$ is not, in general, a sub-pattern of $f_M(w)$. The fact that the functions $f_M$ and $g_M$ are not order-preserving in our case suggests that an approach based on marked letters and marked points analogous to the one used in \cite{albert2013geometric} does not easily extend to our framework.

\section{Encoding permutations in $\tilde{\mathcal{O}}_{\textit{k}}^{\left[\textit{d}\right]}$}

Our goal in this section is to prove the following intermediate result, which allows us to deal with the $\star$-entries of a matrix $M$. 

\begin{thm}\label{thm:Main O_K}
    Let $d$ and $k$ be positive integers. There exists a regular language $L_{d,k}$ and a bijection $f_{d,k}:L_{d,k} \rightarrow \tilde{\mathcal{O}}_{\textit{k}}^{\left[\textit{d}\right]}$ so that for every first-order or monadic second-order sentence $\Psi$ on $\tilde{\mathcal{O}}_{\textit{k}}^{\left[\textit{d}\right]}$ there exists a first-order or a monadic second-order sentence $\Phi$  on $L_{d,k}$ for which 
    \[\{w \in L_{d,k}: w \models \Phi\} = \{w \in L_{d,k}: f_{d,k}(w) \models \Psi\}.\]
\end{thm}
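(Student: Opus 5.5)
We prove Theorem~\ref{thm:Main O_K} by induction on $d$. At each level we encode the left-greedy $\widetilde{\mathcal{O}}_k$-decomposition (the $<_L$-minimal one of Definition~\ref{def: LG order}) by a word. The profile is written as a letter block and the inflating pieces as recursively encoded subwords.

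\textbf{Base case $d=1$.} Here $\widetilde{\mathcal{O}}_k$ is the union of the finite class $\mathcal{O}_k$ with the monotone permutations. Take the alphabet $\{\mathtt{inc},\mathtt{dec}\}\cup\{b_\pi:\pi\in\mathcal{O}_k\text{ non-monotone}\}$. Let $L_{1,k}$ consist of the words $\mathtt{inc}^n$ for $n\ge 0$, the words $\mathtt{dec}^n$ for $n\ge 2$ (so that $1$ is encoded only once), and the words $b_\pi^{|\pi|}$. This language is regular and the map is a length-preserving bijection. A TOTO sentence $\Psi$ is translated as follows. If the word is $\mathtt{inc}^n$ or $\mathtt{dec}^n$, we interpret $\leq_1$ as $\leq$ on positions and $\leq_2$ as $\leq$ or $\geq$ respectively. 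On each of the finitely many words $b_\pi^{|\pi|}$, $\Psi$ has a fixed truth value, which is first-order expressible by a disjunction over letters. This relativization works verbatim for MSO.

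\textbf{Inductive step.} Suppose $\sigma\in\widetilde{\mathcal{O}}_k^{[d+1]}$ has left-greedy decomposition $\tau[\sigma_1,\dots,\sigma_m]$ with $\tau\in\widetilde{\mathcal{O}}_k$ and $\sigma_i\in\widetilde{\mathcal{O}}_k^{[d]}$. Since the profile can be monotone of unbounded length, we cannot spend a single letter per profile entry. Instead we use a fresh level-tagged copy $\Sigma^{(d)}$ of the level-$d$ alphabet, together with a tag recording the profile type (inc, dec, or $\pi$). The word is then the concatenation $u_1u_2\cdots u_m$ of the encodings $u_i=f_{d,k}^{-1}(\sigma_i)$. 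We mark the first letter of each block, so that block boundaries are first-order definable and $m$ equals the number of blocks. The profile type is attached to every letter. With this choice, lengths are preserved.

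\textbf{Regularity and uniqueness.} Regularity means the language must say that every block lies in $L_{d,k}$, that the profile type is constant, and that the number of blocks is correct in the finite case. It must also enforce left-greedy minimality. For monotone profiles, minimality is a local condition: in an increasing profile no block may itself be a sum-decomposable permutation, and adjacent blocks can never be merged. Both are MSO-expressible on the level-$d$ words, because sum-decomposability of $f_{d,k}(u)$ is an MSO sentence on $\sigma_i$, which transfers by the induction hypothesis. For the finitely many non-monotone profiles, minimality likewise reduces to finitely many MSO conditions on the blocks and on consecutive pairs of blocks. By Theorem~\ref{thm: closure for regular languages}, each of these conditions defines a regular language.

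\textbf{Translation of sentences.} We interpret the two orders of $f(w)$ on positions of $w$. For positions $x,y$ in the same block, we use the induction hypothesis relativized to that block: a first-order or MSO formula with parameters evaluated inside a definable factor can be obtained by relativizing quantifiers to the block. For this we need a version of the inductive hypothesis for formulas with free variables, which we strengthen to an interpretation statement: positions of $w$ correspond to points, and $\leq_1,\leq_2$ are definable formulas. For $x,y$ in different blocks, $x\leq_1 y$ iff the block of $x$ precedes the block of $y$. The relation $x\leq_2 y$ is decided by $\tau$: for an inc (dec) profile it agrees with (is reversed from) block order, and for a finite $\pi$ it is given by the relative values of $\pi$ at the block indices, which are first-order definable since there are at most $k$ blocks.

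\textbf{Main obstacle.} We expect the hard step to be stating the induction in the strong form "$\leq_1$ and $\leq_2$ are first-order definable interpretations on $L_{d,k}$" so that relativization to blocks goes through. We also expect difficulty in checking that left-greedy minimality, in particular non-mergeability of blocks into a single $\widetilde{\mathcal{O}}_k^{[d]}$ piece, is MSO-definable. If first-order definability of minimality fails, regularity still follows from the MSO definability, which suffices for the theorem.
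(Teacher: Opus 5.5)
Your overall plan matches the paper's. You encode the $<_L$-minimal decomposition level by level with profile tags and block markers. You make $\leq_1,\leq_2$ definable on positions (the paper's $\xi_{s,d}$ and Theorem~\ref{thm: FO correspondence}). You then cut out one representative per permutation by a definable condition and apply Theorem~\ref{thm: closure for regular languages}.

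The gap is at the step that decides bijectivity: your criterion for left-greedy minimality with a monotone profile is wrong in both directions. Definition~\ref{def: LG order} first minimises the \emph{number} of blocks, so sum-decomposable blocks are not a defect; they are typically forced.
\begin{itemize}
\item \emph{Surjectivity fails.} $12\in\widetilde{\mathcal O}_k^{[2]}$ has left-greedy decomposition $1[12]$, whose single block is sum-decomposable. The only other candidate, $12[1,1]$, is mergeable. So your language contains no word mapping to $12$.
\item \emph{Uniqueness fails.} Non-mergeability is not enough, because a decomposition can be improved by moving only part of $\sigma_{i+1}$ into $\sigma_i$. For $k=4$, $\mathcal O_4$ consists of the patterns of $3142$ and $2413$. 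Hence $132\in\widetilde{\mathcal O}_4$ but $1243\notin\widetilde{\mathcal O}_4$. Then $1243=12[1,132]=12[12,21]$ are two non-mergeable two-block decompositions, and only the second is left-greedy.
\end{itemize}

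The missing ingredient is the correct local characterisation, Lemma~\ref{lemma: albert left greedy} (Albert--Ru\v{s}kuc--Vatter). In your indexing, with pieces in $\widetilde{\mathcal O}_k^{[d]}$, $\sigma_1\oplus\dots\oplus\sigma_m$ is left-greedy iff for no $i$ can one write $\sigma_i\oplus\sigma_{i+1}=\theta_1\oplus\theta_2$ with $\theta_1,\theta_2\in\widetilde{\mathcal O}_k^{[d]}$ and $|\theta_1|>|\sigma_i|$, where $\theta_2$ may be empty. The same holds with $\ominus$. This is a property of the subpermutation carried by two consecutive blocks, so your interpretation lets you pull it back. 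The paper even does this in first-order logic in Lemma~\ref{lemma: sum decomposition lemma}.

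You need some locality statement of this kind. The global condition ``fewest blocks, then lexicographically largest'' quantifies over decompositions with unboundedly many blocks and compares their numbers, so it is not a priori MSO-expressible.

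Two smaller repairs are also needed.
\begin{itemize}
\item For a non-monotone profile $\pi$, minimality is not a condition on blocks and adjacent pairs, since a competing decomposition with fewer blocks may cut across your blocks. However, $|\pi|\le k$, so it is a first-order sentence about $f(w)$ that you can pull back through the level-$(d+1)$ interpretation, as in Lemma~\ref{lemma: handling Q_ks for bijection}.
\item You must forbid the dec type with a single block; otherwise $1[\sigma_1]$ gets two encodings.
\end{itemize}
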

\noindent In order to have a better grasp on the iterative structure of the permutation class $\tilde{\mathcal{O}}_{\textit{k}}^{\left[\textit{d}\right]}$, we define certain intermediate structures  $\mathcal{G}_{d,k}$ as follows. We start by setting $\mathcal{G}_{1,k} =\tilde{\mathcal{O}}_{\textit{k}}$. In order to distinguish between the elements of $\widetilde{\mathcal{O}}_k^{[d]}$ and $\mathcal{G}_{d,k}$, we use the overline $\overline{\theta}$ to highlight the fact the element $\overline{\theta}$ belong to $\mathcal{G}_{d,k}$ instead of being a permutation. 

Having defined $\mathcal{G}_{d-1,k}$ for some $d \geq 2$, define $\mathcal{G}_{d,k}$ to be the set consisting of all tuples $\left(\tau,\overline{\theta}_1,\dots,\overline{\theta}_{|\tau|}\right)$ with $\tau \in \tilde{\mathcal{O}}_{\textit{k}}$, and $\overline{\theta}_1,\dots,\overline{\theta}_{|\tau|} \in \mathcal{G}_{d-1,k}$. Given an element $\sigma = (\tau,\overline{\theta}_1,\dots,\overline{\theta}_{|\tau|}) \in \mathcal{G}_{d,k}$, we refer to $\tau$ as the \emph{profile} of $\sigma$. We also define the natural projection maps $\pi_{d,k}: \mathcal{G}_{d,k} \rightarrow \tilde{\mathcal{O}}_{\textit{k}}^{\left[\textit{d}\right]}$ recursively by setting $\pi_{1,k} = \mathrm{id}$ and 
\[\pi_{d,k}(\tau,\overline{\theta}_1,\dots,\overline{\theta}_{|\tau|}) = \tau[\pi_{d-1,k}(\overline{\theta}_1),\dots,\pi_{d-1,k}(\overline{\theta}_{|\tau|})]\]
for $d \geq 2$. 

To prove Theorem~\ref{thm:Main O_K}, we proceed in three steps. First, we construct an alphabet $\Sigma_{d,k}$ together with a function $g_{d,k}: \Sigma_{d,k}^* \rightarrow \mathcal{G}_{d,k} \cup \{\star\}$ satisfying structural properties analogous to those required in Theorem~\ref{thm:Main O_K}. The auxiliary symbol $\star$ is used as an image of those words that are not relevant for our purposes. Crucially, we prove that there exists a first-order sentence which characterizes exactly the words that are mapped to $\star$.

Second, we prove that the function $f_{d,k} := \pi_{d,k} \circ g_{d,k}$ is both a first-order and a monadic second-order homomorphism. Finally, we establish the existence of a first-order sentence which selects a unique representative in each fibre $(\pi_{d,k} \circ g_{d,k})^{-1}(\sigma)$ for $\sigma \in \tilde{\mathcal{O}}_{\textit{k}}^{\left[\textit{d}\right]}$. This unique choice of representatives allows us to deduce Theorem~\ref{thm:Main O_K}.

Throughout this section, all constructions and proofs are carried out by induction on $d$, while the parameter $k$ is treated as fixed. For notational simplicity, we therefore suppress the dependence on $k$ whenever its value is clear from the context. 
\subsection{Construction}\label{sect: construction}

We write $\mathcal{Q}_k$ for the set of non-monotone permutations in $\mathcal{O}_k$. As the base case $d=1$, we take 
\[\Sigma_{1}=\left\{ a_{\mathrm{inc}},a_{\mathrm{dec}}\right\} \cup\bigcup_{\tau\in\mathcal{Q}_{k}}\left\{ a_{\tau,i}:1\leq i\leq\left|\tau\right|\right\},\]
and define the function $g_{1}$ by setting 
\begin{enumerate}
\item[(i)] $g_{1}\left(a_{\mathrm{inc}}^{n}\right)=12\dots n$ for all $n\geq 1$, $g_{1}\left(a_{\mathrm{dec}}^{n}\right)=n\left(n-1\right)\dots1$
for all $n\geq2$ and $g_{1}\left(a_{\mathrm{dec}}\right)=\star$;
\item[(ii)] $g_{1}\left(a_{\tau,1}\dots a_{\tau,\left|\tau\right|}\right)=\tau$
for every $\tau\in\mathcal{Q}_{k}$; and 
\item[(iii)] $g_{1}\left(w\right)=\star$ for any other $w\in\Sigma_{1}^{*}$.
\item [(iv)] Let $\varepsilon$ denote the empty word. Then $g_d(\varepsilon) = \star$. 
\end{enumerate}
As $\mathcal{G}_{1,k} = \tilde{\mathcal{O}}_{\textit{k}}$, we also have $f_1 = g_1$. We remark that the pre-image of every $\sigma \in \widetilde{\mathcal{O}}_k$ contains exactly one element. 

Having constructed the set of alphabets $\Sigma_{d-1}$ and the function $g_{d-1}$ for some $d \geq 2$, enumerate the elements of $\Sigma_{d-1}$ as $\left\{a_{1},\dots,a_{N}\right\}$. Define $\Sigma_{d,\mathcal{Q}}$ and $\Sigma_{d,m}$ by setting
\begin{align*}
\Sigma_{d,\mathcal{Q}} & =\left\{ a_{\tau,i,j}:\tau\in\mathcal{Q}_{k},i\in\left[\left|\tau\right|\right],j\in\left[N\right]\right\} ,\text{ and }\\
\Sigma_{d,m} & =\left\{ a_{\alpha,\beta,i}:\alpha\in\left\{\mathrm{inc},\mathrm{dec}\right\} ,\beta\subseteq\left\{ L,R\right\} ,i\in\left[N\right]\right\},
\end{align*}
\noindent and set $\Sigma_d = \Sigma_{d,\mathcal{Q}} \sqcup \Sigma_{d,m}$. At this stage, it turns out to be convenient to introduce several projection maps which capture the essential properties of the inductive construction. We introduce the maps $\rho_1: \Sigma_{d} \rightarrow \Sigma_{d-1}$, $\rho_2: \Sigma_{d} \rightarrow \mathcal{Q}_k \cup \{\mathrm{inc},\mathrm{dec}\}$, $\rho_3: \Sigma_{d,\mathcal{Q}} \rightarrow \mathbb{Z}_+$ and $\rho_4:\Sigma_{d,m} \rightarrow \mathcal{P}(\{L,R\})$ which are defined as follows. 
\begin{enumerate}
    \item[(i)] For every $a_{\tau,i,j} \in \Sigma_{d,\mathcal{Q}}$ we set $\rho_1(a_{\tau,i,j}) = a_j$, and for every $a_{\alpha,\beta,j} \in \Sigma_{d,m}$ we set $\rho_1(a_{\alpha,\beta,j}) = a_j$. In addition, we extend $\rho_1$ to a function $\rho_1: \Sigma_{d}^*\rightarrow \Sigma_{d-1}^*$ by setting $\rho_1(w_1\dots w_\ell) = \rho_1(w_1)\dots \rho_1(w_\ell)$. 
    \item[(ii)] For every $a_{\tau,i,j} \in \Sigma_{d,\mathcal{Q}}$, we set $\rho_2(a_{\tau,i,j}) = \tau$, and for every $a_{\alpha,\beta,j} \in \Sigma_{d,m}$ we set $\rho_2(a_{\alpha,\beta,j}) = \alpha$. 
    \item[(iii)] For every $a_{\tau,i,j} \in \Sigma_{d,\mathcal{Q}}$ we set $\rho_3(a_{\tau,i,j}) = i$, and note that we have $\rho_3(a_{\tau,i,j}) \in [|\tau|]$ for every $a_{\tau,i,j} \in \Sigma_{d,\mathcal{Q}}$. 
    \item[(iv)] For every $a_{\alpha,\beta,j} \in \Sigma_{d,m}$ we set $\rho_4(a_{\alpha, \beta, j}) = \beta$. 
\end{enumerate}
\noindent Note that we may also extend $\rho_2$ and $\rho_3$ to $\Sigma_1$ by setting $\rho_2(a_{\tau,i}) = \tau$, $\rho_2(a_\alpha) = \alpha$ for $\alpha \in \{\mathrm{inc},\mathrm{dec}\}$, $\rho_3(a_{\tau,i}) = i$. 

Define the \emph{profile} of a letter $b \in \Sigma_{d}$ to be the value of $\rho_2(b)$. Given a word $w\in\Sigma_{d,k}^{*}$, we say that $w$ is \emph{consistent} if each of its letter has the same profile. If $w$ is consistent, we refer to this common value as the \textit{profile} of $w$. 

If $w$ is not consistent, we set $g_d(w) = \star$. Now suppose that $w$ is non-empty and consistent, and first consider the case when its profile $\tau$ is in $\mathcal{Q}_k$. For each $j$ define $u_j$ to be the (possibly empty) sub-word of $w$ consisting of those letters $w_i$ of $w$ which satisfy $\rho_3(w_i) = j$. We say that $w$ \emph{decomposes properly} if $w$ coincides with the concatenation $w = u_1\dots u_\ell$, where $\ell = |\tau|$. 

Now suppose that $w$ is a consistent word whose profile is $\mathrm{inc}$ or $\mathrm{dec}$. In this case, we say that $w$ \emph{decomposes properly} if $w$ can be written as a concatenation $w = u_1\dots u_{\ell}$ for some $\ell \geq 1$, where the non-empty words $u_i := u_{i,1} \dots u_{i,k_i}$ satisfy the following conditions. 
\begin{align} \label{eq: consistent conditions for I,D - 1}
    \text{1. } \text{We have $L \in \rho_4(u_{i,s})$ if and only if $s = 1$. }
\end{align}
\noindent
\begin{align} \label{eq: consistent conditions for I,D - 2}
    \text{2. } \text{We have $R \in \rho_4(u_{i,s})$ if and only if $s = k_i$. }
\end{align}
\noindent In either case, we set $g_d(w) = \star$ if $w$ does not decompose properly. Otherwise, observe that $w$ can be written as $w = u_1\dots u_\ell$ for some $\ell$, with the words $u_i$ satisfying the appropriate conditions depending on whether the profile $\tau$ satisfies $\tau \in \mathcal{Q}_k$ or $\tau \in \{\mathrm{inc},\mathrm{dec}\}$. Define $\overline{\theta}_1, \dots,\overline{\theta}_\ell \in \mathcal{G}_{d-1,k} \cup \{\star\}$ by setting $\overline{\theta_i} = g_{d-1}\left(\rho_1(u_i)\right)$ for each $i \in [\ell]$. Recall that if  $\tau \in \mathcal{Q}_k$ and $u_i$ is the empty word, we set $\overline{\theta}_i = g_{d-1}(\varepsilon)= \star$. 

Define $g_d(w)$ according to the following rules. If there exists $i \in [\ell]$ for which $\overline{\theta}_i = \star$, set $g_d(w) = \star$. In addition, if $\tau = \mathrm{dec}$ and $\ell = 1$, we set $g_d(w) = \star$. Otherwise, define $g_{d}(w) = (\tau',\overline{\theta_1},\dots,\overline{\theta_{\ell}})$ where 
\begin{enumerate}
    \item [(i)] $\tau' = \tau$ whenever $\tau \in \mathcal{Q}_k$,
    \item [(ii)] $\tau' = 12\dots \ell$ whenever $\tau = \mathrm{inc}$, and 
    \item [(iii)] $\tau' = \ell(\ell-1)\dots1$ whenever $\tau = \mathrm{dec}$ (and in this case we must have $\ell \geq 2 $).
\end{enumerate}
Finally, set $f_d := \pi_d \circ g_d$ with the convention that $\pi_d(\star) = \star$. An easy inductive argument implies that both $f_d$ and $g_d$ are surjective: indeed, if the profile $\tau$ is the singleton $1$, then it is encoded only using a word whose profile is $\mathrm{inc}$.

Having constructed $\Sigma_d$ and the functions $f_d$ and $g_d$ for all $d$, we finish the subsection by introducing the notion of \emph{corresponding points}, which turns out to be very useful when proving that $f_d$ is a first-order homomorphism. Given a word $w \in \Sigma^{*}_{d}$ with $g_{d}(w) \neq \star$, we inductively construct bijections $\alpha_{w}: [|w|] \rightarrow f_{d}(w)$ and $\beta_w: [|w|] \rightarrow g_{d}(w)$, which naturally establishes a one-to-one correspondence between the letters of a word $w$ and the points in $g_{d}(w)$ and $f_{d}(w)$. As the base case, we construct the function $\beta_w$ for all $w \in \Sigma_1^*$ with $g_1(w) \neq \star$ as follows:
\begin{enumerate}
    \item[(i)] if $w = a_{\mathrm{inc}}^n$ or $w = a_{\mathrm{dec}}^n$, define $\beta_w(i)$ to be the $i^{th}$ least point with respect to $\leq_1$ in $g_{1}(w)$, and 
    \item[(ii)] if $w = a_{\tau,1}\dots a_{\tau,|\tau|}$, define $\beta_w(i)$ to be the point $(i,\tau(i))$ in the canonical drawing of $\tau := g_{1}(w)$. 
\end{enumerate}
Since the functions $f_1$ and $g_1$ are identical, we may view $g_1(w)$ as a permutation, which justifies the notion of the $i$-th least point in $g_1(w)$. We also set $\alpha_w = \beta_w$. It is easy to check that each of the functions $\alpha_w$ and $\beta_w$ defined as above is a bijection. 

Suppose that $d \geq 2$ and that the functions $\alpha_{w'}$ and $\beta_{w'}$ are defined for each $w' \in \Sigma_{d-1}^*$ with $g_{d-1}(w') \neq \star$. Given $w \in \Sigma_{d}^*$ with $g_d(w) \neq \star$, suppose that $w$ decomposes properly as $w = u_1\dots u_\ell$ for some $\ell$. For each $i \in [\ell]$, write $\overline{\theta}_i = g_{d-1}(\rho_1(u_i))$ and $\sigma_i = f_{d-1}(\rho_1(u_i))$. 

Given an index $s \in [|w|]$, let $j \in [\ell]$ be chosen so that $w_s$ belongs to the sub-word $u_j$, and write $s'$ for the index chosen so that $w_s$ corresponds to the letter $(u_j)_{s'}$ in $u_j$. Let $p \in \sigma_j$ and $q \in \overline{\theta}_j$ denote the points chosen so that $p = \alpha_{\rho_1(u_j)}(s')$ and $q = \beta_{\rho_1(u_j)}(s')$. Since $g_{d}(w) = (\tau',\overline{\theta}_1,\dots,\overline{\theta}_\ell)$ for some $\tau'$, there exists a point $q'$ in $g_{d}(w)$ which naturally corresponds to $q$, and is located in the part of $g_{d}(w)$ which corresponds to $\overline{\theta}_j$, and we define $\beta_w(s) = q'$. Similarly, there exists a point $p'$ corresponding to $p$ which is located in the part of $f_d(w)$ that corresponds to $\sigma_j$, and we define $\alpha_w(s) = p'$. Since each letter is contained in a unique sub-word $u_i$, and the functions $\alpha_w$ and $\beta_w$ map letters from $u_i$ to the respective objects $\sigma_i$ or $\overline{\theta}_i$ contained in $f_{d}(w)$ or $g_d(w)$ respectively, it is straightforward to check that the functions $\alpha_w$ and $\beta_w$ are bijections. 

\subsection{Capturing the $\star$-values by using first-order sentences}\label{section: stars determined by FOS's}
Our goal in this sub-section is to prove the following lemma, which states that it is possible to determine the set $g_d^{-1}(\star)$ by using a first-order sentence. 
\begin{lem}\label{lemma:gamma}
    Given positive integers $d$ and $k$, there exists $\Gamma_{d,k} \in \mathrm{FOS}$ for which we have $w \models \Gamma_{d,k}$ if and only if $g_{d,k}(w) \neq \star$. 
\end{lem}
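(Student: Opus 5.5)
We argue by induction on $d$, with $k$ fixed. The inductive hypothesis is slightly stronger than the lemma: for every $d$ there is a first-order formula $\Gamma_d(x,y)$ with two free position variables such that, for every word $w$ and positions $x\le y$, $w\models\Gamma_d(x,y)$ holds exactly when the factor $w_x\dots w_y$ satisfies $g_d(w_x\dots w_y)\neq\star$. Taking $x$ and $y$ to be the first and last positions then gives $\Gamma_{d,k}$. This relativised form is needed because at level $d$ the condition on the blocks $u_i$ is a condition on factors of $w$. It is read through the projection $\rho_1$, so we also need the following translation step. Any first-order formula about $\Sigma_{d-1}$-words, relativised to positions in $[x,y]$, becomes a formula about $\Sigma_d$-words. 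To do this, replace each atom $w_i=a$ by $\rho_1(w_i)=a$, which is first-order by Lemma~\ref{lem:projection}. This substitution is compatible with relativisation of quantifiers to an interval.

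For the base case $d=1$, the set of factors $v$ with $g_1(v)\neq\star$ is described directly. Either all letters of $v$ are $a_{\mathrm{inc}}$ and $v$ is nonempty; or all letters are $a_{\mathrm{dec}}$ and $v$ has at least two positions; or $v$ is exactly $a_{\tau,1}\dots a_{\tau,|\tau|}$ for one of the finitely many $\tau\in\mathcal{Q}_k$. Each of these is first-order relative to $[x,y]$, using bounded quantification and a finite disjunction over $\tau$ that spells out the $|\tau|$ consecutive letters.

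For the inductive step, fix $x\le y$ and write down the following first-order conditions relativised to $[x,y]$.
\begin{enumerate}
\item[(a)] \emph{Consistency.} All letters share the same profile; this is expressed by a disjunction over $\tau\in\mathcal{Q}_k\cup\{\mathrm{inc},\mathrm{dec}\}$ of "$\rho_2(w_i)=\tau$ for all $i\in[x,y]$".
\item[(b)] \emph{Proper decomposition.}
\begin{itemize}
\item[$\cdot$] For $\tau\in\mathcal{Q}_k$: $\rho_3$ is weakly increasing along $[x,y]$, i.e.\ $\forall i<j$, $\rho_3(w_i)\le\rho_3(w_j)$, written as a finite disjunction.
\item[$\cdot$] For $\mathrm{inc}$ or $\mathrm{dec}$: $L\in\rho_4(w_x)$ and $R\in\rho_4(w_y)$. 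Every position $i<y$ carries $R$ exactly when position $i+1$ carries $L$, where successor is first-order definable from $<$. This says precisely that the word splits into blocks that open with $L$ and close with $R$, with no interior $L$ or $R$ marks.
\end{itemize}
\item[(c)] \emph{Validity of blocks.} In the $\mathcal{Q}_k$ case every $j\in[|\tau|]$ occurs as some $\rho_3(w_i)$, since empty blocks map to $\star$. Each maximal block $[s,t]$ satisfies the translated $\Gamma_{d-1}(s,t)$, applied to $\rho_1$ of the factor. The block endpoints are first-order definable:
\begin{itemize}
\item[$\cdot$] in the monotone case, $s$ carries $L$, $t$ carries $R$, and there is no $R$ in $[s,t)$;
\item[$\cdot$] in the $\mathcal{Q}_k$ case, $s$ and $t$ are the first and last positions with a given value $\rho_3=j$.
\end{itemize}
So we quantify $\forall s\,\forall t$ over pairs forming a block and require $\Gamma_{d-1}(s,t)$.
\item[(d)] \emph{The dec exclusion.} If the profile is $\mathrm{dec}$, there are at least two blocks, i.e.\ some position in $[x,y)$ carries $R$.
\end{enumerate}
The conjunction of (a)--(d) is $\Gamma_d(x,y)$. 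Checking that it matches the definition of $g_d$ clause by clause is then routine.

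The main obstacle is the bookkeeping in (c). The inner formula must be evaluated on the projected factor $\rho_1(w_s\dots w_t)$, not on $w$ itself. This is why the induction must carry the relativised two-variable statement, and why we must confirm that relativisation and projection commute with the inductive construction. Once the inductive statement is set up in this form, the rest is a direct transcription of the definitions into first-order logic.
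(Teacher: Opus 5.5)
Your proposal is correct and follows essentially the same route as the paper: induction on $d$, with $\Gamma_{d-1}$ relativised to the blocks (the paper's $(\Gamma_{d-1})_{i,j}$ and $(\Gamma_{d-1})_j$) and read through $\rho_1$ via Lemma~\ref{lem:projection}. The differences are cosmetic. You encode the $L/R$ block structure by the local condition ``$R$ at $i$ iff $L$ at $i+1$'' and the $\mathrm{dec}$ exclusion by an $R$ before the last position, whereas the paper quantifies over covering blocks, proves in Claim~\ref{claim: I,D decomposes properly} that they are disjoint or identical, and uses two $L$-marks for the $\mathrm{dec}$ case.
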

\noindent Our proof is by induction on $d$, and treats the parameter $k$ as constant. Thus, we again suppress the presence of $k$ from our notation for clarity, and for instance simply write $\Gamma_d$ on behalf of $\Gamma_{d,k}$. Before proving Lemma~\ref{lemma:gamma}, we introduce some notation. Given a $\Psi \in \mathrm{FOS}$ or $\psi(k_1,\dots,k_\ell) \in \mathrm{FO}$, define $\Psi_{i}, \Psi_{i,j} \in \mathrm{FOS}$, and $\psi_i(k_1,\dots,k_\ell), \psi_{i,j}(k_1,\dots,k_\ell) \in \mathrm{FO}$ as follows. 
\begin{enumerate} 
    \item[(i)] $\Psi_i$ (resp.\ $\psi_{i}(k_1,\dots,k_\ell)$) is defined to be the first-order sentence (resp.\ first-order formula) obtained from $\Psi$ (resp.\ from $\psi$) by adding the additional condition $\rho_3(w_t) = i$ for every variable $t$ occurring in $\Psi$ (resp.\ in $\psi$). 
    \item[(ii)] $\Psi_{i,j}$ (resp.\ $\psi_{i,j}(k_1,\dots,k_\ell)$) is defined to be the first-order sentence (resp.\ first-order formula) obtained from $\Psi$ (resp.\ from $\psi)$  by adding the additional condition $(i \leq t) \land (t \leq j)$ for every variable $t$ occurring in $\Psi$ (resp.\ in $\psi$). 
\end{enumerate}
\begin{lem}\label{lemma: restriction to sub-words preserves FOS}
    Given $\Psi \in \mathrm{FOS}$ (resp.\ $\psi(x_1,\dots,x_\ell) \in \mathrm{FO}$), then we have $\Psi_i, \Psi_{i,j} \in \mathrm{FOS}$ (resp.\ $\psi_i(x_1,\dots,x_\ell), \psi_{i,j}(x_1,\dots,x_\ell) \in \mathrm{FO}$) for all $i$ and $j$. 
\end{lem}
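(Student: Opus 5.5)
The plan is to verify directly that the added conditions are themselves first-order formulae, and then to argue by structural induction on the formula that inserting them, quantifier by quantifier, keeps us inside $\mathrm{FO}$. Throughout, the new conditions must be read as relativisations of the quantifiers, so that each quantifier ranges only over the intended positions.

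The first step concerns the condition $\rho_3(w_t) = i$. Here $\rho_3$ is a function from the finite set $\Sigma_{d,\mathcal{Q}}$ to $\mathbb{Z}_+$; extend it arbitrarily, say by $0$, to all of $\Sigma_d$. Applying Lemma~\ref{lem:projection} with $A=\{i\}$ shows that $\rho_3(w_t)=i$ is equivalent to the first-order formula $\bigvee_{b\in\rho_3^{-1}(i)} (w_t=b)$. The condition $(i\le t)\land(t\le j)$ is a conjunction of atomic order formulae, where $i\le t$ abbreviates $(i<t)\lor(i=t)$. Here $i,j$ may be free variables or parameters (positions), so the resulting object is a formula with those extra free variables. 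In particular, $\Psi_{i,j}$ is a sentence only once $i,j$ are bound, and this reading is the one consistent with the statement.

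The second step makes "adding the condition for every variable $t$" precise. I define the translation $\psi\mapsto\psi^{C}$, where $C(t)$ is either of the two conditions above, by recursion on the construction of $\mathrm{FO}$:
\begin{enumerate}
\item[(i)] atomic formulae and $\bot,\top$ are left unchanged;
\item[(ii)] the translation commutes with $\neg$ and with the binary connectives;
\item[(iii)] $(\exists t\,\varphi)^C := \exists t\,(C(t)\land\varphi^C)$ and $(\forall t\,\varphi)^C := \forall t\,(C(t)\rightarrow\varphi^C)$.
\end{enumerate}
For a formula with free variables $x_1,\dots,x_\ell$, I additionally conjoin $\bigwedge_m C(x_m)$. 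Each clause uses only the formation rules (i)--(v) of $\mathrm{FO}$, and the condition $C(t)$ from the first step is first-order. Hence a routine induction on the formula shows $\psi^C\in\mathrm{FO}$, with the same free variables as $\psi$, together with $i,j$ in the second case. Since quantifiers bind the same variables as before, no new free variables are created, so sentences go to sentences. For second-order variables, quantifiers are left untouched, which gives the $\mathrm{MSO}$ analogue.

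The only delicate point, and the step I expect to need the most care, is the semantic intent: $\Psi_i$ should be true on $w$ exactly when $\Psi$ holds on the subword $u_i$ of positions with $\rho_3=i$, and $\Psi_{i,j}$ exactly when $\Psi$ holds on the factor $w_i\dots w_j$. I would check this by the standard relativisation lemma, by induction on $\psi$, showing that for every assignment of the free variables into the selected set, $w\models\psi^C$ if and only if the substructure induced on that set satisfies $\psi$. This works because the restricted order and letter predicates are just the restrictions of those of $w$, so atomic formulae agree and the quantifier clauses match exactly. The membership statement of the lemma itself follows from the syntactic induction alone.
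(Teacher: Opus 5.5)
Your proposal is correct and follows essentially the paper's route. The paper only calls this an easy application of Lemma~\ref{lem:projection} and omits the details; your explicit relativisation ($\exists t\,(C(t)\land\varphi)$, $\forall t\,(C(t)\rightarrow\varphi)$) and your remark that $\Psi_{i,j}$ really carries $i,j$ as free position variables supply exactly those details. One small addition is needed for how the lemma is used later, e.g.\ $(\Gamma_{d-1})_j$ is meant to express $\rho_1(u_j)\models\Gamma_{d-1}$: each letter atom $w_t=a$ with $a\in\Sigma_{d-1}$ should also be rewritten as $\rho_1(w_t)=a$, which is again first-order by Lemma~\ref{lem:projection}.
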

\begin{proof}
    The proof is an easy application of Lemma~\ref{lem:projection}, we omit the details. 
\end{proof}
\begin{proof}[Proof of Lemma~\ref{lemma:gamma}]
    Let $k$ be a fixed positive integer. As the base case $d = 1$, consider the first-order sentence $\Gamma_{1}$ given by 
\begin{align*}
&\ \bigl( (\exists x:w_x=a_{\mathrm{inc}})\land (\forall x\colon w_x=a_{\mathrm{inc}})\bigr)\\
&\lor\bigl( (\exists x \exists y: (x < y) \land (w_x=a_{\mathrm{dec}})\land w_y = a_{\mathrm{dec}}) \land (\forall x\colon w_x=a_{\mathrm{dec}})\bigr)\\
&\lor\bigvee_{\tau\in\mathcal Q_k}\bigl(\Gamma_\tau\land\forall x\colon\rho_2(w_x)=\tau\bigr),
\end{align*}
where $\Gamma_{\tau}$ is the first-order sentence given by
\begin{align*}
& \bigwedge_{i=1}^{|\tau|} \left(\exists j_1 \forall j_2: w_{j_1} = a_{\tau,i} \land (w_{j_2} = a_{\tau,i} \implies j_1 = j_2)\right)
\\& \land \left(\forall j_1 \forall j_2: (j_1 < j_2 \implies \rho_3(w_{j_1}) < \rho_3(w_{j_2}))\right).
\end{align*}
In order to check that $\Gamma_{1}$ satisfies the required properties. we start by observing that when $w_1 = a_{\mathrm{inc}}$ the first line of $\Gamma_1$ ensures that we have $w \models \Gamma_1$ if and only if $w = a_{\mathrm{inc}}^n$ for any $n \in \mathbb{Z}_+$. Similarly, whenever $w_1 = a_{\mathrm{dec}}$, the second line ensures that we have $w \models \Gamma_1$ if and only if $w = a_{\mathrm{dec}}^n$ for any $n \in \mathbb{Z}_{\geq 2}$. In both instances, these two conditions are equivalent to the condition $g_{1}(w) \neq \star$. 

Finally, consider the case when $\rho_2(w_1) = \tau$ for some $\tau \in \mathcal{Q}_k$. In this case, we have $g_1(w) \neq \star$ if and only if $w = w^{(\tau)}$, where $w^{(\tau)} := a_{\tau,1} \dots a_{\tau,|\tau|}$. Now suppose that $w \models \Gamma_{1}$ and $\rho_2(w_1) = \tau$. Then, (i) the first part of the top line of $\Gamma_\tau$ ensures that each $a_{\tau,i}$ for $i \in [|\tau|]$ occurs at least once in $w$, (ii) the second part of the top line of $\Gamma_\tau$ ensures that each letter $a_{\tau,i}$ occurs exactly once in $w$, and (iii) the second line ensures that the letter $a_{\tau,i}$ occurs before $a_{\tau,j}$ in $w$ whenever $i < j$. It is evident that the only word satisfying all three of these conditions is $ w^{(\tau)}$, which completes the proof of the base case. 

Suppose that $d \geq 2$ and that $\Gamma_{d-1}$ satisfies the required conditions for $d-1$. Define auxiliary first order formulae $\psi_{\neq}(i,j,t)$ and $\psi_{=}(i,j)$ as 
\begin{align*}
    (i\neq j) \implies \left(\rho_{4}\left(w_{i}\right)=\left\{ L\right\} \land\rho_{4}\left(w_{j}\right)=\{R\}\land\left(i<t<j\implies\rho_{4}\left(w_{t}\right)=\emptyset\right) \land (\Gamma_{d-1})_{i,j}\right)
\end{align*}
and 
\begin{align*}
    (i=j) \implies \left(\rho_{4}\left(w_{i}\right)=\left\{ L,R\right\} \land(\Gamma_{d-1})_{i,j}\right)
\end{align*}
respectively. Note that both of them are first-order formulae by Lemma~\ref{lemma: restriction to sub-words preserves FOS}. Define $\Gamma_{d}$ to be the first-order sentence  
    \begin{align*}
& \forall i: (\rho_2(w_i) = \rho_2(w_1))
\\& \land \bigwedge_{\tau \in \mathcal{Q}_k} \left((\rho_2(w_1) = \tau  )\implies (\forall i \forall j:i < j \implies \rho_3(w_i) \leq \rho_3(w_j))\right)
\\& \land \bigwedge_{\tau\in\mathcal{Q}_{k}}\left(\left(\rho_{2}\left(w_{1}\right)=\tau\right)\implies\left(\bigwedge_{j=1}^{\left|\tau\right|}\left(\exists i: \rho_3(w_i) = j \land (\Gamma_{d-1})_{j}\right)\right)\right)
\\& \land  \left(\rho_2(w_1) \in \{\mathrm{inc},\mathrm{dec}\} \implies \left(\forall s \exists i \exists j\forall t: \left(i\leq s\leq j\right)\land \psi_{\neq}(i,j,t) \land \psi_{=}(i,j)\right)\right)
\\& \land \left(\rho_2(w_1) = \mathrm{dec} \implies \exists a\exists b:\left(a<b\land L\in\rho_4(w_a)\land L\in\rho_4(w_b)\right)\right).
\end{align*}
We start by checking that for every $w$ with $w \models \Gamma_d$ we have $g_d(w) \neq \star$. The first line of $\Gamma_d$ ensures that $w$ is consistent. Let $\tau \in \mathcal{Q}_k\cup\{\mathrm{inc},\mathrm{dec}\}$ be the profile $\tau$ of $w$, and first consider the case when $\tau \in \mathcal{Q}_k$. Thus, only the second and third line of $\Gamma_d$ are relevant in this case. 

Let $u_j$ denote the sub-word of $w$ consisting of those $w_i$ with $\rho_3(w_i) = j$. The condition $\exists i: \rho_3(w_i) = j$ ensures that $u_j$ is non-empty for every $ 1\leq j \leq |\tau|$, and the second line ensures that $w$ coincides with the concatenation $u_1\dots u_{|\tau|}$. Finally, the condition $w \models (\Gamma_{d-1})_j$ is equivalent to having $\rho_1(u_j) \models \Gamma_{d-1}$ for every $1 \leq j \leq |\tau|$, which by induction implies that $g_{d-1}(\rho_1(u_i)) \neq \star$ for every $1 \leq i\leq |\tau|$. In particular, we must have  $g_d(w) \neq \star$, as required. 

Now suppose that $\tau \in \{\mathrm{inc},\mathrm{dec}\}$. For each $s$, let $i(s)$ and $j(s)$ denote the indices chosen so that the fourth line is satisfied for this particular value of $s$, and write $u^{(s)}$ for the sub-word $w_{i(s)}\dots w_{j(s)}$. 

\begin{claim}\label{claim: I,D decomposes properly}
    For each $s$, the sub-word $u^{(s)}$ satisfies the conditions~\eqref{eq: consistent conditions for I,D - 1} and~\eqref{eq: consistent conditions for I,D - 2}. In particular, the word $w$ can be expressed as a concatenation $w = u_1\dots u_\ell$ for some $u_1,\dots,u_\ell$ chosen so that for every $j$ we have $u_j = u^{(s_j)}$ for some $s_j$. 
\end{claim}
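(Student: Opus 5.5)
We unpack the fourth line of $\Gamma_d$ for a fixed $s$, with witnesses $i=i(s)\le s\le j=j(s)$. The formula $\forall t:\psi_{\neq}(i,j,t)\land\psi_{=}(i,j)$ splits into two cases. If $i=j$, then $\psi_{=}$ forces $\rho_4(w_i)=\{L,R\}$. In that case $u^{(s)}$ is a single letter $u_{1}$ with $k=1$, so conditions~\eqref{eq: consistent conditions for I,D - 1} and~\eqref{eq: consistent conditions for I,D - 2} hold trivially. If $i<j$, then $\psi_{\neq}$ (quantified over all $t$) gives three facts: $\rho_4(w_i)=\{L\}$, $\rho_4(w_j)=\{R\}$, and $\rho_4(w_t)=\emptyset$ for all $i<t<j$. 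Writing $u^{(s)}=u_{1}\dots u_{k}$ with $k=j-i+1$, the letter $L$ appears exactly at position $1$ and $R$ exactly at position $k$. This is precisely~\eqref{eq: consistent conditions for I,D - 1} and~\eqref{eq: consistent conditions for I,D - 2}, and it proves the first assertion.

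For the decomposition, we show the blocks $u^{(s)}$ are either equal or disjoint, and that they cover $[|w|]$. Covering is immediate, since $s\in[i(s),j(s)]$ for every $s$. For the disjointness, suppose two intervals $[i,j]$ and $[i',j']$ of this form overlap, and without loss of generality $i\le i'$. Since $L\in\rho_4(w_{i'})$ and the only position in $[i,j]$ whose letter contains $L$ is $i$, an $i'$ in the interior of $[i,j]$ or equal to $j\neq i$ is impossible. More precisely, if $i<i'\le j$, then $w_{i'}$ contains $L$, contradicting either $\rho_4(w_{i'})=\emptyset$ (when $i'<j$) or $\rho_4(w_{j})=\{R\}$ (when $i'=j>i$). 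In the case $i=j$ the interval has a single point, so overlapping with it forces $i'=i$. Hence $i=i'$.

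By the symmetric argument using $R$, we get $j=j'$. In detail, if $j\neq j'$, say $j<j'$, then the position $j\in[i',j']$ carries $R$ but is not $j'$, which is a contradiction. So the distinct intervals $[i(s),j(s)]$ partition $[|w|]$ into consecutive blocks. Listing them left to right gives $w=u_1\dots u_\ell$ with each $u_m=u^{(s_m)}$ for some $s_m$, for instance $s_m$ equal to the left endpoint of the $m$-th block.

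The main subtlety is the boundary interaction between the two cases $i=j$ and $i\neq j$. The formulas $\psi_{\neq}$ and $\psi_{=}$ are implications, so both are satisfied vacuously in the other case. One must check that a single-letter block with label $\{L,R\}$ cannot sit inside a longer block. This follows because inside a block every label is $\{L\}$, $\emptyset$ or $\{R\}$, never $\{L,R\}$. The same label bookkeeping covers every overlap case, so the argument above goes through.
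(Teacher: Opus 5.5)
Your proposal is correct and follows essentially the same route as the paper. You read off $\rho_4$-labels from $\psi_{=}$ and $\psi_{\neq}$ for the first assertion. For the decomposition, you combine covering with the observation that overlapping blocks must share their left endpoint (by uniqueness of the $L$-label) and then their right endpoint (by uniqueness of the $R$-label).
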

\begin{proof}
    Given an index $s$, we start with the case when $i(s) = j(s) = s$. In this case, we must have $w \models \psi_{=}(i(s),j(s))$, which in turn implies that $\rho_4(w_{i(s)}) = \{L,R\}$. Thus, the word $u^{(s)}$ that  consists of a single letter $w_{i(s)}$ trivially satisfies both of the conditions~\eqref{eq: consistent conditions for I,D - 1} and~\eqref{eq: consistent conditions for I,D - 2}. 

    Now suppose that $i(s) \neq j(s)$, and thus we must have $i(s) < j(s)$.  Thus the fourth line ensures that $w \models \forall t: \psi_{\neq}(i(s),j(s),t)$, which by construction of $\psi_{\neq}$ implies
    \begin{enumerate}
        \item[(i)] $\rho_4(w_{i(s)}) = \{L\}$,
        \item[(ii)] $\rho_4(w_{j(s)}) = \{R\}$, and 
        \item[(iii)] $\rho_4(w_t) = \emptyset$ for every $i(s) < t < j(s)$. 
    \end{enumerate}
    These conditions together are clearly equivalent to the fact that $u^{(s)}$ satisfies both of the conditions~\eqref{eq: consistent conditions for I,D - 1} and~\eqref{eq: consistent conditions for I,D - 2}, which proves the first part of the claim. 

    To prove the second part, observe that the sub-words $u^{(s)}$ cover $w$ as each $w_s$ is trivially contained in $u^{(s)}$. In addition, recall that each $u^{(s)}$ consists of a set of contiguous letters. Hence, it suffices to prove that for each $s \neq s'$, the sub-words $u^{(s)}$ and $u^{(s')}$ are either identical or disjoint. 

    Suppose that $s \neq s'$ are chosen so that $u^{(s)}$ and $u^{(s')}$ are neither identical nor disjoint. Since both of them consists of a contiguous set of letters, we must have $i(s) \in [i(s'),j(s')]$ or $i(s')\in [i(s),j(s)]$. By symmetry, we may assume that the first case occurs. 

    Since $u^{(s')}$ satisfies property~\eqref{eq: consistent conditions for I,D - 1} and $L \in \rho_4(w_{i(s)})$, we must have $i(s) = i(s')$. As $u^{(s)}$ and $u^{(s')}$ are not identical, we must have $j(s) \neq j(s')$, and by symmetry, we may assume that $j(s) < j(s')$. Since $R \in \rho_4(w_{j(s)})$ and $j(s) \in [i(s),j(s')]$ this contradicts the fact that $u^{(s')}$ satisfies property~\eqref{eq: consistent conditions for I,D - 2}. This completes the proof of Claim~\ref{claim: I,D decomposes properly}. 
\end{proof}
    \noindent Claim~\ref{claim: I,D decomposes properly} implies that any $w$ with $w \models \Gamma_d$ and whose profile is in $\{\mathrm{inc},\mathrm{dec}\}$ decomposes as a concatenation $w = u_1\dots u_\ell$, where the sub-words $u_i$ satisfy properties~\eqref{eq: consistent conditions for I,D - 1} and~\eqref{eq: consistent conditions for I,D - 2}. In addition, the last part of both $\psi_{\neq}$ and $\psi_{=}$ implies that we must also have $\rho_1(u_i) \models \Gamma_{d-1}$ for each $i$, whence the inductive assumption implies that $g_{d-1}(\rho_1(u_i)) \neq \star$ for each $i$. When $\tau = \mathrm{dec}$, the last line of $\Gamma_d$ ensures that $\ell\geq 2$: as the letters marked with $L$ coincide with the first letters in the sub-words $u_i$ and there must be at least two such letters $L$, we indeed have $\ell \geq 2$. Thus we conclude that $g_d(w) \neq \star$. The proof of the converse direction follows similarly, we omit the details.
\end{proof}
\subsection{Establishing logical correspondences in $\widetilde{\mathcal{O}}_k^{[d]}$}\label{section: FO and MSO correspondence okd}

Our goal in this subsection is to prove that the function $f_d$ is both a first-order and a monadic second-order homomorphism. This follows immediately from the following slightly more general theorem which applies to formulae, and states that the correspondence between the variables in the two logics is given by the function $\alpha_w$. Recall that a formula is an atomic formula in $\mathrm{MSO}$-$\mathrm{TOTO}$ if it is of the form $x = y$, $x \leq_1 y$, $x \leq_2 y$ or $x \in z$  for some first-order variables $x$ and $y$, and a second-order variable $z$. 

\begin{thm}\label{thm: FO correspondence}
    Let $v^{(f)}_1,\dots,v^{(f)}_{c_1},v^{(b)}_1,\dots,v^{(b)}_{c_2}$ denote variables (either first- or second-order variables), with $v^{(f)}_i$'s corresponding to free and $v^{(b)}_i$'s corresponding to bound variables. Let $\psi(v^{(f)}_1,\dots,v^{(f)}_{c_1})$ denote a monadic second-order formula in $\mathrm{TOTO}$ given in disjunctive normal form, i.e.\
    \begin{align*} 
        \psi(v^{(f)}_1,\dots,v^{(f)}_{c_1}) = Q_1 v^{(b)}_1\dots Q_{c_2} v^{(b)}_{c_2}: \left[\left(p_{1,1} \land \dots \land p_{1,a_1}\right) \lor \dots \lor \left(p_{b,1} \land \dots  \land p_{b,a_b}\right)\right],
    \end{align*}
    where each $p_{i,j}$ is an atomic formula or its negation involving variables in $v^{(f)}_1,\dots,v^{(f)}_{c_1}$,\\$v^{(b)}_1,\dots,v^{(b)}_{c_2}$. There exists a monadic second-order formula $\phi$ of the form 
    \begin{align}  \label{eq: phi formula}
        \phi(\nu^{(f)}_1,\dots,\nu^{(f)}_{c_1}) = Q_1 \nu^{(b)}_1 \dots Q_{c_2} \nu^{(b)}_{c_2}: \left[\left(q_{1,1} \land \dots \land q_{1,a_1}\right) \lor \dots \lor \left(q_{b,1} \land \dots \land q_{b,a_b}\right)\right]
    \end{align}
    satisfying the following properties:
    \begin{enumerate}
        \item[(i)] each variable $\nu^{(f)}_i$ or $\nu^{(b)}_j$ has the same order as the respective variable $v^{(f)}_i$ or $v^{(b)}_j$;
        \item[(ii)] each formula $q_{i,j}$ depends only on the formula $p_{i,j}$; and
        \item[(iii)] for each $a_1,\dots,a_{c_1}$ and $w \in \Sigma_d^*$ satisfying $w \models \Gamma_d$, we have $w \models \phi(a_1,\dots,a_{c_1})$ if and only if $f_d(w) \models \psi(\alpha_w(a_1),\dots,\alpha_w(a_{c_1}))$.
    \end{enumerate}

    In addition, if $\psi$ is a first-order formula, then so is $\phi$.
\end{thm}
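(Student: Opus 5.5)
The plan is a translation that keeps the quantifier prefix and the DNF skeleton of $\psi$ and replaces each literal by a word formula. A first-order variable of $\psi$ ranging over points of $f_d(w)$ becomes a first-order variable over positions of $w$. A second-order variable becomes a second-order variable over sets of positions. Since $\alpha_w$ is a bijection $[|w|]\to f_d(w)$, it induces bijections on elements and on subsets, so quantifiers correspond exactly. Hence (i) holds by construction, and (iii) reduces to the atomic case: for each atomic formula $p$ of $\mathrm{MSO}$-$\mathrm{TOTO}$ we need a word formula $q$ with $w\models q(\bar a)$ iff $f_d(w)\models p(\alpha_w(\bar a))$ whenever $w\models\Gamma_d$. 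Negated literals are then handled by negating $q$.

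The translations of $x=y$ and $x\in z$ are immediate: $x=y$ and $x\in z$ on positions. So the whole content lies in the order relations $x\leq_1 y$ and $x\leq_2 y$. These must be first-order formulas $\lambda^{(d)}_1(x,y)$ and $\lambda^{(d)}_2(x,y)$ in the positions, with the right meaning on words satisfying $\Gamma_d$. This also gives the final clause, since a first-order $\psi$ yields a first-order $\phi$. The independence of $q_{i,j}$ from everything but $p_{i,j}$ is automatic.

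The formulas $\lambda^{(d)}_1,\lambda^{(d)}_2$ are built by induction on $d$.

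\textbf{Base case $d=1$.} For $w=a_{\mathrm{inc}}^n$ we have $\alpha_w(x)\leq_1\alpha_w(y)$ iff $x\leq y$, and the same holds for $\leq_2$. For $a_{\mathrm{dec}}^n$ the relation $\leq_2$ is reversed. For $w=a_{\tau,1}\cdots a_{\tau,|\tau|}$ the relation $\leq_1$ is position order, and $\leq_2$ is the finite disjunction over pairs $(i,j)$ with $\tau(i)\leq\tau(j)$ of $w_x=a_{\tau,i}\land w_y=a_{\tau,j}$. All cases are first-order, combined by a disjunction over the letter type of $w_1$.

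\textbf{Inductive step.} Take $d\geq2$ and a properly decomposing $w=u_1\cdots u_\ell$. Two positions $x,y$ lie in the same block $u_j$ iff:
\begin{enumerate}
\item[(a)] for profile $\tau\in\mathcal{Q}_k$, $\rho_3(w_x)=\rho_3(w_y)$;
\item[(b)] for profiles $\mathrm{inc}/\mathrm{dec}$, there is no position $t$ with $\min(x,y)<t\leq\max(x,y)$ and $L\in\rho_4(w_t)$.
\end{enumerate}
Call this first-order formula $\mathrm{same}(x,y)$.

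If $\mathrm{same}(x,y)$ holds, the order between $\alpha_w(x)$ and $\alpha_w(y)$ is the order inside $f_{d-1}(\rho_1(u_j))$. This is expressed by relativising $\lambda^{(d-1)}_r$ to the block containing $x$. Concretely, quantify the block endpoints $i,j$ as in $\psi_{\neq},\psi_{=}$, or use the $\rho_3$-restriction. Then form $(\lambda^{(d-1)}_r)_{i,j}$, replacing each letter predicate by its $\rho_1$-preimage via Lemma~\ref{lem:projection}. This relies on Lemma~\ref{lemma: restriction to sub-words preserves FOS}, and on the fact that the points of $\alpha_w$ on $u_j$ correspond to $\alpha_{\rho_1(u_j)}$.

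If $x,y$ are in different blocks, the inflation structure decides the order by the profile. For $\leq_1$ it is position order. For $\leq_2$:
\begin{enumerate}
\item[(a)] if the profile is $\tau\in\mathcal{Q}_k$, the answer is $\tau(\rho_3(w_x))<\tau(\rho_3(w_y))$, a finite disjunction;
\item[(b)] if the profile is $\mathrm{inc}$, the answer is $x<y$;
\item[(c)] if the profile is $\mathrm{dec}$, the answer is $x>y$.
\end{enumerate}

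I expect the main obstacle to be making the relativisation step rigorous. The formula $\lambda^{(d-1)}_r$ has free variables, and its meaning is only guaranteed under $\Gamma_{d-1}$. So I must check three things: that $w\models\Gamma_d$ gives $\rho_1(u_j)\models\Gamma_{d-1}$ (as in the proof of Lemma~\ref{lemma:gamma}); that relative positions within the block match positions in $\rho_1(u_j)$; and that the restriction lemma extends to formulas with free variables whose bounds $i,j$ are themselves variables. Everything else is bookkeeping.
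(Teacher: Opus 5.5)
Your proposal is correct and follows the paper's route. The paper likewise keeps the quantifier prefix and the DNF skeleton, maps $=$ and $\in$ to themselves, and replaces each $\leq_s$ by inductively built formulas $\xi_{s,d}$; their inductive step treats same-block pairs via the relativised $(\xi_{s,d-1})_i$ or $(\xi_{s,d-1})_{i,j}$, decides different-block pairs by the profile, and obtains the quantifier correspondence from $\alpha_w$ being a bijection. The differences are cosmetic: for monotone profiles the paper detects block membership with an $R$-marked position ($P_2,P_3$) and an $L$/$R$-bracketing pair ($P_{4,s}$) instead of your test for no $L$-marked position in $(\min,\max]$, and it writes out the quantifier step as an explicit witness-extension claim instead of appealing directly to the isomorphism induced by $\alpha_w$.
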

\noindent The main ingredient in the proof is constructing the formulae $q_{i,j}$. This turns out to be straightforward if $p_{i,j}$ is of the form $x = y$, $x \in z$, or a negation of one of these two type of formulae. 
\begin{lem} \label{lemma: easy cases = and in}
    Let $w \in \Sigma_d^*$ be chosen so that $w \models \Gamma_d$. 
    \begin{enumerate}
        \item Let $\psi(x,y)$ denote the $\mathrm{TOTO}$-formula given by $x = y$, and $\phi(x',y') \in \mathrm{FO}(\Sigma_{d}^*)$ given by $x' = y'$. We have $w \models \phi(a,b)$ if and only if $f_d(w) \models \psi(\alpha_w(a), \alpha_w(b))$. 
        \item Let $\psi(x,z)$ denote the $\mathrm{MSO}$-$\mathrm{TOTO}$ formula given by $x \in z$, and $\phi(x',z')$ be the monadic second-order formula given by $x' \in z'$. We have $w \models \phi(a,b)$ if and only if $f_d(w) \models \psi(\alpha_w(a), \alpha_w(b))$. Here, we write $\alpha_w(b) = \{\alpha_w(c): c \in b\}$. 
    \end{enumerate}
\end{lem}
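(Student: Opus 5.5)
The whole lemma reduces to one fact: for every $w$ with $w \models \Gamma_d$, the map $\alpha_w\colon [|w|] \to f_d(w)$ is a bijection. Once that is in hand, both parts are immediate. Lemma~\ref{lemma:gamma} gives $g_d(w) \neq \star$ whenever $w \models \Gamma_d$, so $\alpha_w$ is defined. I will therefore first record bijectivity, which the construction in Section~\ref{sect: construction} essentially already asserts, and then deduce the two parts.

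Bijectivity goes by induction on $d$. For $d=1$, the word $w$ is either $a_{\mathrm{inc}}^n$ (with $n \ge 1$), or $a_{\mathrm{dec}}^n$ (with $n\ge 2$), or $a_{\tau,1}\dots a_{\tau,|\tau|}$. In each case $\alpha_w$ sends the $i$-th letter to the $i$-th point of $f_1(w)$ in the $\leq_1$ order. This is a bijection because $|f_1(w)|=|w|$.

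For $d\ge 2$, write the proper decomposition as $w=u_1\dots u_\ell$. Each position $s$ of $w$ lies in exactly one block $u_j$, at some position $s'$ inside it. Then $\alpha_w(s)$ is the copy, inside the $j$-th inflated interval of $f_d(w)=\tau'[\sigma_1,\dots,\sigma_\ell]$, of the point $\alpha_{\rho_1(u_j)}(s')$. The intervals of the inflation are pairwise disjoint and together cover $f_d(w)$. By induction each $\alpha_{\rho_1(u_j)}$ is a bijection onto $\sigma_j$, since $g_{d-1}(\rho_1(u_j))\neq\star$. Gluing these blockwise bijections along disjoint domains (the blocks $u_j$) and disjoint codomains (the intervals) gives a bijection $\alpha_w$.

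With bijectivity established, part~1 is injectivity alone: $a=b$ holds if and only if $\alpha_w(a)=\alpha_w(b)$. This is exactly the statement that $w\models (a=b)$ if and only if $f_d(w)\models(\alpha_w(a)=\alpha_w(b))$.

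For part~2, take $a$ a position and $b$ a set of positions. Injectivity again gives
\[
a\in b \iff \alpha_w(a)\in\alpha_w(b)=\{\alpha_w(c):c\in b\}.
\]
If $a\in b$, the right-hand side holds trivially. Conversely, if $\alpha_w(a)=\alpha_w(c)$ for some $c\in b$, then $a=c\in b$. Surjectivity of $\alpha_w$ is used here in a different way: every subset of points of $f_d(w)$ has the form $\alpha_w(b)$ for some set $b$ of positions. This is what will later let second-order quantifiers be transferred in Theorem~\ref{thm: FO correspondence}, although the lemma itself only needs injectivity.

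There is no real obstacle. The only point needing care is checking that $\alpha_w$ is well defined and bijective in the inductive step, which depends on the blocks of the decomposition and the intervals of the inflation each being disjoint and covering.
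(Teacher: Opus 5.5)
Your proposal is correct and follows the paper's argument. Part~1 reduces to $a=b \iff \alpha_w(a)=\alpha_w(b)$ by injectivity of $\alpha_w$, part~2 is handled the same way, and the only addition is that you re-derive the bijectivity of $\alpha_w$ by induction, which the paper simply records as straightforward when it constructs $\alpha_w$.
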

\begin{proof}
    Suppose that $w$, $a$ and $b$ are chosen so that $w \models \Gamma_d$ and $w \models \phi(a,b)$. Since the condition $w \models \phi(a,b)$ is equivalent to $a = b$, it immediately follows that $\alpha_w(a) = \alpha_w(b)$, whence we have $f_d(w) \models \psi(\alpha_w(a),\alpha_w(b))$.   
    
    Now suppose that $w \models \Gamma_d$ and $f_d(w) \models \psi(\alpha_w(a),\alpha_w(b))$. This is equivalent to the fact that  the points $\alpha_w(a)$ and $\alpha_w(b)$ are the same point. Since $\alpha_w$ is an injection, we conclude that $a = b$, which in turn implies that $w \models \phi(a,b)$. The proof of the second statement follows analogously. 
\end{proof}
\noindent While we did not use the fact that $\alpha_w$ is a surjection in the proof, the surjectivity is used in subsequent applications. 

We now move on to constructing  first-order formulae $\xi_{s,d}(x,y)$ for $s \in \{1,2\}$ so that for each $w \in \Sigma_d^*$ with $w \models \Gamma_d$, and for each pair of indices $a$ and $b$, we have $w \models \xi_{s,d}(a,b)$ if and only if $\alpha_w(a) \leq_s \alpha_w(b)$. The construction for both $s \in \{1,2\}$ is by induction on the parameter $d$, with $k$ treated as a constant. As these constructions are similar, we present them simultaneously.

\subsubsection*{Base case: construction of $\xi_{s,1}(x,y)$}
Given $\tau \in \mathcal{Q}_k$, let $\tau_1$ denote the increasing permutation $12\dots|\tau|$ and $\tau_2 = \tau$. Define $\xi_{s,1}(x,y)$ to be the first-order formula 
\begin{align*}
 & \bigwedge_{\tau\in\mathcal{Q}_{k}}\left(\rho_{2}\left(w_{1}\right)=\tau\implies \bigvee_{\substack{r,t\in[|\tau|]\\
                    \tau_s(r)\leq\tau_s(t)}}
 (w_x=a_{\tau,r}\land w_y=a_{\tau,t})\right)\\
 & \land\left(\rho_{2}\left(w_{1}\right)=\mathrm{inc}\implies x\leq y\right)\\
 & \land\left(\left(\rho_{2}\left(w_{1}\right)=\mathrm{dec}\land s=1\right)\implies x\leq y\right)\\
 & \land\left(\left(\rho_{2}\left(w_{1}\right)=\mathrm{dec}\land s=2\right)\implies y\leq x\right).
\end{align*}
Note that the second last line only occurs in $\xi_{1,1}(x,y)$, and the last line only occurs in $\xi_{2,1}(x,y)$. 

\begin{lem} \label{lemma: base case of leq_1 correspondence}
    For every $w$ with $w \models \Gamma_1$ we have $w \models \xi_{s,1}(a,b)$ if and only if $\alpha_w(a) \leq_s \alpha_w(b)$. 
\end{lem}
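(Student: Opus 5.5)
The plan is a case analysis on the profile of $w$. Since $w \models \Gamma_1$, the proof of Lemma~\ref{lemma:gamma} tells us that $w$ is of exactly one of three forms: $w = a_{\mathrm{inc}}^n$ with $n\geq 1$; $w = a_{\mathrm{dec}}^n$ with $n \geq 2$; or $w = a_{\tau,1}\dots a_{\tau,|\tau|}$ for some $\tau \in \mathcal{Q}_k$. In each case, the hypotheses of all but one conjunct of $\xi_{s,1}$ fail, because $\rho_2(w_1)$ is determined. Hence $w \models \xi_{s,1}(a,b)$ is equivalent to the single surviving implication's conclusion holding at $(a,b)$. It then remains to compare that conclusion with the definition of $\beta_w=\alpha_w$ from Section~\ref{sect: construction}.

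\textbf{Profile $\mathrm{inc}$.} By definition, $\alpha_w(i)$ is the $i$-th least point of $f_1(w) = 12\dots n$ under $\leq_1$. In the identity permutation, the $i$-th point in $\leq_1$ is also the $i$-th point in $\leq_2$. So for both $s=1,2$ we have $\alpha_w(a)\leq_s\alpha_w(b)$ iff $a\leq b$, which is exactly the surviving conjunct.

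\textbf{Profile $\mathrm{dec}$.} Again $\alpha_w(i)$ is the $i$-th point of $n(n-1)\dots 1$ under $\leq_1$. This gives $\alpha_w(a)\leq_1\alpha_w(b)$ iff $a\leq b$. Since the permutation is decreasing, $\alpha_w(a)\leq_2\alpha_w(b)$ iff $b\leq a$. These match the third and fourth lines of $\xi_{1,1}$ and $\xi_{2,1}$ respectively.

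\textbf{Profile $\tau\in\mathcal{Q}_k$.} Here $w_x = a_{\tau,x}$ for every position $x$, and $\alpha_w(x)$ is the point $(x,\tau(x))$ of the canonical drawing. Thus $\alpha_w(a)\leq_1\alpha_w(b)$ iff $a\leq b$, which equals $\tau_1(a)\leq\tau_1(b)$ since $\tau_1$ is the identity. Likewise $\alpha_w(a)\leq_2\alpha_w(b)$ iff $\tau(a)\leq\tau(b)$, which is $\tau_2(a)\leq \tau_2(b)$. The disjunction in the first line is satisfied at $(a,b)$ iff there are $r,t$ with $\tau_s(r)\leq\tau_s(t)$, $w_a=a_{\tau,r}$ and $w_b = a_{\tau,t}$. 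Because each letter $a_{\tau,i}$ occurs exactly once, at position $i$, this forces $r=a$ and $t=b$. The disjunction therefore reduces to $\tau_s(a)\leq\tau_s(b)$, which completes the proof.

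The only step that needs care is this last one: the unique-occurrence structure of $w^{(\tau)}$ must be used to identify the letter indices with positions. This is routine once the form of $w$ is pinned down via $\Gamma_1$. One should also note that the conjuncts for profiles $\tau'\neq \tau$ are vacuous, since their hypotheses $\rho_2(w_1)=\tau'$ fail.
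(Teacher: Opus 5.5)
Your proof is correct and follows the same case analysis on the profile of $w$ as the paper: in each case you compare the single non-vacuous conjunct of $\xi_{s,1}$ with the definition of $\alpha_w$. You are slightly more explicit than the paper in two places, namely the $\mathcal{Q}_k$ case (using that each $a_{\tau,i}$ occurs exactly once, at position $i$) and the $\mathrm{inc}$ case, which the paper only says follows analogously.
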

\begin{proof}
    We start with the case when the profile of $w$, which we denote by $\tau$,  satisfies $\tau \in \mathcal{Q}_k$. Since $d = 1$, it follows that $w = a_{\tau,1}\dots a_{\tau,|\tau|}$, and thus $\alpha_{w}(i)$ corresponds to the point $(i,\tau(i))$ in the canonical drawing of the permutation $\tau = f_d(w)$. In particular, we have $\alpha_w(a) \leq_1 \alpha_w(b)$ if and only if $a \leq b$, which by definition is equivalent to the condition $\tau_1(a) \leq \tau_1(b)$. Thus, the first line of $\xi_{1,1}$ guarantees that this condition is equivalent to $w \models \xi_{1,1}(a,b)$, as required. When $s = 2$, the condition $\alpha_w(i) \leq_2 \alpha_w(j)$ is equivalent to $\tau(i) \leq \tau(j)$. Thus an analogous argument completes the proof in this case. 

    We now move on to the case when $\tau = \mathrm{dec}$. As $f_d(w)$ is a decreasing permutation, the construction of the function $\alpha_w$  ensures that we have $\alpha_w(a) \leq_1 \alpha_w(b)$ if and only if $a \leq b$, and $\alpha_w(a) \leq_2 \alpha_w(b)$ if and only if $b \leq a$. These two conditions clearly match the third and fourth lines in the definition of $\xi_{s,1}$. The case when $\tau = \mathrm{inc}$ follows analogously. 
\end{proof}

\subsubsection*{Inductive step: construction of $\xi_{s,d}(x,y)$}

As the final expression of $\xi_{s,d}$ is rather long, we start by defining certain auxiliary formulae. While the motivation behind the construction of  these formulae may not be evident at first sight, the reader may directly move forward to Lemma~\ref{lemma: inductive step of leq_1 correspondence}, whose proof gives some motivation for the choices of these auxiliary formulae. 

Recall that we write $\tau_1 = 12\dots |\tau|$ and $\tau_2 = \tau$, and recall that the formulae $(\xi_{s,d-1})_i$ and $(\xi_{s,d-1})_{i,j}$ are defined as in the paragraph preceding Lemma~\ref{lemma: restriction to sub-words preserves FOS}. Define auxiliary formulae $P_{s,\tau}(x,y)$, $P_{2}(x,y)$, $P_{3}(x,y)$ and $P_{4,s}(x,y)$ by setting 
\begin{align*} 
P_{s,\tau}\left(x,y\right) & :=\left(\tau_{s}\left(\rho_{3}\left(w_{x}\right)\right)<\tau_{s}\left(\rho_{3}\left(w_{y}\right)\right)\right)\\
 & \lor\left(\bigvee_{i=1}^{\left|\tau\right|}\rho_{3}\left(w_{x}\right)=i\land\rho_{3}\left(w_{y}\right)=i\land(\xi_{s,d-1})_i\left(x,y\right)\right),
\end{align*}
\begin{align*}
    P_{2}\left(x,y\right):=\exists i: \left(x\leq i\land i < y\land R\in\rho_{4}\left(w_{i}\right)\right),
\end{align*}
\begin{align*}
    P_{3}\left(x,y\right):=\exists i: \left(y\leq i\land i < x\land R\in\rho_{4}\left(w_{i}\right)\right),    
\end{align*}
and 
\begin{align*}
P_{4,s}\left(x,y\right) & :=\exists i\exists j\forall \ell:\left(i \leq x \land i \leq y \land x \leq j \land y\leq j\right)\\
& \land L \in \rho_4\left(w_i\right) \land R \in \rho_4\left(w_j\right)\\
 & \land \left(i < j \implies \left(R\not\in\rho_{4}\left(w_{i}\right)\land L\not \in\rho_{4}\left(w_{j}\right)\right)\right)\\
 & \land\left(\left(i<\ell\land \ell<j\right)\implies\rho_{4}\left(w_{\ell}\right)=\emptyset\right)\\
 & \land \left(\xi_{s,d-1}\right)_{i,j}\left(x,y\right).
\end{align*}
Given the necessary preparation, define $\xi_{s,d}(x,y)$ to be the first-order formula 
\begin{align*}
 & \bigwedge_{\tau\in\mathcal{Q}_{k}}\left(\rho_{2}\left(w_{1}\right)=\tau\implies P_{s,\tau}\left(x,y\right)\right)\\
 & \land\left(\rho_{2}\left(w_{1}\right)=\mathrm{inc}\implies\left(P_{2}\left(x,y\right)\lor P_{4,s}\left(x,y\right)\right)\right)\\
 & \land\left(\left(\rho_{2}\left(w_{1}\right)=\mathrm{dec}\land s=1\right)\implies\left(P_{2}\left(x,y\right)\lor P_{4,s}\left(x,y\right)\right)\right)\\
 & \land\left(\left(\rho_{2}\left(w_{1}\right)=\mathrm{dec}\land s=2\right)\implies\left(P_{3}\left(x,y\right)\lor P_{4,s}\left(x,y\right)\right)\right).
\end{align*}

\begin{lem} \label{lemma: inductive step of leq_1 correspondence}
    Let $d$ be a positive integer. For every $w$ with $w \models \Gamma_d$ we have $w \models \xi_{s,d}(a,b)$ if and only if $\alpha_w(a) \leq_s \alpha_w(b)$. 
\end{lem}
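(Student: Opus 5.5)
We argue by induction on $d$, with Lemma~\ref{lemma: base case of leq_1 correspondence} as the base case $d=1$. For the inductive step, fix $w$ with $w\models\Gamma_d$ and let $w=u_1\dots u_\ell$ be its proper decomposition, which exists by Lemma~\ref{lemma:gamma} and Claim~\ref{claim: I,D decomposes properly}. Write $g_d(w)=(\tau',\overline{\theta}_1,\dots,\overline{\theta}_\ell)$. Given indices $a,b$, let $a\in u_i$ and $b\in u_j$, and let $a',b'$ be their positions inside $u_i,u_j$. By the recursive definition of $\alpha_w$, the points $\alpha_w(a),\alpha_w(b)$ lie in the intervals of $f_d(w)=\tau'[\sigma_1,\dots,\sigma_\ell]$ that inflate entries $i$ and $j$ of $\tau'$. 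This gives the basic dichotomy:
\begin{itemize}
\item[(a)] if $i\neq j$, then $\alpha_w(a)\leq_s\alpha_w(b)$ holds if and only if $\tau'_s(i)<\tau'_s(j)$, where $\tau'_1$ is the identity and $\tau'_2=\tau'$;
\item[(b)] if $i=j$, then $\alpha_w(a)\leq_s\alpha_w(b)$ holds if and only if $\alpha_{\rho_1(u_i)}(a')\leq_s\alpha_{\rho_1(u_i)}(b')$ in $\sigma_i$.
\end{itemize}
Case (b) holds because inflation embeds each $\sigma_i$ order-isomorphically in both coordinates.

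The key technical step is a relativization principle. For a first-order formula $\psi$, the restricted formula $\psi_i$ (resp.\ $\psi_{i,j}$) holds in $w$ at positions $a,b$ if and only if $\psi$ holds in $\rho_1(u)$ at the corresponding relative positions. Here $u$ is the subword of letters with $\rho_3=i$ (resp.\ the factor $w_i\dots w_j$). To prove this, induct on $\psi$. Atomic predicates $w_t=c$ for $c\in\Sigma_{d-1}$ are read through $\rho_1$ using Lemma~\ref{lem:projection}. The order $<$ restricts correctly because the chosen positions form an increasing sequence. Quantifiers are bounded to the chosen positions by the added conditions. Combined with the induction hypothesis applied to $\rho_1(u_i)$, which satisfies $\Gamma_{d-1}$, this shows that $(\xi_{s,d-1})_i$ and $(\xi_{s,d-1})_{i,j}$ correctly decide case (b).

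Next we split by the profile $\tau$ of $w$.

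\emph{Profile $\tau\in\mathcal{Q}_k$.} Here $\tau'=\tau$ and $i=\rho_3(w_a)$, $j=\rho_3(w_b)$. The first disjunct of $P_{s,\tau}$ handles case (a). The second disjunct handles case (b) via $(\xi_{s,d-1})_i$, whose positions are exactly the letters of $u_i$.

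\emph{Profile $\mathrm{inc}$, or profile $\mathrm{dec}$ with $s=1$.} Here $\tau'_s$ is increasing, so case (a) reduces to $i<j$, i.e.\ $a<b$ with $a$ and $b$ in different blocks. This is the case where some $R$-marked position $t$ satisfies $a\le t<b$, which is exactly $P_2$. For case (b), $P_{4,s}$ asserts that there are positions $i\le a,b\le j$ such that $w_i\dots w_j$ is a block (an $L$ at the start, an $R$ at the end, no marks strictly inside), together with $(\xi_{s,d-1})_{i,j}(a,b)$. By conditions~\eqref{eq: consistent conditions for I,D - 1} and~\eqref{eq: consistent conditions for I,D - 2}, such a factor must coincide with the block $u_i$ containing $a$ and $b$.

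\emph{Profile $\mathrm{dec}$ with $s=2$.} Here $\tau'_2$ is decreasing, so case (a) reduces to $b<a$, which is $P_3$. Case (b) is handled by $P_{4,2}$ as before.

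We must also check that the disjunctions are exclusive in the right way. When $a$ and $b$ lie in the same block, $P_2$ and $P_3$ fail, since no $R$-mark is strictly inside a block before its end. When they lie in different blocks, $P_{4,s}$ fails, since no block factor contains both. The conjuncts for profiles other than that of $w$ are vacuously true.

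The main obstacle is the relativization principle for $\psi_{i,j}$, together with the claim that the existential witnesses in $P_{4,s}$ are forced to be the block boundaries of $u_i$. Both points were only waved through in Lemma~\ref{lemma: restriction to sub-words preserves FOS}, so we must verify them carefully here, including the degenerate case $i=j$ of a one-letter block marked $\{L,R\}$.
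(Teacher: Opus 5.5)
Your proposal is correct and follows the paper's proof: induction on $d$, a split by profile, $P_{s,\tau}$ for the $\mathcal{Q}_k$ case, and $P_2$/$P_3$ versus $P_{4,s}$ to separate different-block pairs from same-block pairs in the monotone cases. Your explicit relativization argument and exclusivity checks just fill in details the paper leaves implicit; when you relativize $\xi_{s,d-1}$, read the constant $w_1$ as a bounded quantifier for the first position of the subword, so that it picks up the profile of $\rho_1(u_i)$ rather than that of $w$.
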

\begin{proof}
    The proof is by induction on $d$, with the base case $d = 1$ covered by Lemma~\ref{lemma: base case of leq_1 correspondence}. Suppose that the statement holds for $d -1$, and let $w\in \Sigma_d^*$ be chosen so that $w \models \Gamma_d$. Thus $w$ is consistent and decomposes properly. Let $u_1,\dots,u_m$ be chosen so that $w$ decomposes as $w = u_1\dots u_m$. For each $i \in \{1,\dots,m\}$ write $\sigma_i = f_{d-1}(\rho_1(u_i))$, and let $\tau'$ be chosen so that $f_d(w) = \tau'[\sigma_1,\dots,\sigma_m]$, and write $\tau$ for the profile of $w$. We split the proof into three cases based on whether we have $\tau \in \mathcal{Q}_k$, $\tau = \mathrm{dec}$ or $\tau = \mathrm{inc}$. 

    \textbf{Case 1, $\tau \in \mathcal{Q}_k$:} Since $\tau \in \mathcal{Q}_k$, we have $\tau' = \tau$. From the expression of $\xi_{s,d}$ it immediately follows that the condition $w \models \xi_{s,d}(a,b)$ is equivalent to $w \models P_{s,\tau}(a,b)$. For convenience, write $p = \alpha_w(a)$, $q = \alpha_w(b)$, $i_p = \rho_3(w_a)$ and $i_q = \rho_3(w_b)$. Thus $p$ belongs to the interval $\sigma_{i_p}$ and $q$ belongs to the interval $\sigma_{i_q}$ in the inflation 
    \begin{align}\label{eq: important inflation}
        f_d(w) = \tau'[\sigma_1,\dots,\sigma_m]. 
    \end{align}
    First suppose that $i_p \neq i_q$. As $p$ and $q$ belong to different intervals in inflation~\eqref{eq: important inflation}, the relative order of $p$ and $q$ can be determined using the indices $i_p$ and $i_q$. Indeed, the condition $p \leq_1 q$ is equivalent to $i_p < i_q$, and the condition $p \leq_2 q$ is equivalent to $\tau(i_p) < \tau(i_q)$. These conditions are clearly equivalent to the condition on the  top line of $P_{s,\tau}$. 

    Now suppose that $i_p = i_q = i$. In this case, both $p$ and $q$ belong to the same interval $\sigma_{i}$, and the indices $a$ and $b$ belong to the same sub-word $u_i$, for which we have $f_{d-1}(\rho_1(u_i)) = \sigma_i$ and $\sigma_i \in \widetilde{\mathcal{O}}_k^{[d-1]}$. Thus the inductive assumption implies that $\xi_{s,d}$ satisfies the required properties. 

    \textbf{Case 2, $\tau =\mathrm{dec} $:} Note that the equation~\eqref{eq: important inflation} remains valid, but the permutation $\tau'$ is a monotone decreasing permutation, whose (unknown) length we denote by $\ell$. As before, we write $p = \alpha_w(a)$ and $q = \alpha_w(b)$, and choose $i_p$ and $i_q$ to be the indices in $\{1,\dots,\ell\}$ so that $w_a$ and $w_b$ are contained in $u_{i_p}$ and $u_{i_q}$ respectively. In particular, $i_p$ and $i_q$ play the same role as the corresponding indices in the previous case (however, in this case we cannot define them using the function $\rho_3$ as it does not apply to words whose type is $\mathrm{inc}$ or $\mathrm{dec}$). 
    \begin{claim}\label{claim: simple P2 and P3}
        We have $w \models P_{2}(a,b)$ if and only if $i_p < i_q$, and we have $w \models P_{3}(a,b)$ if and only if $i_q < i_p$
    \end{claim}
    \begin{proof}
        First suppose that $w \models P_2(a,b)$, and let $i = i_*$ be chosen so that $a \leq i_*$, $i_* < b$ and $R \in \rho_4(w_{i_*})$. Let $s$ be chosen so that $w_{i_*}$ is contained in $u_s$. Hence, the condition $a \leq i_*$ implies that $i_p \leq s$, and the conditions $i_* < b$ and $R \in \rho_4(w_{i_*})$ imply that $s < i_q$. Note that in the second conclusion, the fact $R \in \rho_4(w_{i_*})$ ensures that $w_{i_*}$ is the last letter in the sub-word $u_s$, which in turn ensures that $w_b$ cannot be contained in $u_s$ as $b > i_*$. Hence, it follows that $i_p \leq s <  i_q$, as required.
        
        If $i_p < i_q$, we may simply choose the value of $i$ in $P_2$ to be the index of the last letter in $u_{i_p}$, which proves the converse. The second assertion follows analogously. 
    \end{proof}
    \begin{claim} \label{claim: P4}
        We have $w \models P_{4,s}(a,b)$ if and only if $i_p = i_q$, and the sub-word $u_{i_p}$ satisfies $\rho_1(u_{i_p}) \models \xi_{s,d-1}(a,b)$.
    \end{claim}
    \begin{proof}
        First suppose that $w \models P_{4,s}(a,b)$, and let $i = i_*$ and $j = j_*$ be chosen so that the remaining formula is satisfied for all values of  $\ell$. The third and fourth lines of $P_{4,s}$ imply that $i_*$ and $j_*$ are the first and last index of some $u_t$, and the first line implies that $w_a$ and $w_b$ must both belong to $u_t$. In particular, we must have $t = i_p = i_q$. Finally, the last line implies that $\rho_1(u_t) \models \xi_{s,d-1}(a,b)$, as required. In order to prove the converse direction,  choose the values of $i$ and $j$ to be the indices of the left and right endpoints of the sub-word $u_{i_p}$. 
    \end{proof}

    Suppose that $a$ and $b$ are chosen so that $p \leq_s q$. Our aim is to show that we have $w \models \xi_{s,d}(a,b)$. First consider the case when $s = 1$ and $i_p \neq i_q$. In this case, $p \leq_s q$ is equivalent to $i_p < i_q$, as $p$ belongs to the interval corresponding to $\sigma_{i_p}$ and $q$ belongs to the interval  corresponding to $\sigma_{i_q}$. By Claim~\ref{claim: simple P2 and P3}, this is equivalent to having $w\models P_2(a,b)$, which in turn implies that $w \models \xi_{1,d}(a,b)$ by considering the third line of $\xi_{1,d}$. 

    Now suppose that $s = 2$ and $i_p \neq i_q$.  In this case, $p \leq_s q$ is equivalent to $i_q < i_p$, as $\tau'$ is a monotone decreasing permutation. By Claim~\ref{claim: simple P2 and P3}, this is equivalent to having $w \models P_3(a,b)$, which due to the last line of $\xi_{s,d}$ implies that $w \models \xi_{2,d}(a,b)$. 

    Finally, suppose that $i_p = i_q = t$. In this case, both $p$ and $q$ belong to the same inflated block $\sigma_{t}$, and both letters $w_a$ and $w_b$ belong to the same sub-word $u_t$. Since $p \leq_s q$, the inductive hypothesis implies that  $\rho_1(u_t) \models \xi_{s,d-1}(a,b)$. In particular, Claim~\ref{claim: P4} implies that $w \models P_{4,s}(a,b)$. Thus, by considering the appropriate line in the construction of $\xi_{s,d}$ it is easy to verify that we have $w \models \xi_{s,d}(a,b)$, as required. 

    Proving that $w \models \xi_{s,d}(a,b)$ implies that $p \leq_s q$ follows by using a similar argument, we omit the details. 

    \textbf{Case 3, $\tau = \mathrm{inc}$:} The analysis is similar to the one presented in Case 2. We omit the details. 
\end{proof}

We are now ready to prove Theorem~\ref{thm: FO correspondence}. 

\begin{proof}[Proof of Theorem~\ref{thm: FO correspondence}]
    For each variable $v_i^{(\beta)}$ with $\beta \in \{f,b\}$ occurring in $\psi$ either as a bound or free variable, we define $\nu_i^{(\beta)}$ to be a variable of the same order, and we say that $\nu_i^{(\beta)}$ is the corresponding variable in $\phi$.  Given a variable $x_{\psi}$ occurring in $\psi$, we write $x_{\phi}$ for the corresponding variable in $\phi$. Define the formulae $q_{i,j}$ as follows:
    \begin{enumerate}
        \item[(i)] if $p_{i,j} := (x_\psi = y_\psi)$, we set $q_{i,j} := (x_{\phi} = y_\phi)$;
        \item[(ii)] if $p_{i,j} := (x_\psi \in  y_\psi)$, we set $q_{i,j} := (x_\phi \in  y_\phi)$;
        \item[(iii)] if $p_{i,j} := (x_\psi \leq_s  y_\psi)$ for $s \in \{1,2\}$, we set $q_{i,j} := \xi_{s,d}(x_\phi, y_\phi)$; and
        \item[(iv)] if $p_{i,j}$ is a negation of an atomic formula, we define $q_{i,j}$ to be the negation of the appropriate formula constructed as above. 
    \end{enumerate}
    Having defined the formulae $q_{i,j}$, define $\phi(\nu_{1}^{(f)},\dots,\nu_{c_1}^{(f)})$ to be the formula given by equation~\eqref{eq: phi formula}. For convenience, write $\psi'(v^{(f)}_1,\dots,v^{(f)}_{c_1},v^{(b)}_1,\dots,v_{c_2}^{(b)})$ for the quantifier-free part of $\psi$, i.e.\  
    \[\psi'(v^{(f)}_1,\dots,v^{(f)}_{c_1},v^{(b)}_1,\dots,v_{c_2}^{(b)}) := (p_{1,1} \land \dots \land p_{1,a_1}) \lor \dots \lor (p_{b,1} \land \dots \land p_{b,a_b}),\]
    and similarly write $\phi'(\nu ^{(f)}_1,\dots,\nu ^{(f)}_{c_1},\nu ^{(b)}_1,\dots,\nu_{c_2}^{(b)})$  for the quantifier-free part of $\phi$. 
    Given an integer $t \in \{0,\dots,c_2\}$ and constants $a_1,\dots,a_{c_1}$ and $b_{1},\dots,b_{t}$, define 
    \begin{align}\label{eq: restricted psi}
        \psi_t(a_1,\dots,a_{c_1},b_1,\dots,b_t) := Q_{t+1} v_{t+1}^{(b)} \dots Q_{c_2} v^{(b)}_{c_2}: \psi'(a_1,\dots,a_{c_1},b_1,\dots,b_t,v_{t+1},\dots,v_{c_2}),
    \end{align}
    and similarly define
    \begin{align}\label{eq: restricted phi}
        \phi_t(a_1,\dots,a_{c_1},b_1,\dots,b_t) := Q_{t+1} \nu_{t+1}^{(b)} \dots Q_{c_2} \nu^{(b)}_{c_2}: \phi'(a_1,\dots,a_{c_1},b_1,\dots,b_t,\nu_{t+1},\dots,\nu_{c_2}). 
    \end{align} 
    Write $X \subseteq \{1,\dots,c_2\}$ for the set of indices $t$ for which we have $Q_t = \forall$. Finally, we say that a constant $c_i$ chosen as the value of the variable $x_i$ has \emph{correct order} if $c_i$ and $x_i$ are first-order constant and variable respectively, or they are second-order constant and variable respectively. 
    
    Having defined the necessary notation, we move on to the main part of the proof. We only prove the reverse direction of the statement as the proof of the other direction follows analogously. That is, we prove that whenever $f_d(w) \models \psi(b_1,\dots,b_{c_1})$ for some constants $b_1,\dots,b_{c_1}$, then $w \models \phi(\alpha_w^{-1}(b_1),\dots,\alpha_w^{-1}(b_{c_1}))$. 
    
    \begin{claim} \label{claim: extending sequences}
        Let $\underline{s}$ be a $[c_1]$-tuple of constants in $f_d(w)$ with correct orders with respect to the variables $v^{(f)}_i$ chosen so that $f_d(w) \models \psi(s_1,\dots,s_{c_1})$. Let $\underline{t}$ denote the $[c_1]$-tuple of constants in $w$ chosen so that $t_i = \alpha_w^{-1}(s_i)$ for each $i \in [c_1]$, and let $\hat{\underline{u}} = (\hat{u}_i: i \in X)$ denote an arbitrary $X$-tuple of constants in $w$ of the correct orders. There exists a $[c_2]$-tuple of constants $\underline{u}$ for which the following properties hold for every $j \in \{0,1,\dots,c_2\}$:
        \begin{enumerate}
            \item[(i)] if $j \in X$, we have $u_j = \hat{u}_j$;
            \item[(ii)] if $j \not \in X$ and $j > 0$, the choice of $u_j$ only depends on the tuple $\underline{t}$, and on the values of the constants in the tuple $(\hat{u}_i: i \in X, i < j)$; and 
            \item[(iii)] we have $f_d(w) \models \psi_j(s_1,\dots,s_{c_1},\alpha_w(u_1),\dots,\alpha_w(u_j))$. 
        \end{enumerate}
    \end{claim}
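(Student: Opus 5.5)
We build $\underline{u}$ one coordinate at a time, by induction on $j$, while maintaining (iii). The key ingredient is that $\alpha_w:[|w|]\rightarrow f_d(w)$ is a bijection. Hence it also induces a bijection between subsets of positions of $w$ and subsets of points of $f_d(w)$, via $b \mapsto \alpha_w(b)$, which is the convention of Lemma~\ref{lemma: easy cases = and in}. So every constant of the correct order in $f_d(w)$ has a unique preimage of the same order in $w$, and conversely. For $j=0$, property (iii) reads $f_d(w)\models\psi_0(s_1,\dots,s_{c_1})$, which is exactly the hypothesis, since $\psi_0=\psi$. Properties (i) and (ii) are vacuous at this stage.

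Now suppose $u_1,\dots,u_{j-1}$ have been chosen satisfying (i)--(iii) up to $j-1$. Then $f_d(w)\models\psi_{j-1}(s_1,\dots,s_{c_1},\alpha_w(u_1),\dots,\alpha_w(u_{j-1}))$. By definition~\eqref{eq: restricted psi}, $\psi_{j-1}$ has the form $Q_j v^{(b)}_j:\psi_j(\dots,v^{(b)}_j)$, and we split according to $Q_j$.

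\textbf{Case $j\in X$, so $Q_j=\forall$.} We are forced to set $u_j=\hat u_j$. Since $\hat u_j$ has the correct order, $\alpha_w(\hat u_j)$ is a legitimate value for $v^{(b)}_j$. The universal quantifier then gives $f_d(w)\models\psi_j(s_1,\dots,s_{c_1},\alpha_w(u_1),\dots,\alpha_w(u_j))$.

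\textbf{Case $j\notin X$, so $Q_j=\exists$.} There is some constant $e$ of the correct order in $f_d(w)$ witnessing the existential, and we set $u_j=\alpha_w^{-1}(e)$. To secure (ii), we fix a deterministic rule for picking $e$, for instance the least witness in a fixed enumeration of points and subsets of $f_d(w)$. This choice is a function of $s_1,\dots,s_{c_1}$ and $u_1,\dots,u_{j-1}$ only. Unwinding the induction, each earlier $u_i$ with $i\notin X$ was itself determined by $\underline t$ and the $\hat u_{i'}$ with $i'<i$. Also $\underline s$ is determined by $\underline t$, via $s_i=\alpha_w(t_i)$. So $u_j$ depends only on $\underline t$ and $(\hat u_i: i\in X, i<j)$, which is (ii). Property (iii) holds because $\alpha_w(u_j)=e$.

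The main point needing care is the dependency condition (ii). Its purpose is to let the resulting tuple behave like a Skolem function, so that afterwards one can transfer truth of the quantifier-free parts via Lemmas~\ref{lemma: easy cases = and in} and~\ref{lemma: inductive step of leq_1 correspondence}. A canonical choice of witnesses, together with the inductive unwinding above, resolves this. Surjectivity of $\alpha_w$ on both elements and subsets is exactly what makes the universal case valid, and this is where that surjectivity, flagged earlier in the paper, gets used.
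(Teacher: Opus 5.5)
Your proof is correct and is essentially the paper's argument: induction on $j$, instantiating the universal quantifier at $\alpha_w(\hat u_j)$ and pulling an existential witness back through $\alpha_w^{-1}$. Your fixed choice rule for witnesses just makes explicit the dependency in (ii) that the paper states informally. One correction to your closing remark: inside this claim, surjectivity of $\alpha_w$ is needed in the existential case, to write the witness $e$ as $\alpha_w(u_j)$, not in the universal case; it is in the converse direction of Theorem~\ref{thm: FO correspondence} that surjectivity handles universal quantifiers.
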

    
    \noindent Before proving the claim, we note that the choice of $t_i = \alpha_w^{-1}(s_i)$ in the statement exists and is unique as $\alpha_w$ is a bijection. 
    
    \begin{proof}
        We construct the sequence $\underline{u}$ by induction on $j$. The  base case $j = 0$ follows immediately from the assumption that $f_d(w) \models \psi(s_1,\dots,s_{c_1})$ as $\psi_0 = \psi$, and since only the third condition is applicable in this case. Suppose that the statement holds for some $j = i$ with $i \in \{0,\dots,c_2-1\}$. We split the proof into cases based on whether we have $i + 1 \in X$ or $i + 1\not \in X$. 

        \textbf{Case 1, $i + 1 \in X$:} In this case, we set $u_{i+1} = \hat{u}_{i+1}$, and thus the first condition is automatically satisfied. Applying the inductive assumption with the third condition we conclude that 
        \begin{align*}
            f_d(w) \models \forall v_{i+1} Q_{i+2} v_{i+2} \dots Q_{c_2} v_{c_2}: \psi'(s_1,\dots,s_{c_1},\alpha_w(u_1),\dots,\alpha_w(u_i),v_{i+1},\dots,v_{c_2}). 
        \end{align*}
        Here, we are using the fact that $Q_{i+1} = \forall$, which follows from $i+1 \in X$. By choosing $v_{i+1} := \alpha_w(u_{i+1})$, we conclude that
        \begin{align*}
            f_d(w) \models Q_{i+2} v_{i+2} \dots Q_{c_2} v_{c_2}: \psi'(s_1,\dots,s_{c_1},\alpha_w(u_1),\dots,\alpha_w(u_{i+1}),v_{i+2},\dots,v_{c_2}).
        \end{align*}
         This coincides with the statement $f_d(w) \models \psi_{i+1}(s_1,\dots,s_{c_1},\alpha_w(u_1),\dots,\alpha_w(u_{i+1}))$, and thus completes the proof of Case 1. 

        \textbf{Case 2, $i + 1 \not \in X$:} In this case, the inductive assumption implies that
        \begin{align*}
            f_d(w) \models \exists v_{i+1} Q_{i+2} v_{i+2} \dots Q_{c_2} v_{c_2}: \psi'(s_1,\dots,s_{c_1},\alpha_w(u_1),\dots,\alpha_w(u_i),v_{i+1},\dots,v_{c_2}). 
        \end{align*}
        Let $\gamma$ denote a constant chosen so that 
        \begin{align*}
            f_d(w) \models Q_{i+2} v_{i+2} \dots Q_{c_2} v_{c_2}: \psi'(s_1,\dots,s_{c_1},\alpha_w(u_1),\dots,\alpha_w(u_i),\gamma, v_{i+2},\dots,v_{c_2}). 
        \end{align*}
        Note that $\gamma$ only depends on the tuple $\underline{s}$, and the choices of $\alpha_w(u_1),\dots,\alpha_w(u_i)$. Let $u_{i+1} = \alpha_w^{-1}(\gamma)$, and note that $\alpha_w^{-1}(\gamma)$ is well-defined since $\alpha_w$ is a bijection. Since $\alpha_w$ is a bijection, the inductive assumption implies that the tuple $(\alpha_w(u_1),\dots,\alpha_w(u_{i}))$ only depends on the tuple $\underline{t}$ and the values of $(\underline{\hat{u}}_j: j \in X, j \leq i)$. Thus $u_{i+1}$ chosen in this way satisfies both the second and the third condition, as required. 
    \end{proof}

    Fix a sequence $s_1,\dots,s_{c_1}$ and a $X$-tuple $\hat{\underline{u}} = (\hat{u}_i: i \in X)$ for which we have $f_d(w) \models \psi(s_1,\dots,s_{c_1})$, and let $\underline{t}$ and $\underline{u}$ be constructed as in the statement of Claim~\ref{claim: extending sequences}. Applying Claim~\ref{claim: extending sequences} with $j = c_2$ yields that 
    \[f_d(w) \models \psi'(s_1,\dots,s_{c_1},\alpha_w(u_1),\dots,\alpha_w(u_{c_2})).\]
    Recall that $s_i = \alpha_w(t_i)$ for each $i$. Given $i$ and $j$, let $x_\psi$  and $y_\psi$ denote the variables occurring in the formula $p_{i,j} := p_{i,j}(x_\psi,y_\psi)$. By construction, we have $q_{i,j} := q_{i,j}(x_\phi,y_\phi)$, and depending on the type of the atomic formula used to construct $p_{i,j}$, Lemma~\ref{lemma: easy cases = and in} or Lemma~\ref{lemma: inductive step of leq_1 correspondence} respectively implies that for all $a$ and $b$ we have $w \models q_{i,j}(a,b)$ if and only if $f_d(w) \models p_{i,j}(\alpha_w(a),\alpha_w(b))$. Since $\alpha_w$ is a bijection, it follows that 
    \[w \models \phi'(t_1,\dots,t_{c_1},u_1,\dots,u_{c_2}).\]

    Since the conclusion holds for an arbitrary $X$-tuple $\hat{\underline{u}}$, and the values of $u_j$ with $j \not \in X$ only depend on the values of $\hat{\underline{u}}_i$ with $i < j$, we may use these values to assign correct choices for the variables attached with $\exists$-quantifiers. Hence, we conclude that 
    \[w \models \phi(t_1,\dots,t_{c_1}).\]
    Since $t_i = \alpha_w^{-1}(s_i)$ and $\alpha_w$ is a bijection, this completes the proof. Proof of the other direction, i.e.\ of the fact that $w \models \phi(a_1,\dots,a_{c_1})$ implies $f_d(w) \models \psi(\alpha_w(a_1),\dots,\alpha_w(a_{c_1}))$ follows analogously - we omit the details.  
    
    If $\psi$ is a first-order formula, then all bound and free variables in $\psi$ are first-order variables, and none of the $p_{i,j}$'s is of the form $x_\psi \in y_\psi$ or its negation for some variables $x_\psi$ and $y_\psi$. Thus the same must be true for the variables and formulae $q_{i,j}$ in $\phi$, whence $\phi$ must be a first-order formula  as well. This completes the proof of Theorem~\ref{thm: FO correspondence}. 
\end{proof}
\subsection{Restricting to the bijection}\label{section: restricting to bijection}
Recall that the left-greedy order $<_{L}$ induces a total order on the set $D_{d,k}(\sigma)$ of inflations inducing $\sigma$, and the least element in $D_{d,k}(\sigma)$ is referred to as the left-greedy decomposition of $\sigma$. 

Our goal in this subsection is to construct a first-order sentence $\Delta_d$  so that for each $\sigma \in \widetilde{\mathcal{O}}_{k}^{[d]}$, there exists a unique $w \in \Sigma_{d}^*$ for which we have $w \models \Delta_d$ and $f_d(w) = \sigma$. The idea behind the inductive construction can be described as follows. Given $\sigma \in \widetilde{\mathcal{O}}_k^{[d]}$, suppose that $\tau' \in \widetilde{\mathcal{O}}_k$ and $\sigma_1,\dots,\sigma_\ell \in \widetilde{\mathcal{O}}_k^{[d-1]}$ are chosen so that $\tau'[\sigma_1,\dots,\sigma_\ell]$ is the left-greedy decomposition of $\sigma$. Let $u_1',\dots,u_\ell' \in \Sigma_{d-1}^*$ be the unique words chosen so that $f_{d-1}(u_i') = \sigma_i$ and $u_i' \models \Delta_{d-1}$ for each $i$. Let $\tau \in \mathcal{Q}_k \cup \{\mathrm{inc},\mathrm{dec}\}$ to be chosen so that $\tau = \tau'$ whenever $\tau'\in \mathcal{Q}_k$, $\tau = \mathrm{inc}$ whenever $\tau'$ is monotone increasing, and $\tau= \mathrm{dec}$ otherwise. For each $i$, let $u_i$ denote the unique lift of $u_i'$ in $\Sigma_d^*$ satisfying the following conditions:
\begin{enumerate}
    \item[(i)] $\rho_1(u_i)$ = $u_i'$;
    \item[(ii)] $\rho_2((u_i)_j) = \tau$ for each $j$;
    \item[(iii)] if $\tau \in \mathcal{Q}_k$, we have $\rho_3((u_i)_j) = i$ for each $j$; and
    \item[(iv)] if $\tau \in \{\mathrm{inc},\mathrm{dec}\}$, we have $L \in \rho_4((u_i)_j)$ if and only if $j = 1$, and we have $R \in \rho_4((u_i)_j)$ if and only if $j = |u_i|$. 
\end{enumerate}
It is straightforward to check that such $u_i$ always exists and is unique. Let $w^* = u_1\dots u_\ell$, and note that the construction of $w^*$ ensures that $f_d(w^*) = \sigma$. Our goal is to construct $\Delta_d \in \mathrm{FOS}$ for which we have $w \models \Delta_d$ and $f_d(w) = \sigma$ if and only if $w = w^*$. 

As the inductive construction of $\Delta_d$ is rather lengthy, we split it into smaller steps. We start with two quick observations. First, we prove that there exists a first-order sentence $\chi_{d,k}$ such that for every $\sigma \in \mathfrak{S}_n$ we have $\sigma \models \chi_{d,k}$ if and only if $\sigma \in \widetilde{\mathcal{O}}_{k}^{[d]}$. Our proof is based on the following result of Vatter.  

\begin{thm} \label{thm: finitely based}[Theorem 6.2 in \cite{vatter2011small}]
Let $\mathcal{F}$ be a small permutation class. Then $\mathcal{F}$
is finitely based, i.e.\ there exists a finite collection of permutations
$\varphi_{1},\dots,\varphi_{L}$ so that $\mathcal{F}=\bigcap_{i=1}^{L}\mathrm{Av}\left(\varphi_{i}\right)$. 
\end{thm}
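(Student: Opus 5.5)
The plan is to follow the standard route from well-quasi-ordering to finite bases, using the structure theorem Proposition~\ref{prop: small implies contained inside Geom (M) dk}. The starting observation is elementary. Every basis element $\varphi$ of $\mathcal{F}$ (a minimal permutation not in $\mathcal{F}$) has the property that deleting any single point of $\varphi$ yields a permutation in $\mathcal{F}$. Hence the basis is an antichain contained in the one-point extension
\[
\mathcal{F}^{+1}:=\{\sigma:\ \sigma \text{ minus some point lies in } \mathcal{F}\}.
\]
It therefore suffices to prove that $\mathcal{F}^{+1}$ contains no infinite antichain, i.e.\ that $\mathcal{F}^{+1}$ is well-quasi-ordered under the subpattern order.

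The second step places $\mathcal{F}^{+1}$ inside a class of the same shape. By Proposition~\ref{prop: small implies contained inside Geom (M) dk} we have $\mathcal{F}\subseteq \mathrm{Geom}(M)_{d,k}$ for a $\star$-isolated forest matrix $M$. Given $\sigma\in\mathcal{F}^{+1}$, I take a gridding of $\sigma$ minus the extra point $p$. I then refine the grid by inserting a new column and a new row through $p$, so that $p$ becomes the unique point of a new isolated cell, which I give value $1$ (equivalently, a $\star$ cell).

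Each old cell cut by the new lines splits into at most four pieces.
\begin{itemize}
\item For a monotone cell, at most two of these pieces are nonempty, and they lie diagonally opposite.
\item For a $\star$ cell, the inflated permutation splits into at most four pieces.
\end{itemize}
Each piece is again in $\widetilde{\mathcal{O}}_{k'}^{[d']}$ for slightly larger parameters, since intervals of an inflation cut by a line remain inflations. There are only finitely many ways to do this refinement for a fixed $M$, so $\mathcal{F}^{+1}$ is contained in a finite union $\bigcup_{M'}\mathrm{Geom}(M')_{d',k'}$. The main obstacle is that splitting a column and a row of $M$ may create a cycle in the row-column graph, so that $M'$ is no longer a forest. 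I would first try to remove such cycles by further refining the matrix, separating cells into distinct rows and columns. If that fails, I would fall back on bounding $\mathcal{F}^{+1}$ inside a geometrically griddable (rather than forest) class. Note that Albert et al.\ prove well-quasi-order for $\mathrm{Geom}(M)$ with arbitrary $M$, and it is only the $\star$ cells that require the forest and $\star$-isolation structure.

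The third step is the well-quasi-order itself, proved by encoding. For the monotone cells, Theorem~\ref{thm: Albert et al regular language} gives an order-preserving surjection from $\Sigma_M^*$ onto $\mathrm{Geom}(M)$. By Higman's lemma, $\Sigma_M^*$ is well-quasi-ordered under the subword order, so $\mathrm{Geom}(M)$ is too.

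For the $\star$ cells, I would argue by induction on $d$. The class $\widetilde{\mathcal{O}}_k$ is a finite set together with two chains, hence well-quasi-ordered. Inflation $\mathcal{P}[\mathcal{Q}]$ of well-quasi-ordered classes is well-quasi-ordered, again via Higman's lemma applied to sequences of blocks labelled by elements of $\mathcal{Q}$. Finally, I would combine the geometric part with the $\star$ cells by labelling each letter of the geometric encoding that falls in a $\star$ cell with an element of the well-quasi-ordered class $\widetilde{\mathcal{O}}_{k'}^{[d']}$. This labelled version of Higman's lemma yields well-quasi-order of $\mathrm{Geom}(M')_{d',k'}$, hence of $\mathcal{F}^{+1}$, and so the basis of $\mathcal{F}$ is finite.
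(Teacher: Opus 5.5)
Your Step~2 is where the argument breaks, and you never actually close it.

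\begin{itemize}
\item \textbf{Monotone cells.} Inserting a new column and row through the extra point $p$ splits an old column into two. Whenever two cells of that column have points on both sides of the cut, this creates a $4$-cycle in the row--column graph. The monotone pieces also no longer run corner to corner, so you only land in $\mathrm{Grid}(M')$, and monotone grid classes of non-forests are in general not well-quasi-ordered. This part can actually be repaired: propagate the cut through the whole tree component containing the cut column or row. That cuts every diagonal of the component consistently and turns its graph into several disjoint copies of the tree.
\item \textbf{$\star$ cells.} This is the real obstruction. If $p$ lies in the column or row of a $\star$ cell, cutting there splits the inflated content into pieces that share a row or column. Not cutting and placing $p$ elsewhere in that column instead puts a second nonzero entry in a $\star$ column. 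Either way $\star$-isolation is lost, and your Step~3 uses $\star$-isolation to treat the $\star$ contents as independent labels.
\item \textbf{Your fallback.} A geometrically griddable class does not help. $\mathcal{F}^{+1}\supseteq\mathcal{F}$ need not even be monotone griddable: a small class may contain all sums $21\oplus\cdots\oplus 21$, whose growth rate is the golden ratio.
\end{itemize}
As written, you never exhibit a well-quasi-ordered class containing $\mathcal{F}^{+1}$.

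The missing observation is that no refinement is needed. $\mathcal{F}^{+1}$ is itself a permutation class, and $|\mathcal{F}^{+1}_n|\le n^2|\mathcal{F}_{n-1}|$: choose $\sigma-p\in\mathcal{F}_{n-1}$, then the position and value of $p$. Hence $\overline{\mathrm{gr}}(\mathcal{F}^{+1})=\overline{\mathrm{gr}}(\mathcal{F})<\kappa$, so $\mathcal{F}^{+1}$ is small. Applying Proposition~\ref{prop: small implies contained inside Geom (M) dk} directly to $\mathcal{F}^{+1}$ gives a $\star$-isolated forest $M'$ and parameters $d',k'$ with $\mathcal{F}^{+1}\subseteq\mathrm{Geom}(M')_{d',k'}$.

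Your Step~3 then goes through cleanly. By $\star$-isolation, an element is determined by two pieces of data:
\begin{itemize}
\item its gridded $\pm1$ part, which is well-quasi-ordered by Higman's lemma through the order-preserving map $g_{M'_{\pm1}}$;
\item its finitely many $\star$-cell contents, each in $\widetilde{\mathcal{O}}_{k'}^{[d']}$.
\end{itemize}
Coordinatewise containment implies containment, so a finite product of well-quasi-orders suffices. This is the standard argument behind Vatter's theorem; the paper itself gives no proof and simply cites it.

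One correction to Step~3: inflations of well-quasi-ordered classes need not be well-quasi-ordered. Anchored increasing oscillations give an infinite antichain in such an inflation. What you actually use is that the outer class is labelled well-quasi-ordered. This does hold for $\widetilde{\mathcal{O}}_k$, which is a finite set plus two chains, via Higman's lemma on labelled words.
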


As our second step, we prove that for every permutation $\tau$  there exists a $\psi_{\tau} \in \mathrm{FO}$ which can be used to check if $\sigma$ can be written as an inflation of the form $\tau[\sigma_1,\dots,\sigma_\ell]$, with $\sigma_i \in \widetilde{\mathcal{O}}_k^{[d-1]}$, and so that each $\sigma_i$ is non-empty. Furthermore, the formula $\psi_{\tau}$  constructed has convenient structural properties, in the sense that it involves variables whose values correspond to the extremal points in each $\sigma_i$. 

As a notational convenience, write $p \in [a_1,b_1]_1\times [a_2,b_2]_2$ if and only if $p$ satisfies the conditions $a_1 \leq_1 p$, $p \leq_1 b_1$, $a_2 \leq_2 p$ and $p \leq_2 b_2$. In particular, this condition can be expressed as a first-order formula, for which we use the same shorthand $p \in [a_1,b_1]_1\times [a_2,b_2]_2$. 

\begin{proposition} \label{prop: small classes FOS classifiable}
    Let $\mathcal{F}$ be a finitely based permutation class. There exists a first-order formula $\upsilon_{\mathcal{F}}(x_1,x_2,y_1,y_2)$ for which $\sigma \models \upsilon_{\mathcal{F}}(a_1,a_2,b_1,b_2)$ if and only if the sub-permutation $\tau$ of $\sigma$ is contained in $\mathcal{F}$, where $\tau$ is induced by the points $p$ in $\sigma$ which satisfy the condition $p \in [a_1,b_1]_1 \times [a_2,b_2]_2$. 

    In particular, there exists a first-order sentence $\hat{\upsilon}_{\mathcal{F}}$ for which we have  $\sigma \models \hat{\upsilon}_{\mathcal{F}}$ if and only if $\sigma \in \mathcal{F}$. 
\end{proposition}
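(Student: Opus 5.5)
Since $\mathcal{F}$ is finitely based, write $\mathcal{F}=\bigcap_{i=1}^{L}\mathrm{Av}(\varphi_i)$. The plan is to express ``the points in the box $[a_1,b_1]_1\times[a_2,b_2]_2$ contain no copy of any $\varphi_i$'' directly as a first-order formula. Since $\mathcal{F}$ is a permutation class and the box-restricted sub-permutation $\tau$ is an ordinary permutation, $\tau\in\mathcal{F}$ is equivalent to $\tau$ avoiding every $\varphi_i$. It is therefore enough to build, for each fixed pattern $\varphi\in\mathfrak{S}_m$, a formula $\kappa_\varphi(x_1,x_2,y_1,y_2)$ which holds exactly when the points of $\sigma$ lying in the box contain $\varphi$. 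We then set $\upsilon_{\mathcal{F}}:=\bigwedge_{i=1}^{L}\neg\kappa_{\varphi_i}$.

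For a fixed $\varphi\in\mathfrak{S}_m$, define
\[
\kappa_\varphi(x_1,x_2,y_1,y_2):=\exists z_1\dots\exists z_m:\ \bigwedge_{r=1}^{m} z_r\in[x_1,y_1]_1\times[x_2,y_2]_2\ \land\bigwedge_{r<t}\neg(z_t\leq_1 z_r)\ \land\bigwedge_{\varphi(r)<\varphi(t)}\neg(z_t\leq_2 z_r).
\]
The box condition is the first-order shorthand introduced just before the proposition. The strict orders are written as negations of the non-strict atoms $\leq_1,\leq_2$; this also forces the $z_r$ to be distinct. Correctness is the definition of pattern containment. An occurrence of $\varphi$ in $\tau$ is an order-preserving injection $[m]\to\tau$, given by points $z_1<_1\dots<_1 z_m$ whose $\leq_2$-order matches $\varphi$. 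The subpermutation induced by a subset of the box points is the same whether we view it inside $\tau$ or inside $\sigma$, because both orders are inherited. Hence $\sigma\models\kappa_\varphi(a_1,a_2,b_1,b_2)$ holds if and only if $\tau$ contains $\varphi$.

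For the ``in particular'' part, we need to cover the whole of $\sigma$. Take $\hat{\upsilon}_{\mathcal{F}}:=\forall x_1\forall x_2\forall y_1\forall y_2:\upsilon_{\mathcal{F}}(x_1,x_2,y_1,y_2)$. If $\sigma\in\mathcal{F}$, every box sub-permutation is a subpattern of $\sigma$ and so lies in $\mathcal{F}$ by closure under subpatterns. Conversely, if $\sigma$ is nonempty, choose $a_1,b_1$ to be the $\leq_1$-minimum and maximum points and $a_2,b_2$ the $\leq_2$-minimum and maximum points. Then the box contains all of $\sigma$, so $\sigma\in\mathcal{F}$. The empty permutation lies in every nonempty class, and the universal sentence is vacuously true for it, so this case is consistent.

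There is no real obstacle here. The only points needing care are, first, that the minimal/maximal points in the two coordinates may be different points, which is why four independent variables are used. Second, for notational hygiene, the formula must be built from the non-strict relations $\leq_1,\leq_2$ adopted in the paper.
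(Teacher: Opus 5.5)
Your proof is correct and follows the paper's argument: the paper also writes $\mathcal{F}=\bigcap_{i=1}^{L}\mathrm{Av}(\varphi_i)$ and negates, for each basis element, an existential formula asserting a copy of $\varphi_i$ inside the box. The only cosmetic difference is in the ``in particular'' part, where the paper deletes the box condition from each existential formula, whereas you universally quantify over the four corner variables; both are valid.
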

    
\begin{proof}
    Choose $\varphi_1,\dots,\varphi_L$ so that $\mathcal{F} = \bigcap_{i=1}^{L} \mathrm{Av}(\varphi_i)$.  For each $i \in [L]$, let $A_i(x_1,x_2,y_1,y_2)$ denote the first-order formula given by 
    \begin{align*}
     \exists z_1,\dots,z_{|\varphi_i|} :& \bigwedge_{j=1}^{|\varphi_i|} \left(z_j \in [x_1,y_1]_1 \times [x_2,y_2]_2\right)\\
    & \land \bigwedge_{j = 1}^{|\varphi_i| - 1} \bigwedge_{\ell =  j + 1}^{|\varphi_i|} \left(z_j <_1 z_\ell \land \left(z_j <_2 z_\ell \iff \varphi_i(j) < \varphi_i(\ell)\right)\right),
    \end{align*}
    and set 
    \[\upsilon_{\mathcal{F}}(x_1,x_2,y_1,y_2) := \bigwedge_{i=1}^{L} \left(\neg A_i(x_1,x_2,y_1,y_2)\right).\]

    To check that $\upsilon_\mathcal{F}(x_1,x_2,y_1,y_2)$ satisfies the required properties, suppose that we have $\sigma \models A_i(a_1,a_2,b_1,b_2)$  for some $i$. The top line of $A_i$ implies that each of the points $z_1,\dots,z_{|\varphi_i|}$ is contained in $[a_1,b_1]_1 \times [a_2,b_2]_2$, while the second line implies that they form a sub-permutation that coincides with $\varphi_i$. Thus we have $\sigma \models \upsilon_{\mathcal{F}}(x_1,x_2,y_1,y_2)$ if and only if the sub-permutation $\tau$ of $\sigma$ consisting of the points in $[x_1,y_1]_1 \times [x_2,y_2]_2$ does not contain $\varphi_i$ as a sub-permutation for any $1 \leq i \leq L$, as required. The second part of the statement follows by defining $A_i$ to be the first-order sentence obtained from $A_i(x_1,x_2,y_1,y_2)$ by deleting the second part of the top line, and setting $\hat{\upsilon}_\mathcal{F} := \bigwedge_{i=1}^L \left(\neg A_i\right)$. 
\end{proof}
\noindent We remark that each of the permutation classes $\widetilde{\mathcal O}_k^{[d]}$ and hence each $\mathrm{Geom}(M)_{d,k}$ is finitely based by an easy application of \cite[Theorem~4.4]{albert2015inflations}.
\begin{lem}
    Let $\tau$ be a permutation of length $\ell$. There exists a first-order formula $\psi_{\tau}(x_{i,u,v})$ with variables $x_{i,u,v}$ for $i \in \{1,\dots,\ell\}$ and $u,v \in \{1,2\}$  which satisfies the following conditions. We have $\sigma \models \psi_{\tau}(a_{i,u,v})$ for some constants $a_{i,u,v}$ if and only if $\sigma = \tau[\sigma_1,\dots,\sigma_\ell]$ for some non-empty permutations $\sigma_i \in \widetilde{\mathcal{O}}_k^{[d-1]}$, and for each $1 \leq i\leq \ell$, the points $a_{i,u,1}$ and $a_{i,u,2}$ are respectively the least and greatest points of $\sigma_i$ in the order $\leq_u$ for $u \in \{1,2\}$, and the sub-interval corresponding to $\sigma_i$ consists of those points $p$ which satisfy the condition $p \in [a_{i,1,1},a_{i,1,2}]_1 \times [a_{i,2,1},a_{i,2,2}]_2$. 
\end{lem}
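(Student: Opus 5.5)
We will take $\psi_\tau$ to be an explicit conjunction of first-order conditions on the $4\ell$ variables $x_{i,u,v}$, each expressing one part of the required structure. The idea is that the points $a_{i,u,1}$ and $a_{i,u,2}$ cut out an axis-parallel box $B_i := [a_{i,1,1},a_{i,1,2}]_1 \times [a_{i,2,1},a_{i,2,2}]_2$ in the plane. The formula should then assert three things: the boxes partition $\sigma$ into intervals arranged according to $\tau$; each box contains its defining extremal points; and each box induces a permutation in $\widetilde{\mathcal{O}}_k^{[d-1]}$.

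Concretely, we write $\psi_\tau$ as the conjunction of the following clauses.

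First, for each $i$ and $u$, the points $x_{i,u,1}$ and $x_{i,u,2}$ lie in $B_i$, and $x_{i,u,1} \leq_u p \leq_u x_{i,u,2}$ for every point $p$ of $B_i$. These are first-order statements, obtained by quantifying over $p$ with the shorthand introduced above.

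Second, every point of $\sigma$ lies in some $B_i$ (a finite disjunction). Moreover, for $i \neq j$ the relative position of the boxes matches $\tau$. This is expressed by requiring $x_{i,1,2} <_1 x_{j,1,1}$ whenever $i<j$, and $x_{i,2,2} <_2 x_{j,2,1}$ whenever $\tau(i)<\tau(j)$; the reverse inequalities are required otherwise.

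Third, each box satisfies $\upsilon_{\widetilde{\mathcal{O}}_k^{[d-1]}}(x_{i,1,1},x_{i,2,1},x_{i,1,2},x_{i,2,2})$, as given by Proposition~\ref{prop: small classes FOS classifiable}. This proposition applies because $\widetilde{\mathcal{O}}_k^{[d-1]}$ is finitely based, as remarked after that proposition via \cite[Theorem~4.4]{albert2015inflations}.

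Non-emptiness of each $\sigma_i$ is automatic, since $x_{i,1,1}$ is required to lie in $B_i$.

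For the forward direction, assume $\sigma \models \psi_\tau(a_{i,u,v})$ and set $S_i := \sigma \cap B_i$.
\begin{enumerate}
\item The separation inequalities force the $B_i$ to be pairwise disjoint in both coordinates, so the $S_i$ are pairwise disjoint.
\item Together with the covering clause, this makes the $S_i$ a partition of the points of $\sigma$.
\item The sets $S_i$ are contiguous in $\leq_1$: if $p <_1 q <_1 r$ with $p,r \in S_i$ but $q \in S_j$ for some $j \neq i$, then $B_j$ would lie entirely to one side of $B_i$ in the first coordinate, which is impossible. The same argument shows that the $S_i$ are contiguous in $\leq_2$, so each $S_i$ is an interval.
\item The ordering clauses show that the blocks $S_i$ are arranged in the pattern $\tau$, so $\sigma = \tau[\sigma_1,\dots,\sigma_\ell]$ with $\sigma_i$ induced by $S_i$.
\item By the third clause, each $\sigma_i \in \widetilde{\mathcal{O}}_k^{[d-1]}$.
\item The first clause identifies the $a_{i,u,v}$ as the extremal points of $\sigma_i$.
\end{enumerate}
Conversely, if $\sigma = \tau[\sigma_1,\dots,\sigma_\ell]$ with non-empty $\sigma_i \in \widetilde{\mathcal{O}}_k^{[d-1]}$, we take $a_{i,u,v}$ to be the extremal points of $\sigma_i$ and check each clause directly. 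The one non-trivial check is that the interval of $\sigma_i$ equals $\sigma \cap B_i$. A point of another block $\sigma_j$ cannot lie in $B_i$, because the interval of $\sigma_j$ lies entirely to one side of the interval of $\sigma_i$ in $\leq_1$.

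The main point requiring care is step 3, that the blocks are genuine intervals of $\sigma$. Since the boxes are defined by both coordinates at once, we need the separation clauses in both coordinates to rule out interleaving. We would verify this carefully rather than rely on the covering clause alone.
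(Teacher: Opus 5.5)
Your proposal is correct and takes the same approach as the paper. The paper's $\psi_\tau$ consists of the same clauses: every point lies in one of the boxes $[x_{i,1,1},x_{i,1,2}]_1\times[x_{i,2,1},x_{i,2,2}]_2$; the boxes are separated in $\leq_1$ by index and in $\leq_2$ according to $\tau$; each $x_{i,u,v}$ lies in its own box; and $\upsilon_{d-1,k}$ holds on each box. Imposing separation only on the extremal variables, as you do, is equivalent to the paper's version over all $u,v,u',v'$ once the membership clause is present, and your explicit contiguity check in step 3 spells out a point the paper only asserts.
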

\begin{proof}
    Let $\upsilon_{d-1,k}(x_1,x_2,y_1,y_2)$ denote the first-order formula constructed in the statement of Proposition~\ref{prop: small classes FOS classifiable} with $\mathcal{F} = \widetilde{\mathcal{O}}_k^{[d-1]}$. 
    We claim that one can take $\psi_{\tau}(x_{i,u,v})$ to be the formula 
    \begin{align*}
         &\forall z: \bigvee_{i=1}^{\ell} \left(z \in [x_{i,1,1},x_{i,1,2}]_1 \times [x_{i,2,1},x_{i,2,2}]_2\right) \\
         & \land \bigwedge_{i,i'=1}^{\ell}  \bigwedge_{u,u'=1}^{2} \bigwedge_{v,v'=1}^{2} \left((i < i' \implies x_{i,u,v} <_1 x_{i',u',v'}) \land (\tau(i) < \tau(i') \implies x_{i,u,v} <_2 x_{i',u',v'})\right) \\
         & \land \bigwedge_{i=1}^{\ell} \upsilon_{d-1,k}(x_{i,1,1},x_{i,2,1},x_{i,1,2},x_{i,2,2})
\\ &\land\bigwedge_{i=1}^{\ell}\bigwedge_{u,v=1}^{2}
\left(x_{i,u,v}\in
[x_{i,1,1},x_{i,1,2}]_1\times
[x_{i,2,1},x_{i,2,2}]_2\right).
    \end{align*}
    Suppose that $\sigma \models \psi_{\tau}(a_{i,u,v})$, and for each $i \in \{1,\dots,\ell\}$ let $\sigma_i$ denote the sub-permutation induced by the points in $[a_{i,1,1},a_{i,1,2}]_1 \times [a_{i,2,1},a_{i,2,2}]_2$. The second last line of $\psi_{\tau}$ implies that we have $\sigma_i \in \widetilde{\mathcal{O}}_k^{[d-1]}$ for each $i$, and the last line implies that the points $a_{i,u,v}$ are chosen consistently so that each $\sigma_i$ is indeed non-empty. The first line implies that each point $p \in \sigma$ belongs to some $\sigma_i$. In addition, the second line implies that each $\sigma_i$ forms a sub-interval of $\sigma$, and these intervals are ordered so that $\sigma = \tau[\sigma_1,\dots,\sigma_\ell]$. The converse direction follows analogously, we omit the details. 
\end{proof}

As the first step towards the proof of Theorem~\ref{thm:Main O_K}, we prove the following Lemma which allows us to perform the construction in the inductive step for those $w$ whose profile is in $\mathcal{Q}_k$. 

\begin{lem}\label{lemma: handling Q_ks for bijection}
    \begin{enumerate}
        \item Let $\tau$ be a permutation in $\widetilde{\mathcal{O}}_k$. There exists a first-order formula $\psi_{\tau,<_L}(x_{i,u,v})$ with variables $x_{i,u,v}$ for $i \in \{1,\dots,|\tau|\}$ and $u,v \in \{1,2\}$ so that $\sigma \models \psi_{\tau,<_L}(a_{i,u,v})$ if and only if the left-greedy decomposition of $\sigma$ is given by $\tau[\sigma_1,\dots,\sigma_{|\tau|}]$, where the points $a_{i,u,1}$ and $a_{i,u,2}$ are respectively the least and greatest points of $\sigma_i$ in the order $\leq_u$ for $u \in \{1,2\}$, and each $\sigma_i$ consists of the points in $[a_{i,1,1},a_{i,1,2}]_1 \times [a_{i,2,1},a_{i,2,2}]_2$. 
        \item Let $\hat{\Delta}$ be a first-order sentence in $\Sigma_{d-1}^* $. Given $\tau \in \mathcal{Q}_k$,  there exists a first-order formula $\Delta_\tau(\nu_{i,u,v})$ with $i \in \{1,\dots,|\tau|\}$, and $u,v\in\{1,2\}$ for which $w \models \Delta_\tau(c_{i,u,v})$ if and only if the following conditions hold:
        \begin{enumerate}
            \item $f_d(w) \neq \star$ and $w$ has profile $\tau$;
            \item if $w$ decomposes as $w = u_1\dots u_{|\tau|}$, then we have $\rho_1(u_i) \models \hat{\Delta}$ for each $i$; and
            \item the left-greedy decomposition of $f_d(w)$ is given by $\tau[f_{d-1}(\rho_1(u_1)),\dots,f_{d-1}(\rho_1(u_{|\tau|}))]$, and we have $f_d(w) \models \psi_{\tau,<_L}(\alpha_w(c_{i,u,v}))$. 
        \end{enumerate}
    \end{enumerate}

    In~\cite{ourotherpaper}, we also require an analogous version of Lemma~\ref{lemma: handling Q_ks for bijection} to determine the end-points of the first $B$ blocks when the profile $\tau$ is a monotone permutation of arbitrary length. As the construction is somewhat similar to the proof of Lemma~\ref{lemma: handling Q_ks for bijection}, we state and prove this result in the Appendix. 
    \begin{proof}
    \textbf{Proof of statement 1:} Given a positive integer $t$, define the auxiliary first-order formula $\psi_{\geq,t}(x_1,\dots,x_t,y_1,\dots,y_t)$ as
        \begin{align*}
            \bigwedge_{j = 1}^{t} \left(\left(\bigwedge_{i=1}^{j-1} x_i = y_i\right) \implies  y_j \leq_1 x_j\right).
        \end{align*}
        Note that the first term corresponding to $j = 1$ is simply $y_1 \leq_1 x_1$. Let $S_{\tau} = \{\tau' \in \widetilde{\mathcal{O}}_k: |\tau'| < |\tau|\}$. Define $\psi_{\tau,<_L}(x_{i,u,v})$ to be the formula 
        \begin{align*}
            & \psi_{\tau}(x_{i,u,v}) \\
            & \land \bigwedge_{\tau' \in S_{\tau}} \left(\forall_{i \in [|\tau'|], u,v\in \{1,2\}} y_{i,u,v}: (\neg \psi_{\tau'}(y_{i,u,v}))\right) \\
            & \land \left(\forall_{i \in [|\tau|], u,v\in \{1,2\}} y_{i,u,v}: \psi_{\tau}(y_{i,u,v}) \implies \psi_{\geq, |\tau|}(x_{i,1,2},y_{i,1,2})\right).
        \end{align*}
        Suppose that $\sigma \models \psi_{\tau,<_L}(a_{i,u,v})$. For each $i$, let  $\sigma_i$ denote the sub-permutation of $\sigma$ consisting of points $p$ in $[a_{i,1,1},a_{i,1,2}]_1 \times [a_{i,2,1},a_{i,2,2}]_2$. The first line of $\psi_{\tau,<_L}$ implies that $\sigma = \tau[\sigma_1,\dots,\sigma_{|\tau|}]$, and our goal is to show that $\tau[\sigma_1,\dots,\sigma_{|\tau|}]$ is the left-greedy decomposition of $\sigma$. 

        Suppose that $\tau' \in \widetilde{\mathcal{O}}_k$ and $\sigma_1',\dots,\sigma'_{|\tau'|} \in \widetilde{\mathcal{O}}_k^{[d-1]}$ are chosen so that $\sigma = \tau'[\sigma_1',\dots,\sigma'_{|\tau'|}]$, and suppose for the sake of contradiction we have  $\tau'[\sigma'_1,\dots,\sigma'_{|\tau'|}]<_{L}\tau[\sigma_1,\dots,\sigma_{|\tau|}]$. First consider the case when $|\tau'|< |\tau|$. Let $b_{i,u,v}$ be chosen so that $\sigma'_i$ consists of the points in $[b_{i,1,1},b_{i,1,2}]_1\times[b_{i,2,1},b_{i,2,2}]_2$. Note that such $b_{i,u,v}$ exist, as each $\sigma_i'$ is a sub-interval of $\sigma$. Choosing $y_{i,u,v} = b_{i,u,v}$ for this particular choice of $\tau'$ for each $i \in [|\tau'|]$  on the second line of $\psi_{\tau,<_L}$ gives the desired contradiction. 

        Now suppose that $|\tau'| = |\tau|$. Proposition~2.3 in~\cite{albert2015inflations} implies that for any $\sigma \in \widetilde{\mathcal{O}}_{k}^{[d]}$, the set $T(\sigma)$ of possible profiles $\tau'$  of decompositions of $\sigma$ as $\sigma = \tau'[\sigma_1,\dots,\sigma_\ell]$ with $\sigma_i \in \widetilde{\mathcal{O}}_{k}^{[d-1]}$ has a unique minimum element under sub-pattern containment. In particular, since $\tau$ is an element of minimal size in $T(\sigma)$, we must have $\tau \leq \tau'$ and thus $\tau = \tau'$. 
        
        Let $t$ be chosen so that $|\sigma_i| = |\sigma_i'|$ for every $i \leq t-1$ and $|\sigma_t'| > |\sigma_t|$. Let $b_{i,u,v}$ be chosen as above, and choose $y_{i,u,v} = b_{i,u,v}$ on the last line of $\psi_{\tau, <_L}$. Since $\sigma \models \psi_{\tau}(b_{i,u,v})$, we must have $\psi_{\geq,|\tau|}(a_{i,1,2},b_{i,1,2})$. In particular, we have 
        \begin{align} \label{eq: leftgreedy contradiction}
            \sigma \models \left(\left(\bigwedge_{i=1}^{t-1} a_{i,1,2} = b_{i,1,2}\right) \implies b_{t,1,2} \leq_1 a_{t,1,2}\right). 
        \end{align}
        Note that for each $i$, $a_{i,1,2}$ is the largest point in $\sigma_i$ with respect to $\leq_1$, and analogously $b_{i,1,2}$ is the largest point in $\sigma_i'$. Thus, the condition $a_{i,1,2} = b_{i,1,2}$ is equivalent to 
        \begin{align*}
            \sum_{j=1}^{i} |\sigma_j| = \sum_{j=1}^i |\sigma_j'|
        \end{align*}
        and the condition $b_{i,1,2} \leq_1 a_{i,1,2}$ is equivalent to\begin{align*}
            \sum_{j=1}^{i} |\sigma_j| \geq \sum_{j=1}^i |\sigma_j'|. 
        \end{align*}
        In particular, the equation~\eqref{eq: leftgreedy contradiction} implies that $|\sigma_i| = |\sigma_i'|$ for every $i \leq t-1$ and $|\sigma_t'| \leq |\sigma_t|$, which gives the required contradiction. Proving the converse direction follows analogously, we omit the details. 

        \textbf{Proof of statement 2:} Given $\tau \in \mathcal{Q}_k$ we write $\ell = |\tau|$. Define the first-order formula $\chi_{\tau}(x_{i,u,v},x_i)$ involving the variables  $x_{i,u,v}$  and $x_i$ for $i\in [\ell]$, $u,v \in \{1,2\}$ by setting
        \begin{align} \label{eq: importan definition}
            \psi_{\tau,<_L}(x_{i,u,v}) \land \bigwedge_{i=1}^\ell \left(x_i \in [x_{i,1,1},x_{i,1,2}]_1\times [x_{i,2,1},x_{i,2,2}]_2\right).
        \end{align}
        Let $\hat{\Delta}_{\tau}(\nu_{i,u,v},\nu_i)$ denote the first-order formula in $\Sigma_{d}^*$ obtained by applying Theorem~\ref{thm: FO correspondence}  to $\chi_{\tau}(x_{i,u,v},x_i)$, where $\nu_i$ and $\nu_{i,u,v}$ are the variables corresponding to $x_i$ and $x_{i,u,v}$ respectively. Let $\Delta_{\tau}(\nu_{i,u,v})$ denote the first-order formula in $\Sigma_{d}^*$ given by
        \begin{align} \label{eq: Final construction}
            &\Gamma_d\land(\forall\nu:\rho_2(w_\nu)=\tau)\land\bigwedge_{i=1}^{\ell}\hat\Delta_i\\
            &\land\forall\nu_1\cdots\forall\nu_\ell:
            \left(\left(\bigwedge_{i'=1}^{\ell}\rho_3(w_{\nu_{i'}})=i'\right)
            \implies\hat\Delta_\tau(\nu_{i,u,v},\nu_i)\right),
        \end{align}
        where, as before, $\hat{\Delta}_i$ denotes the first-order sentence $\hat{\Delta}$ applied to $\rho_1(u_i)$, where $u_i$ denotes the  sub-word of $w$ which consists of those letters $w_j$ which satisfy $\rho_3(w_j) = i$. 

        To check that $\Delta_\tau$ satisfies the required properties, suppose that $w\models \Delta_\tau(c_{i,u,v})$. As the first two conditions in the expression of $\Delta_\tau$ imply that $w \models \Gamma_d$ and $\rho_2(w_i) = \tau$ for all $i$, it follows that $w$ decomposes as $w = u_1\dots u_\ell$, where $u_i$ is the sub-word of $w$ which consists of all letters $w_j$ with $\rho_3(w_j) = i$. In addition, the condition $\hat{\Delta}_i$ ensures that for each $i$ we have $\rho_1(u_i) \models \hat{\Delta}$. Thus the conditions (a) and (b) are satisfied. 

        For each $i \in \{1,\dots,\ell\}$ and $u,v\in \{1,2\}$ write $b_{i,u,v} = \alpha_w(c_{i,u,v})$, set $\sigma_i := f_{d-1}(\rho_1(u_i))$, and choose $\tau' \in \widetilde{\mathcal{O}}_k$ and $\sigma_1',\dots,\sigma_{|\tau'|}'$ so that the left-greedy decomposition of $f_d(w)$ is given by $\tau'[\sigma_1',\dots,\sigma_{|\tau'|}']$. Since each $u_i$ is non-empty, we may choose $\nu_i$ in $u_i$ for every $i$. The last condition of $\Delta_\tau$ and  Theorem~\ref{thm: FO correspondence} imply that $f_d(w) \models \psi_{\tau,<_L}(b_{i,u,v})$. Hence we must have $\tau' = \tau$, and for each $i \in \{1,\dots,\ell\}$, the permutation $\sigma_i'$ consists of points in $[b_{i,1,1},b_{i,1,2}]_1 \times [b_{i,2,1},b_{i,2,2}]_2$. On the other hand, varying a single $\nu_i$ over the indexes of $u_i$ while keeping the other choice of $\nu_j$ fixed, it is easy to see that the second condition in the construction~\eqref{eq: importan definition}, the condition $\rho_3(w_{\nu_i})= i$ in the construction~\eqref{eq: Final construction} and Theorem~\ref{thm: FO correspondence} together imply that every point corresponding to the letters of $u_i$ belongs to the rectangular block corresponding to $\sigma_i'$. Since $\alpha_w$ is a bijection, the blocks $B_i$ consisting of points corresponding to the letters of the word $u_i$ partition $f_d(w)$. Since the blocks corresponding to $\sigma_i'$ also partition $f_d(w)$, the inclusions must be equalities, and thus $B_i = \sigma_i'$ for every $i$. 
        
        Thus, we conclude that $\alpha_w(j) \in [b_{i,1,1},b_{i,1,2}]_1 \times [b_{i,2,1},b_{i,2,2}]_2$ if and only if $\rho_3(w_j) = i$.  Equivalently, the letter $w_j$ belongs to the sub-word $u_i$ if and only if the corresponding point $\alpha_w(j)$ belongs to $\sigma_i'$. By the recursive construction of the functions $\alpha_w$ we have $f_{d-1}(\rho_1(u_i)) = \sigma_i'$ for each $i \in \{1,\dots,\ell\}$. This verifies that the final condition (c) is also satisfied. 

        Proof of the converse direction follows analogously. 
    \end{proof}
\end{lem}
It is not too hard to prove an analogue of Lemma~\ref{lemma: handling Q_ks for bijection} for a given (monotone) permutation $\tau$. In fact, we present a proof of a slightly more general variant of Lemma~\ref{lemma: handling Q_ks for bijection} for monotone permutations in the Appendix, as it is needed for the purposes of \cite{ourotherpaper}. However, this would not be an useful approach for proving Theorem~\ref{thm:Main O_K}, as the set $\widetilde{\mathcal{O}}_k$ contains infinitely many permutations. Due to this reason we may not simply combine these sentences for every $\tau \in \widetilde{\mathcal{O}}_k$, as such an expression would no longer be a first-order sentence. In contrast, we may use this strategy to handle all the profiles $\tau \in \mathcal{Q}_k$ individually, as  $\mathcal{Q}_k$ is a finite set. 

Instead, we need to develop an approach which allows us to deal with monotone profiles of arbitrary length simultaneously. The approach builds on the following observation of Albert, Ruskuc and Vatter. 
\begin{lem}[Special case of Proposition 2.4 in \cite{albert2015inflations}]\label{lemma: albert left greedy}
    Given $\sigma \in \widetilde{\mathcal{O}}_k^{[d]}$, the expression $\sigma_1 \oplus \dots \oplus \sigma_\ell$ with $\sigma_i \in \widetilde{\mathcal{O}}_k^{[d-1]}$ is not the left-greedy decomposition of $\sigma$ if and only if there exists $i \in \{1,\dots,\ell-1\}$ for which $\sigma_i \oplus \sigma_{i+1} = \theta_1 \oplus \theta_2$ for some $\theta_1,\theta_2 \in \widetilde{\mathcal{O}}_k^{[d-1]}$ with $|\theta_1| > |\sigma_{i}|$ (allowing the possibility that $\theta_2$ is the empty permutation). 

    An analogous statement holds when all the instances of $\oplus$ are replaced with $\ominus$. 
\end{lem}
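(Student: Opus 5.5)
The plan is to prove the two directions separately, working only with sum decompositions. Write $\mathcal{C}=\widetilde{\mathcal{O}}_k^{[d-1]}$. A \emph{cut point} of $\sigma$ is a position $e$ for which $\sigma$ splits as a direct sum of its first $e$ points and the rest. I first reduce to the case where the competing decompositions are themselves sums. By Proposition~2.3 of~\cite{albert2015inflations}, the set of profiles of $\mathcal{C}$-decompositions of $\sigma$ has a unique $\leq$-minimal element. Since $12\dots\ell$ is one such profile, the minimal profile is contained in $12\dots\ell$, so it is increasing. The $<_L$-minimal decomposition minimises $|\tau|$, so it has this increasing profile and is a sum decomposition $\sigma'_1\oplus\dots\oplus\sigma'_{\ell'}$ with $\ell'\leq \ell$. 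From now on every decomposition I compare is a sum, and its blocks are determined by its set of cut points.

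\textbf{The ``if'' direction.} Suppose $\sigma_i\oplus\sigma_{i+1}=\theta_1\oplus\theta_2$ with $|\theta_1|>|\sigma_i|$ and $\theta_1,\theta_2\in\mathcal{C}$. Replace the two blocks $\sigma_i,\sigma_{i+1}$ by $\theta_1,\theta_2$, deleting $\theta_2$ if it is empty. This is again a $\mathcal{C}$-decomposition of $\sigma$. If $\theta_2$ is empty, it has fewer blocks. Otherwise it agrees with the original in the first $i-1$ block sizes and has a strictly larger $i$-th block. In both cases it is $<_L$-smaller, so the original decomposition is not left-greedy.

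\textbf{The ``only if'' direction (contrapositive).} Assume no adjacent pair admits such a re-splitting. I will show that $\sigma_1\oplus\dots\oplus\sigma_\ell$ is the \emph{greedy} sum decomposition, in which each block is the longest prefix of the remaining permutation that ends at a cut point and lies in $\mathcal{C}$.

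The key claim is that $\sigma_1$ is such a longest prefix. Suppose instead that a longer prefix $P\in\mathcal{C}$ ends at a cut point $e'$ beyond the end of $\sigma_1$. There are two cases.
\begin{itemize}
\item If $e'$ lies at or beyond the end of $\sigma_2$, then $\sigma_1\oplus\sigma_2$ is a subpattern of $P$. Since $\mathcal{C}$ is a class, $\sigma_1\oplus\sigma_2\in\mathcal{C}$. Taking $\theta_1=\sigma_1\oplus\sigma_2$ and $\theta_2$ empty gives a violation at $i=1$.
\item Otherwise $e'$ falls strictly inside $\sigma_2$. Because $e'$ is a cut point of $\sigma$, the block $\sigma_2$ splits there as $\sigma_2=\alpha\oplus\beta$, with $\alpha,\beta$ subpatterns of $\sigma_2$ and hence in $\mathcal{C}$. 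Then $\sigma_1\oplus\sigma_2=P\oplus\beta$ with $|P|>|\sigma_1|$, again a violation at $i=1$.
\end{itemize}
The hypothesis passes to $\sigma_2\oplus\dots\oplus\sigma_\ell$, so induction on $\ell$ shows that the whole decomposition is greedy.

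It remains to show that the greedy decomposition is $<_L$-minimal; I expect this to be the main obstacle, since it is a standard greedy-covering argument that needs care. Let $e_j$ be the greedy cut positions and $e'_j$ those of any other sum decomposition. I would prove $e_j\geq e'_j$ by induction on $j$.
\begin{itemize}
\item If $e_{j-1}\geq e'_{j-1}$, consider the points from $e_{j-1}+1$ up to $e'_j$. They form a contiguous piece of the block $\sigma'_j$ lying between two cut points of $\sigma$, so they form a subpattern of $\sigma'_j$ and lie in $\mathcal{C}$.
\item By maximality of the greedy choice this forces $e_j\geq e'_j$. The degenerate case $e'_j\leq e_{j-1}$ is trivial.
\end{itemize}
Consequently the greedy decomposition uses the fewest blocks. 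Among decompositions with the same number of blocks, the inequalities $e_j\geq e'_j$ show that at the first index where block sizes differ, the greedy block is the larger one. Hence the greedy decomposition is $<_L$-least, so $\sigma_1\oplus\dots\oplus\sigma_\ell$ is the left-greedy decomposition. The $\ominus$ version follows verbatim, or by applying the $\oplus$ case to the reverse-complement symmetry, which swaps $\oplus$ and $\ominus$ and preserves $\widetilde{\mathcal{O}}_k^{[d-1]}$.
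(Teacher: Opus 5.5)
Your proof is correct, but it does not follow the paper's route: the paper gives no argument for this lemma and simply specializes Proposition~2.4 of \cite{albert2015inflations}.

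\textbf{What you do instead.} You use only the unique-minimal-profile fact, Proposition~2.3 of \cite{albert2015inflations}, which the authors themselves invoke in the proof of Lemma~\ref{lemma: handling Q_ks for bijection}. With it you restrict attention to sum decompositions. You then show directly two things:
\begin{enumerate}
\item If no adjacent pair is improvable, each block is the longest $\widetilde{\mathcal{O}}_k^{[d-1]}$-prefix of the remainder that ends at a cut point.
\item This greedy decomposition is $<_L$-least, via the exchange inequality $e_j\ge e'_j$.
\end{enumerate}
The two cases in your key claim are exactly what is needed: the competing cut point lies either at or beyond the end of $\sigma_2$, or strictly inside $\sigma_2$. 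The block-count bookkeeping at the end is also sound.

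\textbf{What each route buys.} Your version is self-contained apart from Proposition~2.3. It also gives an explicit ``greedy cut-point'' characterization of left-greedy sum decompositions. That is the form in which the paper actually uses this fact later, in Lemma~\ref{lemma: sum decomposition lemma} and in the closely analogous exchange argument of Lemma~\ref{lemma: final step} in the Appendix. The citation buys brevity and the general statement for arbitrary inflation classes.

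\textbf{One slip, in your last sentence.} Reverse-complement is rotation by $180^\circ$. It sends $\alpha\oplus\beta$ to $\beta^{rc}\oplus\alpha^{rc}$, so it does not swap $\oplus$ and $\ominus$. It also reverses the block order, which would turn left-greedy into right-greedy.

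The symmetry you want is complement. It sends $\alpha\oplus\beta$ to $\alpha^{c}\ominus\beta^{c}$ and keeps block positions and sizes, hence preserves $<_L$. It preserves $\widetilde{\mathcal{O}}_k^{[d-1]}$, since $\varpi_k=\omega_k^{c}$ and complement swaps the two monotone classes. Your other option, rerunning the argument verbatim with skew cut points, is already fine.
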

We start with the following preliminary lemma. 
\begin{lem}\label{lemmma: oplus}
    There exists a first-order formula $\psi_{\oplus}(x_{i,u,v})$ with variables $x_{i,u,v}$ for $i,u,v \in \{1,2\}$ which satisfies the following conditions. For $i \in \{1,2\}$, let $\theta_i$ denote the sub-permutation consisting of the points in $[a_{i,1,1},a_{i,1,2}]_1 \times [a_{i,2,1},a_{i,2,2}]_2$. 
    \begin{enumerate}
        \item [(i)] Whenever $\sigma \models \psi_{\oplus}(a_{i,u,v})$, we have $\sigma = \theta_1 \oplus \theta_2$, and $\theta_i \in \widetilde{\mathcal{O}}_k^{[d-1]}$
        \item [(ii)] Given any decomposition of $\sigma$ as $\sigma = \theta_1 \oplus \theta_2$ for non-empty $\theta_i \in \widetilde{\mathcal{O}}_k^{[d-1]}$, let $a_{i,u,v}$ denote the extremal points in $\theta_i$ chosen as above. Then $\sigma \models \psi_{\oplus}(a_{i,u,v})$. 
    \end{enumerate}
    
    Similarly, there exists a first-order formula $\psi_{\ominus}(x_{i,u,v})$ satisfying the appropriate conditions with each instance of $\oplus$ replaced with $\ominus$. 
\end{lem}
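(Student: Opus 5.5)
The plan is to reduce this to the already established lemma giving $\psi_\tau(x_{i,u,v})$, taken with $\tau = 12$ (and $\tau=21$ for $\psi_\ominus$). By definition $\theta_1\oplus\theta_2 = 12[\theta_1,\theta_2]$ and $\theta_1\ominus\theta_2 = 21[\theta_1,\theta_2]$. So I would set
\[
\psi_{\oplus}(x_{i,u,v}) := \psi_{12}(x_{i,u,v}), \qquad \psi_{\ominus}(x_{i,u,v}) := \psi_{21}(x_{i,u,v}),
\]
with $i,u,v\in\{1,2\}$. These are first-order formulae by the lemma, since each $\psi_\tau$ is built from $\upsilon_{d-1,k}$ of Proposition~\ref{prop: small classes FOS classifiable}, and $\widetilde{\mathcal O}_k^{[d-1]}$ is finitely based.

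For (i), suppose $\sigma\models\psi_\oplus(a_{i,u,v})$. The lemma for $\psi_\tau$ gives $\sigma = 12[\sigma_1,\sigma_2]$ with non-empty $\sigma_i\in\widetilde{\mathcal O}_k^{[d-1]}$. Moreover, $\sigma_i$ consists exactly of the points in $[a_{i,1,1},a_{i,1,2}]_1\times[a_{i,2,1},a_{i,2,2}]_2$, so $\sigma_i=\theta_i$ and $\sigma=\theta_1\oplus\theta_2$.

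For (ii), take a decomposition $\sigma=\theta_1\oplus\theta_2$ with non-empty $\theta_i\in\widetilde{\mathcal O}_k^{[d-1]}$, and let $a_{i,u,v}$ be the extremal points of $\theta_i$. The converse direction of the same lemma then yields $\sigma\models\psi_{12}(a_{i,u,v})$. The only thing to check is that the rectangle spanned by these extremal points contains exactly the points of $\theta_i$. This holds because $\theta_i$ is an interval of $\sigma$: its positions and values are contiguous, so its bounding box in both orders contains no other points. The $\ominus$ case is identical with $\tau=21$.

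The main (mild) obstacle is the possibility in Lemma~\ref{lemma: albert left greedy} that $\theta_2$ is empty. The formula $\psi_\oplus$ cannot represent an empty block by extremal points. I would handle this outside the formula, at the later point of use, with a disjunction against the case $\sigma\in\widetilde{\mathcal O}_k^{[d-1]}$ itself, expressed by $\upsilon_{d-1,k}$ applied to the bounding points. This is consistent with the statement, which in (ii) only concerns non-empty $\theta_i$.
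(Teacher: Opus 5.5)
Your proposal is correct and matches the paper's proof: the paper writes out $\psi_\oplus$ explicitly with the same four conjuncts as $\psi_{12}$, namely the covering by the two rectangles, the $\oplus$-ordering between them, $\upsilon_{d-1,k}$ on each rectangle, and the extremal points lying in their own rectangles. The only cosmetic difference is that the paper states the between-block ordering for all pairs of points, where $\psi_\tau$ states it for the extremal points; your treatment of the empty-$\theta_2$ case outside the formula also agrees with how the paper later handles it in Lemma~\ref{lemma: sum decomposition lemma}, via the conjunct $\neg\chi_\upsilon$.
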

\begin{proof}
    We claim that we may take $\psi_{\oplus}(x_{i,u,v})$ to be the first-order formula 
    \begin{align*}
        & \left( \forall z: (z \in [x_{1,1,1},x_{1,1,2}]_1 \times [x_{1,2,1},x_{1,2,2}]_2 \lor z \in [x_{2,1,1},x_{2,1,2}]_1 \times [x_{2,2,1},x_{2,2,2}]_2) \right) \\
        \land & \left( \forall z_1 \forall z_2: \left( \bigwedge_{i=1}^2 z_i \in [x_{i,1,1},x_{i,1,2}]_1 \times [x_{i,2,1},x_{i,2,2}]_2\right) \implies \left((z_1 <_1 z_2) \land (z_1 <_2 z_2)\right)\right) \\
        \land & \bigwedge_{i=1}^2 \upsilon_{d-1,k}(x_{i,1,1},x_{i,2,1},x_{i,1,2},x_{i,2,2})\\ 
        \land &\bigwedge_{i,u,v=1}^{2}
        \left(x_{i,u,v}\in[x_{i,1,1},x_{i,1,2}]_1
        \times[x_{i,2,1},x_{i,2,2}]_2\right).
    \end{align*}
    Indeed, suppose that $\sigma \models \psi_{\oplus}(a_{i,u,v})$. Let $\theta_1$ and $\theta_2$ denote the sub-permutations defined as in the statement of the Lemma. The first line of $\psi_\oplus$ implies that every point $p \in \sigma$ belongs to at least one of $\theta_1$ or $\theta_2$, the second line implies that we have $\sigma = \theta_1\oplus \theta_2$, and the third line implies that we have $\theta_1,\theta_2 \in \widetilde{\mathcal{O}}_k^{[d-1]}$. Finally, the last line ensures that each of these rectangles is non-empty, and its specified endpoints are indeed its extremal points. Proving the converse follows similarly, we omit the details. 
\end{proof}
We are now ready to prove the final lemma preceding the proof of Theorem~\ref{thm:Main O_K}. This lemma provides a first-order formula that enables us to handle all monotone increasing inflations in a uniform manner.
\begin{lem}\label{lemma: sum decomposition lemma}
    There exists a first-order formula $\psi_{\mathrm{inc}}(k_{1},\ell_1,k_2,\ell_2) \in \mathrm{FO}(\Sigma_{d}^*)$ so that $w \models \psi_{\mathrm{inc}}(a_{1},b_1,a_2,b_2)$ if and only if the following conditions hold:
    \begin{enumerate}
        \item $w \models \Gamma_d$, $\rho_2(w_1) = \mathrm{inc}$, and there exists $j$ so that $u_j$ and $u_{j+1}$ coincide with the sub-words $w_{a_1}\dots w_{b_1}$ and $w_{a_2} \dots w_{b_2}$ respectively; and 
        \item if $f_{d-1}(\rho_1(u_j)) \oplus f_{d-1}(\rho_1(u_{j+1})) = \theta_1 \oplus \theta_2$ for such choice of $j$ as above, and for some (possibly empty) permutations $\theta_1, \theta_2 \in \widetilde{\mathcal{O}}_k^{[d-1]}$, we have $|\theta_1| \leq |f_{d-1}(\rho_1(u_j))|$. 
    \end{enumerate}
    Similarly, there exists a first-order formula $\psi_{\mathrm{dec}}(k_1,\ell_1,k_2,\ell_2)$ which satisfies analogous conditions with $\oplus$ replaced by $\ominus$. 
\end{lem}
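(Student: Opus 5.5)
We build $\psi_{\mathrm{inc}}$ as a conjunction of two parts, one for each condition. For condition 1 we take $\Gamma_d \land \rho_2(w_1)=\mathrm{inc}$ together with a block-recognition formula. The latter says that $L\in\rho_4(w_{a_1})$, $R\in\rho_4(w_{b_1})$, and $\rho_4(w_t)=\emptyset$ for $a_1<t<b_1$, with the obvious modification when $a_1=b_1$. The same conditions are imposed on $(a_2,b_2)$, and finally $a_2=b_1+1$. Since $w\models\Gamma_d$, Claim~\ref{claim: I,D decomposes properly} shows that the maximal segments delimited by $L$ and $R$ marks are exactly the blocks $u_j$. Hence these conditions say precisely that $w_{a_1}\dots w_{b_1}$ and $w_{a_2}\dots w_{b_2}$ are two consecutive blocks $u_j,u_{j+1}$. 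This part is a direct modification of the formula $P_{4,s}$.

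For condition 2 we first work on the permutation side. Let $\chi(y_{1},y_{2},y_3,y_4)$ be the $\mathrm{TOTO}$ formula stating the following: for all $x_{i,u,v}$ (with $i,u,v\in\{1,2\}$), if the subpermutation $\pi$ induced by the points in the rectangle spanned by $y_1,\dots,y_4$ satisfies $\psi_{\oplus}(x_{i,u,v})$, then $x_{1,1,2}\leq_1 y_*$. Here $y_*$ is the $\leq_1$-largest point of the first block, and $\psi_{\oplus}$ is Lemma~\ref{lemmma: oplus} relativised to $\pi$. The relativisation is done by restricting all quantifiers to the rectangle, exactly as in Proposition~\ref{prop: small classes FOS classifiable}. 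The parameters are chosen so that they refer to the extreme points of $\sigma_j$ and $\sigma_{j+1}$. The case in which $\theta_2$ is empty must be handled separately: it means $\sigma_j\oplus\sigma_{j+1}\in\widetilde{\mathcal{O}}_k^{[d-1]}$. This condition is expressed by $\neg\upsilon_{d-1,k}$ applied to the combined rectangle, that is, we require that the combined rectangle is not in the class. By Lemma~\ref{lemmma: oplus} (ii), every nontrivial decomposition $\theta_1\oplus\theta_2$ is caught by the universal quantifier. The $\leq_1$ comparison of the last point of $\theta_1$ with the last point of $\sigma_j$ is equivalent to $|\theta_1|\leq|\sigma_j|$, because $\oplus$-blocks are initial segments in $\leq_1$.

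Next we transfer $\chi$ to words via Theorem~\ref{thm: FO correspondence}, which gives a formula $\hat\chi$ on $\Sigma_d^*$ with $w\models\hat\chi(\cdot)$ iff $f_d(w)\models\chi(\alpha_w(\cdot))$. The main obstacle is connecting the word positions $a_1,b_1,a_2,b_2$ with the extremal points of $\sigma_j,\sigma_{j+1}$ in $f_d(w)$. The fix is to avoid needing extremal points as parameters and instead define the rectangle on the word side. For $\tau=\mathrm{inc}$, the points of $\sigma_j\oplus\sigma_{j+1}$ are exactly $\alpha_w(t)$ for $a_1\leq t\leq b_2$, and they form an interval of $f_d(w)$. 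So in the word formula we existentially quantify over positions $c_{u,v}\in[a_1,b_2]$ that are $\xi_{u,d}$-extremal among positions in $[a_1,b_2]$, which is first-order. Similarly, we quantify over $e\in[a_1,b_1]$ that is $\xi_{1,d}$-maximal in $[a_1,b_1]$; its image is $y_*$. These are plugged into $\hat\chi$. Correctness then follows from Lemma~\ref{lemma: inductive step of leq_1 correspondence} and the bijectivity of $\alpha_w$.

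The $\mathrm{dec}$ case is identical, using $\psi_\ominus$ and $\ominus$ in place of $\psi_\oplus$ and $\oplus$, with the block $\sigma_j$ again $\leq_1$-initial.
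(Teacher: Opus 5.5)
Your proposal is correct and follows essentially the paper's proof: you recognise the two consecutive blocks via the $L$/$R$ marks under $\Gamma_d$, and you exclude the $\theta_2=\emptyset$ case by $\neg\upsilon_{d-1,k}$ on the combined block. You then take a universally quantified, relativised copy of $\psi_\oplus$ and compare the $\leq_1$-maximum of its first block with that of $\sigma_j$ (via $\xi_{1,d}$), transferring everything through Theorem~\ref{thm: FO correspondence}. The only differences from the paper are cosmetic: you relativise on the permutation side to a rectangle rather than to the index range $[k_1,\ell_2]$, and you pin down extremal points by $\xi_{u,d}$-extremality rather than via $\chi_\square$.

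One small fix is needed in your explicit block test. For $a_1<b_1$ it must require $\rho_4(w_{a_1})=\{L\}$ and $\rho_4(w_{b_1})=\{R\}$, as $P_{4,s}$ does. Otherwise two adjacent singleton blocks with $b_1=a_1+1$ would pass.
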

\begin{proof}
    Let $\chi_{\square}(\nu,\nu_{u,v}) \in \mathrm{FO}(\Sigma_{d}^*)$  with  $u,v\in \{1,2\}$denote the formula obtained by applying Theorem~\ref{thm: FO correspondence} to the formula $x \in [x_{1,1},x_{1,2}]_1 \times [x_{2,1},x_{2,2}]_2$.
    Write $\chi_{\oplus}(\nu_{i,u,v})$ for the first-order formula which corresponds to the formula $\psi_{\oplus}(x_{i,u,v})$ constructed in Lemma~\ref{lemmma: oplus}. 
    In addition, recall that $\upsilon_{d-1,k}(x_1,x_2,y_1,y_2)$ is the first-order formula constructed in Proposition~\ref{prop: small classes FOS classifiable}, and let $\chi_v$ denote the corresponding formula on words provided by Theorem~\ref{thm: FO correspondence}. 

    Set $\psi_{\mathrm{inc}}(k_1,\ell_1,k_2,\ell_2)$ to be the first-order formula
    \begin{align*}
        &\exists_{i,u,v\in \{1,2\}} \nu_{i,u,v}: \forall_{i\in \{1,2\}} \nu_i \text{ }\forall_{i,u,v\in \{1,2\}} \nu'_{i,u,v}\text{ } \forall t:\Bigl[ \\
        & \Gamma_d \land \rho_2(w_1) = \mathrm{inc} \land k_1\leq\ell_1<k_2\leq\ell_2\land(t\leq\ell_1\lor k_2\leq t) \\
        &\land\bigwedge_{i=1}^{2}\left((k_i\leq t\land t\leq\ell_i)\implies
        \left((L\in\rho_4(w_t)\iff t=k_i)\land(R\in\rho_4(w_t)\iff t=\ell_i)\right)\right)\\
        &\land\bigwedge_{i=1}^{2}\left(\left(\bigwedge_{u,v=1}^{2}k_i\leq\nu_{i,u,v}\leq\ell_i\right)
        \land\left((k_i\leq\nu_i\land\nu_i\leq\ell_i)\iff\chi_\square(\nu_i,\nu_{i,u,v})\right)\right)\\
        & \land \left(\neg \chi_{\upsilon}(\nu_{1,1,1},\nu_{1,2,1},\nu_{2,1,2},\nu_{2,2,2})\right)\\
        &\land\left((\chi_\oplus(\nu'_{i,u,v}))_{k_1,\ell_2}
        \implies\xi_{1,d}(\nu'_{1,1,2},\nu_{1,1,2})\right)\Bigr].
    \end{align*}
    Here $(\chi_\oplus(\nu'_{i,u,v}))_{k_1,\ell_2}$  is the first-order formula on words obtained by applying Theorem~\ref{thm: FO correspondence} to the formula $\chi_{\oplus}(\nu_{i,u,v})$, and including the restriction that for every bound and free variable $z$ must satisfy $k_1 \leq z \leq \ell_2$. Since $L \in \rho_4(w_{k_1})$ and $R \in \rho_4(w_{\ell_2})$, the points corresponding to the letters of the sub-word $w_{k_1}\dots w_{\ell_2}$ form a union of blocks in the decomposition. In particular, the condition $k_1 \leq z \leq \ell_2$ simply implies that $\alpha_w(z)$ belongs to the union of these blocks. 
    
    In order to check that $\psi_{\mathrm{inc}}$ satisfies the required properties, suppose that we have $w\models \psi_{\mathrm{inc}}(a_1,b_1,a_2,b_2)$. The second line implies that the profile of $w$ is $\mathrm{inc}$, and $w$ decomposes as $w = u_1\dots u_\ell$ with each $u_i$ satisfying the conditions described by~\eqref{eq: consistent conditions for I,D - 1} and ~\eqref{eq: consistent conditions for I,D - 2}. The third line implies that both sub-words $w_{a_i} \dots w_{b_i}$ must coincide with some $u_j$ for both $i \in \{1,2\}$, and the condition $b_1 < a_2 \land (\forall k: k \leq b_1 \lor a_2 \leq k)$ implies that there exists $j$ so that $w_{a_1}\dots w_{b_1} = u_j$ and $w_{a_2} \dots w_{b_2} = u_{j+1}$ - that is, these two subwords in fact correspond to consecutive subwords $u_j$ and $u_{j+1}$. Thus the first condition of Lemma is satisfied.  

    Fix values of $\nu_{i,u,v}$ which realize the fact that $w \models \psi_{\mathrm{inc}}(a_1,b_1,a_2,b_2)$. As usual, write $\sigma = f_d(w)$ and $\sigma_i = f_{d-1}(\rho_1(u_i))$. In addition, write $\tau$ for the monotone increasing permutation of length $\ell$. 

    Theorem~\ref{thm: FO correspondence} implies that the condition on the fourth line is equivalent to the following condition: the letter $w_{\nu_i}$ is contained in the sub-word $u_s$ for $s \in \{j,j+1\}$ if and only if the corresponding point $\alpha_w(\nu_i)$ is contained in the set $[\alpha_w(\nu_{s',1,1}),\alpha_w(\nu_{s',1,2})]_1 \times [\alpha_w(\nu_{s',2,1}), \alpha_w(\nu_{s',2,2})]_2$, where $s' = s - j  +1$. Equivalently, when viewing $\sigma$ as an inflation of the form $\sigma = \tau[\sigma_1,\dots,\sigma_\ell]$, the points in $\sigma_s$ are those in  $[\alpha_w(\nu_{s',1,1}),\alpha_w(\nu_{s',1,2})]_1 \times [\alpha_w(\nu_{s',2,1}), \alpha_w(\nu_{s',2,2})]_2$ for both $s \in \{j,j+1\}$, where $s' = s - j +1$. 

    Suppose that $\theta_1, \theta_2 \in \widetilde{\mathcal{O}}_k^{[d-1]}$ are chosen so that $\sigma_{j} \oplus \sigma_{j+1} = \theta_1 \oplus \theta_2$. The fifth line is equivalent to the condition that $\sigma_j \oplus \sigma_{j+1} \not \in \widetilde{\mathcal{O}}_k^{[d-1]}$, thus implying that both $\theta_1$ and $\theta_2$ must be non-empty. Note that the condition $\chi_{\oplus}(\nu'_{i,u,v})_{k_1,\ell_2}$ is satisfied exactly when the values of the variables $\nu'_{i,u,v}$'s are chosen so that $p \in \theta_i$ if and only if $p \in [\alpha_w(\nu'_{i,1,1}),\alpha_w(\nu'_{i,1,2})]_1 \times [\alpha_w(\nu'_{i,2,1}),\alpha_w(\nu'_{i,2,2})]_2$. This follows from the additional restriction imposed on the variable occurring in $\chi_{\oplus}(\nu'_{i,u,v})_{k_1,\ell_2}$, which ensures that the formula $\chi_\oplus$ is applied to the sub-permutation $\sigma_j \oplus \sigma_{j+1}$ (which consists of the points $\alpha_w(s)$ for $s \in [a_1,b_2]$). 
    
    For any such choice of variables the condition $\xi_{1,d}(\nu'_{1,1,2},\nu_{1,1,2})$ must be satisfied. By Theorem~\ref{thm: FO correspondence}, this is equivalent to having $\alpha_w(\nu'_{1,1,2})\leq_1\alpha_w(\nu_{1,1,2})$, which is equivalent to $|\theta_1|\leq|\sigma_j|$, as these points are the largest points in their respective blocks with respect to $\leq_1$. The proof of the converse direction and the construction for profile $\mathrm{dec}$ follow analogously. 
\end{proof}

\begin{proof}[Proof of Theorem~\ref{thm:Main O_K}]
    We construct the sentences $\Delta_d$ by induction. As the base
    case, note that $\Delta_1=\Gamma_1$ satisfies the required property. Suppose that $d\geq2$ and that $\Delta_{d-1}$ satisfies the conditions of the Theorem. For each $\tau\in\mathcal{Q}_k$, let $\Delta_{\tau,d}$ denote the first-order sentence obtained by applying Lemma~\ref{lemma: handling Q_ks for bijection} with $\hat\Delta:=\Delta_{d-1}$, and attaching $\exists$-quantifier with every free variable. 

    To handle with words whose profile is $\mathrm{inc}$ or $\mathrm{dec}$, we let $\delta(x,y)$ be the formula
    \begin{align*}
        x\leq y\ \land\ \forall c:\Bigl((x\leq c\land c\leq y)
        \implies\bigl((L\in\rho_4(w_c)\iff c=x)
        \land(R\in\rho_4(w_c)\iff c=y)\bigr)\Bigr).
    \end{align*}
    Note that $\delta(x,y)$ corresponds to detecting whether the sub-word $w_x\dots w_y$ corresponds to one of the words $u_i$ in the proper decomposition of $w$. We also write $\delta_2(a_1,b_1,a_2,b_2)$ for
    \[
        \delta(a_1,b_1)\land\delta(a_2,b_2)\land b_1<a_2
        \land\neg\exists c:\ b_1<c<a_2,
    \]
    which similarly corresponds to identifying that $w_{a_1}\dots w_{b_1}$ and $w_{a_2}\dots w_{b_2}$ correspond to two consecutive such sub-words $u_i$ and $u_{i+1}$. 
    
    Recall that we write $(\Delta_{d-1})_{a,b}$ for $\Delta_{d-1}$ applied to the projected sub-word $\rho_1(w_a\dots w_b)$. For $s\in\{\mathrm{inc},\mathrm{dec}\}$ define
    \begin{align*}
        \Delta_{s,d}:={}&
        \forall a\forall b:\bigl(\delta(a,b)
            \implies(\Delta_{d-1})_{a,b}\bigr)\\
        &\land\ \forall a_1\forall b_1\forall a_2\forall b_2:
        \bigl(\delta_2(a_1,b_1,a_2,b_2)
            \implies\psi_s(a_1,b_1,a_2,b_2)\bigr).
    \end{align*}
     Finally, we define
    \begin{align*}
        \Delta_d:={}&\Gamma_d
        \land\bigl(\rho_2(w_1)=\mathrm{inc}\implies\Delta_{\mathrm{inc},d}\bigr)\\
        &\land\bigl(\rho_2(w_1)=\mathrm{dec}\implies\Delta_{\mathrm{dec},d}\bigr)\\
        &\land\bigwedge_{\tau\in\mathcal{Q}_k}
            \bigl(\rho_2(w_1)=\tau\implies\Delta_{\tau,d}\bigr).
    \end{align*}

    Given $w$ with $w\models\Gamma_d$, suppose that $w$ has profile $\tau$ and properly decomposes as  $w=u_1\dots u_\ell$. Write $\sigma_i=f_{d-1}(\rho_1(u_i))$, and we choose $\tau'$ so that $\tau' = \tau$ when $\tau\in\mathcal{Q}_k$, and otherwise $\tau'$ is the monotone permutations of length $\ell$ whose direction is given by $\tau$. 
    \begin{claim}\label{claim: central for bijection}
        We have $w\models\Delta_d$ if and only if 
        \begin{enumerate}
            \item for every $i$ we have $\rho_1(u_i)\models\Delta_{d-1}$,
            \item the expression $\tau'[\sigma_1,\dots,\sigma_\ell]$ is the left-greedy decomposition of the respective permutation, and
        \item $\tau\neq \mathrm{dec}$ when $\ell=1$.
        \end{enumerate} 
    \end{claim}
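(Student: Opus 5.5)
We argue by splitting on the profile $\tau$ of $w$, using that $w\models\Gamma_d$. Then exactly one of the three implications in $\Delta_d$ is active. So $w\models\Delta_d$ if and only if $w\models\Delta_{\tau,d}$ when $\tau\in\mathcal{Q}_k$, and $w\models\Delta_{s,d}$ when $\tau=s\in\{\mathrm{inc},\mathrm{dec}\}$. Condition 3 is automatic in every case. When $\tau\in\mathcal{Q}_k$ we have $\ell=|\tau|\geq 2$, since non-monotone permutations have length at least $3$. When $\tau=\mathrm{dec}$, the definition of $g_d$ forces $\ell\geq 2$ because $g_d(w)\neq\star$. It therefore remains to show that the active sentence is equivalent to conditions 1 and 2.

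\textbf{Case $\tau\in\mathcal{Q}_k$.} Here $\Delta_{\tau,d}$ is $\Delta_\tau(\nu_{i,u,v})$ from Lemma~\ref{lemma: handling Q_ks for bijection} with $\hat\Delta=\Delta_{d-1}$, existentially closed. By part 2 of that lemma, $w\models\Delta_{\tau,d}$ holds exactly when two things are true:
\begin{enumerate}
\item[(a)] $\rho_1(u_i)\models\Delta_{d-1}$ for every $i$, which is condition 1;
\item[(b)] $\tau[\sigma_1,\dots,\sigma_\ell]$ is the left-greedy decomposition of $f_d(w)$, witnessed by some choice of the constants $c_{i,u,v}$.
\end{enumerate}
For the converse direction we pick the witnesses explicitly. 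Since $\alpha_w$ is a bijection, we take $c_{i,u,v}=\alpha_w^{-1}$ of the extremal points of $\sigma_i$. Part 1 of the lemma then gives $f_d(w)\models\psi_{\tau,<_L}(\alpha_w(c_{i,u,v}))$, as required.

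\textbf{Case $\tau=\mathrm{inc}$ (and symmetrically $\mathrm{dec}$).} The formula $\delta(a,b)$ holds exactly when $w_a\dots w_b$ is one of the blocks $u_i$. Indeed, the $L$/$R$ markers inside $[a,b]$ occur only at the two ends, so the decomposition of Claim~\ref{claim: I,D decomposes properly} forces $[a,b]$ to be a single block. Consequently, the first conjunct of $\Delta_{s,d}$ says precisely that $\rho_1(u_i)\models\Delta_{d-1}$ for all $i$, which is condition 1.

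Likewise, $\delta_2$ holds exactly when the two intervals are consecutive blocks $u_j,u_{j+1}$. By Lemma~\ref{lemma: sum decomposition lemma}, the second conjunct then says the following for every $j<\ell$: whenever $\sigma_j\oplus\sigma_{j+1}=\theta_1\oplus\theta_2$ with $\theta_i\in\widetilde{\mathcal{O}}_k^{[d-1]}$ (possibly $\theta_2$ empty), we have $|\theta_1|\leq|\sigma_j|$. By Lemma~\ref{lemma: albert left greedy}, this is exactly the statement that $\sigma_1\oplus\dots\oplus\sigma_\ell$ is the left-greedy decomposition, which is condition 2. The $\mathrm{dec}$ case is identical with $\ominus$ and $\psi_{\mathrm{dec}}$.

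\textbf{Main obstacle.} The delicate point is the case where the second block can be absorbed, that is, $\sigma_j\oplus\sigma_{j+1}\in\widetilde{\mathcal{O}}_k^{[d-1]}$ and $\theta_2$ is empty. Lemma~\ref{lemma: sum decomposition lemma} encodes this through the fifth line of $\psi_{\mathrm{inc}}$, namely $\neg\chi_\upsilon$. We must check that this line forbids such a merge, since otherwise it would also fail to certify the decomposition. This is needed so that "$\psi_s$ holds for all consecutive pairs" matches the condition of Lemma~\ref{lemma: albert left greedy} exactly, including this degenerate $\theta_2$.

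A second point to verify is how the profile of length one interacts with the left-greedy order. A single block is encoded only with profile $\mathrm{inc}$, which is consistent with condition 3. Also, $<_L$ minimises $|\tau'|$ first, so we must confirm that no monotone decomposition with $\ell\geq 2$ can be beaten by a shorter non-monotone profile. This holds because, by Proposition~2.3 of~\cite{albert2015inflations}, the minimal profile is unique under containment, and a monotone permutation contains no permutation of $\mathcal{Q}_k$.
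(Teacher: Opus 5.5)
Your proof is correct and follows the paper's argument. You split on the profile, invoke Lemma~\ref{lemma: handling Q_ks for bijection} when $\tau\in\mathcal{Q}_k$, combine the first conjunct of $\Delta_{s,d}$ with Lemmas~\ref{lemma: sum decomposition lemma} and~\ref{lemma: albert left greedy} for monotone profiles, and read condition 3 off $\Gamma_d$. The two points you flag as obstacles are already built into the statements of those lemmas: condition 2 of Lemma~\ref{lemma: sum decomposition lemma} explicitly allows $\theta_2$ to be empty, and Lemma~\ref{lemma: albert left greedy} is phrased in terms of the left-greedy decomposition itself. So neither point needs additional work at this stage.
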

    \begin{proof}
        For $\tau\in\mathcal{Q}_k$ the statement follows immediately from Lemma~\ref{lemma: handling Q_ks for bijection}. For $\tau=\mathrm{inc}$, the second line of $\Delta_{\mathrm{inc},d}$ ensures that for every $i$, we cannot write $\sigma_i \oplus \sigma_{i+1} = \theta_1 \oplus \theta_2$ with $|\theta_1| > |\sigma_i|$ (with all these permutations contained in $\widetilde{\mathcal{O}}_{k}^{[d-1]}$), and thus Lemma~\ref{lemma: albert left greedy} ensures that the expression given in the second condition indeed corresponds to the left-greedy decomposition. Finally, the first line of $\Delta_{\mathrm{inc},d}$ ensures that for each $i$, the sub-word $u_i$ is uniquely determined for the given unique $\sigma_i$. An analogous argument applies when $\tau= \mathrm{dec}$; in this case the third condition follows from the last line of $\Gamma_d$.
    \end{proof}
    \noindent Claim~\ref{claim: central for bijection} implies the uniqueness and existence of $w \in \Sigma_d^*$ with $w \models \Delta_d$ and $f_d(w) = \sigma$ for any $\sigma \in \widetilde{\mathcal{O}}_k^{[d]}$. Set $L_{d} = \{w \in \Sigma_{d}^*: w \models \Delta_{d}\}$. This language is regular by Theorem~\ref{thm: closure for regular languages}, and Theorem~\ref{thm: FO correspondence} gives the required logical correspondences for the restriction of $f_{d}$ to $L_{d}$. 
\end{proof}
\subsection{Constructing a finite deterministic automaton capturing non-$\star$ values}
For each $d \geq 1$, we construct an automaton $\mathcal{A}_d=(Q_d,\Sigma_d,\delta_d,e_d,F_d)$ which recognizes precisely those words $w$ with $g_d(w) \neq \star$. The construction is by induction on $d$. Before we start with the base case, we make the following observation which turns out to be handy during the inductive step. 
 \begin{lem} \label{lemma: unique single letter nonstar}
     For each $d \geq 1$ there exists a unique letter $v \in \Sigma_d$ for which $g_d(v) \neq \star$. 
 \end{lem}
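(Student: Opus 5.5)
We argue by induction on $d$, writing down the unique letter explicitly at each level. The property we track is this: for a single-letter word $v$, $g_d(v)\neq\star$ holds exactly when $v$ encodes the one-point permutation $1$.

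For the base case $d=1$, we go through the definition of $g_1$ on the letters of $\Sigma_1$. First, $g_1(a_{\mathrm{inc}})=1\neq\star$, by rule (i) with $n=1$. Second, $g_1(a_{\mathrm{dec}})=\star$, which rule (i) states explicitly. Third, take a letter $a_{\tau,i}$ with $\tau\in\mathcal{Q}_k$. Every such $\tau$ is non-monotone, so $|\tau|\geq 3$, and the single letter $a_{\tau,i}$ is not of the form $a_{\tau,1}\dots a_{\tau,|\tau|}$. Rule (iii) then gives $g_1(a_{\tau,i})=\star$. Hence the unique letter for $d=1$ is $v_1=a_{\mathrm{inc}}$.

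For the inductive step, let $d\geq 2$ and assume $v_{d-1}\in\Sigma_{d-1}$ is the unique letter with $g_{d-1}(v_{d-1})\neq\star$. Fix a letter $b\in\Sigma_d$. The word $b$ has length one, so it is automatically consistent. We split into cases according to its profile $\rho_2(b)$.

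\textbf{Profile $\tau\in\mathcal{Q}_k$.} Proper decomposition requires $b=u_1\dots u_{|\tau|}$, and since $|\tau|\geq 3$ at least one $u_j$ is empty. That gives $\overline{\theta}_j=g_{d-1}(\varepsilon)=\star$, so $g_d(b)=\star$.

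\textbf{Profile $\mathrm{inc}$ or $\mathrm{dec}$.} The only possible decomposition is $\ell=1$ with $u_1=b$ and $k_1=1$. Conditions~\eqref{eq: consistent conditions for I,D - 1} and~\eqref{eq: consistent conditions for I,D - 2} then force $\rho_4(b)=\{L,R\}$; any other $\rho_4(b)$ means $b$ does not decompose properly, so $g_d(b)=\star$. The profile $\mathrm{dec}$ with $\ell=1$ is sent to $\star$ by definition. This leaves only the profile $\mathrm{inc}$ with $\rho_4(b)=\{L,R\}$. In that case $g_d(b)=(1,g_{d-1}(\rho_1(b)))$, and this is non-$\star$ exactly when $g_{d-1}(\rho_1(b))\neq\star$. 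By the induction hypothesis, that happens exactly when $\rho_1(b)=v_{d-1}$.

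Combining the cases, the unique letter is $v_d=a_{\mathrm{inc},\{L,R\},j}$, where $a_j=v_{d-1}$ in the enumeration of $\Sigma_{d-1}$. This letter exists in $\Sigma_{d,m}$, which completes the induction.

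There is no real obstacle here; the only care needed is bookkeeping. Two points deserve explicit mention: every $\tau\in\mathcal{Q}_k$ has length at least $3$, and the empty word satisfies $g_{d-1}(\varepsilon)=\star$.
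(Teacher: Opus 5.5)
Your proof is correct and takes essentially the same route as the paper. Both argue by induction on $d$ with a case split on the profile of the single letter: a $\mathcal{Q}_k$-profile fails because some block is empty, $\mathrm{dec}$ fails because $\ell=1$, and $\mathrm{inc}$ forces $\rho_4(b)=\{L,R\}$ and reduces to the inductive hypothesis through $\rho_1$. The only difference is that you state existence of the letter $a_{\mathrm{inc},\{L,R\},j}$ explicitly, which the paper leaves implicit.
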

 \begin{proof}
     For the base case $d = 1$, we note that $v = a_{\mathrm{inc}}$ is the unique such letter. For the inductive step, suppose that $g_d(v) \neq \star$. Note that $\tau:= \rho_2(v) \in \mathcal{Q}_k$ is impossible, as then $v$ (viewed as a word) must contain a letter with $\rho_3$-value $i$ for each $1 \leq i \leq |\tau|$. This is clearly impossible as $|\tau| \geq 2$ as $v$ consists of just a single letter. If $\rho_2(v) = \mathrm{dec}$, then $v$ viewed as a word must contain at least $2$ distinct positions whose $\rho_4$-image contains $L$, which is again impossible.
     
     Finally, if $\rho_2(v) = \mathrm{inc}$, we must have $\rho_4(v) =\{L,R\}$. Thus $g_d(v) \neq \star$ if and only if $g_{d-1}(\rho_1(v)) \neq \star $ by the construction of $g_d$. Thus $\rho_1(v)$ is uniquely determined, and given that $\rho_2(v) = \mathrm{inc}$ and $\rho_4(v) = \{L,R\}$, the letter $v$ itself is uniquely determined.
 \end{proof}

 We set $Q_1 = \{e_1,\star,q_{\mathrm{inc}},q_{\mathrm{dec}}^*,q_{\mathrm{dec}}\} \cup \bigcup_{\tau \in \mathcal{Q}_k} \{q_{\tau,i}: 1\leq i \leq |\tau|\}$ to be the set of states with $e_1$ to be the designated starting state, $\star$ corresponding to a rejection state (to which we transition if some prefix of $w$ already implies that $g_d(w) = \star$), and the set of accepting states is given by $F_1 = \{q_{\mathrm{inc}},q_{\mathrm{dec}}\} \cup \{q_{\tau,|\tau|}: \tau \in \mathcal{Q}_k\}$. We define the transition function $\delta_1$ by setting 
 \[\begin{aligned}
 \delta_1(q_{\tau,i-1},a_{\tau,i})&=q_{\tau,i}\\
 \delta_1(e_1,a_{\mathrm{inc}})=\delta_1(q_{\mathrm{inc}},a_{\mathrm{inc}})&=q_{\mathrm{inc}},\\
 \delta_1(e_1,a_{\mathrm{dec}})&=q_{\mathrm{dec}}^*,&
 \delta_1(q_{\mathrm{dec}}^*,a_{\mathrm{dec}})=\delta_1(q_{\mathrm{dec}},a_{\mathrm{dec}})&=q_{\mathrm{dec}},
\end{aligned}\]
where we use the convention that $q_{\tau,0}=e_1$. Finally, we define all other transition relations $\delta_1(q,v)$ not specified above to map to $\star$. See Figure~\ref{fig: A1 figure} for an illustration of $\mathcal{A}_1$. 
\begin{figure} 
    \centering
    \begin{tikzpicture}[>=stealth,thick,font=\small,
  state/.style={circle,draw,minimum size=10mm,inner sep=2pt},
  final/.style={state,double,double distance=1.5pt}]
  \node[state] (e) at (0,0) {$e_1$};
  \node[above] at (e.north) {initial state};
  \node[final] (inc) at (3,2) {$q_{\mathrm{inc}}$};
  \node[state] (dec0) at (3,0) {$q_{\mathrm{dec}}^*$};
  \node[final] (dec) at (6,0) {$q_{\mathrm{dec}}$};
  \node[state] (t1) at (3,-2) {$q_{\tau,1}$};
  \node[state] (t2) at (5.5,-2) {$q_{\tau,2}$};
  \node (dots) at (7.5,-2) {$\cdots$};
  \node[final] (tm) at (9.5,-2) {$q_{\tau,|\tau|}$};
  \node[right] at (10,-2)
    {path corresponding to $a_{t,1}\dots a_{\tau,|\tau|}$};
  \draw[->] (e) -- node[above left]{$a_I$} (inc);
  \draw[->] (inc) to[out=60,in=120,looseness=6] (inc);
  \node[right] at (4,2)
    {path corresponding to $a_{\mathrm{inc}}^n$, $n\geq1$};

  \draw[->] (e) -- node[below]{$a_D$} (dec0);
  \draw[->] (dec0) -- node[below]{$a_D$} (dec);
  \draw[->] (dec) to[out=60,in=120,looseness=6] (dec);
  \node[right] at (7,0) {path corresponding to $a_{\mathrm{dec}}^n$, $n\geq2$};

  \draw[->] (e) -- node[below left] {$a_{\tau,1}$} (t1);
  \draw[->] (t1) -- node[above] {$a_{\tau,2}$} (t2);
  \draw[->] (t2) -- (dots);
  \draw[->] (dots) -- node[above] {$a_{\tau,|\tau|}$} (tm);

\end{tikzpicture}
    \caption{Illustration of $\mathcal{A}_1$. Double circles are accepting states and $\star$ is omitted.}
    \label{fig: A1 figure}
\end{figure}

We now move on to the inductive step of the construction. Suppose that $d \geq 2$ and that we have constructed the automaton $\mathcal{A}_{d-1}$. Write $Q_{d-1}' = Q_{d-1}\setminus \{e_{d-1},\star\}$. Define the sets of states 
\[
\begin{aligned}
 B_{\tau,i}&=\{q_{r,\tau,i}:r\in Q_{d-1}'\}
 &&(\tau\in\mathcal Q_k,\ 1\le i\le|\tau|),\\
 B_{\alpha,\beta}&=\{q_{r,\alpha,\beta}:r\in Q_{d-1}'\}
 &&(\alpha\in\{\mathrm{inc},\mathrm{dec}\},\ \beta\subseteq \{L,R\}, \beta \neq \{L,R\}),\\
 B_{\alpha,\{L,R\}}&=\{q_{\alpha,LR}\}
 &&(\alpha\in\{\mathrm{inc},\mathrm{dec}\}).
\end{aligned}
\]
Note that $B_{\alpha,\{L,R\}}$ in particular consists of a single state. In addition, for every $\beta\subseteq\{L,R\}$, let $B_{\mathrm{dec},\beta}^*$ be a disjoint copy of $B_{\mathrm{dec},\beta}$; in particular, $B_{\mathrm{dec},\{L,R\}}^*=\{q_{\mathrm{dec},LR}^*\}$.

Write $H_{\tau} = \bigcup_{i=1}^{|\tau|} B_{\tau,i}$ and $H_{\alpha} = \bigcup_{\beta \subseteq \{L,R\}} B_{\alpha,\beta}$. In addition, put $H_{\mathrm{dec}}^*=\bigcup_{\beta\subseteq\{L,R\}}B_{\mathrm{dec},\beta}^*$ and finally we define
\[
Q_d=\{e_d,\star\}\cup\bigcup_{\tau\in\mathcal Q_k}H_\tau
       \cup H_{\mathrm{inc}}\cup H_{\mathrm{dec}}\cup H_{\mathrm{dec}}^*.
\]

The structure of the automaton splits naturally into parts based on the profile $\rho_2(w) \in \mathcal{Q}_k \cup \{\mathrm{inc},\mathrm{dec}\}$, and thus after the starting state, all the transitions occur inside the blocks $H_\alpha$ with $\alpha \in \mathcal{Q}_k \cup \{\mathrm{inc},\mathrm{dec}\}$, inside $H_{\mathrm{dec}}^*$ or from $H_{\mathrm{dec}}^*$ to $H_{\mathrm{dec}}$. We now describe the transition inside the blocks $H_{\alpha}$, and describe the accepting states $F_d$. 

We start by describing the transition in the case when $\alpha \in \mathcal{Q}_k$. First, if $v = a_{\tau,i,j}$, we set $\delta_d(e_d,v) = q_{u,\tau,1}$ with $u = \delta_{d-1}(e_{d-1},\rho_1(v))$ whenever $i = 1$ and $u \neq \star$, and $\delta_d(e_d,v) = \star$ whenever $i \geq 2$ or $u = \star$. 

Now consider a state $q:=q_{r,\tau,i} \in B_{\tau,i}$ with $1 \leq i \leq |\tau|$. Given a letter $v$, we set $\delta_d(q,v) = \star$ whenever $\rho_2(v) \neq \tau$ or $\rho_3(v) \not \in \{i,i+1\}$. If $\rho_2(v) = \tau$ and $\rho_3(v) = i$, we write $u =\delta_{d-1}(r,\rho_1(v))$ and set $\delta_d(q,v) = q_{u,\tau,i}$ whenever $u \neq \star$, and $\delta_d(q,v) = \star$ if $u = \star$. If $\rho_2(v) = \tau$ and $\rho_3(v) = i+1$, we write $u =\delta_{d-1}(e_{d-1},\rho_1(v))$. If $r \in F_{d-1}$ (i.e.\ $r$ is an accepting state in $\mathcal{A}_{d-1}$) and $u \neq \star$, we set $\delta_d(q,v) = q_{u,\tau,{i+1}}$. Finally, we set $F_{d} \cap H_{\tau}$ to consist of those states $q_{r,\tau,|\tau|}$ with $r \in F_{d-1}$. In particular, $F_{d} \cap H_{\tau} \subseteq B_{\tau,|\tau|}$. 

Now suppose that $\alpha = \mathrm{inc}$. We write $E_{\mathrm{inc}} = \{q_{\mathrm{inc},LR},e_d\} \cup B_{\mathrm{inc},\{R\}}$, and we call the set $E_{\mathrm{inc}}$ end states. We set $F_d \cap H_{\mathrm{inc}}$ to be the set consisting of $q_{\mathrm{inc},LR}$ together with the states that are of the form $q_{r,\mathrm{inc},\{R\}}$ for some $r \in F_{d-1}$. 

We now move on to define the transition rules. While the definition of the transition rules below is somewhat tedious, we remark that all the relations are inherited from depth $d-1$, and with the relations between the blocks corresponding to $\emptyset$, $L$, $R$ and $\{L,R\}$ being consistent so that each accepted $w$ decomposes as $w = u_1\dots u_s$ so that each $\rho_1(u_i)$ is accepted by $\mathcal{A}_{d-1}$, the left end-point of each $u_i$ is the unique letter receiving label $L$ and the right end-point is the unique letter receiving label $R$. This matches exactly with the definition of $g_d$. For illustration, see Figure~\ref{fig:Ad increasing block}. 

\begin{figure}
\centering
\begin{tikzpicture}[>=stealth,thick,font=\small,
  block/.style={draw,rounded corners,minimum width=25mm,
                minimum height=10mm},
  lab/.style={font=\scriptsize,fill=white,inner sep=1.5pt}]

  \node[draw,circle] (e) at (0,0) {$e_d$};
  \node[below] at (e.south) {initial state};
  \node[draw,circle] (LR) at (3,0) {$q_{\mathrm{inc},LR}$};

  \node[block] (L) at (7,2.3) {$B_{\mathrm{inc},\{L\}}$};
  \node[block] (O) at (7,0) {$B_{\mathrm{inc},\emptyset}$};
  \node[block] (R) at (7,-2.3) {$B_{\mathrm{inc},R}$};

  \draw[->] (e.north)
    .. controls (0,3.2) and (4,3.2) ..
    node[lab,above,pos=.55] {$L$} (L.west);
  \draw[->] (e) -- node[lab,above] {$LR$} (LR);

  \draw[->] (L) -- node[lab,right] {$\emptyset$} (O);
  \draw[->] (O) -- node[lab,right] {$R$} (R);

  \draw[->] (O.north east)
    .. controls +(0.45,0.3) and +(0.45,-0.3) ..
    node[lab,right,pos=.5] {$\emptyset$} (O.south east);

  \draw[->] (L.east)
    .. controls (10,2.3) and (10,-2.3) ..
    node[lab,right,pos=.5] {$R$} (R.east);

  \draw[->] (LR) -- node[lab,above left] {$L$} (L);
  \draw[->] (R.south east)
    .. controls (12,-3.3) and (12,3.3) ..
    node[lab,right,pos=.5] {$L$} (L.north east);
  \draw[->] (R) -- node[lab,below left] {$LR$} (LR);
  \draw[->] (LR) to[out=65,in=115,looseness=3]
    node[lab,above] {$LR$} (LR);
\end{tikzpicture}
    \caption{Illustration of the states of $H_\mathrm{inc}$ in $\mathcal{A}_d$. The labels on the directed edges between the blocks are chosen to represent the $\rho_4$-images of the labels along the directed edges between the blocks.}
    \label{fig:Ad increasing block}
\end{figure}

Throughout the definition, we use the convention that whenever $u = \star$, the state $q_{u,\alpha,\beta}$ is defined to be the rejecting state $\star$. If $q \in H_{\mathrm{inc}}$ and $v\in \Sigma_d$ with $\rho_2(v) \neq \mathrm{inc}$, we set $\delta_d(q,v) = \star$. 

First consider the case when $q\in E_{\mathrm{inc}}$. If $q\not \in (F_d\cap E_{\mathrm{inc}})\cup\{e_d\}$, we set $\delta_d(q,v) = \star$ for every $v$. Now suppose that $q \in (F_d \cap E_{\mathrm{inc}}) \cup \{e_d\}$. In this case, for letters $v$ with $\rho_2(v) = \mathrm{inc}$, we set $u = \delta_{d-1}(e_{d-1},\rho_1(v))$ and define 
\begin{align}\label{eq: monotone delta_d definition}
\delta_d(q,v)=
\begin{cases}
q_{u,\mathrm{inc},\{L\}},&\rho_4(v)=\{L\},\quad u\neq\star,\\
q_{\mathrm{inc},LR},&\rho_4(v)=\{L,R\},\quad u\in F_{d-1},\\
\star,&\text{otherwise}.
\end{cases}        
\end{align}

Now suppose that $q = q_{r,\mathrm{inc},\beta} \in B_{\mathrm{inc},\emptyset} \cup B_{\mathrm{inc},\{L\}}$. Given $v \in \Sigma_d$ with $\rho_2(v) = \mathrm{inc}$, if $L \not \in \rho_4(v)$ we set $u = \delta_{d-1}(r,\rho_1(v))$ and define $\delta_d(q,v) = q_{u,\mathrm{inc},\rho_4(v)}$. If $L\in \rho_4(v)$, we define $\delta_d(q,v) =\star$.

Finally, we consider $H_{\mathrm{dec}}^*$ and $H_{\mathrm{dec}}$. For an illustration, see Figure~\ref{fig: decreasing Ad}. The transition rules inside $H_{\mathrm{dec}}$ are defined identically as in $H_{\mathrm{inc}}$. The transition $\delta_d(e_d,v)$ for letters $v$ with $\rho_2(v) =\mathrm{dec}$ is defined as in~\eqref{eq: monotone delta_d definition}, so that the transition occurs to the appropriate copy of the state located in $H_{\mathrm{dec}}^*$. Inside $H_{\mathrm{dec}}^*$, we keep all the directed edges corresponding to those in $H_{\mathrm{inc}}$ whose starting point does not belong to the set $E_{\mathrm{dec}}^*:= \{q_{\mathrm{dec},LR}^*\}\cup B^*_{\mathrm{dec},\{R\}}$. In addition, we add a copy of every directed edge corresponding to one in $H_{\mathrm{inc}}$ whose corresponding starting state belongs to $E_{\mathrm{dec}}^*$, and with the end point chosen to be the corresponding state in $H_{\mathrm{dec}}$. Finally, we set $F_d \cap H_{\mathrm{dec}}$ to consist of the state $q_{\mathrm{dec},LR}$ together with all the states $q_{r,\mathrm{dec},\{R\}}$ with $r \in F_{d-1}$. We also set $F_d \cap H_{\mathrm{dec}}^* = \emptyset$. Finally, no state that we have not yet specified as an accepting state is an accepting state, and every transition not specified above leads to $\star$, including every transition from $\star$.
\begin{figure}
    \centering
    
\begin{tikzpicture}[>=stealth,thick,font=\scriptsize,
  x=.95cm,y=.95cm,
  block/.style={draw,rounded corners=2pt,
                minimum width=12mm,minimum height=6mm,
                inner sep=2pt},
  singleton/.style={draw,circle,minimum size=9mm,inner sep=2pt},
  lab/.style={fill=white,inner sep=1.5pt},
  clear/.style={preaction={draw=white,line width=3.5pt}}]

  \draw[dashed,rounded corners,gray]
    (.5,2.9) rectangle (5.8,-2.15);
  \draw[dashed,rounded corners,gray]
    (6.2,2.9) rectangle (12.1,-2.15);
  \node at (3.9,2.55) {$H_{\mathrm{dec}}^*$};
  \node at (9.2,2.55) {$H_{\mathrm{dec}}$};

  \node[singleton] (e) at (-.8,0) {$e_d$};
  \node[below] at (e.south) {start};

  \node[singleton] (LRs) at (1.6,0) {$q_{LR}^{*}$};
  \node[block] (Ls) at (4,1.4) {$B_{\{L\}}^*$};
  \node[block] (Os) at (4,0) {$B_{\emptyset}^*$};
  \node[block] (Rs) at (4,-1.4) {$B_R^*$};

  \node[singleton] (LRt) at (7.4,0) {$q_{LR}$};
  \node[block] (Lt) at (9.8,1.4) {$B_{\{L\}}$};
  \node[block] (Ot) at (9.8,0) {$B_{\emptyset}$};
  \node[block] (Rt) at (9.8,-1.4) {$B_R$};

  \foreach \s/\x in {s/4,t/9.8} {
    \draw[->] (L\s.south) --
      node[lab,right] {$\emptyset$} (O\s.north);
    \draw[->] (O\s.south) --
      node[lab,right] {$R$} (R\s.north);
    \draw[->] (O\s.north west)
      .. controls +(-.3,.2) and +(-.3,-.2) ..
      node[lab,left,pos=.5] {$\emptyset$}
      (O\s.south west);
    \draw[->] (L\s.south east)
      .. controls (\x+1.55,.9) and (\x+1.55,-1.4) ..
      node[lab,right,pos=.5] {$R$} (R\s.east);
  }

  \draw[->] (e.east) --
    node[lab,above] {$LR$} (LRs.west);
  \draw[->,rounded corners=5pt]
    (e.north) -- (-.8,1.4) --
    node[lab,above,pos=.7] {$L$} (Ls.west);

  \draw[->,rounded corners=5pt]
    (LRs.south) -- (1.6,-2.75) --
    node[lab,below] {$LR$}
    (7.4,-2.75) -- (LRt.south);

  \draw[->] (Rs.south east)
    .. controls (5.8,-1.9) and (6.4,-.9) ..
    node[lab,below,pos=.5] {$LR$} (LRt.south west);

  \draw[->,clear,rounded corners=5pt]
    (Rs.west) -- (2.8,-1.4) -- (2.8,2.05) --
    node[lab,above] {$L$}
    (8.7,2.05) -- (Lt.north west);

  \draw[->,clear,rounded corners=5pt]
    (LRs.north) -- (1.6,3.55) --
    node[lab,above] {$L$}
    (12.5,3.55) -- (12.5,2.05) --
    (10.8,2.05) -- (Lt.north east);

  \draw[->] (LRt.north east) --
    node[lab,above left] {$L$} (Lt.south west);

  \draw[->] (Rt.west) --
    node[lab,below left] {$LR$} (LRt.south east);

  \draw[->] (LRt.north west)
    .. controls +(-.35,.65) and +(.35,.65) ..
    node[lab,above,pos=.5] {$LR$} (LRt.north);

  \draw[->] (Rt.south east)
    .. controls (12.2,-2) and (12.2,1.4) ..
    node[lab,right,pos=.5] {$L$} (Lt.east);

\end{tikzpicture}
    \caption{Illustration of the states of $H_\mathrm{dec}^*$ and $H_\mathrm{dec}$ in $\mathcal{A}_d$. Again, the labels on the directed edges between the blocks are chosen to represent the $\rho_4$-images of the labels along the directed edges between the blocks.}
    \label{fig: decreasing Ad}
\end{figure}

Intuitively speaking, given a word $w$ whose letters have type $\mathrm{dec}$, we first enter $H_{\mathrm{dec}}^*$. Inside $H_{\mathrm{dec}}^*$, we cycle through once starting from a state labelled with $L$, and ending with state labelled with $R$ (which includes possibly visiting only the state $q_{\mathrm{dec},LR}^*$). If the state labelled with $R$ corresponds to an accepting state of $\mathcal{A}_{d-1}$ (or is the state $q_{\mathrm{dec},LR}^*$), and the next letter is labelled with $\{L\}$ (or $\{L,R\}$), we move to the appropriate state in $H_{\mathrm{dec}}$. This corresponds precisely to the automata accepting precisely those words $w$ all of whose letters have type $\mathrm{dec}$, which decompose properly as $w = u_1\dots u_s$, and passing through the part corresponding to $H_{\mathrm{dec}}^*$ ensures that we must have $s \geq 2$. 

We state that the automaton $\mathcal{A}_d$ accepts precisely those words $w \in \Sigma_d^*$ with $g_d(w) \neq \star$.  
\begin{lem}\label{lem:Ad-nonstar}
The automaton $\mathcal A_d$ accepts exactly those $w$ with $g_d(w) \neq \star$.
\end{lem}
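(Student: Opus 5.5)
We argue by induction on $d$, proving the slightly stronger statement that $\mathcal{A}_d$ accepts exactly those $w$ with $g_d(w)\neq\star$, and that every word leading from $e_d$ to a non-$\star$ state is a prefix of some accepted word. For $d=1$ this is a direct inspection of $\mathcal{A}_1$ against the definition of $g_1$. The only words not sent to $\star$ by $g_1$ are $a_{\mathrm{inc}}^n$ with $n\ge1$, $a_{\mathrm{dec}}^n$ with $n\ge2$, and $a_{\tau,1}\dots a_{\tau,|\tau|}$ with $\tau\in\mathcal{Q}_k$. These are exactly the labels of the three families of paths from $e_1$ to $F_1$ in Figure~\ref{fig: A1 figure}, and every other transition goes to $\star$, which is absorbing. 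The empty word is rejected because $e_1\notin F_1$, matching $g_1(\varepsilon)=\star$.

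For the inductive step, first note that every transition out of $e_d$ into a block $H_\alpha$ (or $H^*_{\mathrm{dec}}$) requires $\rho_2(v)=\alpha$. All transitions inside these blocks, and from $H^*_{\mathrm{dec}}$ to $H_{\mathrm{dec}}$, also require the same profile, and otherwise go to $\star$. Hence an accepted word is consistent, and we may treat each profile $\alpha$ separately.

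\textbf{Case $\tau\in\mathcal{Q}_k$.} Reaching a state of $B_{\tau,i}$ forces $\rho_3$ to be nondecreasing along $w$ with increments of at most one, starting at $1$. Within a run of letters with the same $\rho_3$-value $i$, the first coordinate $r$ of $q_{r,\tau,i}$ equals $\delta_{d-1}(e_{d-1},\rho_1(u_i'))$, where $u_i'$ is the prefix of $u_i$ read so far. This is a straightforward sub-induction on the length of the run. A jump to $\rho_3$-value $i+1$ is allowed only if the current $r\in F_{d-1}$. Acceptance happens in $B_{\tau,|\tau|}$ with $r\in F_{d-1}$. Therefore $w$ is accepted iff $w=u_1\dots u_{|\tau|}$ with each $u_i$ nonempty and each $\rho_1(u_i)$ accepted by $\mathcal{A}_{d-1}$. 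By induction, this is exactly the condition that $w$ decomposes properly and every $g_{d-1}(\rho_1(u_i))\neq\star$, i.e.\ $g_d(w)\neq\star$.

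\textbf{Case $\mathrm{inc}$.} We prove by induction on prefixes the following invariant. After reading $u_1\dots u_{j-1}u'$, where $u'$ is a nonempty proper-so-far prefix of the current block $u_j$, the automaton is in $q_{r,\mathrm{inc},\beta}$ with $r=\delta_{d-1}(e_{d-1},\rho_1(u'))$ and $\beta$ equal to $\rho_4$ of the last letter. The only exception is a single letter with label $\{L,R\}$, in which case the automaton is in $q_{\mathrm{inc},LR}$. The rules permit $L$ only from end states $E_{\mathrm{inc}}$ that are accepting (or from $e_d$), and they forbid $L$ from $B_\emptyset\cup B_{\{L\}}$. This encodes conditions~\eqref{eq: consistent conditions for I,D - 1} and~\eqref{eq: consistent conditions for I,D - 2}. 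Accepting an end state means that the block just closed has $\rho_1(u_j)$ accepted by $\mathcal{A}_{d-1}$. For a one-letter block, this uses Lemma~\ref{lemma: unique single letter nonstar}, which says such a block is accepted iff $\rho_1(v)$ is the unique non-$\star$ letter; the rule $u\in F_{d-1}$ in~\eqref{eq: monotone delta_d definition} checks exactly this.

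\textbf{Case $\mathrm{dec}$.} The automaton is the $\mathrm{inc}$ automaton with the first block forced through the copy $H^*_{\mathrm{dec}}$, which contains no accepting states. Every path from $e_d$ to $F_d\cap H_{\mathrm{dec}}$ must therefore start a second block, giving $\ell\ge2$. Conversely, any properly decomposing word with $\ell\ge2$ and valid blocks traces such a path.

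The main obstacle is the bookkeeping in the monotone case. One must check that the invariant is preserved by every transition type in Figure~\ref{fig:Ad increasing block}, and in particular that the test ``previous block valid'' is performed exactly at block boundaries. This is where the requirement that $q\in (F_d\cap E_{\mathrm{inc}})\cup\{e_d\}$ before an $L$-letter does the work. Both directions of the equivalence then follow by reading the invariant forwards (accepted $\Rightarrow$ proper decomposition with valid blocks) and backwards (proper decomposition with valid blocks $\Rightarrow$ the run stays out of $\star$ and ends in $F_d$).
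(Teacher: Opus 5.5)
Your argument is correct and is essentially the paper's proof: induction on $d$, a split by profile, and a check that the block structure of $\mathcal{A}_d$ enforces a proper decomposition into non-empty blocks whose $\rho_1$-images are accepted by $\mathcal{A}_{d-1}$, with $H^*_{\mathrm{dec}}$ forcing $\ell\geq 2$. You simply write out the prefix invariants that the paper leaves implicit.

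One correction concerns the ``slightly stronger statement'' you announce, namely that every word reaching a non-$\star$ state is a prefix of an accepted word. This is false for $d\geq 2$. Some states $q_{r,\mathrm{inc},\{R\}}$ with $r\in Q'_{d-1}\setminus F_{d-1}$ are reachable: for $d=2$, read an $\{L\}$-letter and then an $\{R\}$-letter of profile $\mathrm{inc}$ whose $\rho_1$-images are $a_{\tau,1}a_{\tau,2}$ with $|\tau|\geq 3$. Such a state is non-accepting and sends every letter to $\star$. You never use this clause, so simply drop it.

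Your appeal to the lemma on the unique non-$\star$ single letter is also unnecessary. The test $u\in F_{d-1}$ with $u=\delta_{d-1}(e_{d-1},\rho_1(v))$ is, by the induction hypothesis, exactly the condition $g_{d-1}(\rho_1(v))\neq\star$.
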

\begin{proof}
It is easy to check that the statement is true for $d = 1$; in particular, note that the state $q_{\mathrm{dec}}^*$ ensures that $a_{\mathrm{dec}}^n$ is accepted only when $n \geq 2$. 

When $d \geq 2$, the inductive structure ensures that every $w$ accepted by the automaton must decompose properly. If the type of $w$ coincides with $\tau$, the statement follows easily: note that each sub-word $u_i$ must be non-empty as the states in $B_{\tau,i}$ have only directed edges to states in $B_{\tau,j}$ with $j \in \{i,i+1\}$, and in both of these cases these  edges are labelled with letters $v$ satisfying $\rho_3(v) = j$. If the type of $w$ equals $\mathrm{inc}$ or $\mathrm{dec}$, then $\mathcal{A}_d$ accepts $w$ if and only if $w$ decomposes as $w = u_1\dots u_s$ into non-empty sub-words $u_i$ which satisfy the following properties: 
\begin{enumerate}
    \item $\rho_1(u_t)$ is accepted by $\mathcal{A}_{d-1}$ for every $t$,
    \item the only letter in $u_i$ whose $\rho_4$-image contains $L$ is its left end-point, and the only letter in $u_i$ whose $\rho_4$-image contains $R$ is its right end-point. 
\end{enumerate}
In addition, the block $H_{\mathrm{dec}}^*$ guarantees that we must have $s \geq 2$ when the type of $w$ is $\mathrm{dec}$. This completes the proof. 
\end{proof}

Note that the automaton $\mathcal{A}_d$ splits into disjoint parts $H_\tau$ for $\tau \in \mathcal{Q}_k$, $H_{\mathrm{inc}}$, $H_{\mathrm{dec}}^*$ and $H_{\mathrm{dec}}$ with no directed cycles crossing between the parts. Thus every strongly connected component with at least two vertices is contained in one of these blocks. Since each such block can be further decomposed into disjoint parts that closely resemble $\mathcal{A}_{d-1}$ (in fact, with only the initial state and the rejection state removed), and these parts interact only in a very delicate way, we may backtrack through the inductive structure and find well-behaved copies of these blocks in which all the strongly connected components are contained. To make this process formal, we introduce the notion of iterative sequences. 
\begin{definition}
    An iterative sequence of depth $j \in \{1,\dots,d\}$ is a sequence $\underline{\tau} = (\tau_s,i_s)_{s = j}^d$ with $\tau_s \in \mathcal{Q}_k \cup \{\mathrm{inc},\mathrm{dec},\mathrm{dec}^*\}$ for all $j +1 \leq s \leq d$, $\tau_j \in \{\mathrm{inc},\mathrm{dec}\}$, and $i_s \in \{1,\dots,|\tau_s|\}$ for every $s$ with $\tau_s \in \mathcal{Q}_k$, and with $i_s$ otherwise undefined. 
\end{definition}
Given an iterative sequence $\underline{\tau}$ of depth $j$, we also identify a subpart of the automaton $\mathcal{A}_d$ on depth $j$ which corresponds to $\underline{\tau}$. 
\begin{definition}
    Let $\underline{\tau} = (\tau_s,i_s)_{s = j}^d$ denote an iterative sequence of depth $j$. We define the set $H_{\underline{\tau}}$ of states in $Q_d'$ by induction on $d$ as follows. If $d = 1$, $\underline{\tau}$ only consists of $\tau_1 \in \{\mathrm{inc},\mathrm{dec}\}$ as $i_1$ is undefined. In this case, we define $H_{\underline{\tau}}$ to be the set $\{q_{\tau_1}\}$ in $\mathcal{A}_1$. 

    If $d \geq 2$ and $j = d$, we define $H_{\underline{\tau}} = H_{\tau_d}$. Note that this is well-defined as $\tau_d \in \{\mathrm{inc},\mathrm{dec}\}$. If $j < d$, let $\underline{\tau}'=(\tau_s,i_s)_{s = j}^{d-1}$ denote the sequence obtained by deleting the last element of $\underline{\tau}$.

    Let $Q$ be the copy of $Q_{d-1}'$ given by $B_{\tau_d,i_d}$ if $\tau_d \in \mathcal{Q}_k$, by $B_{\tau_d,\emptyset}$ if $\tau_d \in \{\mathrm{inc},\mathrm{dec}\}$, and by $B^*_{\mathrm{dec},\emptyset}$ if $\tau_d = \mathrm{dec}^*$. We then define $H_{\underline{\tau}}$ to be the set of states $H_{\underline{\tau'}}$ obtained when we apply the definition to the sequence $\underline{\tau}'$ in $Q$. 
\end{definition}
Note that since $Q$ corresponds to a copy of $Q_{d-1}'$, the inductive hypothesis can be applied to $Q$. It is easy to verify by induction that if $\underline{\tau}$ has depth $j$, then $H_{\underline{\tau}}$ coincides with $H^{j}_{\alpha}$ for $\alpha = \tau_j\in \{\mathrm{inc},\mathrm{dec}\}$, where $H^{j}_\alpha$ denotes the construction of $H_\alpha$ inside $\mathcal{A}_j$. Here, we slightly abuse the notation and use the convention that $H^{1}_\alpha = \{q_\alpha\}$.

For $j \geq 2$, decompose $H^{j}_\alpha$ as $\bigcup_{\beta \subseteq \{L,R\}} B^{j-1}_{\alpha,\beta}$, where for $\beta \neq \{L,R\}$ these sets correspond to their natural counterparts in the decomposition of this particular $H^{j}_\alpha$ and are thus copies of $Q_{j-1}'$, and $B^{j-1}_{\alpha,\{L,R\}}$ consists of the single state labelled with $LR$ and direction $\alpha$. When $j = 1$, we use the natural convention that $H_{\underline{\tau},\beta} = \emptyset$ for every $\beta \subseteq \{L,R\}$. For $j \geq 2$, we set $H_{\underline{\tau},\beta} = B^{j-1}_{\alpha,\beta}$ for every $\beta \subseteq \{L,R\}$.

We now prove that the strongly connected component $C$ in $\mathcal{A}_d$ are naturally related to the iterative sequences. By strongly connected sets of states we refer to set $H$ which is strongly connected in the transition digraph, but not necessarily maximal with respect to such property.

\begin{lem}\label{lemma: iterative sequence lemma}
    Let $C$ be a strongly connected set of states in $Q'_d$ which is located along some directed walk from the initial state to an accepting state, and which   either contain at least $2$ states or a self-loop. Then there exists an iterative sequence $\underline{\tau}$ of depth $j$ so that $C \subseteq H_{\underline{\tau}}$ and $C \not \subseteq H_{\underline{\tau},\emptyset}$.
\end{lem}
\begin{proof}
    We prove the statement by induction on $d$. When $d = 1$, the only strongly connected sets $C$ satisfying the conditions are $\{q_{\mathrm{inc}}\}$ and $\{q_{\mathrm{dec}}\}$, and both clearly satisfy the statement of the Lemma.
    
    Since $C$ either contains a self-loop or at least $2$ states, it must  contain a directed cycle. Note that a simple inductive argument proves that any directed edge in $\mathcal{A}_d$ which does not end at $\star$ is labelled with a unique letter. 
    
    Let $v$ be letter attached as a label of an edge in $C$, and let $\tau = \rho_2(v)$. First consider the case when $\tau \in \mathcal{Q}_k$. Then $C$ must be clearly contained in $H_{\tau}$. Furthermore, since all the directed edges whose starting point is in $B_{\tau,i}$ have their end-point  in $B_{\tau,i} \cup B_{\tau,i+1}$, we conclude that all the states in $C$ must belong to a single $B_{\tau,i}$. Since $B_{\tau,i}$  corresponds to a copy of $Q'_{d-1}$, the inductive hypothesis implies that there exists an $\underline{\tau}' =(\tau_s,i_s)_{s = j}^{d-1}$ satisfying the conclusion of the Lemma. Thus we may choose $\underline{\tau}$ to be the sequence obtained by appending the sequence $\underline{\tau}'$ by $(\tau,i)$. 

    Now suppose that $\tau = \mathrm{inc}$. Since all the directed edges whose starting point is in $H_{\mathrm{inc}}$ have their end-point in $H_{\mathrm{inc}}\cup \{\star\}$, we conclude that $C \subseteq H_{\mathrm{inc}}$. If $C \subseteq B_{\mathrm{inc},\emptyset}$, we may apply the inductive hypothesis as above to complete the proof (and by setting $\tau_d = \mathrm{inc}$). If $C \not \subseteq B_{\mathrm{inc},\emptyset}$, we may simply choose $j = d$ and $\tau_d = \mathrm{inc}$. An analogous argument applies when $\tau = \mathrm{dec}$ and $C \subseteq H_{\mathrm{dec}}$. 
    
    Finally, suppose that $C \subseteq H_{\mathrm{dec}}^*$. Recall that all the directed edges whose starting point is in $B_{\mathrm{dec},\{R\}}^*$ or $B_{\mathrm{dec},\{L,R\}}^*$ have their end-point in $H_{\mathrm{dec}}$. Similarly, all the directed edges whose end-point is in $B_{\mathrm{dec},\{L\}}^*$ must have $e_d$ as their starting point - and note that $e_d$ is not contained in any directed cycle. Thus $C$ cannot contain vertices from any of these sets, and thus we must have $C \subseteq B_{\mathrm{dec},\emptyset}^*$. Thus we may set $\tau_d = \mathrm{dec}^*$, and again an analogous argument completes the proof. 
\end{proof}
\section{Small permutation classes}\label{section: small classes}

Having constructed the necessary tools to deal with $\star$-entries in the previous section, we move on to generalize the construction of Albert et.\ al.\ from \cite{albert2013geometric} presented in section~\ref{subsetion: albert construction}. Throughout this section, we assume that $M$ is a $[\ell]\times [m]$ $\star$-isolated forest matrix, and we also recall that $M$ is hence multiplicative. For convenience, we write $\mathcal{S}_+ = \{(i,j): M_{i,j} = +1\}$, $\mathcal{S}_- = \{(i,j): M_{i,j} = -1\}$,  $\mathcal{S}_\star = \{(i,j): M_{i,j} = \star \}$,  $\mathcal{S}_{\pm} = \mathcal{S}_+ \sqcup \mathcal{S}_-$ and finally $\mathcal{S} = \mathcal{S}_\pm \sqcup \mathcal{S}_\star$. Given positive integers $d$ and $k$, enumerate the elements of $\Sigma_{d,k}$ as $\{b_1,\dots,b_N\}$ for some $N$.  Let $\Sigma_{M,\pm}$ denote an alphabet containing a letter $a_{i,j}$ for every $(i,j) \in \mathcal{S}_\pm$. In addition, for each $(i,j) \in \mathcal{S}_\star$ write $\Sigma_{\star,(i,j)}$ for the alphabet $\left\{b_{(i,j),1},\dots,b_{(i,j),N}\right\}$, where $b_{(i,j),u}$ denotes a copy of letter $b_u$ attached to the $(i,j)$-th cell. Finally, we set $\Sigma_M = \Sigma_{M,\pm} \cup \bigcup_{(i,j): M_{i,j} = \star} \Sigma_{\star, (i,j)}$. 

Having constructed the alphabet $\Sigma_M$, we define the maps $\rho_5: \bigcup_{(i,j) \in \mathcal{S}_\star} \Sigma_{\star,(i,j)} \rightarrow \Sigma_{d,k}$ and $\rho_6: \Sigma_M \rightarrow [\ell] \times [m]$ as follows. For each $b_{(i,j),u} \in \Sigma_{\star, (i,j)}$, define $\rho_5(b_{(i,j),u}) = b_u$, and extend the map $\rho_5$ to words by setting $\rho_5(w_1\dots w_s) = \rho_5(w_1) \dots \rho_5(w_s)$. Finally, we set $\rho_6(a_{i,j}) = (i,j)$ for every $a_{i,j} \in \Sigma_{M,\pm}$ and $\rho_6(b_{(i,j),u}) = (i,j)$ for every $b_{(i,j),u} \in \Sigma_{\star,(i,j)}$. 

Enumerate the elements of $\mathcal{S}_\star$ as $(i_1,j_1),\dots,(i_r,j_r)$ with $i_1 < \dots <i_r$, and note that such a choice is possible as $M$ is $\star$-isolated. For $w \in \Sigma_M^*$, define the sub-words $u_0$ and $u_{(i_1,j_1)},\dots,u_{(i_r,j_r)}$ as follows. We write $u_0$ for the sub-word which consists of every letter $w_s$ with $w_s \in \Sigma_{M,\pm}$. For each $(i,j)\in \mathcal{S}_\star$, define $u_{(i,j)}$ to be the sub-word consisting of every letter $w_s$ with $w_s \in \Sigma_{\star,(i,j)}$. We say that $w$ \emph{decomposes properly} if $w$ coincides with the concatenation $u_0 u_{(i_1,j_1)} \dots u_{(i_r,j_r)}$, where we allow the possibility that some of these sub-words are empty. 
 
Having defined the necessary notions on the alphabets $\Sigma_M$, we move on to construct the functions $g_M: \Sigma_M^* \rightarrow \mathrm{Geom}^\#(M)_{d,k} \cup \{\star\}$ and $f_M := \pi_M^\# \circ g_M$. Given $w \in \Sigma_M^*$, if $w$  does not decompose properly, we set $g_M(w) = \star$. If $w$ decomposes properly as $w = u_0 u_{(i_1,j_1)}\dots u_{(i_r,j_r)}$, define $\sigma_{i,j} := f_d(\rho_5(u_{(i,j)}))$ for every $(i,j) \in \mathcal{S}_\star$, with the convention that $\sigma_{i,j}$ is the empty permutation whenever $u_{(i,j)}$ is the empty word. If $\sigma_{i,j} = \star$ for any $(i,j) \in \mathcal{S}_\star$, we set $g_M(w) = \star$. Otherwise, we proceed as follows. 

Let $M_{\pm 1}$ denote the matrix obtained from $M$ by replacing all the $\star$-entries with $0$'s. If $u_0$ is the empty word, we set $F_w'$ to be the empty figure (in the standard gridding $G$). Otherwise, we choose $F_w'$ to be the gridded figure $g_{M_{\pm 1}}(u_0)$ in the standard gridding $G$, where $g_{M_{\pm 1}}$ denotes the function $g_{M'}$ constructed by Albert et. al. \cite{albert2013geometric} for a $\{-1,0,1\}$-matrix, whose construction is also presented in section~\ref{subsetion: albert construction}. 

Let $F_w$ be the gridded figure obtained by starting with $F_{w}'$, and for each $(i,j) \in \mathcal{S}_\star$ adding the permutation $\sigma_{i,j}$ in the $(i,j)$-th cell of $(F_w',G)$. Since $M$ is $\star$-isolated, it is easy to see that the equivalence class of $(F_w,G)$ is independent of the construction, and hence setting $g_M(w)$ to be the equivalence class containing $(F_w,G)$ defines a well-defined function $g_M$. Finally, define the function $f_M: \Sigma_M^* \rightarrow \mathrm{Geom}(M)_{d,k}\cup \{\star\}$ by setting $f_M := \pi_M^\# \circ g_M$, with the convention that $\pi_M^\#(\star) = \star$. 

To complete the construction, for each $w$ with $g_M(w) \neq \star$, we define the functions $\alpha_{w,M}: [|w|] \rightarrow f_M(w)$ and $\beta_{w,M}: [|w|] \rightarrow g_M(w)$ analogously as in section~\ref{sect: construction}. Given $i \in [|w|]$, first consider the case when $w_i$ is contained in $u_0$. In this case,  we define $\beta_{w,M}(i)$ to be the point $\beta_{u_0,M_{\pm 1}}(i')$, where $i'$ is the index of $w_i$ in $u_0$. We also define $\alpha_{w,M}(i)$ analogously. If $w_i$ is contained in  $u_{(i,j)}$ for some $(i,j) \in \mathcal{S}_\star$, we set $\beta_{w,M}(i)$ to be the point corresponding to $\alpha_{\rho_5(u_{(i,j)})}(i') \in \sigma_{i,j}$, where $i'$ is the index of $w_i$ in $u_{(i,j)}$, and the function $\alpha_{\rho_5(u_{(i,j)})}(i')$ is constructed as in section~\ref{sect: construction} for $\widetilde{\mathcal{O}}_k^{[d]}$. Finally, we define $\alpha_{w,M}(i)$ analogously. 

It is straightforward to verify that both of the functions $\alpha_{w,M}$ and $\beta_{w,M}$ are bijections for each $w$ with $g_M(w) \neq \star$. In addition, the construction ensures that for each $(i,j)$ we have 
\begin{align} \label{eq: important observation construction beta}
    \beta_{w,M}(u) \in g_M(w) \cap C_{(i,j)} \text{ if and only if } \rho_6(w_u) = (i,j).     
\end{align}
Having constructed the functions $g_M$ and $f_M$, the rest of the argument splits into two main steps. As a first step, we prove that $f_M$ is both a first-order and a monadic second-order homomorphism. This is carried out in section~\ref{section: MSO correspondence in M}. The proof is relatively straightforward, as the main technical ingredients have already been developed in section~\ref{section: FO and MSO correspondence okd}.

As a second step, we prove the existence of a first-order sentence $\Delta_M$ with the following property: for every $\sigma \in \mathrm{Geom}(M)_{d,k}$, there exists a unique word $w \in \Sigma_M^*$ such that $f_M(w) = \sigma$ and $w \models \Delta_M$. The argument proceeds in two stages. We first prove an analogous uniqueness statement for the function $g_M$. Once this intermediate result has been obtained, the construction of $\Delta_M$ reduces to translating the defining conditions of the left-greedy order into first-order formulae. This part of the argument is presented in section~\ref{section: M bijection restriction}. Although the overall strategy is conceptually natural, some of the proofs are rather lengthy and require somewhat technical details. 
\subsection{Establishing logical correspondences in $\mathrm{Geom}(M)_{d,k}$} \label{section: MSO correspondence in M}

As in section~\ref{section: FO and MSO correspondence okd}, our first goal is to translate atomic formulae and their negations in $\mathrm{TOTO}$ to formulae in $\Sigma_M^*$. Finding the corresponding formulae for  $x = y$ and $x \in z$ is again easy. Finding the corresponding formulae for $x \leq_1 y$ and $x \leq_2 y$ is slightly trickier, but it is a substantially easier having already done most of the work in the construction of the formulae $\xi_{s,d}(x,y)$. Before establishing the logical correspondences, we start with the following analogue of Lemma~\ref{lemma:gamma}.

\begin{lem}\label{lemma:gammaM}
    There exists a first-order sentence $\Gamma_M$ so that $w \models \Gamma_M$ if and only if $f_M(w) \neq \star$. 
\end{lem}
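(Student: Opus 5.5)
We will write $\Gamma_M$ as a conjunction of two first-order sentences. The first says that $w$ decomposes properly; the second says that each $\star$-block is valid, i.e.\ $f_d(\rho_5(u_{(i,j)}))\neq\star$ whenever $u_{(i,j)}$ is non-empty. The letters of $u_0$ never by themselves force the value $\star$: by construction $g_{M_{\pm1}}$ is defined on all of $\Sigma_{M,\pm}^*$, including the empty word, which gives the empty figure. Hence these two conditions are exactly the complement of the cases in which $g_M(w)=\star$. Since $f_M=\pi^\#_M\circ g_M$ and $\pi_M^\#(\star)=\star$, we have $f_M(w)\neq\star$ if and only if $g_M(w)\neq\star$.

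For proper decomposition, fix the linear order $0\prec(i_1,j_1)\prec\dots\prec(i_r,j_r)$ on the set $\{0\}\cup\mathcal S_\star$. Define $\kappa:\Sigma_M\to\{0\}\cup\mathcal S_\star$ by $\kappa(a)=0$ for $a\in\Sigma_{M,\pm}$ and $\kappa(a)=\rho_6(a)$ otherwise. Then $w$ decomposes properly if and only if
\[
\forall x\,\forall y:\ x<y\implies \neg\bigl(\kappa(w_y)\prec\kappa(w_x)\bigr).
\]
Because $\kappa$ is a map between finite sets, Lemma~\ref{lem:projection} lets us write this as a finite disjunction over pairs of letters. It is therefore a first-order sentence.

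For the blocks, fix $(i,j)\in\mathcal S_\star$ and take the sentence $\Gamma_{d}=\Gamma_{d,k}$ from Lemma~\ref{lemma:gamma}. Relativize it to the positions with $\rho_6(w_t)=(i,j)$: add the guard $\rho_6(w_t)=(i,j)$ to every quantified variable $t$, and replace each atomic letter predicate $w_t=b_u$ by the predicate $w_t=b_{(i,j),u}$ (equivalently $\rho_5(w_t)=b_u$). As in Lemma~\ref{lemma: restriction to sub-words preserves FOS}, with Lemma~\ref{lem:projection} handling the letter predicates, the result $(\Gamma_d)^{(i,j)}$ is first-order. A routine induction on formulae shows that $w\models(\Gamma_d)^{(i,j)}$ if and only if $\rho_5(u_{(i,j)})\models\Gamma_d$, which by Lemma~\ref{lemma:gamma} holds if and only if $g_d(\rho_5(u_{(i,j)}))\neq\star$. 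We then set
\[
\Gamma_M:=(\text{proper decomposition})\land\bigwedge_{(i,j)\in\mathcal S_\star}\Bigl(\bigl(\exists x:\rho_6(w_x)=(i,j)\bigr)\implies(\Gamma_d)^{(i,j)}\Bigr).
\]

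The main obstacle is the empty-block convention. By definition $g_d(\varepsilon)=\star$, yet an empty $u_{(i,j)}$ must give the empty permutation $\sigma_{i,j}$ and must not force $\star$. The guarded implication above handles this, so each block condition is imposed only when $u_{(i,j)}$ is non-empty. A second point to check is that relativization commutes with the order predicate: positions of $u_{(i,j)}$ inherit the order $<$ from $w$, so a sentence about the subword $\rho_5(u_{(i,j)})$ translates directly. Contiguity of $u_{(i,j)}$ is not needed for this, though the first conjunct guarantees it anyway.
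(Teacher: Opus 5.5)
Your proposal is correct and is essentially the paper's proof. The paper likewise takes $\Gamma_M$ to be a proper-decomposition condition conjoined with the guarded relativizations $\bigl(\exists x\colon \rho_6(w_x)=(i,j)\bigr)\implies(\Gamma_d)_{(i,j)}$ of the sentence from Lemma~\ref{lemma:gamma}, and the guard handles empty blocks exactly as you describe. The only cosmetic difference is that you express proper decomposition as a single monotonicity condition on $\kappa$, whereas the paper splits it into two clauses: $\Sigma_{M,\pm}$-letters precede all $\star$-letters, and $\star$-letters appear in weakly increasing column order, which suffices since $M$ is $\star$-isolated.
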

\begin{proof}
Let $\Gamma_M$ be the first-order sentence 
\begin{align*}
&\forall i\forall j\,\Bigl[
 \bigwedge_{(a_1,b_1)\in\mathcal S_\pm}
 \bigwedge_{(a_2,b_2)\in\mathcal S_\star}
 ((\rho_6(w_i)=(a_1,b_1)\land\rho_6(w_j)=(a_2,b_2))\implies i<j)
 \Bigr]\\
&\land\forall i\forall j\,\Bigl[
 \bigwedge_{(a_1,b_1),(a_2,b_2)\in\mathcal S_\star}
 ((\rho_6(w_i)=(a_1,b_1)\land\rho_6(w_j)=(a_2,b_2)\land i<j)
   \implies a_1\leq a_2)\Bigr]\\
&\land\bigwedge_{(a,b)\in\mathcal S_\star}
 \Bigl((\exists u\colon\rho_6(w_u)=(a,b))
       \implies(\Gamma_d)_{(a,b)}\Bigr).
\end{align*}
As before, $(\Gamma_d)_{(a,b)}$ denotes the first-order sentence $\Gamma_d$ applied to the word $\rho_5(u_{(a,b)})$. In order to check that $\Gamma_M$ satisfies the required properties, suppose that $w \models \Gamma_M$. The first line of $\Gamma_M$ implies that $w$ is of the form $w = u_0w'$ for some $w' \in \left(\Sigma_M \setminus \Sigma_{M,\pm}\right)^*$, and the second line implies that each letter in $u_{(i,j)}$ must precede the letters of $u_{(i',j')}$ for every $(i,j),(i',j') \in \mathcal{S}_\star$ with $i < i'$. In particular, we must have $w = u_0u_{(i_1,j_1)}\dots u_{(i_r,j_r)}$. Finally, the last line implies that for every $1 \leq s \leq r$, either $u_{(i_s,j_s)}$ is empty or $\rho_5(u_{(i_s,j_s)}) \models \Gamma_{d}$. The proof that for every $w \in \Sigma_M^*$ with $f_M(w) \neq \star$ we have $w  \models \Gamma_M$ follows analogously. 
\end{proof}
\begin{lem} \label{lemma: easy cases = and in M}
    Let $w \in \Sigma_M^*$ be chosen so that $w \models \Gamma_M$. 
    \begin{enumerate}
        \item Let $\psi(x,y)$ denote the $\mathrm{MSO}$-$\mathrm{TOTO}$ formula given by $x = y$, and $\phi(x',y')$ denote the formula in $\mathrm{MSO}(\Sigma_M^*)$  given by $x' = y'$. We have $w \models \phi(a,b)$ if and only if $f_M(w) \models \psi(\alpha_{w,M}(a), \alpha_{w,M}(b))$. 
        \item Let $\psi(x,z)$ denote the $\mathrm{MSO}$-$\mathrm{TOTO}$ -formula given by $x \in z$, and $\phi(x',z')$ denote the formula in $\mathrm{MSO}(\Sigma_M^*)$ given by $x' \in z'$. We have $w \models \phi(a,c)$ if and only if $f_M(w) \models \psi(\alpha_{w,M}(a), \alpha_{w,M}(c))$. Here, we define $\alpha_{w,M}(c) := \{\alpha_{w,M}(t): t\in c\}$. 
    \end{enumerate}
\end{lem}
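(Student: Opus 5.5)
The plan is to copy the proof of Lemma~\ref{lemma: easy cases = and in}, using only that $\alpha_{w,M}:[|w|]\rightarrow f_M(w)$ is a bijection whenever $g_M(w)\neq\star$. This was observed right after the construction of $\alpha_{w,M}$ and $\beta_{w,M}$. By Lemma~\ref{lemma:gammaM}, the hypothesis $w\models\Gamma_M$ is equivalent to $f_M(w)\neq\star$, so $\alpha_{w,M}$ is defined and bijective for every $w$ under consideration.

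For the first statement, fix positions $a,b$ of $w$. If $w\models\phi(a,b)$, then $a=b$ as positions, so $\alpha_{w,M}(a)=\alpha_{w,M}(b)$ and $f_M(w)\models\psi(\alpha_{w,M}(a),\alpha_{w,M}(b))$. Conversely, if $\alpha_{w,M}(a)$ and $\alpha_{w,M}(b)$ are the same point of $f_M(w)$, then $a=b$ by injectivity of $\alpha_{w,M}$, and hence $w\models\phi(a,b)$.

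For the second statement, fix a position $a$ and a set of positions $c\subseteq[|w|]$, and put $\alpha_{w,M}(c)=\{\alpha_{w,M}(t):t\in c\}$. If $a\in c$, then $\alpha_{w,M}(a)\in\alpha_{w,M}(c)$ by definition. If $\alpha_{w,M}(a)\in\alpha_{w,M}(c)$, then $\alpha_{w,M}(a)=\alpha_{w,M}(t)$ for some $t\in c$, and injectivity gives $a=t\in c$.

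Surjectivity is not needed here. It matters later, when quantified variables are transported as in Claim~\ref{claim: extending sequences}, because it makes $c\mapsto\alpha_{w,M}(c)$ a bijection between subsets of positions and subsets of points. There is no real obstacle: the only substantive input is the bijectivity of $\alpha_{w,M}$, which comes from combining the bijectivity of $\alpha_{u_0,M_{\pm1}}$ on the $u_0$ part with that of $\alpha_{\rho_5(u_{(i,j)})}$ on each $\star$-cell part. These parts are disjoint because $w$ decomposes properly, and their images partition the points of $f_M(w)$ by~\eqref{eq: important observation construction beta}.
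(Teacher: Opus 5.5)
Your proof is correct and follows the paper's approach. The paper simply says the argument is identical to that of Lemma~\ref{lemma: easy cases = and in}, namely that each direction follows from the definition of $\alpha_{w,M}(c)$ together with its injectivity, which is available because $w\models\Gamma_M$ forces $f_M(w)\neq\star$. Your remark that surjectivity is only used later, when transporting quantified variables, likewise matches the paper's comment after Lemma~\ref{lemma: easy cases = and in}.
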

\begin{proof}
    The proof is identical to the proof of Lemma~\ref{lemma: easy cases = and in}. 
\end{proof}

We start by giving a brief outline for the construction of the formulae $\xi_{s,M}(x,y)$. Here, it turns out to be convenient to use the function $g_M$ instead of $f_M$. Suppose that $p'$ and $q'$ are chosen in $f_M(w)$ so that $p' \leq_1 q'$, and let $p $ and $q$ denote the corresponding points in $g_M(w)$. If $p$ and $q$ are contained in different columns $i$ and $i'$, then the condition $p \leq_1 q$ is equivalent to $i < i'$. Since the information on $i$ and $i'$ is already encoded in the corresponding letters $\beta_{w,M}^{-1}(p)$ and $\beta_{w,M}^{-1}(q)$, this case can be handled easily. 

Now suppose that $i = i'$. If $p$ and $q$ are contained in the same cell $C_{i,j}$ which corresponds to a $\star$-entry, then both $\beta_{w,M}^{-1}(p)$ and $\beta_{w,M}^{-1}(q)$ belong to the same sub-word $u_{(i,j)}$. Thus, we may use the formulae $\xi_{s,d}$ and Lemma~\ref{lemma: inductive step of leq_1 correspondence} to take care of this case. 

Finally, consider the case when $i = i'$ and the $i$-th column contains (possibly multiple) $\pm 1$-entries. In this case, the construction of $g_{M_{\pm 1}}(u_0)$ ensures that the condition $p \leq_1 q$ can be uniquely determined by the parameter $\delta_i$ and the relative order of the indices $\beta_{w,M}^{-1}(p)$ and $\beta_{w,M}^{-1}(q)$. Indeed, if $\delta_i = 1$, the points are placed in the $i$-th column from `left to right', while if $\delta_i = -1$, the points are placed from `right to left'. 

We now present the formal construction of $\xi_{1,M}(x,y)$ and $\xi_{2,M}(x,y)$. Given $(a,b) \in \mathcal{S}_\star$, write $\xi_{s,d,(a,b)}(x,y)$ for the first-order formula $\xi_{s,d}$ applied to the word $\rho_5(u_{(a,b)})$ (here, we assume that $u_{(a,b)}$ is non-empty and satisfies $f_d(\rho_5(u_{(a,b)})) \neq \star$). We also write $\delta_{=,i}(x,y)$ and $\varepsilon_{=,j}(x,y)$ for the first-order formulae 
\begin{align*}
    (\delta_i = 1 \land x \leq y) \lor (\delta_i=-1 \land y \leq x)
\end{align*}
and 
\begin{align*}
    (\epsilon_j = 1 \land x \leq y) \lor (\epsilon_j = -1 \land y \leq x)
\end{align*}
respectively. Define $\xi_{1,M}(x,y)$ to be the formula 
\begin{align*}
    & \bigvee_{(a_1,b_1) \in \mathcal{S}} \bigvee_{(a_2,b_2) \in \mathcal{S}} \left(\rho_6(w_x) = (a_1,b_1) \land \rho_6(w_y) = (a_2,b_2) \land a_1 <a_2\right) \\
    \lor &  \bigvee_{(a_1,b_1) \in \mathcal{S}_\pm} \bigvee_{(a_2,b_2) \in \mathcal{S}_\pm}  \left(\rho_6(w_x) = (a_1,b_1) \land \rho_6(w_y) = (a_2,b_2) \land a_1 = a_2 \land \delta_{=,a_1}(x,y)\right) \\
    \lor &  \bigvee_{(a,b) \in \mathcal{S}_\star} \left(\rho_6(w_x) = (a,b) \land \rho_6(w_y) = (a,b) \land \xi_{1,d,(a,b)}(x,y)\right),
\end{align*}
and $\xi_{2,M}(x,y)$ to be the formula 
\begin{align*}
    & \bigvee_{(a_1,b_1) \in \mathcal{S}} \bigvee_{(a_2,b_2) \in \mathcal{S}} \left(\rho_6(w_x) = (a_1,b_1) \land \rho_6(w_y) = (a_2,b_2) \land b_1 <b_2\right) \\
    \lor &  \bigvee_{(a_1,b_1) \in \mathcal{S}_\pm} \bigvee_{(a_2,b_2) \in \mathcal{S}_\pm}  \left(\rho_6(w_x) = (a_1,b_1) \land \rho_6(w_y) = (a_2,b_2) \land b_1 = b_2 \land \varepsilon_{=,b_1}(x,y)\right) \\
    \lor &  \bigvee_{(a,b) \in \mathcal{S}_\star} \left(\rho_6(w_x) = (a,b) \land \rho_6(w_y) = (a,b) \land \xi_{2,d,(a,b)}(x,y)\right).
\end{align*}

\begin{lem} \label{lem: leq1 for M}
    For every $w \in \Sigma_M^*$ with $w \models \Gamma_M$ we have $w \models \xi_{s,M}(c_1,c_2)$ if and only if $\alpha_{w,M}(c_1) \leq_s \alpha_{w,M}(c_2)$.  
\end{lem}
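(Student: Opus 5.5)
The plan is to work through the gridded figure $g_M(w)$ rather than $f_M(w)$, and to split into cases according to the cells containing the two points. Let $w\models\Gamma_M$, so by Lemma~\ref{lemma:gammaM} $w$ decomposes properly as $w=u_0u_{(i_1,j_1)}\dots u_{(i_r,j_r)}$ with each nonempty $\rho_5(u_{(i,j)})\models\Gamma_d$. Put $p=\beta_{w,M}(c_1)$ and $q=\beta_{w,M}(c_2)$. Since $\pi^\#$ only deletes grid lines, $\alpha_{w,M}(c_1)\leq_s\alpha_{w,M}(c_2)$ holds if and only if $p\leq_s q$ holds in $g_M(w)$. By \eqref{eq: important observation construction beta}, $p$ lies in $C_{a_1,b_1}$ and $q$ lies in $C_{a_2,b_2}$, where $\rho_6(w_{c_1})=(a_1,b_1)$ and $\rho_6(w_{c_2})=(a_2,b_2)$. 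I treat $s=1$; the case $s=2$ is identical with rows in place of columns and $\varepsilon_j$ in place of $\delta_i$.

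\textbf{Case $a_1\neq a_2$.} Points in different columns compare by column index, so $p\leq_1 q$ holds if and only if $a_1<a_2$. This is exactly the first disjunct of $\xi_{1,M}$. The other two disjuncts both require $a_1=a_2$, so they are false here.

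\textbf{Case $a_1=a_2$, with a $\pm1$ cell involved.} Suppose $(a_1,b_1)\in\mathcal S_\pm$. Since $M$ is $\star$-isolated, column $a_1$ contains no $\star$ entry, so $(a_2,b_2)\in\mathcal S_\pm$ as well. Both letters then lie in $u_0$, and by construction $p$ and $q$ are the points of $g_{M_{\pm1}}(u_0)$ at $x$-coordinates $a_1-1+d_{t}$ or $a_1-d_{t}$, where $t$ is the position within $u_0$. These positions are order-isomorphic to $c_1,c_2$, because $u_0$ is a prefix of $w$. Hence $p\leq_1 q$ holds if and only if $c_1\leq c_2$ when $\delta_{a_1}=1$, and if and only if $c_2\leq c_1$ when $\delta_{a_1}=-1$. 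This is precisely $\delta_{=,a_1}(c_1,c_2)$, the second disjunct. I must check that the case $c_1=c_2$ is consistent with the non-strict order, which it is.

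\textbf{Case $a_1=a_2$, with a $\star$ cell involved.} Now $\star$-isolation forces $(a_1,b_1)=(a_2,b_2)=(a,b)\in\mathcal S_\star$. Both letters lie in $u_{(a,b)}$, which is nonempty, and $\rho_5(u_{(a,b)})\models\Gamma_d$. The points $p$ and $q$ are the copies, inside $C_{a,b}$, of $\alpha_{\rho_5(u_{(a,b)})}(c_i')$, where $c_i'$ denotes the position within the subword. Placing a permutation inside one cell preserves its relative $\leq_1$ order, so Lemma~\ref{lemma: inductive step of leq_1 correspondence} gives that $p\leq_1 q$ holds if and only if $\rho_5(u_{(a,b)})\models\xi_{1,d}(c_1',c_2')$. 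It then remains to confirm that this is what $\xi_{1,d,(a,b)}(c_1,c_2)$ expresses. By construction, $\xi_{1,d,(a,b)}$ is $\xi_{1,d}$ relativised to the positions $t$ with $\rho_6(w_t)=(a,b)$, with letters read through $\rho_5$. These positions form a contiguous block, so relativisation preserves first-order definability (as in Lemma~\ref{lemma: restriction to sub-words preserves FOS}) and also preserves the relative order of positions.

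The main obstacle is this last identification. One must check that relativising $\xi_{1,d}$ to the block $u_{(a,b)}$ really evaluates it on $\rho_5(u_{(a,b)})$. Every quantified variable has to be bounded to the block, including the references to $w_1$ in $\xi_{1,d}$, which must be reinterpreted as the first letter of the block. The rest is routine bookkeeping, since in each case exactly one disjunct can be true, so the three-way disjunction matches the three-way case split.
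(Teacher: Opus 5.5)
Your proposal is correct and follows essentially the same route as the paper. You pass from $\alpha_{w,M}$ to $\beta_{w,M}$ and split into three cases: different columns, the same column of $\pm1$ cells (decided by $\delta_{a_1}$ and the order of positions in $u_0$), and the same $\star$ cell (via Lemma~\ref{lemma: inductive step of leq_1 correspondence}); this mirrors the three disjuncts of $\xi_{1,M}$, and your explicit appeal to $\star$-isolation and your remark on relativising $\xi_{1,d}$ (including its reference to $w_1$) to the block $u_{(a,b)}$ spell out points the paper leaves implicit in its definition of $\xi_{s,d,(a,b)}$.
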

\begin{proof}
    Suppose that $w$, $c_1$ and $c_2$ are chosen so that $w \models \Gamma_M$ and $w \models \xi_{1,M}(c_1,c_2)$. Note that the condition $\alpha_{w,M}(c_1) \leq_1 \alpha_{w,M}(c_2)$ is trivially equivalent to the condition $\beta_{w,M}(c_1) \leq_1 \beta_{w,M}(c_2)$. 

    First consider the case when the formula on the first line is satisfied for $x = c_1, y= c_2$ and for some $(a_1,b_1)$ and $(a_2,b_2) \in \mathcal{S}$. Using observation~\eqref{eq: important observation construction beta}, $\beta_{w,M}(c_1)$ belongs to the $a_1$-th column and $\beta_{w,M}(c_2)$ belongs to the $a_2$-th column of the gridded permutation $g_M(w)$. Since $a_1 < a_2$, we must have $\beta_{w,M}(c_1) \leq_1 \beta_{w,M}(c_2)$, as required. 

    We now consider the case when the formula on the second line is satisfied for some $(a_1,b_1)$ and $(a_2,b_2)$, and write $a_* = a_1 = a_2$. Thus, the points $\beta_{w,M}(c_1)$ and $\beta_{w,M}(c_2)$ belong to the same column of $g_M(w)$, whose index we denote by $a_*$, and every non-zero entry in the column is a $\pm 1$-entry. First suppose that $\delta_{a_*} = 1$. Since $w \models \delta_{=,a_1}(c_1,c_2)$, it follows that $c_1 \leq c_2$. By the construction of the function $g_M$, this is clearly equivalent to $\beta_{w,M}(c_1) \leq_1 \beta_{w,M}(c_2)$. The case $\delta_{a_*} = -1$ follows analogously from the construction of the formula $\delta_{=,a_*}(x,y)$. 

    Finally, we consider the case when the formula on the last line is satisfied for some $(a',b')$. Thus the points $\beta_{w,M}(c_1)$ and $\beta_{w,M}(c_2)$ belong to the same cell $C_{a',b'}$ of $g_M(w)$, which is indexed by a $\star$-entry, and which contains the permutation $f_{d}(\rho_5(u_{(a',b')}))$. Since $w \models \xi_{1,d,(a',b')}(c_1,c_2)$, Lemma~\ref{lemma: inductive step of leq_1 correspondence} implies that $\beta_{w,M}(c_1) \leq_1 \beta_{w,M}(c_2)$, as required. The proof of the converse direction and the proof for $\xi_{2,M}$ follow analogously. 
\end{proof}

\begin{thm}\label{thm: FO correspondence for M}
    Let $v^{(f)}_1,\dots,v^{(f)}_{c_1},v^{(b)}_1,\dots,v^{(b)}_{c_2}$ denote variables (either first or second-order variables), with $v^{(f)}_i$'s corresponding to free and $v^{(b)}_i$'s corresponding to bound variables. Let $\psi(v^{(f)}_1,\dots,v^{(f)}_{c_1})$ denote a monadic second-order formula in $\mathrm{TOTO}$ given in disjunctive normal form, i.e.\
    \begin{align*} 
        \psi(v^{(f)}_1,\dots,v^{(f)}_{c_1}) = Q_1 v^{(b)}_1\dots Q_{c_2} v^{(b)}_{c_2} \left(\left(p_{1,1} \land \dots \land p_{1,a_1}\right) \lor \dots \lor \left(p_{b,1} \land \dots  \land p_{b,a_b}\right)\right),
    \end{align*}
    where each $p_{i,j}$ is an atomic formula or its negation using variables in $v^{(f)}_1,\dots,v^{(f)}_{c_1},v^{(b)}_1,\dots,v^{(b)}_{c_2}$. There exists a monadic second-order formula $\phi$ in $\Sigma_M^*$ of the form 
    \begin{align*}  
        \phi(\nu^{(f)}_1,\dots,\nu^{(f)}_{c_1}) = Q_1 \nu^{(b)}_1 \dots Q_{c_2} \nu^{(b)}_{c_2}: \left(q_{1,1} \land \dots \land q_{1,a_1}\right) \lor \dots \lor \left(q_{b,1} \land \dots \land q_{b,a_b}\right)
    \end{align*}
    satisfying the following properties:
    \begin{enumerate}
        \item each variable $\nu^{(f)}_i$ and $\nu^{(b)}_j$ has the same order as the respective variable $v^{(f)}_i$ and $v^{(b)}_j$;
        \item each formula $q_{i,j}$ depends only on the formula $p_{i,j}$; and 
        \item for each $a_1,\dots,a_{c_1}$ and $w \in \Sigma_M^*$ satisfying $w \models \Gamma_M$, we have $w \models \phi(a_1,\dots,a_{c_1})$ if and only if $f_M(w) \models \psi(\alpha_{w,M}(a_1),\dots,\alpha_{w,M}(a_{c_1}))$. 
    \end{enumerate}

    In addition, if $\psi$ is a first-order formula, then so is $\phi$.
\end{thm}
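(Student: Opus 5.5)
The proof will follow the proof of Theorem~\ref{thm: FO correspondence} almost verbatim; the only new input is the translation of atomic formulae, which is already available for $M$. For each variable $v_i^{(\beta)}$ of $\psi$, introduce a variable $\nu_i^{(\beta)}$ of the same order. Define the literals $q_{i,j}$ from $p_{i,j}$ as follows:
\begin{enumerate}
\item[(i)] $x_\psi=y_\psi$ becomes $x_\phi=y_\phi$;
\item[(ii)] $x_\psi\in y_\psi$ becomes $x_\phi\in y_\phi$;
\item[(iii)] $x_\psi\leq_s y_\psi$ becomes $\xi_{s,M}(x_\phi,y_\phi)$;
\item[(iv)] a negated atomic formula becomes the negation of the corresponding formula above.
\end{enumerate}
Then $\phi$ is given by the same quantifier prefix applied to the DNF of the $q_{i,j}$. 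Properties 1 and 2 hold by construction. If $\psi$ is first-order, no $\in$-literals and no second-order variables occur, and each $\xi_{s,M}$ is first-order, so $\phi$ is first-order.

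For property 3, fix $w\models\Gamma_M$. The key fact is that for every literal $p_{i,j}$ and all constants $a,b$ of the correct orders,
\[
w\models q_{i,j}(a,b)\iff f_M(w)\models p_{i,j}\bigl(\alpha_{w,M}(a),\alpha_{w,M}(b)\bigr).
\]
This follows from Lemma~\ref{lemma: easy cases = and in M} for $=$ and $\in$, from Lemma~\ref{lem: leq1 for M} for $\leq_1$ and $\leq_2$, and trivially for negations. Consequently the quantifier-free parts satisfy $w\models\phi'(\underline t,\underline u)$ if and only if $f_M(w)\models\psi'(\alpha_{w,M}(\underline t),\alpha_{w,M}(\underline u))$. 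Here $\alpha_{w,M}$ is extended to sets elementwise.

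The quantifiers are then handled exactly as in Claim~\ref{claim: extending sequences}. The essential point is that $\alpha_{w,M}$ is a bijection $[|w|]\to f_M(w)$, as stated after its construction. Hence its elementwise extension is also a bijection between subsets of positions of $w$ and subsets of points of $f_M(w)$. It follows that first-order witnesses and universally quantified values, and likewise second-order ones, can be transported back and forth. Concretely, run the same induction on the quantifier prefix, with the truncations $\psi_t,\phi_t$ of \eqref{eq: restricted psi} and \eqref{eq: restricted phi}. Universally quantified values $\hat u_j$ are fixed arbitrarily, and existential witnesses are pulled back by $\alpha_{w,M}^{-1}$. This gives both directions of the equivalence.

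The only point needing care is the second-order case of the bijection argument, where $\alpha_{w,M}^{-1}$ must act on sets. This is routine since $\alpha_{w,M}$ is a bijection of finite sets. All genuine difficulty has already been absorbed into Lemma~\ref{lem: leq1 for M}, so I expect no real obstacle beyond checking that the earlier proof never used anything specific to $f_d$ other than bijectivity of $\alpha_w$ and the atomic correspondences.
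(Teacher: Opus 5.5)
Your proposal is correct and matches the paper's proof. The paper likewise reuses the proof of Theorem~\ref{thm: FO correspondence} verbatim, with $\xi_{s,M}$ in place of $\xi_{s,d}$ and Lemmas~\ref{lemma: easy cases = and in M} and~\ref{lem: leq1 for M} in place of Lemmas~\ref{lemma: easy cases = and in} and~\ref{lemma: inductive step of leq_1 correspondence}; your observation that only the bijectivity of $\alpha_{w,M}$ (and its elementwise extension to sets) and the atomic correspondences are needed is exactly what the paper leaves implicit.
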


\begin{proof}
    The proof of Theorem~\ref{thm: FO correspondence for M} is identical to the proof of Theorem~\ref{thm: FO correspondence}, apart from the following trivial modifications. The constructions of the formulae $q_{i,j}$ are identical except for using the formulae $\xi_{s,M}$ in place of $\xi_{s,d}$. In addition, Lemmas~\ref{lemma: easy cases = and in M} and~\ref{lem: leq1 for M} are used instead of Lemmas~\ref{lemma: easy cases = and in} and~\ref{lemma: inductive step of leq_1 correspondence} respectively. We omit the details. 
\end{proof}
\subsection{Restricting to the bijection} \label{section: M bijection restriction}

The purpose of this sub-section is to prove the following result. 
\begin{thm} \label{thm: fM is a bijection}
    There exists a first-order sentence $\Delta_M$ so that for every $\sigma \in \mathrm{Geom}(M)_{d,k}$, there exists a unique $w \in \Sigma_M^*$ with $w \models \Delta_M$ and $f_M(w) = \sigma$. 
\end{thm}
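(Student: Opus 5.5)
The plan has two stages, following the outline given just before the statement. First I construct a first-order sentence $\Delta_M^\#$ such that every gridded permutation in $\mathrm{Geom}^\#(M)_{d,k}$ has exactly one preimage under $g_M$ satisfying $\Delta_M^\#$. Then I add a first-order condition saying that $g_M(w)$ is the $<_{LG}$-minimal gridding of $f_M(w)$.

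For the first stage, a word $w\models\Gamma_M$ decomposes properly as $w=u_0u_{(i_1,j_1)}\dots u_{(i_r,j_r)}$. Because $M$ is $\star$-isolated, the gridded figure $g_M(w)$ is determined by two pieces of data: the gridded figure $g_{M_{\pm1}}(u_0)$, and the permutations $f_d(\rho_5(u_{(i,j)}))$ placed in the $\star$-cells. Uniqueness in the $\star$-cells comes from Theorem~\ref{thm:Main O_K}: I require $(\Delta_d)_{(a,b)}$ for every nonempty $u_{(a,b)}$, with the same relativisation trick used for $(\Gamma_d)_{(a,b)}$ in Lemma~\ref{lemma:gammaM}. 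Uniqueness for $u_0$ comes from the regular language $L_1$ of Theorem~\ref{thm: Albert et al regular language}, on which $g_{M_{\pm1}}$ is a bijection onto $\mathrm{Geom}^\#(M_{\pm1})$. Since I need a first-order condition, not merely a regular one, I will use the explicit description of $L_1$ in \cite{albert2013geometric}. There, the fibres of $g_{M_{\pm1}}$ are the trace-monoid classes for the commutation of letters lying in cells that share neither a row nor a column, and a normal form such as lexicographic Foata or Cartier--Foata normal form is first-order expressible. Concretely, it forbids a factor $ab$, with $a,b$ commuting and $b<a$, that is separated only by letters commuting with $b$. I relativise this condition to the letters of $\Sigma_{M,\pm}$ and call it $\Lambda_0$. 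Then $\Delta_M^\#:=\Gamma_M\wedge\Lambda_0\wedge\bigwedge_{(a,b)\in\mathcal S_\star}\bigl((\exists u\colon\rho_6(w_u)=(a,b))\implies(\Delta_d)_{(a,b)}\bigr)$. Existence of a preimage follows from surjectivity of the pieces, and uniqueness follows because both the cell contents and the $u_0$-representative are unique.

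For the second stage, I encode left-greediness as a statement about $\sigma=f_M(w)$ in TOTO and pull it back using Theorem~\ref{thm: FO correspondence for M}. The idea is that a gridding of $\sigma$ is first-order described by its $\ell-1$ vertical and $m-1$ horizontal cut positions. I would represent each cut by the last point lying before it in $\leq_1$ (respectively $\leq_2$), using auxiliary "empty" flags for cuts preceding every point. For a tuple $\bar z$ of such cut points, the TOTO formula $\mathrm{Comp}_M(\bar z)$ states that the resulting gridded figure lies in $\mathrm{Geom}^\#(M)_{d,k}$. This is first-order: zero cells are empty, and $\pm1$ cells are monotone in the correct direction. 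For $\star$-cells, the induced rectangle satisfies $\upsilon$ from Proposition~\ref{prop: small classes FOS classifiable} for the finitely based class $\widetilde{\mathcal O}_k^{[d]}$. No geometric condition is needed beyond this, because $M_{\pm1}$ is a forest and Proposition~\ref{prop: equality grid and geometric grid when HM is a forest} gives $\mathrm{Grid}^\#=\mathrm{Geom}^\#$; the isolated $\star$-cells do not interfere with this. Next, the cut tuple of the actual gridding $g_M(w)$ is first-order definable on $w$. For instance, the last point in column $i$ is $\beta_{w,M}$ of a letter with $\rho_6$-column $i$ that is $\xi_{1,M}$-maximal among such letters. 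Finally, I take the TOTO formula asserting that every compatible cut tuple $\bar z'$ is not $<_{LG}$-smaller than the given one. This is expressible because the column counts are compared through the cut positions in the order $\leq_1$, exactly as $\psi_{\geq,t}$ was used in Lemma~\ref{lemma: handling Q_ks for bijection}. Within a column, the cell counts are compared through cut positions of $\leq_2$ restricted to that column's points. Translating this formula via Theorem~\ref{thm: FO correspondence for M} and conjoining it with $\Delta_M^\#$ gives $\Delta_M$. Each $\sigma$ then has exactly one preimage: $\sigma$ has a unique left-greedy gridding, and by stage one that gridding has exactly one $\Delta_M^\#$-word.

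I expect the main obstacle to be expressing the within-column comparison of cell counts, part 2 of Definition~\ref{def: left greedy}, purely in terms of the points. The difficulty is that a cell count is not determined by a single cut point; it depends on both the row cuts and the column cuts. My first attempt will be to express "cell $(i,j)$ of $\bar z$ has at least as many points as cell $(i,j)$ of $\bar z'$" through an inequality between suitable extremal points. If the comparison is only ever needed when all earlier cells agree, such an inequality should suffice, since equality of the preceding cells pins down matching extremal points. A secondary risk is that $\Lambda_0$ might fail to select a $\sigma$-independent representative; if that happens, I would fall back on Lemma~\ref{lemma: iterate}-style reasoning, or choose a different normal form that is first-order.
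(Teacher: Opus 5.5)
Your proposal is correct and follows essentially the same route as the paper. Stage one is the paper's Lemma~\ref{lemma: key thm for gM instead}: your $\Lambda_0$ is exactly the lexicographic-normal-form condition $\eta$ of Lemma~\ref{lemma: no stars preimages given by equivalence relation}, conjoined with $\Gamma_M$ and the $(\Delta_d)_{(a,b)}$. Stage two is Proposition~\ref{prop: penultimate step}, built from Lemmas~\ref{lemma: cells indexed by A} and~\ref{lemma: left-greedy comparison lemma}, pulled back via Theorem~\ref{thm: FO correspondence for M} and tied to the actual gridding through $\rho_6$.

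The only difference is in how a gridding is encoded. You use column and row cut points, with a finite case split for empty prefixes. The paper uses the four $\leq_1,\leq_2$-extremal points of each non-empty cell, together with a disjunction over the set $A$ of non-empty cells. Your remark that a within-column cell comparison is only needed once all earlier cells agree is precisely what justifies the paper's comparison $y_{(i,j),2,2}<_2 x_{(i,j),2,2}$ as well.
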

As an initial step, we start by proving the gridded variant of Theorem~\ref{thm: fM is a bijection}, i.e.\ the variant in which the  function $f_M$ is replaced with $g_M$. We say that two letters $a_{i,j}$ and $a_{i',j'} \in \Sigma_{M,\pm}$ are \emph{unrelated} if and only if $i \neq i'$ and $j \neq j'$. We say that a word $w'$ can be obtained from $w \in \Sigma_{M,\pm}^*$ by applying a \emph{local swap} if there exist indices $i$ and $i+1$ so that $w_i$ and $w_{i+1}$ are unrelated, and $w'$ is obtained by swapping these two letters. 

It is evident that for any $w'$ which can be obtained from $w$ by applying a local swap we have $g_M(w) = g_M(w')$. Define an equivalence relation $\sim$ on $\Sigma_{M,\pm}^*$ so that $w\sim w'$  if and only if there exists a finite sequence $w_0,\dots,w_k$ with $w_0 = w$, $w_k = w'$, and $w_{i+1}$  can be obtained from $w_i$ by applying a local swap for every $0 \leq i \leq k-1$. We start by recalling the following result of Albert et.\ al.\ from \cite{albert2013geometric}, which we have rephrased slightly. 
\begin{lem}\cite[Proposition 7.1]{albert2013geometric}\label{lemma: albert equivalence relation}
    For every $w,w' \in \Sigma_{M,\pm}^*$ we have $g_M(w) = g_M(w')$ if and only if $w \sim w'$. 
\end{lem}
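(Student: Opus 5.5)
The forward direction is immediate, and for the converse I would argue by induction on $|w|$, peeling off the last letter of $w$ and moving the corresponding letter of $w'$ to the end by local swaps.

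\emph{Local swaps preserve $g_M$.} Suppose $w_t=a_{i,j}$ and $w_{t+1}=a_{i',j'}$ are unrelated, so $i\neq i'$ and $j\neq j'$. Swapping them only interchanges which of the two points gets parameter $d_t$ and which gets $d_{t+1}$. The points lie in different columns, so their $x$-order is determined by the columns. They lie in different rows, so their $y$-order is determined by the rows. Their order relative to every other point is unchanged, because within a fixed column the $x$-order is governed by $\delta_i$ and by the relative order of the indices, and adjacent indices remain adjacent. Hence the gridded figure stays in the same equivalence class. By transitivity, $w\sim w'$ implies $g_M(w)=g_M(w')$.

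\emph{Key observation for the converse.} In $g_M(w)$, a point $p_s=\beta_{w,M}(s)$ in column $i$ is, among all points of column $i$, further in the direction prescribed by $\delta_i$ exactly when its index $s$ is larger. Thus within column $i$ the $x$-order of points agrees with the order of their indices if $\delta_i=1$ and is reversed if $\delta_i=-1$. The same holds within row $j$ with $\varepsilon_j$ and the $y$-order. This is exactly what the formulae $\delta_{=,i}$ and $\varepsilon_{=,j}$ above encode. Consequently, the point $p_n$ of the last letter $w_n=a_{i,j}$ is the extreme point of column $i$ in the $\delta_i$-direction, and the extreme point of row $j$ in the $\varepsilon_j$-direction.

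\emph{Inductive step.} Suppose $g_M(w)=g_M(w')$, so the two gridded figures are equivalent. Equivalence gives a bijection between their points that respects both orders and the cells. Let $q$ be the point of $g_M(w')$ matched with $p_n$, and let $t$ be the index with $\beta_{w',M}(t)=q$. Since $q$ lies in cell $C_{i,j}$, we have $w'_t=a_{i,j}$. Take any $s>t$. If $w'_s$ lay in column $i$, the observation applied to $w'$ would make $\beta_{w',M}(s)$ strictly more extreme in column $i$ than $q$. Transporting this through the equivalence contradicts the extremality of $p_n$ in $g_M(w)$. The same argument with row $j$ rules out $w'_s$ lying in row $j$. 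Hence every letter after position $t$ in $w'$ is unrelated to $a_{i,j}$. Successive local swaps then move $w'_t$ to the end, producing $w''\sim w'$ whose last letter is $a_{i,j}$ and whose last point is matched with $p_n$.

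\emph{Removing the last point.} Deleting the matched points $p_n$ and $\beta_{w'',M}(n)$ from two equivalent figures leaves equivalent figures, namely $g_M(w_1\dots w_{n-1})$ and $g_M(w''_1\dots w''_{n-1})$. By induction $w_1\dots w_{n-1}\sim w''_1\dots w''_{n-1}$. Appending $a_{i,j}$ to both gives $w\sim w''\sim w'$.

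\emph{Main obstacle.} The delicate step is the bookkeeping in the inductive step: making precise that equivalence of gridded figures yields a point bijection preserving both orders and cells, and that the within-column order is tied to index order via $\delta_i$. The latter needs the multiplicativity of $M$, so that the placement $x=i-1+d_t$ or $x=i-d_t$ is consistent across all cells of column $i$. Once this is established, the extremality argument is routine.
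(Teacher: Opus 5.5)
Your proof is correct. The paper does not prove this statement itself; it quotes it from \cite{albert2013geometric}. Your induction is essentially the standard argument for it: the point of the last letter is the $\delta_i$-extreme point of its column and the $\varepsilon_j$-extreme point of its row, so the matching occurrence of $a_{i,j}$ in $w'$ is followed only by unrelated letters and can be swapped to the end. It uses only multiplicativity of $M$. One detail is worth stating explicitly: the point bijection between equivalent figures is the unique $x$-order-preserving one. Hence the last point of $g_M(w'')$ is matched with $p_n$ simply because both are the $\delta_i$-extreme points of column $i$ in their respective figures.
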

In \cite{albert2013geometric}, the authors observed that by using standard properties of trace monoids, one may construct a regular language $L$ which contains a unique representative from each equivalence class of $\sim$. For our purposes, it turns out to be convenient to phrase their result in a way that is easier to translate to a first-order sentence. As the proof of the following Lemma is analogous to the proof of Proposition 7.1 in \cite{albert2013geometric}, we omit the proof. 
\begin{lem}\label{lemma: no stars preimages given by equivalence relation}
    Suppose that $M$ is a multiplicative  matrix whose entries are in $\{-1,0,1\}$. For every $\sigma_\# \in \mathrm{Geom}^\#(M)$ there exists a unique $w$ for which $g_M(w) = \sigma_\#$, and for every pair of letters $w_{\ell_1} = a_{i,j}$ and $w_{\ell_2} = a_{i',j'}$ with $\ell_1 < \ell_2$ at least one of the following conditions holds: (i) $i = i'$, (ii) $j = j'$, or (iii) if for every $\ell_3 \in (\ell_1,\ell_2) \cap \mathbb{Z}$ the letter $w_{\ell_3}$ is unrelated with $w_{\ell_2}$, then $(i,j) <_{lex} (i',j')$. We call such $w$ the lexicographically minimal $w$. 
\end{lem}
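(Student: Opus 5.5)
The plan is to reduce the statement to the classical theory of lexicographic normal forms in trace monoids, using Lemma~\ref{lemma: albert equivalence relation}. The key identification is this: \emph{unrelated} letters are exactly the independent pairs, and a letter is never unrelated to itself, since $i=i'$. With this, $\Sigma_{M,\pm}^*/\sim$ is the free partially commutative monoid for that independence relation. By Lemma~\ref{lemma: albert equivalence relation}, the fibre $g_M^{-1}(\sigma_\#)$ is exactly one $\sim$-class. So it suffices to show that every $\sim$-class contains exactly one word satisfying the stated condition (i)--(iii). Here $\Sigma_{M,\pm}$ is ordered via $<_{lex}$ on the indices $(i,j)$, and words are compared lexicographically.

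\textbf{Existence.} A $\sim$-class is finite, since all of its words have the same multiset of letters. Let $w$ be its lexicographically least element; I claim $w$ satisfies the condition. Suppose instead that $\ell_1<\ell_2$ violate it. Then $w_{\ell_1}$ and $w_{\ell_2}$ are unrelated, every $w_{\ell_3}$ with $\ell_1<\ell_3<\ell_2$ is unrelated with $w_{\ell_2}$, and $(i',j')<_{lex}(i,j)$. Perform local swaps moving $w_{\ell_2}$ leftwards past $w_{\ell_2-1},\dots,w_{\ell_1}$; each swap is legal because the exchanged letters are unrelated. The resulting word $w'\sim w$ agrees with $w$ before position $\ell_1$ and has the smaller letter $a_{i',j'}$ at position $\ell_1$. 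Hence $w'$ is lexicographically smaller than $w$, a contradiction.

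\textbf{Uniqueness.} Suppose $u\neq v$ are $\sim$-equivalent and both satisfy the condition. Let $p$ be the first position where they differ, with $u_p=a$ and $v_p=b$, and without loss of generality $a<_{lex}b$. I need the following invariant of $\sim$. Label the occurrences of each letter by their rank: the $t$-th occurrence of $c$. A local swap exchanges only two \emph{unrelated}, hence distinct, letters. It therefore preserves these ranks, and it preserves the relative order of any two occurrences of related letters, including two occurrences of the same letter.

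Apply this to $u$ and $v$. In $u$, the occurrence of $a$ at position $p$ precedes every occurrence, lying after position $p-1$, of any letter related to $a$. Since the prefixes of length $p-1$ coincide, the matching occurrence of $a$ in $v$ sits at some position $q>p$. Every letter $v_{\ell_3}$ with $p\le \ell_3<q$ is then unrelated to $a$; in particular $b=v_p$ is unrelated to $a$. Now apply the condition for $v$ with $\ell_1=p$ and $\ell_2=q$. Alternatives (i) and (ii) fail because $b$ and $a$ are unrelated, and all intermediate letters are unrelated to $v_q=a$. So (iii) forces $b<_{lex}a$, contradicting $a<_{lex}b$.

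I expect the main obstacle to be stating the occurrence-order invariant cleanly. That is, justifying rigorously that ``the first $a$ after the common prefix'' corresponds across $u$ and $v$. I would prove it by induction on the number of local swaps, tracking rank-labelled occurrences. Everything else is a direct translation of the condition.
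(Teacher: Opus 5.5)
Your proposal is correct and is essentially the argument the paper has in mind: the paper omits the proof, deferring to Albert et al.\ and the standard lexicographic normal form for trace monoids, and you reconstruct exactly that normal-form argument, using Lemma~\ref{lemma: albert equivalence relation}, the leftward-exchange argument for existence, and the occurrence-order invariant for uniqueness. One small addition: identifying the fibre $g_M^{-1}(\sigma_\#)$ with a single $\sim$-class also needs the fibre to be non-empty, i.e.\ surjectivity of $g_M$ onto $\mathrm{Geom}^\#(M)$, which the paper records in Section~\ref{subsetion: albert construction}.
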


\begin{lem}\label{lemma: key thm for gM instead}
    There exists a first-order sentence $\Delta_{M,\#}$ so that for every $\sigma_\# \in \mathrm{Geom}(M)^\#_{d,k}$, there exists a unique $w \in \Sigma_M^*$ with $w \models \Delta_{M,\#}$ and $g_M(w) = \sigma_\#$. 
\end{lem}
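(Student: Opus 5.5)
We will take $\Delta_{M,\#}$ to be the conjunction of three requirements: $\Gamma_M$; a first-order encoding of the lexicographic-minimality condition of Lemma~\ref{lemma: no stars preimages given by equivalence relation} on the sub-word $u_0$; and, for each $(a,b)\in\mathcal{S}_\star$, the statement that either $u_{(a,b)}$ is empty or $\rho_5(u_{(a,b)})\models\Delta_{d}$, where $\Delta_d$ is the sentence constructed in the proof of Theorem~\ref{thm:Main O_K}. Relativising $\Delta_d$ to $u_{(a,b)}$ is handled exactly as $(\Gamma_d)_{(a,b)}$ was in Lemma~\ref{lemma:gammaM}: every quantified variable $t$ is required to satisfy $\rho_6(w_t)=(a,b)$, and letters are read through $\rho_5$. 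This gives a first-order sentence by Lemma~\ref{lem:projection}.

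For the $u_0$ part, we restrict all variables to positions $t$ with $w_t\in\Sigma_{M,\pm}$. We then write
\[
\forall \ell_1\forall\ell_2\colon \Bigl(\ell_1<\ell_2 \land w_{\ell_1},w_{\ell_2}\in\Sigma_{M,\pm}\Bigr)\implies \Bigl(\mathrm{SR}(\ell_1,\ell_2)\lor\mathrm{SC}(\ell_1,\ell_2)\lor\bigl(\bigl(\forall\ell_3\colon \ell_1<\ell_3<\ell_2\implies \mathrm{Unrel}(\ell_3,\ell_2)\bigr)\implies \mathrm{Lex}(\ell_1,\ell_2)\bigr)\Bigr).
\]
Here $\mathrm{SR}$ and $\mathrm{SC}$ say that $\rho_6$ gives the same column, respectively the same row. $\mathrm{Unrel}$ says that both coordinates differ. $\mathrm{Lex}$ is the finite disjunction over pairs $(i,j)<_{lex}(i',j')$ in $\mathcal{S}_\pm$. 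Since $\mathcal{S}$ is finite, each of these is a finite Boolean combination of letter predicates.

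Next we check existence and uniqueness. Take $\sigma_\#\in\mathrm{Geom}^\#(M)_{d,k}$. Because $M$ is $\star$-isolated, $\sigma_\#$ splits canonically into two pieces: the restriction to the $\pm1$ cells, which is an element of $\mathrm{Geom}^\#(M_{\pm1})$, and, for each $(a,b)\in\mathcal{S}_\star$, the permutation $\sigma_{a,b}\in\widetilde{\mathcal{O}}_k^{[d]}$ drawn in cell $C_{a,b}$. The construction of $g_M$ shows that $g_M(w)=\sigma_\#$ holds exactly when three conditions hold: $w$ decomposes properly, $g_{M_{\pm1}}(u_0)$ equals the $\pm1$ part, and $f_d(\rho_5(u_{(a,b)}))=\sigma_{a,b}$ for every $(a,b)$. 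Each factor can be chosen independently, and $\Gamma_M$ forces the factors to appear in the fixed order. Uniqueness in the $\pm1$ part follows from Lemma~\ref{lemma: no stars preimages given by equivalence relation}, applied to the multiplicative matrix $M_{\pm1}$. Uniqueness in each $\star$ cell follows from Theorem~\ref{thm:Main O_K}, which gives exactly one $w'\models\Delta_d$ with $f_d(w')=\sigma_{a,b}$. An empty cell corresponds to the empty sub-word; since $g_d(\varepsilon)=\star$ this case has to be split off, which is what the disjunction in the third requirement does. Combining the factors yields exactly one word.

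The main obstacle is to confirm that the first-order condition quantifying over the positions of $u_0$ inside $w$ agrees with the condition of Lemma~\ref{lemma: no stars preimages given by equivalence relation} applied to $u_0$ as a standalone word. This holds because $u_0$ is a prefix of $w$ under $\Gamma_M$, so intermediate positions $\ell_3$ in $w$ are intermediate positions in $u_0$. A second point to check is that the $\pm1$ part of $\sigma_\#$ really is an arbitrary element of $\mathrm{Geom}^\#(M_{\pm1})$, so that Lemma~\ref{lemma: no stars preimages given by equivalence relation} applies. This follows because the $\star$ cells share no rows or columns with the $\pm1$ cells, so inserting or deleting their contents does not change the equivalence class of the rest of the gridded figure.
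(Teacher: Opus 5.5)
Your proposal is correct and follows the paper's proof essentially verbatim: the paper also defines $\Delta_{M,\#}=\Gamma_M\land\eta\land\bigwedge_{(i,j)\in\mathcal{S}_\star}\bigl((\exists x\colon\rho_6(w_x)=(i,j))\implies(\Delta_d)_{(i,j)}\bigr)$, where $\eta$ is the same first-order encoding of the lexicographic-minimality condition of Lemma~\ref{lemma: no stars preimages given by equivalence relation}, and then concludes using that lemma for $u_0$ and Theorem~\ref{thm:Main O_K} for each $\star$-cell. Your existence and uniqueness discussion (the factors are independent by $\star$-isolation, and empty cells must be split off because $g_d(\varepsilon)=\star$) fills in details that the paper only describes as easy to check.
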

\begin{proof}
Write $u(a,b)$ as a shorthand for the statement that is true if and only if the letters $a$ and $b$ are unrelated. Recall that $\mathcal{S}_\pm$ denotes the indexes of the $\pm 1$ entries of $M$. For simplicity, for variables $x$, $y$ and $z$ we write $x' = \rho_6(w_x)$, $y' = \rho_6(w_y)$ and $z' = \rho_6(w_z)$. Let $\eta$ denote the first-order sentence
\[
\neg \exists x \exists y \left(x' \in \mathcal{S}_\pm \land y' \in \mathcal{S}_\pm \land x < y \land x' >_{lex} y'\land \forall z\left(x \leq z < y \implies u\left(w_z,w_y\right)\right) \right).
\]
It is easy to check that $\eta$ matches the condition of Lemma~\ref{lemma: no stars preimages given by equivalence relation}.

Let $\Delta_{M,\#}$ be the first-order sentence defined by 
\begin{align*}
\Delta_{M,\#} = \Gamma_M \land \eta \land \bigwedge_{(i,j)\in\mathcal{S}_\star}
\left(
\left(\exists x\colon \rho_6(w_x)=(i,j)\right)
\implies (\Delta_d)_{(i,j)}
\right).
\end{align*}
It is easy to check that $\Delta_{M,\#}$  satisfies the required properties. Indeed, $\eta$ (and $\Gamma_M$) ensure that whenever $u_0$ is non-empty, the word $u_0$ corresponds to the unique representative word in the trace monoid by Lemma~\ref{lemma: no stars preimages given by equivalence relation}. 

For each $(i,j) \in \mathcal{S}_\star$, if the respective permutation in $C_{i,j}$ is non-empty, then it has a unique left-greedy representative by Theorem~\ref{thm:Main O_K}. Thus the second line ensures that either the respective sub-word $u_{(i,j)}$ is empty, or is the unique word which satisfies $\Delta_d$ and whose image coincides with $(\sigma_\#)_{i,j}$. This completes the proof. 
\end{proof}
As different elements of $\mathrm{Geom}(M)_{d,k}$  can have a varying number of gridded figures corresponding to them,  Lemma~\ref{lemma: key thm for gM instead} on its own is not particularly useful for proving  Theorem~\ref{thm: fM is a bijection}, nor even using it as a tool for proving a convergence law.  To overcome this difficulty, we recall from Definition~\ref{def: left greedy} the notion of left-greedy order $<_{LG}$ introduced by Albert et al, which for every $\sigma$ induces a total order on the set $G(\sigma)$. In particular, the left-greedy order gives a unique minimal representative in $G(\sigma)$ for each $\sigma$, which is called the \emph{left-greedy representation} of $\sigma$. 

Our next goal is to construct first-order formulae which correspond to the conditions in the definition of the left-greedy order, analogously as in section~\ref{section: restricting to bijection}. Using this approach, we construct a first-order sentence $\hat{\Delta}_M$ for which $w \models \hat{\Delta}_M$ if and only if $g_M(w)$ is the left-greedy representation of the underlying permutation $f_M(w)$. Together with Lemma~\ref{lemma: key thm for gM instead}, we conclude that $\Delta_M := \hat{\Delta}_M \land \Delta_{M,\#}$ is a suitable first-order sentence to satisfy the conditions of Theorem~\ref{thm: fM is a bijection}. 

We move on to proving a sequence of preliminary Lemmas that will be required in the proof of Theorem~\ref{thm: fM is a bijection}. 

\begin{lem} \label{lemma: cells indexed by A}
    Let $A \subseteq \mathcal{S}$ be a collection of non-zero entries of $M$. There exists a first-order formula $\psi_A(x_{(i,j),u,v})$ involving variables $x_{(i,j),u,v}$ for $(i,j) \in A$, $u,v\in \{1,2\}$ so that $\sigma \models \psi_A(c_{(i,j),u,v})$ for some constants $c_{(i,j),u,v}$ if and only if there exists a gridded permutation $\sigma_\# \in G(\sigma)$  so that the following conditions are satisfied:
    \begin{enumerate}
        \item[(i)] the non-empty cells of $\sigma_\#$ are indexed exactly by $A$; and 
        \item[(ii)] for every $(i,j) \in A$, we have $p \in \sigma_\# \cap C_{i,j}$ if and only if $p \in [c_{(i,j),1,1},c_{(i,j),1,2}]_1 \times [c_{(i,j),2,1},c_{(i,j),2,2}]_2$. Moreover, each $c_{(i,j),u,v}$ belongs to the cell $\sigma_\# \cap C_{i,j}$. 
    \end{enumerate}
\end{lem}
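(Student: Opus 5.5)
The plan is to write $\psi_A$ explicitly as a conjunction of first-order conditions on the $4|A|$ variables, in the same spirit as $\psi_\tau$ and $\psi_\oplus$ above. The variables $x_{(i,j),u,1}$ and $x_{(i,j),u,2}$ are meant to be the $\leq_u$-least and $\leq_u$-greatest points of the cell contents. Write $R_{(i,j)}$ for the rectangle $[x_{(i,j),1,1},x_{(i,j),1,2}]_1\times[x_{(i,j),2,1},x_{(i,j),2,2}]_2$. The formula $\psi_A$ will require the following:
\begin{enumerate}
\item[(a)] every point $z$ lies in some $R_{(i,j)}$ with $(i,j)\in A$;
\item[(b)] every $x_{(i,j),u,v}$ lies in $R_{(i,j)}$;
\item[(c)] the rectangles are consistent with the grid geometry: whenever $i<i'$, every point of $R_{(i,j)}$ is $<_1$ every point of $R_{(i',j')}$, and whenever $j<j'$, every point of $R_{(i,j)}$ is $<_2$ every point of $R_{(i',j')}$;
\item[(d)] the points in $R_{(i,j)}$ form the correct kind of cell content.
\end{enumerate}
For (d): if $M_{i,j}=1$, every pair of points in $R_{(i,j)}$ is increasing; if $M_{i,j}=-1$, every pair is decreasing; if $M_{i,j}=\star$, we impose $\upsilon_{\mathcal{F}}$ from Proposition~\ref{prop: small classes FOS classifiable} with $\mathcal{F}=\widetilde{\mathcal O}_k^{[d]}$, applied to the four corner variables. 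This class is finitely based, as remarked after that proposition. All of these are finite conjunctions over $A$, so $\psi_A$ is first-order.

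For the forward direction, take constants satisfying $\psi_A$. We build a gridding by placing the points of $R_{(i,j)}$ into cell $C_{i,j}$. Condition (c) says that the column order and the row order of the blocks agree with the column and row indices, so we can choose vertical grid lines between the $\leq_1$-extents of consecutive occupied columns and horizontal grid lines likewise. One point must be checked here: two rectangles in the same column overlap in $\leq_1$-extent, and they are separated only in $\leq_2$. For this reason the vertical lines should be placed between column groups rather than between individual rectangles, which is exactly what (c) permits. Conditions (a) and (c) together force the rectangles to be pairwise disjoint, since two distinct rectangles differ in their row or their column index and are then strictly separated in that order. So each point lands in exactly one cell.

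By (d), the resulting gridded figure is $M$-compatible in the sense of $\mathrm{Grid}^\#$, with the $\star$-cells carrying permutations of $\widetilde{\mathcal O}_k^{[d]}$. We then need it to lie in $\mathrm{Geom}^\#(M)_{d,k}$ rather than merely in the gridded class. This is the step I expect to be the main obstacle. The plan is to handle it with the forest property: remove the $\star$-cells, which are isolated and so do not interact with the rest, and apply Proposition~\ref{prop: equality grid and geometric grid when HM is a forest} to $M_{\pm1}$. Then reinsert the $\star$-cell contents, which affect neither the other rows nor the other columns. The cells in $A$ are nonempty by (b), and by (a) no cell outside $A$ contains a point. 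Finally, by (b) and the definition of the rectangles, the $x_{(i,j),u,v}$ are the extremal points of their cell, which gives (ii).

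The converse direction is routine. Given such a $\sigma_\#$, take the $c_{(i,j),u,v}$ to be the true extremal points of each nonempty cell. Each cell's points are then exactly those in its bounding rectangle, because the grid lines separate that cell from the other cells in its row and in its column. Conditions (a) through (d) can then be verified directly from the fact that $\sigma_\#$ is a valid gridding in $\mathrm{Geom}^\#(M)_{d,k}$.
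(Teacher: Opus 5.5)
Your proposal is correct and follows essentially the same route as the paper. The paper's $\psi_A$ is the same conjunction of covering, extremal-point, monotonicity/$\upsilon_{d,k}$ and row/column separation conditions, and it passes from $\mathrm{Grid}^\#$ to $\mathrm{Geom}^\#$ in the same way, via Proposition~\ref{prop: equality grid and geometric grid when HM is a forest} applied to $M_{\pm 1}$ using $\star$-isolation (in the converse, condition (ii) already forces the given constants to be the extremal points of their cells, so your choice of the $c_{(i,j),u,v}$ is the only one possible).
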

\begin{proof}
    Given variables $x_{(i,j),u,v}$ for $(i,j) \in A$ and $u,v\in \{1,2\}$, write $(\upsilon_{d,k})_{i,j}$ for the first-order formula $\upsilon_{d,k}(x_{(i,j),1,1},x_{(i,j),2,1},x_{(i,j),1,2},x_{(i,j),2,2})$, which can be used to check whether the sub-permutation induced by the points in the  rectangle $[x_{(i,j),1,1},x_{(i,j),1,2}]_1\times[x_{(i,j),2,1},x_{(i,j),2,2}]_2$ is contained in $\widetilde{\mathcal O}_k^{[d]}$. Let $\psi_A(x_{(i,j),u,v})$ denote the first-order formula
    \begin{align*}
        & \forall p: \left(\bigvee_{(i,j) \in A} \left(p \in [x_{(i,j),1,1}, x_{(i,j),1,2}]_1 \times [x_{(i,j),2,1}, x_{(i,j),2,2}]_2\right)\right) \\
        & \land \left(\bigwedge_{(i,j) \in A, u,v\in \{1,2\}} \left(x_{(i,j),u,v} \in [x_{(i,j),1,1}, x_{(i,j),1,2}]_1 \times [x_{(i,j),2,1}, x_{(i,j),2,2}]_2\right)\right) \\
        & \land \bigwedge_{(i,j) \in \mathcal{S}_\star \cap A} (\upsilon_{d,k})_{i,j} \\
        & \land \bigwedge_{(i,j) \in \mathcal{S}_+ \cap A} \left(\forall p\forall q: \left(p,q \in [x_{(i,j),1,1}, x_{(i,j),1,2}]_1 \times [x_{(i,j),2,1}, x_{(i,j),2,2}]_2 \right) \implies (p \leq_1 q \iff p  \leq_2 q)\right)\\
        & \land \bigwedge_{(i,j) \in \mathcal{S}_- \cap A} \left(\forall p\forall q: \left(p,q \in [x_{(i,j),1,1}, x_{(i,j),1,2}]_1 \times [x_{(i,j),2,1}, x_{(i,j),2,2}]_2 \right) \implies (p \leq_1 q \iff q  \leq_2 p)\right)\\
        & \land \bigwedge_{(i,j),(i',j') \in A} \left(i < i' \implies x_{(i,j),1,2} <_1 x_{(i',j'),1,1}\right) \\
        & \land \bigwedge_{(i,j),(i',j') \in A} \left(j < j' \implies x_{(i,j),2,2} <_2 x_{(i',j'),2,1}\right).
    \end{align*}
    Here, we use shorthand $p,q \in [x,y]_1\times [z,w]_2$ for the formula $(p \in [x,y]_1 \times [z,w]_2) \land (q \in [x,y]_1 \times [z,w]_2)$. 
    In order to check that $\psi_A$ satisfies the required properties, suppose that $\sigma \models \psi_{A}(c_{(i,j),u,v})$. For each $(i,j) \in A$, let $\theta_{i,j}$ denote the set of points $p \in [c_{(i,j),1,1}, c_{(i,j),1,2}]_1 \times [c_{(i,j),2,1}, c_{(i,j),2,2}]_2$. Note that at this stage we have not verified that $\theta_{i,j}$'s are disjoint nor that they are sub-intervals of $\sigma$.

    The first line implies that every $p$ belongs to at least one $\theta_{i,j}$, and the second line implies that $c_{(i,j),u,v}$'s are the smallest and largest points in $\theta_{i,j}$ under both orders respectively. In addition, the last two lines imply that whenever $i < i'$, every point $p \in \theta_{i,j}$ and every point $p' \in \theta_{i',j'}$ satisfy the condition $p <_1 p'$, and an analogous condition holds for the $\leq_2$-order whenever $j < j'$. In particular, every point $p$ belongs to exactly one $\theta_{i,j}$. 

    The third line implies that $\theta_{i,j} \in \widetilde{\mathcal{O}}_k^{[d]}$ for each $(i,j)$ with $M_{i,j} = \star$, and the fourth and fifth lines imply that the points in $\theta_{i,j}$'s with $M_{i,j} \in \{\pm 1\}$ form a monotone sub-permutation of correct direction. As these points are only guaranteed to form a monotone sub-permutation, but they do not necessarily lie on the main diagonal, it follows that there exists $\sigma_{\#} \in \mathrm{Grid}^\#(M)_{d,k}$ which satisfies the conditions of the Lemma.  To complete the proof, recall the result of Albert et. al. \cite{albert2013geometric} (stated earlier as Proposition~\ref{prop: equality grid and geometric grid when HM is a forest}), which states that $\mathrm{Grid}^\#(M)_{d,k} = \mathrm{Geom}^\#(M)_{d,k}$ whenever $M$ is a forest, and thus applies to $M_{\pm 1}$. Thus we may insert the points in the monotone cells on appropriate diagonals, which completes the proof. The proof of the converse direction follows analogously, we omit the details. 
\end{proof}
From now on, we write $\lambda_{\Box}(\nu, \nu_{u,v})$ for the first-order formula obtained by applying Theorem~\ref{thm: FO correspondence for M} to the first-order formula $x \in [x_{1,1},x_{1,2}]_1 \times [x_{2,1},x_{2,2}]_2$, with the variable $\nu$ corresponding to $x$, and each $\nu_{u,v}$ corresponding to $x_{u,v}$. 
\begin{lem} \label{lemma: word equivalent of cells indexed by A}
    Let $\phi_A^{(1)}(\boldsymbol\nu)$ be the first-order formula on words that is obtained by applying Theorem~\ref{thm: FO correspondence for M} to $\psi_A$, where $\boldsymbol{\nu} = (\nu_{(i,j),u,v}: (i,j) \in A,\ u,v  \in \{1,2\})$. Set
    \[
    \phi_A(\boldsymbol\nu):=
    \Gamma_M\land\phi_A^{(1)}(\boldsymbol\nu) \land\bigwedge_{(i,j)\in A}\forall t\,\bigl(\lambda_\Box(t;\nu_{(i,j),u,v}) \iff\rho_6(w_t)=(i,j)\bigr).
    \]
    We have $w \models \phi_A(c_{(i,j),u,v})$ if and only if 
    \begin{enumerate}
        \item[(i)] the non-empty cells of $g_M(w)$ are exactly those indexed by $A$; and 
        \item[(ii)] for each $(i,j) \in A$, the $g_M(w) \cap C_{i,j}$ consists of  those points $p$ that are contained in $[\beta_{w,M}(c_{(i,j),1,1}),\beta_{w,M}(c_{(i,j),1,2})]_1 \times [\beta_{w,M}(c_{(i,j),2,1}),\beta_{w,M}(c_{(i,j),2,2})]_2$. Moreover, each \\$\beta_{w,M}(c_{(i,j),u,v})$ belongs to $g_M(w) \cap C_{i,j}$. 
    \end{enumerate}
\end{lem}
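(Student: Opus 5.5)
We prove both directions by unwinding the three conjuncts of $\phi_A$ and translating through Theorem~\ref{thm: FO correspondence for M}. Throughout we use that $\alpha_{w,M}=\pi^\#\circ\beta_{w,M}$ is a bijection. We also use observation~\eqref{eq: important observation construction beta}, which says that $\beta_{w,M}(t)$ lies in $C_{i,j}$ exactly when $\rho_6(w_t)=(i,j)$.

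For the forward direction, suppose $w\models\phi_A(c_{(i,j),u,v})$. The conjunct $\Gamma_M$ guarantees $g_M(w)\neq\star$, so Theorem~\ref{thm: FO correspondence for M} applies. It converts $w\models\phi^{(1)}_A(c)$ into $f_M(w)\models\psi_A(\alpha_{w,M}(c))$. Likewise, for every $t$ it converts $\lambda_\Box(t;c_{(i,j),u,v})$ into the statement $\alpha_{w,M}(t)\in R_{i,j}$. Here $R_{i,j}$ denotes the rectangle $[\alpha_{w,M}(c_{(i,j),1,1}),\alpha_{w,M}(c_{(i,j),1,2})]_1\times[\alpha_{w,M}(c_{(i,j),2,1}),\alpha_{w,M}(c_{(i,j),2,2})]_2$.

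The last conjunct therefore says that, for each $(i,j)\in A$, the letters $t$ with $\rho_6(w_t)=(i,j)$ are exactly those whose points $\alpha_{w,M}(t)$ lie in $R_{i,j}$. By the observation above, these letters are exactly those with $\beta_{w,M}(t)\in g_M(w)\cap C_{i,j}$. Since $\beta_{w,M}$ preserves both orders relative to $\alpha_{w,M}$, this proves the first half of (ii).

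Next we use $\psi_A$. From the proof of Lemma~\ref{lemma: cells indexed by A}, its first two lines say that the rectangles $R_{i,j}$ for $(i,j)\in A$ cover all of $f_M(w)$, and that each $c_{(i,j),u,v}$ lies in its own rectangle $R_{i,j}$. Because $\alpha_{w,M}$ is surjective, the covering shows that every letter has $\rho_6$-value in $A$, so no cell outside $A$ is occupied. Because each rectangle contains its defining corner points, each $R_{i,j}$ is non-empty, so every cell indexed by $A$ is occupied. This gives (i). The fact that $c_{(i,j),u,v}\in R_{i,j}$, combined with the equivalence just established, shows that $\rho_6(w_{c_{(i,j),u,v}})=(i,j)$. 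Hence $\beta_{w,M}(c_{(i,j),u,v})\in C_{i,j}$, which is the ``moreover'' part of (ii).

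For the converse, assume (i) and (ii) hold and that $g_M(w)\neq\star$. Then $\Gamma_M$ holds by Lemma~\ref{lemma:gammaM}. The last conjunct holds by reversing the argument above via Theorem~\ref{thm: FO correspondence for M} and the observation. For $\phi^{(1)}_A$, note that $g_M(w)$ is itself a gridded permutation in $G(f_M(w))$. It satisfies conditions (i) and (ii) of Lemma~\ref{lemma: cells indexed by A} with constants $\alpha_{w,M}(c_{(i,j),u,v})$. That lemma then gives $f_M(w)\models\psi_A(\alpha_{w,M}(c))$, and Theorem~\ref{thm: FO correspondence for M} transfers this to $w\models\phi^{(1)}_A(c)$.

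The step needing most care is the forward direction. Lemma~\ref{lemma: cells indexed by A} only asserts that \emph{some} gridding of $f_M(w)$ has cells cut out by the rectangles, whereas we need this for the specific gridding $g_M(w)$. This is exactly why the $\lambda_\Box$ conjunct is included: it ties the rectangles to the actual cell labels $\rho_6$, so the conclusion about $g_M(w)$ follows without identifying the gridding supplied by Lemma~\ref{lemma: cells indexed by A}.
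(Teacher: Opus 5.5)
Your proof is correct and follows essentially the same route as the paper. You transfer $\phi_A^{(1)}$ and $\lambda_\Box$ through Theorem~\ref{thm: FO correspondence for M}, use the $\lambda_\Box$ conjunct with observation~\eqref{eq: important observation construction beta} to tie the rectangles from $\psi_A$ to the actual cells of $g_M(w)$, and let Lemma~\ref{lemma: cells indexed by A} supply the converse. The paper's one-line proof only says that this conjunct puts the corner points $\beta_{w,M}(c_{(i,j),u,v})$ in the right cells; you use its full biconditional to identify each cell of $g_M(w)$ with its rectangle directly, which is more explicit but not a different method.
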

\begin{proof}
    The proof follows easily from Theorem~\ref{thm: FO correspondence for M} and  Lemma~\ref{lemma: cells indexed by A}, and by noticing that the last part of the formula $\phi_A$ ensures that the points $\beta_{w,M}(c_{(i,j),u,v})$ are always contained in $g_M(w) \cap C_{i,j}$ for every $(i,j) \in A$, and $u,v\in \{1,2\}$. 
\end{proof}
\begin{lem}\label{lemma: left-greedy comparison lemma}  
    Let $A, B \subseteq \mathcal{S}$. There exists a first-order formula $\lambda_{A,B}(x_{(i,j),u,v}, y_{(i',j'),u',v'})$ with variables $x_{(i,j),u,v}$ for $(i,j) \in A$, $u,v \in \{1,2\}$ and $y_{(i',j'),u',v'}$ for $(i',j')\in B$, $u',v' \in \{1,2\}$ so that  $\sigma \models \lambda_{A,B}(a_{(i,j),u,v},b_{(i',j'),u',v'})$ for some constants $a_{(i,j),u,v}$ and $b_{(i',j'),u',v'}$ if and only if both of the following conditions are satisfied:
    \begin{enumerate}
        \item[(i)] there exist gridded permutations $\sigma_{\#,a}$, $\sigma_{\#,b} \in G(\sigma)$ so that $\sigma_{\#,a}$ satisfies the conditions of Lemma~\ref{lemma: cells indexed by A} for the set $A$ and for constants $a_{(i,j),u,v}$, and  $\sigma_{\#,b}$ satisfies the conditions for the set $B$ and for constants $b_{(i,j),u,v}$; and 
        \item[(ii)] we have $\sigma_{\#,a} \leq_{LG} \sigma_{\#,b}$. 
    \end{enumerate}
\end{lem}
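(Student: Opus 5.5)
The plan is to take $\lambda_{A,B}$ to be the conjunction $\psi_A(x_{(i,j),u,v}) \land \psi_B(y_{(i',j'),u',v'}) \land \kappa_{A,B}$, where $\psi_A,\psi_B$ are the formulae of Lemma~\ref{lemma: cells indexed by A} and $\kappa_{A,B}$ is a quantifier-free first-order translation of the two clauses in Definition~\ref{def: left greedy}. The first two conjuncts give condition (i) directly. The only real work is to express the comparison of cell sizes between the two griddings using the extremal points $a_{(i,j),u,v}$ and $b_{(i',j'),u',v'}$. This mirrors the use of $\psi_{\geq,t}$ in Lemma~\ref{lemma: handling Q_ks for bijection}, where comparing the $\leq_1$-largest points of prefix unions replaced comparing cardinalities.

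The key observation concerns partial sums of column counts. Since the cells of a gridded permutation are arranged by columns, the number of points of $\sigma_{\#,a}$ in columns $1,\dots,i$ is determined by the $\leq_1$-largest point among the cells $(i'',j'')\in A$ with $i''\le i$. That point is the $\leq_1$-maximum of the finitely many $a_{(i'',j''),1,2}$, and this maximum is expressible by a finite disjunction over which index attains it. Comparing the partial column counts of $\sigma_{\#,a}$ and $\sigma_{\#,b}$ therefore reduces to comparing two such maxima under $\leq_1$, with the usual care when one of the prefix unions is empty (in which case its count is zero).

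Clause 1 of Definition~\ref{def: left greedy} then becomes a finite disjunction over $i_0$. For every $i<i_0$ the partial column sums must agree (equality of the corresponding extremal points), and at $i_0$ the partial sum for $a$ must be strictly larger. Equality of individual cells in the earlier columns is needed as well. Within a fixed column, the $j$-th cell's count is captured analogously by the $\leq_2$-order: the number of points of column $i$ lying in rows $\le j$ equals the number of points of $\sigma$ that are $\le_2$ the top point of the highest nonempty cell of that column with row $\le j$, restricted to the column. Restricting to the column is first-order, since the column is the rectangle between the $\leq_1$-extremal points already named. I would express "same number of points" by "the same point is the $\leq_2$-maximum of the column restricted below row $j$". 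Because $\sigma$ is fixed and both gridded permutations live on the same point set, equal cardinalities of such initial segments within an identical column range are equivalent to equality of these extremal points.

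Clause 2 is handled similarly, as a disjunction over $(i_0,j_0)$, together with the option that the two griddings are equivalent. Equivalence holds when all cells agree, and it is expressed by the extremal points coinciding cellwise, i.e.\ $A=B$ and $a_{(i,j),u,v}=b_{(i,j),u,v}$. The main obstacle is the bookkeeping needed to show that cell counts agree exactly when the chosen extremal points agree. This requires that, once all earlier columns agree, column $i_0$ occupies the same $\leq_1$-range in both griddings, so that within-column comparisons take place on the same point set. I would prove this by induction along the lexicographic order of cells before writing out $\kappa_{A,B}$ explicitly.
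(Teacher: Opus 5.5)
Your proposal is correct and follows essentially the same route as the paper: $\lambda_{A,B}=\psi_A\land\psi_B\land C_{A,B}$, where $C_{A,B}$ is a quantifier-free comparison of the named extremal points, with column counts compared via $\leq_1$-maxima of columns and cell counts via $\leq_2$-tops of cells once all earlier cells agree. The only difference is bookkeeping: you compare cumulative (prefix) maxima, which handles empty columns and cells uniformly, whereas the paper expresses agreement of earlier cells as literal equality $x_{(i,j),u,v}=y_{(i,j),u,v}$ and treats $A\neq B$ by a three-case split on the lexicographically least cell of the symmetric difference of $A$ and $B$.
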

\begin{proof}
     Write $A_i = \{(i,j): (i,j) \in A\}$ and define $B_i$ analogously. For notational convenience, write $\underline{x}_A : = (x_{(i,j),u,v}: (i,j) \in A, u,v \in \{1,2\})$ and $\underline{y}_B = (y_{(i,j),u,v}: (i,j) \in B, u,v\in \{1,2\})$.  Given a finite set of variables $x_1,\dots,x_k$ write $y = \max_1(x_1,\dots,x_k)$ for the point $y \in \{x_1,\dots,x_k\}$ chosen so that $x_i \leq_1 y$ for all $i \in [k]$. It is easy to prove that $y$ can be expressed as a first-order formula. We also write $m_i(\underline{x}_A)$ and $m_i(\underline{y}_B)$ as shorthands for $\max_1(x_{(i,j),1,2}: j \text{ chosen so that } (i,j) \in A)$ and $\max_1(y_{(i,j),1,2}: j\text{ chosen so that } (i,j) \in B)$ respectively. Note that we define $m_i(\cdot)$ only when the maximum is taken over a non-empty set. 
     
     For each pair $(t,s)$, write$A_{<(t,s)} = \{(i,j)\in A: i < t, \text{ or } i=t\text{ and } j<s\}$, and define $B_{<(t,s)}$ analogously. For convenience, we slightly abuse the notation by allowing $s = \infty$, whence $A_{<(t,\infty)} = \{(i,j) \in A: i \leq t\}$. 
     
     Given sets $A$ and $B$ with $A_{<(t,s)} = B_{<(t,s)}$ define auxiliary formula $\omega_{<(t,s),A,B}(\underline{x}_A,\underline{y}_B)$ as 
    \begin{align*}
        \bigwedge_{(i,j) \in A_{<(t,s)}} \bigwedge_{u,v\in \{1,2\}} (x_{(i,j),u,v} = y_{(i,j),u,v}). 
    \end{align*}
    For an integer $i'$ with $A_{<(i',\infty)} = B_{<(i',\infty)}$, define $\lambda_{i',A,B}(\underline{x}_A,\underline{y}_B)$ to be the formula 
    \begin{align*}
        & \bigvee_{\substack{1\leq i\leq i'\\ A_i=B_i\ne\emptyset}} \left(\omega_{<(i-1,\infty),A,B}(\underline{x}_A, \underline{y}_B) \land \left(m_i(\underline{y}_B) <_1 m_i(\underline{x}_A)\right)\right)\\
        & \lor \bigvee_{\substack{1\leq i\leq i'\\ A_i=B_i\ne\emptyset}}\left(\omega_{<(i-1,\infty),A,B}(\underline{x}_A, \underline{y}_B) \land \left(m_i(\underline{y}_B) = m_i(\underline{x}_A)\right)\right.\\ &\qquad\left.\land \bigvee_{j: (i,j) \in A} \left(\omega_{<(i,j),A,B}(\underline{x}_A,\underline{y}_B)\land y_{(i,j),2,2}<_2 x_{(i,j),2,2}\right)\right).
    \end{align*}
    We also use the convention that $\lambda_{0,A,B}(\underline{x}_A,\underline{y}_B)$ is the false sentence. We start by proving the following preliminary claim, which describes how $\lambda_{i',A,B}$ can  be used to determine the relative order of $\sigma_{\#,a}$ and $\sigma_{\#,b}$ with respect to $<_{LG}$. 
    \begin{claim}\label{claim: lambda claim}
        Let $i'$ be chosen so that $A_{<(i',\infty)} = B_{<(i',\infty)}$. Given constants $\underline{a}_A$ and $\underline{b}_B$, suppose that we have $\sigma \models \psi_A(\underline{a}_A)$ and $\sigma \models \psi_B(\underline{b}_B)$. Then we have $\sigma \models \lambda_{i',A,B}(\underline{a}_A,\underline{b}_B)$ if and only if $\sigma_{\#,a} <_{LG} \sigma_{\#,b}$, and if at least one of the following conditions is satisfied: 
        \begin{enumerate}
            \item[(i)] the relation $\sigma_{\#,a} <_{LG} \sigma_{\#,b}$ is witnessed when condition 1 in Definition~\ref{def: left greedy}  is satisfied for some $i_* \leq i'$; or
            \item[(ii)] the relation $\sigma_{\#,a} <_{LG} \sigma_{\#,b}$ is witnessed when condition 2 in Definition~\ref{def: left greedy}  is satisfied for some $(i_*,j_*)$ with $i_* \leq i'$. 
        \end{enumerate}
    \end{claim}
    \begin{proof}
        The proof follows by observing that 
        \begin{enumerate}
            \item[(i)]$\sigma$ satisfying the condition on the first row of $\lambda_{i',A,B}$ with $i = i_*$ is equivalent to the first condition being satisfied in Definition~\ref{def: left greedy}  when $i = i_*$; and 
            \item[(ii)] $\sigma$ satisfying the condition on the second row of $\lambda_{i',A,B}$ with $i = i_*$ and $j = j_*$ is equivalent to the second condition being satisfied in Definition~\ref{def: left greedy}  with $i = i_*$ and $j = j_*$.  
        \end{enumerate}
    \end{proof}
    
    \noindent We now construct the formulae $C_{A,B}(\underline{x}_A, \underline{y}_B)$. When $A = B$, we set 
    \begin{align*}
        C_{A,A}(\underline{x}_A, \underline{y}_A) = \lambda_{\ell,A,A}(\underline{x}_A,\underline{y}_A) \lor \omega_{<(\ell,\infty),A,A}(\underline{x}_A,\underline{y}_A).
    \end{align*}
    By Claim~\ref{claim: lambda claim}, the first part of $C_{A,A}$ is equivalent to the condition that $\sigma_{\#,a} <_{LG} \sigma_{\#,b}$, while the second part is equivalent to the condition that $\sigma_{\#,a} = \sigma_{\#,b}$. 

    Now suppose that $A \neq B$, and let $(i',j')$ be the lex-minimal element in $A \Delta B$. In particular, we have $A_{<(i',j')} = B_{<(i',j')}$. Note that for any choice of constants $\underline{a}_A$ and $\underline{b}_B$ we must have $\sigma_{\#,a} \neq \sigma_{\#,b}$. In addition, the left-greedy order between $\sigma_{\#,a}$ and $\sigma_{\#,b}$ must be determined in column $i_*$ or cell $(i_*,j_*)$ for some $i_* \leq i'$. We split the construction into three cases based on the structure of $A$ and $B$ in the $i'$-th column of $M$.  We write $A_{i'} = \{(i',j): (i',j) \in A\}$, and define $B_{i'}$ analogously. 

    \textbf{Case 1, $A_{i'} = \emptyset$: } Since $(i',j') \in A\Delta B$, we must have $B_{i'} \neq \emptyset$. We claim that we may take $C_{A,B}(\underline{x}_A, \underline{y}_B) = \lambda_{i'-1,A,B}(\underline{x}_A, \underline{y}_B)$. Note that whenever the condition $\sigma_{\#,a} <_{LG} \sigma_{\#,b}$ is satisfied, either the first condition in Definition~\ref{def: left greedy} holds in some column $i_*$ with $i_* \leq i'-1$, or the second condition holds in some cell $(i_*,j_*)$ with $i_* \leq i'-1$. Indeed, by our previous observation,  both of these statements are true when the upper bound on $i_*$ is replaced with $i'$ instead of $i'-1$. However, the case $i_* = i'$ is impossible as $A_{i'} = \emptyset$, whence $\sigma_{\#,a}$ cannot contain any points in its $i'$-th column (while $\sigma_{\#,b}$ is guaranteed to contain at least one point in its $i'$-th column as $B_{i'} \neq \emptyset$). Thus Claim~\ref{claim: lambda claim} implies that the formula $C_{A,B}$ satisfies the required properties. 

    \textbf{Case 2, $B_{i'} = \emptyset$:} As above, we must have $A_{i'} \neq \emptyset$. We claim that 
    \begin{align*}
         C_{A,B}(\underline{x}_A,\underline{y}_B) = \lambda_{i'-1,A,B}(\underline{x}_A,\underline{y}_B) \lor \omega_{<(i'-1,\infty),A,B}(\underline{x}_A,\underline{y}_B)
    \end{align*}
    satisfies the required properties. As above, the first part of $C_{A,B}$ covers the case when $\sigma_{\#,a} <_{LG} \sigma_{\#,b}$ is witnessed when the first or second condition in Definition~\ref{def: left greedy} is satisfied either in column $i_*$ or cell $(i_*,j_*)$ respectively for some $i_* < i'$. Note that the second part is equivalent to the statement that both $\sigma_{\#,a}$ and $\sigma_{\#,b}$ contain equal number of points in each cell $(i,j)$ with $i \leq i'-1$, which immediately implies that $\sigma_{\#,a} <_{LG} \sigma_{\#,b}$, as we have $B_{i'} = \emptyset$ and $A_{i'} \neq \emptyset$. 

    \textbf{Case 3, Both $A_{i'}, B_{i'}$ are non-empty:} Since both $A_{i'}$ and $B_{i'}$ are non-empty, both $\sigma_{\#,a}$ and $\sigma_{\#,b}$ must contain a point in their $i'$-th columns. Define $C_{A,B}(\underline{x},\underline{y})$ to be the formula 
    \begin{align*}
        & \lambda_{i'-1,A,B}(\underline{x}_A,\underline{y}_B) \\
        & \lor \left(\omega_{<(i'-1,\infty),A,B}(\underline{x}_A, \underline{y}_B) \land m_{i'}(\underline{y}_B) <_1 m_{i'}(\underline{x}_A)\right) \\
        & \lor \bigvee_{j:\,(i',j)\in A,\ j<j'} \left(\omega_{<(i',j),A,B}(\underline{x}_A, \underline{y}_B) \land m_{i'}(\underline{y}_B) =m_{i'}(\underline{x}_A)\land  y_{(i',j), 2,2} <_2  x_{(i',j),2,2}\right) \\
        & \lor \left(\omega_{<(i',j'),A,B}(\underline{x}_A, \underline{y}_B) \land m_{i'}(\underline{y}_B) =m_{i'}(\underline{x}_A)\land (i',j') \in A\right),
    \end{align*}
    where the last line is only included if $(i',j') \in A$. As before, the first line of $C_{A,B}$ corresponds to the case when  $\sigma_{\#,a} <_{LG} \sigma_{\#,b}$ is witnessed either in column $i_*$ or cell $(i_*,j_*)$ with $i_* < i'$. In addition, the second line corresponds to the case when $\sigma_{\#,a} <_{LG} \sigma_{\#,b}$ is witnessed when the first condition in Definition~\ref{def: left greedy} holds in column $i'$, while the third line corresponds to the case when the second condition holds in cell $(i',j)$ with $j < j'$. Finally, the last line corresponds to the case when the second condition holds in the cell $(i',j')$, which may only happen when $(i',j') \in A$. 

    Finally, we define 
    \[\lambda_{A,B}(\underline{x}_A,\underline{y}_B) = \psi_{A}(\underline{x}_A) \land \psi_{B}(\underline{y}_B) \land C_{A,B}(\underline{x}_A,\underline{y}_B).\]
    Then $\lambda_{A,B}$ satisfies the required properties as $\psi_A$ and $\psi_B$ ensure that there exist gridded permutations associated to the tuples $x_A$ and $y_B$, and $C_{A,B}$ ensures that ¨$\sigma_{\#,a} \leq_{LG} \sigma_{\#,b}$. This completes the proof. 
\end{proof}
\begin{lem}\label{lemma: left-greedy drawing, which has its extremal points in A}
    Let $A \subseteq{\mathcal{S}}$. There exists a first-order formula $\Delta_{A}(x_{(i,j),u,v})$ with variables $x_{(i,j),u,v}$ for $(i,j) \in A$,  $u,v \in \{1,2\}$ for which the following holds. We have $\sigma \models \Delta_{A}(a_{(i,j),u,v})$ for some constants $a_{(i,j),u,v}$ if and only if $\sigma \models \psi_{A}(a_{(i,j),u,v})$, where $\psi_A$ is the formula given by Lemma~\ref{lemma: cells indexed by A}, and the gridded permutation $\sigma_\#$ satisfying the conclusions of Lemma~\ref{lemma: cells indexed by A} is the left-greedy representation of $\sigma$. 
\end{lem}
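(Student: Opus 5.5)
The natural plan is to define $\Delta_A$ as $\psi_A$ together with a universal statement saying that no gridded representative of $\sigma$ is strictly smaller in the left-greedy order. Since $\mathcal{S}$ is finite, only finitely many index sets $B \subseteq \mathcal{S}$ can occur as the set of non-empty cells of a gridded permutation in $G(\sigma)$, so we may quantify over all of them by a finite conjunction. Concretely, set
\[
\Delta_A(\underline{x}_A) := \psi_A(\underline{x}_A) \land \bigwedge_{B \subseteq \mathcal{S}} \forall \underline{y}_B: \left(\psi_B(\underline{y}_B) \implies \lambda_{A,B}(\underline{x}_A,\underline{y}_B)\right),
\]
where $\forall \underline{y}_B$ abbreviates the block of universal quantifiers over all $y_{(i,j),u,v}$ with $(i,j)\in B$ and $u,v \in\{1,2\}$. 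Here $\lambda_{A,B}$ is the formula of Lemma~\ref{lemma: left-greedy comparison lemma}. Every ingredient is first-order by Lemmas~\ref{lemma: cells indexed by A} and~\ref{lemma: left-greedy comparison lemma}, so $\Delta_A$ is first-order.

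\textbf{Forward direction.} Suppose $\sigma \models \Delta_A(a_{(i,j),u,v})$. By Lemma~\ref{lemma: cells indexed by A} the constants determine a gridded permutation $\sigma_{\#,a}\in G(\sigma)$ whose non-empty cells are exactly $A$ and whose cells are the given rectangles. Let $\sigma'_\#\in G(\sigma)$ be arbitrary, and let $B$ be the set of its non-empty cells. Take $b_{(i,j),u,v}$ to be the $\leq_u$-extremal points of its cells; then $\sigma\models\psi_B(\underline b_B)$ by the converse part of Lemma~\ref{lemma: cells indexed by A}. The conjunct for $B$ now gives $\lambda_{A,B}(\underline a_A,\underline b_B)$, and hence $\sigma_{\#,a}\leq_{LG}\sigma'_\#$. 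This needs one check: the comparison in Lemma~\ref{lemma: left-greedy comparison lemma} depends only on the equivalence classes determined by the constants. That holds because the rectangle constants determine the cell contents of a gridded permutation up to equivalence, which is exactly what $<_{LG}$ depends on. Since $\sigma'_\#$ was arbitrary, $\sigma_{\#,a}$ is the $<_{LG}$-minimum of $G(\sigma)$, that is, the left-greedy representation.

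\textbf{Converse direction.} Suppose $\sigma_\#$ is the left-greedy representation of $\sigma$, its non-empty cells are $A$, and the $a_{(i,j),u,v}$ are its extremal points. Then $\psi_A$ holds by Lemma~\ref{lemma: cells indexed by A}. For any $B$ and any $\underline b_B$ with $\psi_B(\underline b_B)$, the tuple determines some $\sigma_{\#,b}\in G(\sigma)$, and minimality gives $\sigma_\#\leq_{LG}\sigma_{\#,b}$. Lemma~\ref{lemma: left-greedy comparison lemma} then yields $\lambda_{A,B}$.

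\textbf{Main obstacle.} The only delicate point is the well-definedness issue above. We must ensure that $\psi_B(\underline b_B)$ pins down the gridded permutation uniquely up to equivalence, so that $\lambda_{A,B}$ compares precisely the intended objects. This should follow from condition (ii) of Lemma~\ref{lemma: cells indexed by A}, since the points of each cell are fixed by its rectangle. The monotone cells need no further care, because Proposition~\ref{prop: equality grid and geometric grid when HM is a forest} lets us move them onto the diagonals without changing the equivalence class.
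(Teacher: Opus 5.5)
Your proposal is correct and follows the paper's proof essentially verbatim: the paper defines $\Delta_A$ as $\psi_A$ conjoined with $\bigwedge_{B\subseteq\mathcal S}\forall \underline{y}_B\,(\psi_B(\underline{y}_B)\implies\lambda_{A,B}(\underline{x}_A,\underline{y}_B))$, and argues both directions via Lemma~\ref{lemma: cells indexed by A} and Lemma~\ref{lemma: left-greedy comparison lemma} exactly as you do. The well-definedness point you flag is left implicit in the paper, and it is resolved as you say: an equivalence class of gridded figures is determined by the permutation together with the cell containing each point.
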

\begin{proof}
    Given $B \subseteq \mathcal{S}$ and variables $y_{(i',j'), u',v'}$ with $(i',j') \in B$, $u',v' \in \{1,2\}$, write 
    \[\hat{\Delta}_{B}(x_{(i,j),u,v},y_{(i',j'), u',v'}),\] 
    where $(i,j) \in A$, $u,v\in \{1,2\}$, for the first-order formula 
    \begin{align*}
        \psi_B\left(y_{(i',j'),u',v'}\right) \implies \lambda_{A,B}\left(x_{(i,j),u,v}, y_{(i',j'),u',v'}\right),
    \end{align*}
    and set $\Delta_A(x_{(i,j),u,v})$ to be the formula  
    \begin{align*}
        & \psi_{A}(x_{(i,j),u,v}) \\
        & \land \bigwedge_{B \subseteq \mathcal{S}} \left(\forall_{(i',j') \in B, u',v' \in \{1,2\}} \text{ } y_{(i',j'), u',v'}: \psi_{B}(y_{(i',j'), u',v'}) \implies \lambda_{A,B}\left(x_{(i,j),u,v}, y_{(i',j'), u',v'}\right)\right).
    \end{align*}
    To check that $\Delta_A$ satisfies the required properties, suppose that $\sigma \models \Delta_A(a_{(i,j),u,v})$. Let $\sigma_\#$ be the gridded permutation satisfying the conclusion of Lemma~\ref{lemma: cells indexed by A} (whose existence is provided by the top row of $\Delta_A$). We claim that $\sigma_\#$ is the left-greedy representation of $\sigma$. 

    Let $\phi_\#$ be an arbitrary element in $G(\sigma)$, and let $B \subseteq \mathcal{S}$ be chosen so that $\phi_\# \cap C_{i,j} \neq \emptyset$ if and only if $(i,j) \in B$. For each $(i',j') \in B$ and $u',v' \in \{1,2\}$,  let $b_{(i',j'),u',v'}$ be chosen so that $\phi_\# \cap C_{i',j'} = [b_{(i',j'),1,1}, b_{(i',j'),1,2}]_1 \times [b_{(i',j'),2,1}, b_{(i',j'),2,2}]_2$. 

    These choices ensure that  $\sigma \models \psi_B(b_{(i',j'),u',v'})$, and hence the second line of $\Delta_A$ implies that $\sigma \models \lambda_{A,B}\left(a_{(i,j),u,v},b_{(i',j'),u',v'}\right)$. Thus Lemma~\ref{lemma: left-greedy comparison lemma} implies that $\sigma_\# \leq_{LG} \phi_\#$, and since $\phi_\#$ is an arbitrary member of $G(\sigma)$, it follows that $\sigma_\#$ is the left-greedy representation of $\sigma$. The converse direction follows analogously, we omit the details. 
\end{proof}

\begin{proposition}\label{prop: penultimate step}
    There exists a first-order sentence $\hat{\Delta}_M$ for which $w \models \hat{\Delta}_M$ if and only if $g_M(w)$ is the left-greedy representation of the underlying permutation $f_M(w)$.
\end{proposition}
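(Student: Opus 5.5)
The plan is to translate the permutation-side characterisation of Lemma~\ref{lemma: left-greedy drawing, which has its extremal points in A} back to words via Theorem~\ref{thm: FO correspondence for M}. We then tie the extremal-point variables to the actual cells of $g_M(w)$ using Lemma~\ref{lemma: word equivalent of cells indexed by A}. Since $\mathcal{S}$ is finite, we can take a finite disjunction over all possible supports $A\subseteq\mathcal{S}$ of the gridded permutation $g_M(w)$.

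Concretely, for each $A\subseteq\mathcal{S}$ let $\Delta^{w}_A(\boldsymbol\nu)$ denote the word formula obtained by applying Theorem~\ref{thm: FO correspondence for M} to $\Delta_A(x_{(i,j),u,v})$ (first put it in disjunctive normal form by Proposition~\ref{prop: every FOS can be written in DFN}). We then set
\[
\hat{\Delta}_M := \Gamma_M\land \bigvee_{A\subseteq \mathcal{S}} \exists_{(i,j)\in A,\,u,v\in\{1,2\}}\nu_{(i,j),u,v}:\ \bigl(\phi_A(\boldsymbol\nu)\land \Delta^{w}_A(\boldsymbol\nu)\bigr),
\]
with the convention that for $A=\emptyset$ the disjunct says the word is empty. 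That case is trivial, since the empty gridded permutation is the unique representation.

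For the forward direction, suppose $w\models\hat\Delta_M$ and fix the witnessing $A$ and constants $c_{(i,j),u,v}$. By Lemma~\ref{lemma: word equivalent of cells indexed by A}, the non-empty cells of $g_M(w)$ are exactly those indexed by $A$. Moreover, each cell $g_M(w)\cap C_{i,j}$ is the rectangle spanned by the points $\beta_{w,M}(c_{(i,j),u,v})$, which lie in that cell. By Theorem~\ref{thm: FO correspondence for M}, $f_M(w)\models \Delta_A(\alpha_{w,M}(c_{(i,j),u,v}))$. Hence by Lemma~\ref{lemma: left-greedy drawing, which has its extremal points in A}, the gridded permutation $\sigma_\#$ determined by these rectangles is the left-greedy representation of $f_M(w)$.

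The subtle point is that $\sigma_\#$ is determined only up to equivalence by which points lie in which cells. We must check that it coincides with $g_M(w)$, meaning the same equivalence class. This holds because both gridded permutations induce $f_M(w)$, and the points of $\sigma_\#$ in cell $(i,j)$ are exactly $\alpha_{w,M}$ of the letters with $\rho_6$-value $(i,j)$, which are exactly the points of $g_M(w)$ in that cell. By the definition of equivalence of gridded figures, the two are therefore equal. In the same way, the extremal constants in condition (ii) of Lemma~\ref{lemma: cells indexed by A} are genuinely the $\leq_1$- and $\leq_2$-extremes of each cell, so nothing is lost in passing between $\alpha_{w,M}$ and $\beta_{w,M}$.

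For the converse, suppose $g_M(w)$ is left-greedy. Take $A$ to be its support and choose the $c$'s as the preimages under $\alpha_{w,M}$ of the extremal points of each cell. Then both conjuncts hold, by the same two lemmas read in reverse.

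I expect the main obstacle to be this identification of gridded permutations. Lemma~\ref{lemma: cells indexed by A} only asserts the existence of some $\sigma_\#\in G(\sigma)$ with the given cell rectangles. So one must argue that the cell contents, rather than the particular drawing, determine the equivalence class, and this is exactly where the forest property, via Proposition~\ref{prop: equality grid and geometric grid when HM is a forest}, is implicitly used.
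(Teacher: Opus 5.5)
Your proposal is correct and is essentially the paper's proof: the paper's $\hat\Delta_M$ is likewise $\Gamma_M$ conjoined with a disjunction over $A\subseteq\mathcal S$ of the translated $\Delta_A$ together with the tie condition $\lambda_\Box(t;\nu_{(i,j),u,v})\iff\rho_6(w_t)=(i,j)$. Your extra conjunct $\phi^{(1)}_A$ is redundant, since $\psi_A$ is already a conjunct of $\Delta_A$. Your explicit identification of $\sigma_\#$ with $g_M(w)$ fills in a step the paper leaves implicit. One small correction: that identification needs only the definition of equivalence of gridded figures. The forest property enters solely inside Lemma~\ref{lemma: cells indexed by A}, to guarantee that a geometric representative exists, and is not needed for the identification.
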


\begin{proof}
    Recall that $\lambda_{\Box}(\nu_{i,j},\nu_{(i,j),u,v})$ is defined to be the first-order formula obtained by applying Theorem~\ref{thm: FO correspondence for M} to the formula  $x_{i,j} \in [x_{(i,j),1,1},x_{(i,j),1,2}]_1 \times [x_{(i,j),2,1},x_{(i,j),2,2}]_2.$ We also write $\hat{\Delta}_A(\nu_{(i,j),u,v})$ for the first-order formula obtained by applying Theorem~\ref{thm: FO correspondence for M} to the formula $\Delta_A(x_{(i,j),u,v})$. 

    Let $\Delta_A'$ denote the first-order sentence 
    \begin{align*}
        \Delta_A':=&\exists_{(i,j)\in A,\ u,v\in\{1,2\}} \nu_{(i,j),u,v}\,\Bigl(\hat\Delta_A(\boldsymbol\nu)\\
        &\land\bigwedge_{(i,j)\in A}\forall t\, \bigl(\lambda_\Box(t;\nu_{(i,j),u,v}) \iff\rho_6(w_t)=(i,j)\bigr)\Bigr),
    \end{align*}
    and set 
    \begin{align*}
        \hat\Delta_M:=&\Gamma_M\land \bigvee_{A\subseteq\mathcal S}\Delta_A'.
    \end{align*}
    We claim that  $\hat\Delta_M$ satisfies the required properties. 
    
    Suppose that $w \models \Delta_A' \land \Gamma_M$ for some  $A \subseteq S$. Since $w \models \Gamma_M$, $w$ decomposes as $w = u_0u_{(i_1,j_1)} \dots u_{(i_r,j_r)}$. In addition, the second line of $\Delta_A'$ implies that for every $i$, the letter $w_i$ belongs to the sub-word $u_{(i_s,j_s)}$ if and only if $\beta_{w,M}(i) \in g_M(w) \cap C_{i_s,j_s}$. Thus for every $(i,j) \in A \cap \mathcal{S}_\pm$  we have $\beta_{w,M}(\ell) \in g_M(w) \cap C_{i,j}$ if and only if $\rho_6(w_\ell) = (i,j)$. In particular, the letter $w_\ell$ must belong to the sub-word $u_0$. 

    Thus, for every such $w$ we have $g_{M}(w) \cap C_{(i_s,j_s)} = f_d(\rho_5(u_{(i_s,j_s)}))$ and $g_M(w) \setminus \left(\bigcup_{s=1}^r C_{(i_s,j_s)}\right) = g_{M_{\pm 1}}(u_0)$. Hence the result follows from the first-order correspondence provided by Theorem~\ref{thm: FO correspondence for M}.

\end{proof}

\begin{proof}[Proof of Theorem~\ref{thm: fM is a bijection}]
    Define  $\Delta_M := \hat{\Delta}_M \land \Delta_{M,\#}$, where $\hat{\Delta}_M$ is the first-order sentence constructed in Proposition~\ref{prop: penultimate step} and $\Delta_{M,\#}$ is the first-order sentence constructed in Lemma~\ref{lemma: key thm for gM instead}. We claim that $\Delta_M$ satisfies the required properties. 

    Given $\sigma \in \mathrm{Geom}(M)_{d,k}$, let $\sigma_\#$ be the left-greedy representation of $\sigma$, and let $w$ be the unique word chosen so that $w \models \Delta_{M,\#}$ and $g_M(w) =\sigma_{\#}$. Since $\sigma_\#$ is the left-greedy representation of $\sigma$, Proposition~\ref{prop: penultimate step} implies that $w \models \hat{\Delta}_M$. Hence $w \models \Delta_M$, as required.

    If $f_M(w) = f_M(w') = \sigma$ and $w, w' \models \Delta_M$ for some $\sigma \in \mathrm{Geom}(M)_{d,k}$, then the fact that $w,w' \models \hat{\Delta}_M$ implies that  $g_M(w) = g_M(w')$, as both $g_M(w)$ and $g_M(w')$ coincide with the (unique) left-greedy representation of $\sigma$. Since $w,w' \models \Delta_{M,\#}$, Lemma~\ref{lemma: key thm for gM instead} implies that $w = w'$, which completes the proof. 
\end{proof}
\subsection{Constructing a finite deterministic automaton}

For convenience, we follow the convention that $g_M(\varepsilon) = \varepsilon$, where $\varepsilon$ denotes both the empty word and the empty permutation (and thus in particular the empty word is not mapped to $\star$). This somewhat inconsistent choice of notation does not affect the rest of our argument, and makes it slightly easier to construct the corresponding automata.

Recall that we enumerate the elements of $\mathcal{S}_\star$ as $(i_1,j_1),\dots,(i_r,j_r)$. Let $\mathcal{A}_M$ denote the finite deterministic automaton consisting of initial state $e_M$, states 
\[Q_M = \{e_M,\star\} \cup \bigcup_{t = 1}^r \left\{q_{(i_t,j_t)}: q \in Q_d\setminus \{e_d,\star\}\right\},\] 
and accepting states 
\[F_M = \{e_M\} \cup \bigcup_{t=1}^r \left\{q_{(i_t,j_t)}: q \in F_d\right\}.\]
For convenience, we write $B_t$ to denote the restriction of $\mathcal{A}_M$ onto states $q_{(i_t,j_t)}$ where $q$ ranges over all states in $Q_d\setminus \{e_d,\star\}$. 

In addition, the transition rules are given as follows. If $v = a_{i,j} \in \Sigma_{M,\pm}$, we have $\delta_M(e_M,v) = e_M$ and $\delta_M(q,v) = \star$ whenever $q \neq e_M$. If  $v \in \Sigma_M\setminus \Sigma_{M,\pm}$, we write $u = \delta_d(e_{d},\rho_5(v))$, and set $\delta_M(e_M,v) = u_{\rho_6(v)}$, with the usual convention that $\delta_M(e_M,v) = \star$ whenever $u = \star$. 

Now suppose that $q = r_{(i_t,j_t)}$ for some $r \in Q_d$. If $v \in \Sigma_{M,\pm}$, we set $\delta_M(q,v) = \star$. If $\rho_6(v) = (i_t,j_t)$, we write $u = \delta_d(r,\rho_5(v))$, and set $\delta_M(q,v) = \star$ if $u = \star$, and $\delta_M(q,v) = u_{(i_t,j_t)}$ otherwise. If $\rho_6(v) = (i_{t'},j_{t'})$ with $t' \neq t$, then 
\begin{enumerate}
    \item if $t' < t$, we set $\delta_M(q,v) = \star$,
    \item if $r \not \in F_d$, we set $\delta_M(q,v) = \star$, and 
    \item if $t' > t$ and $r \in F_d$, we write $u = \delta_{d}(e_d,\rho_5(v))$ and set $\delta_M(q,v) = u_{(i_{t'},j_{t'})}$. 
\end{enumerate}
In the last case, we use the convention that if $u = \star$ then $\delta_M(q,v) = \star$. Finally, we set $\delta_M(\star,v) = \star$.
We now check that $\mathcal{A}_M$ satisfies the required properties. 
\begin{lem} \label{A_pm lemma}
    $\mathcal{A}_M$ accepts precisely those words $w$ for which $g_M(w) \neq \star$. 
\end{lem}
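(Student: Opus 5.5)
The plan is to compare the run of $\mathcal{A}_M$ on $w$ with the definition of $g_M$. By that definition, $g_M(w)\neq\star$ holds exactly when $w$ decomposes properly as $u_0u_{(i_1,j_1)}\dots u_{(i_r,j_r)}$ and, for every $t$, either $u_{(i_t,j_t)}$ is empty or $g_d(\rho_5(u_{(i_t,j_t)}))\neq\star$. Here we use the convention $g_M(\varepsilon)=\varepsilon$, and we note that $u_0$ imposes no constraint, since every word over $\Sigma_{M,\pm}$ has a gridded figure. Lemma~\ref{lem:Ad-nonstar} says that $\mathcal{A}_d$ accepts precisely those $x\in\Sigma_d^*$ with $g_d(x)\neq\star$. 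So it suffices to show that $\mathcal{A}_M$ accepts $w$ if and only if $w$ has this form and each non-empty $\rho_5(u_{(i_t,j_t)})$ is accepted by $\mathcal{A}_d$.

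First we record the shape of the run, which follows from the transition rules by induction on the prefix length. Letters of $\Sigma_{M,\pm}$ loop at $e_M$, and they lead to $\star$ from every other state. The block $B_t$ is entered only by letters with $\rho_6$-value $(i_t,j_t)$, and inside $B_t$ such letters simulate $\delta_d$ on $\rho_5$-images. From $B_t$, a move to $B_{t'}$ is possible only when $t'>t$ and the current state is a copy of an accepting state of $\mathcal{A}_d$. When that move happens, the simulation restarts from $e_d$. Since $e_d$ and $\star$ are excluded from the copies $Q_d\setminus\{e_d,\star\}$, any transition of $\mathcal{A}_d$ into $\star$ sends $\mathcal{A}_M$ to $\star$, which is absorbing.

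For the forward direction, suppose $w$ is accepted. If the run ends at $e_M$, then $w\in\Sigma_{M,\pm}^*$, so $w$ decomposes properly with every $u_{(i,j)}$ empty, and hence $g_M(w)\neq\star$. Otherwise the run visits blocks $B_{t_1},\dots,B_{t_p}$ with $t_1<\dots<t_p$ and ends in a copy of a state of $F_d$. The letters read before the first block entry all lie in $\Sigma_{M,\pm}$. After that entry, all letters have $\rho_6\in\mathcal{S}_\star$ and arrive in increasing order of $t$. This gives the proper decomposition. Each segment read inside $B_{t_s}$ equals $u_{(i_{t_s},j_{t_s})}$, and its $\rho_5$-image drives $\mathcal{A}_d$ from $e_d$ to an accepting state. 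This holds for the intermediate segments because leaving a block requires an accepting state, and for the last segment because the run ends in a copy of $F_d$. The blocks that are skipped correspond to empty sub-words. By Lemma~\ref{lem:Ad-nonstar}, every non-empty $\rho_5(u_{(i,j)})$ satisfies $g_d\neq\star$, and therefore $g_M(w)\neq\star$.

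For the converse, read the decomposition of $w$ and follow the same run: loop at $e_M$ along $u_0$, then traverse the non-empty $u_{(i_t,j_t)}$ in increasing order of $t$. Each segment ends in a copy of an accepting state, so every inter-block transition is allowed and the final state lies in $F_M$. The only mild subtlety is the copying of states, namely that a copy $u_{(i_t,j_t)}$ is defined only for $u\neq e_d,\star$. This is harmless because $\mathcal{A}_d$ never returns to $e_d$ on a non-empty word. We check this from the constructions of $\mathcal{A}_1$ and $\mathcal{A}_d$, where no transition enters $e_d$. We expect this bookkeeping, rather than any real difficulty, to be the main point requiring care.
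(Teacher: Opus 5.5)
Your proposal is correct and follows essentially the same route as the paper: you use the block structure of $\mathcal{A}_M$ (edges only from $B_i$ to $B_j$ with $i<j$) to get the proper decomposition $u_0u_{(i_1,j_1)}\dots u_{(i_r,j_r)}$, and then reduce each non-empty $\rho_5(u_{(i_t,j_t)})$ to Lemma~\ref{lem:Ad-nonstar}. You also check that no transition of $\mathcal{A}_d$ re-enters $e_d$, so the copied states are always defined; this is a bookkeeping detail the paper leaves implicit.
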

\begin{proof}
    Write $B_0 = \{e_M\}$. Note that for each $i < j$, there are only directed edges from $B_i$ to $B_j$. Thus, it is straightforward to see that any $w$ which is accepted by $\mathcal{A}_M$ decomposes as $w = u_0u_{(i_1,j_1)}\dots u_{(i_r,j_r)}$, with $u_0$ and $u_{(i_t,j_t)}$'s containing the letters of $w$ with appropriate $\rho_6$-images, allowing any of these sub-words to be empty. Note that any such $u_0$ is accepted by $\mathcal{A}_M$, and for $w$ to be accepted, we need each $u_{(i_t,j_t)}$ to be either empty or its $\rho_5$-image to be accepted by $\mathcal{A}_d$. This coincides with the condition that $g_M(w) \neq \star$. 
\end{proof}

Having constructed the automaton $\mathcal{A}_M$, we remark that the component $B_0 = \{e_M\}$ corresponding to the sub-words $u_0 \in \Sigma_{M,\pm}^*$ fails to capture all the relevant properties, as we do not yet restrict our attention to injectivity. To complete this sub-section, we construct a finite deterministic automaton $\mathcal{A}_\pm$ which accepts precisely those $w \in \Sigma_{M,\pm}^*$ that correspond to lexicographically minimal words. The construction heavily uses Lemma~\ref{lemma: no stars preimages given by equivalence relation} and thus builds on the work of Albert et. al.~\cite{albert2013geometric}. While the construction of $\mathcal{A}_\pm$ does not directly appear in~\cite{albert2013geometric}, we emphasize that it is in a sense implicit from their work, and we simply produce the construction here for completeness.

Define $\mathcal{A}_\pm = \left(Q_\pm, \Sigma_{M,\pm}, \delta_\pm, e_\pm, F_\pm \right)$ where 
\[Q_\pm = \{q_A:A \subseteq \Sigma_{M,\pm}\} \cup \{\star\},\]
the initial state $e_\pm$ is chosen to be the state $q_\emptyset$, 
\[F_\pm = Q_\pm \setminus \{\star\},\]
and the transition rules are given as follows. For each $a =a_{i,j}\in \Sigma_{M,\pm}$  define the sets $U(a_{i,j})$ by setting 
\[U(a_{i,j}) = \{a_{i',j'}\in\Sigma_{M,\pm}:i'\neq i,\ j'\neq j\}.\]
Hence $U(a)$ consists of letters that do not correspond to the same row nor column as $a$. For simplicity, we write $a < b$ if $a = a_{i,j}$ and $b = a_{i',j'}$ where $(i,j) <_{lex} (i',j')$. Given a state $q_A \neq \star$, we set $\delta_\pm(q_A,v) = \star$ whenever $v \in A$, and otherwise we set 
$\delta_\pm(q_A,v) = q_B$, where 
\[B = (A \cap U(v)) \cup \{c \in U(v):c < v\}.\]
In addition, we define $\delta_\pm (\star,v) = \star$ for every $v$. 

We prove that $\mathcal{A}_\pm$ satisfies the required properties. 
\begin{lem}\label{lemma: directed edge creates self-loop}
    $\mathcal{A}_{\pm}$ accepts exactly those words $w$ which correspond to the lexicographically minimal words. In addition, whenever $\delta_{\pm}(q_A,v) = q_B$, then $\delta_\pm(q_B,v) = q_B$. Thus every strongly connected component with at least two states contains a vertex with a self-loop. 
\end{lem}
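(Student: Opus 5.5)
The plan is to prove an invariant describing the set $A$ of the state reached after reading a prefix, and then compare it with the condition in Lemma~\ref{lemma: no stars preimages given by equivalence relation}. The claimed invariant is the following. After reading a prefix $w_1\dots w_n$ without hitting $\star$, the state is $q_A$, where $A$ is the set of letters $c$ such that some position $\ell\le n$ satisfies
\begin{enumerate}
\item[(a)] $c<w_\ell$ and $c$ is unrelated to $w_\ell$;
\item[(b)] $c$ is unrelated to every letter $w_{\ell'}$ with $\ell<\ell'\le n$.
\end{enumerate}
Informally, these are the letters that could still commute leftwards past $w_{\ell+1}\dots w_n$ and then sit immediately after $w_\ell$.

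The invariant is proved by induction on $n$ directly from the transition rule $B=(A\cap U(v))\cup\{c\in U(v):c<v\}$. Letters already in $A$ survive exactly when they are unrelated to the new letter $v$. The new letters $c\in U(v)$ with $c<v$ enter with $\ell=n+1$.

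With the invariant in hand, reading $v=w_{n+1}$ leads to $\star$ exactly when $v\in A$. By the invariant, this means there are indices $\ell_1=\ell<\ell_2=n+1$ such that $w_{\ell_2}<w_{\ell_1}$, the two letters are unrelated, and every intermediate letter is unrelated to $w_{\ell_2}$. This is precisely a violation of conditions (i)–(iii) of Lemma~\ref{lemma: no stars preimages given by equivalence relation}. Note that unrelated already excludes the cases $i=i'$ and $j=j'$, so only the lex comparison is left. Since every non-$\star$ state is accepting, $\mathcal{A}_\pm$ accepts exactly the words with no violating pair, that is, the lexicographically minimal words. Conversely, any violating pair $(\ell_1,\ell_2)$ is detected at step $\ell_2$, because $w_{\ell_2}$ then lies in the current $A$.

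For the self-loop claim, suppose $\delta_\pm(q_A,v)=q_B$. Then $v\notin B$, since $B\subseteq U(v)$ and $v\notin U(v)$, so reading $v$ from $q_B$ does not reject. The resulting set is
\[
B'=(B\cap U(v))\cup\{c\in U(v):c<v\}.
\]
Because $B\subseteq U(v)$, we have $B\cap U(v)=B$. Also $\{c\in U(v):c<v\}\subseteq B$ by the definition of $B$. Hence $B'=B$, so $\delta_\pm(q_B,v)=q_B$.

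For the final sentence, take a strongly connected component with at least two states and any edge inside it, say $q_A\to q_B$ labelled $v$. The target $q_B$ then carries a self-loop labelled $v$, and $q_B$ lies in the component. The rejecting state $\star$ forms its own component, so it does not interfere with this argument.

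The main obstacle is stating the invariant precisely, in particular tracking that membership persists exactly while subsequent letters are unrelated. Once the invariant is set up correctly, the rest is routine bookkeeping.
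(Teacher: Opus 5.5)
Your proposal is correct and follows essentially the same route as the paper: your explicit invariant for the state set is the closed form of the entry/survival dynamics the paper tracks. The paper's largest index $j$ at which $w_\ell$ enters $P_j$ is exactly your witness position $\ell$, and your computation $B'=B$ for the self-loop matches the paper's. Stating it as an invariant lets you handle both directions of the acceptance claim at once, in place of the paper's minimal-counterexample argument for one direction, and your remark $v\notin U(v)$ gives $v\notin B$ a little more directly.
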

\begin{proof}
    Suppose for the sake of contradiction that there exists $w = w_1\dots w_\ell$ of minimal length that is lexicographically minimal, and for which $w$ is rejected by $\mathcal{A}_\pm$. Note that every proper prefix of $w$ must also be lexicographically minimal, and hence each proper prefix of $w$ is accepted by $\mathcal{A}_\pm$ by the minimality of $w$. For each $0 \leq i \leq \ell-1$, let $P_i$ denote the set chosen so that $q_{P_i} = \delta_{\pm}(q_\emptyset ,w_1\dots w_{i})$ with the convention that $P_0 = \emptyset$. Since $w$ is rejected, we have $w_\ell \in P_{\ell-1}$. 
    
    Note that for each $1 \leq i \leq \ell-1$, we have $w_\ell \in P_{i}\setminus P_{i-1}$ only when  $w_\ell \in U(w_i)$ and $w_\ell < w_i$. Let $j$ be the largest index for which we have $w_\ell \in U(w_j)$ and $w_\ell < w_j$. Thus $w_\ell$ must be contained in each of the sets $P_j,\dots,P_{\ell-1}$, and thus we must have $w_\ell \in U(w_i)$ for every $j \leq i \leq \ell-1$. However, this contradicts the third condition of Lemma~\ref{lemma: no stars preimages given by equivalence relation}. 

    Conversely, suppose that $w$ is accepted, and define $P_i$ as above for all the prefixes of $w$. If $w$ were not lexicographically minimal, then  by Lemma~\ref{lemma: no stars preimages given by equivalence relation} there exists indices $j<t$ with $w_t < w_j$ and $w_t \in U(w_i)$ for every $j \leq i < t$. Thus by the transition rules of $\mathcal{A}_\pm$ we conclude that $w_t \in P_{t-1}$, which contradicts the fact that $\mathcal{A}_\pm$ accepts $w$. Thus $w$ must be lexicographically minimal.

    Finally, suppose that $A$, $B$ and $v$ are chosen so that $\delta_\pm(q_A,v) = q_B$. Then 
    \[B= (A \cap U(v)) \cup \{c \in U(v): c < v\},\]
    and thus in particular we have $B \subseteq U(v)$. Note that we also have $v \not \in B$: indeed, as $v \not \in A$ we have $v \not \in A \cap U(v)$, and clearly $v$ is not lexicographically strictly smaller than $v$ itself, so $v$ is not contained in the second set either. 

    Let $B' = (B \cap U(v)) \cup \{c \in U(v): c < v\}$. Since $B \subseteq U(v)$ and $\{c \in U(v): c < v\} \subseteq B$, we conclude that $B' = B$. Thus, the rules of the transition function imply that $\delta_\pm(q_B,v) = q_B$. Since every strongly connected component with at least two states contains a directed edge, the existence of a vertex with a self-loop follows immediately. 
\end{proof}
\section{Constructing a finite deterministic automaton}
We recall some useful terminology from Section~\ref{sec: automata theory}. Given an automaton $\mathcal{A}$ with transition matrix $A$, and a strongly connected component $C$, we write $A_C$ for the restriction of $A$ on the states in $C$. Let $A'$ denote the transition matrix of the automaton $\mathcal{A'}$ from which all the non-productive states are removed. We write $\rho(\mathcal{A})$ for the spectral radius of $A'$, and a component $C$ is said to be dominant if it is productive and we have $\rho(A_C) = \rho(\mathcal{A})$.

 Throughout this section we assume that $\mathcal{F}$ is infinite. Thus, in particular every dominant component in an automaton that accepts $L_{\mathcal{F}}$ contains a directed cycle.  Our goal in this section is to prove that for every infinite small permutation class $\mathcal{F}$ there exists a finite deterministic automaton $A_{\mathcal{F}}$ which recognizes precisely the words in $L_{\mathcal{F}}$, and all of whose dominant components are aperiodic. We start by  outlining the argument on how we construct $A_{\mathcal{F}}$. 

Recall that there exists a suitable matrix $M$ so that $\mathcal{F} \subseteq \mathrm{Geom}(M)_{d,k}$, and there exists a first-order sentence $\chi_{\mathcal F}$ on $\Sigma_M^*$ for which we have $w \models \chi_{\mathcal{F}}$ if and only if $f_M(w) \in \mathcal{F}$. Such a sentence exists by Proposition~\ref{prop: small classes FOS classifiable} and
Theorem~\ref{thm: FO correspondence for M}.

Write $\mathcal F^\#$ for all admissible $M$-griddings of members of $\mathcal F$. Note that $\mathcal{F}^\#$ is a down-set for gridded pattern containment. For convenience reasons, we again allow the empty permutation to be counted in $\mathcal{F}$ and $\mathcal{F}^\#$.

Let $\Delta_d$ denote the first-order sentence constructed in Section~\ref{section: restricting to bijection} and let $\mathcal{B}_d$ denote a finite deterministic automaton accepting precisely those words in $\Sigma_{d,k}^*$ for which $w \models \Delta_d$, and which satisfies the conclusion of Lemma~\ref{lemma: iterate}. By replacing each copy of $\mathcal{A}_d$ with $\mathcal{B}_d$, and the initial state with a copy of $\mathcal{A}_\pm$, we can modify the construction of $\mathcal{A}_{M}$ to obtain $\mathcal{A}_{M,\#}$ which accepts precisely those words $w$ for which $w \models \Delta_{M,\#}$. In particular, each gridded permutation has a unique pre-image accepted by $\mathcal{A}_{M,\#}$. For an illustration of $\mathcal{A}_{M,\#}$, see Figure~\ref{fig: M and hash}. 

\begin{figure}
    \centering
\begin{tikzpicture}[>=stealth,font=\small,
  block/.style={draw,rounded corners,align=center,
                minimum height=15mm},
  lab/.style={font=\scriptsize,below,inner sep=3pt}]

  \node[block,minimum width=23mm] (pm) at (1,0)
    {$\mathcal A_\pm$};
  \node[block,minimum width=23mm] (first) at (4.5,0)
    {$\mathcal B_{d,(i_1,j_1)}$};
  \node (dots1) at (6.5,0) {$\dots$};
  \node[block,minimum width=23mm] (middle) at (8.5,0)
    {$\mathcal B_{d,(i_s,j_s)}$};
  \node (dots2) at (10.5,0) {$\dots$};
  \node[block,minimum width=23mm] (last) at (12.5,0)
    {$\mathcal B_{d,(i_r,j_r)}$};

  \draw[->] ([xshift=-3.15mm]pm.south)
  arc[start angle=125,end angle=415,radius=5.5mm]
  node[lab,pos=.5] {$\rho_6(v)\in\Sigma_{M,\pm}$};

  \foreach \name/\x/\idx in
    {first/4.5/1,middle/8.5/s,last/12.5/r} {
    \draw[->] (\name.60)
      .. controls ({\x+.8},1.6) and ({\x-.8},1.6) ..
      node[above,font=\scriptsize,inner sep=3pt]
        {$\rho_6(v)=(i_{\idx},j_{\idx})$}
      (\name.120);
  }

  \draw[->] (pm.south east)
    .. controls (2.5,-1.5) and (3,-1.5) ..
    node[lab] {$\rho_6(v)=(i_1,j_1)$}
    (first.south west);

  \draw[->] (first.east) -- (dots1.west);
  \draw[->] (dots1.east) -- (middle.west);
  \draw[->] (middle.east) -- (dots2.west);
  \draw[->] (dots2.east) -- (last.west);

  \draw[->] (pm.north east)
  .. controls (3,3) and (7.35,3) ..
  node[lab,pos=.48] {$\rho_6(v)=(i_s,j_s)$}
  (middle.north west);

  \draw[->] (pm.north)
    .. controls (1,4.6) and (11.35,4.6) ..
    node[lab] {$\rho_6(v)=(i_r,j_r)$}
    (last.north west);

  \draw[->] (first.south east)
    .. controls (5.8,-1.6) and (7.2,-1.6) ..
    node[lab] {$\rho_6(v)=(i_s,j_s)$}
    (middle.south west);

  \draw[->] (first.south)
    .. controls (4.5,-3.2) and (12.5,-3.2) ..
    node[lab] {$\rho_6(v)=(i_r,j_r)$}
    (last.south);
\end{tikzpicture}
    \caption{Illustration of the automaton $\mathcal{A}_{M,\#}$ accepting words $w$ with $w \models \Delta_{M,\#}$. The directed edges between the blocks are labelled with the corresponding $\rho_6$-images of the associated letters.}
    \label{fig: M and hash}
\end{figure}

As our next goal we construct an automaton $\mathcal{A}_{\mathcal{F},\#}$ so that for each $\sigma \in \mathcal{F}$ and for any gridded realization $\theta^\# \in \mathrm{Geom}^\#(M)_{d,k}$ with $\pi_M^\#(\theta^\#)=  \sigma$ there exists a unique $w$ that is accepted by $\mathcal{A}_{\mathcal{F},\#}$  and for which $g_M(w) = \theta^\#$. We do so by simply taking the product of $\mathcal{A}_{M,\#}$ together with an automaton obtained by applying Lemma~\ref{lemma: iterate} to the first-order sentence $\chi_\mathcal{F}$. 

The main challenge at this stage is to prove that the dominant strongly connected components in $\mathcal{A}_{\mathcal{F},\#}$ are aperiodic. This is especially tricky as when passing to the permutation subclass, the spectral radius of the transition matrix may decrease. We also highlight that the statement is not true if $\chi_{\mathcal{F}}$ is replaced with a general first-order sentence, and we heavily use the fact that $\mathcal{F}$ is a down-set. 

Finally, we let $\mathcal{D}$ denote the finite deterministic automaton which accepts precisely those words $w$ with $w \models \Delta_M$, and which satisfies the conclusion of Lemma~\ref{lemma: iterate}. Then $\mathcal{A}_{\mathcal{F}} := \mathcal{A}_{\mathcal{F},\#} \times \mathcal{D}$ clearly satisfies the required properties. Again, we are left with proving the fact that the dominant strongly connected components in $\mathcal{A}_{\mathcal{F}}$ are aperiodic. A simple pegs and holes -type of argument due to Vatter~\cite{vatter2011small} implies that the ratio $|\mathcal{F}^\#_n|/|\mathcal{F}_n|$ is bounded above by a function that grows at most polynomially in $n$. Thus, the spectral radius of the dominant strongly connected components must be the same in both $\mathcal A_{\mathcal F,\#}$ and $\mathcal A_{\mathcal F}$. In Subsection~\ref{sec:ungridded-aperiodicity} we prove that under such assumptions we must maintain the aperiodicity in the dominant strongly connected components in the automata $\mathcal{A}_{\mathcal{F}}$ as well. 

The remainder of this section is organized as follows. In Section~\ref{sec:spectral-comparison} we start by proving several preliminary lemmas which allow us to compare the spectral radii of two somewhat structurally related strongly connected components. In Section~\ref{sec:geometric-component-aperiodicity} we construct the automata $\mathcal{A}_\mathcal{F,\#}$, and analyse the dominant components whose projection lies inside $\mathcal{A}_\pm$. In section~\ref{sec:recursive-component-aperiodicity} we in turn analyse the dominant components whose projection lies inside $\mathcal{A}_d$. Finally, we pass from $\mathcal{F}^\#$ to $\mathcal{F}$ in Section~\ref{sec:ungridded-aperiodicity}. 
\subsection{Comparing spectral radii}\label{sec:spectral-comparison}
We need the following standard version of the Perron-Frobenius theorem. Given an $n \times n$ matrix $A$ with non-negative entries, we define the associated directed graph $G$ with $V(G) = [n]$ and for every $i$ and $j$, the edge $ij$ is included as a directed edge if and only if $A_{i,j} \neq 0$. A matrix $A$ is said to be $\textit{irreducible}$ if the associated directed graph $G$ is strongly connected, i.e.\ for every $i$ and $j$ there exists a directed path from $i$ to $j$. In addition, $A$ is said to be \textit{primitive} if there exists a positive integer $m$ so that $A^m$ has only positive entries. 
\begin{thm}[Perron-Frobenius theorem]\label{thm: Perron frobenius irreducible}
    Let $A$ be an irreducible matrix with non-negative entries, and let $\lambda = \rho(A)$ denote its spectral radius. Then the following conditions are satisfied.
    \begin{enumerate}
        \item Both the left and right eigenspace associated with $\lambda$ have dimension $1$. 
        \item There exists both left and right eigenvectors $v$ and $w$ respectively associated with $\lambda$ which have positive entries. 
    \end{enumerate}
    Furthermore, if $A$ is primitive, then any eigenvalue $\mu \neq \lambda$ satisfies $|\mu| < |\lambda|$. 
\end{thm}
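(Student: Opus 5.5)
We prove the theorem via the Collatz--Wielandt characterisation of the spectral radius, using positivity of a suitable power of $I+A$ to move from nonnegative to strictly positive vectors. Let $A$ be an irreducible $n\times n$ matrix with non-negative entries. First, $B:=(I+A)^{n-1}$ has only positive entries. Indeed, $(B)_{i,j}>0$ as soon as there is a directed walk from $i$ to $j$ of length at most $n-1$ in the associated digraph $G$, and irreducibility guarantees a directed path of length at most $n-1$ between any two vertices. Consequently, for every non-negative non-zero vector $x$ the vector $Bx$ is strictly positive. Moreover, $B$ commutes with $A$.

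For non-negative non-zero $x$, set $r(x)=\min_{i:\,x_i>0}(Ax)_i/x_i$, and let $r=\sup_x r(x)$. Since $r(Bx)\geq r(x)$, because $B$ commutes with $A$ and is monotone, the supremum may be taken over the compact set $B(\Delta)$, where $\Delta$ is the standard simplex. On $B(\Delta)$ all vectors are strictly positive, so $r(\cdot)$ is continuous there and the supremum is attained at some $z>0$. This gives $Az\geq rz$.

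\textbf{Key step (the main obstacle).} We must show that $Az=rz$. Suppose instead that $Az-rz\geq 0$ with $Az-rz\neq 0$. Applying $B$ gives $A(Bz)-rBz>0$ strictly, so $r(Bz)>r$, contradicting maximality. Hence $Az=rz$ with $z>0$, and $z$ is a positive right eigenvector for $r$.

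We next show that $r=\rho(A)$. Take any eigenvalue $\mu$ with eigenvector $y$. The triangle inequality gives $|\mu|\,|y|\leq A|y|$ entrywise, so $r(|y|)\geq|\mu|$ and therefore $|\mu|\leq r$. Thus $\lambda:=\rho(A)=r$. Applying the same argument to $A^{T}$, which is irreducible because its digraph is the reverse of $G$, yields a positive left eigenvector.

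For one-dimensionality of the right eigenspace, suppose $y$ is a real eigenvector for $\lambda$ that is not a multiple of $z$. Real and imaginary parts of a complex eigenvector are again eigenvectors, so it suffices to treat real $y$. Choose $t$ so that $z-ty\geq 0$ has a zero coordinate but is non-zero. Then $z-ty$ is a non-negative $\lambda$-eigenvector, so $B(z-ty)=(1+\lambda)^{n-1}(z-ty)$. The left-hand side is strictly positive, which contradicts the zero coordinate. The left eigenspace is handled in the same way via $A^{T}$.

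Finally, suppose $A$ is primitive, with $A^m>0$, and let $|\mu|=\lambda$ with eigenvector $y$. From $\lambda|y|\leq A|y|$ and the maximality argument above, $|y|$ is a $\lambda$-eigenvector. Hence $|A^m y|=\lambda^m|y|=A^m|y|$. Equality in the triangle inequality for the strictly positive rows of $A^m$ forces all coordinates of $y$ to share a common argument, so $y=e^{i\theta}|y|$. Then $\mu y=Ay=e^{i\theta}A|y|=\lambda y$, so $\mu=\lambda$. Therefore every eigenvalue $\mu\neq\lambda$ satisfies $|\mu|<\lambda$.
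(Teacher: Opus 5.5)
Your proof is correct and complete. It is the standard Wielandt argument: the Collatz--Wielandt function $r(x)$, positivity of $(I+A)^{n-1}$ to pass from non-negative to strictly positive vectors (this is what makes the maximiser an eigenvector and what drives the $z-ty$ step), and the equality case of the triangle inequality in the strictly positive rows of $A^m$. The only tacit step, scale-invariance of $r$ when restricting to $\Delta$, is immediate.

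The paper gives no argument for this statement. It quotes the theorem as classical and points to Flajolet--Sedgewick (Note V.34 and the proof of Theorem V.7). The citation keeps the paper short and imports the full classical statement. Your route is self-contained and elementary (compactness plus the triangle inequality), and it produces exactly what the paper uses later:
\begin{itemize}
\item positive left and right Perron vectors in Lemmas~\ref{lemma: first spectral drops lemma} and~\ref{lemma: loose radius or maintain aperiodicity};
\item strict dominance for primitive blocks in Lemma~\ref{lemma: primitive dominant asymptotics}.
\end{itemize}

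One thing the classical theorem provides that your proof does not is \emph{algebraic} simplicity of $\lambda$; the statement as written also only asserts that the eigenspaces are one-dimensional. Algebraic simplicity is tacitly needed in the pole-order count behind the factor $n^k$ in Lemma~\ref{lemma: primitive dominant asymptotics}, since a nontrivial Jordan block at $\lambda$ would raise the order of the pole at $1/\lambda$. Your tools give it in one line. Suppose $(A-\lambda I)x=z$ for your positive eigenvector $z$. Multiplying on the left by the positive left eigenvector $v^{T}$ gives $0=v^{T}z>0$, a contradiction.
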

For a proof, see e.g.\ \cite[Note V.34 and proof of Theorem V.7]{FS}. We are now ready to prove Lemma~\ref{lemma: loose radius or maintain aperiodicity}. 

Note that if $C$ is a strongly connected component of an automaton either containing a self-loop or with at least two vertices, and $A_C$ denotes its transition matrix, then $A_C$ is irreducible. Furthermore, if $C$ is aperiodic, it is easy to verify that $A_C$ is primitive. In this section, we prove three preliminary Lemmas. The first one of these proves that by slightly modifying a periodic strongly connected component of an automaton, we may create another automaton with a larger spectral radius. By cleverly constructing the second automaton to match a subset of our permutation class, we can use this result to prove that dominant strongly connected components must be aperiodic. 

\begin{lem}\label{lemma: first spectral drops lemma}
    Let $n$ and $m$ be positive integers, and let $A$ and $B$ be $(n+m)\times(n+m)$ matrices with non-negative entries satisfying the following properties.
    \begin{enumerate}
        \item $A$ is irreducible,
        \item $A_{i,j} \geq B_{i,j}$ for all $i,j$, and $A_{i,j} = B_{i,j}$ for every $1\leq i,j \leq n$, and 
        \item The $n \times n$ sub-matrix of $B$ corresponding to the top-left corner is irreducible, and $B_{i,j} = 0$ whenever $\max(i,j) \geq n+1$. 
    \end{enumerate}
    Let $\lambda = \rho(A)$ and $\mu = \rho(B)$. Then $\mu < \lambda$. 
\end{lem}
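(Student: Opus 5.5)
We compare the spectral radii using the positive Perron eigenvector of $B$'s nonzero block and the strict positivity forced by irreducibility of $A$. Write $B'$ for the top-left $n\times n$ block of $B$. Since $B$ vanishes outside that block, the spectrum of $B$ consists of the spectrum of $B'$ together with zeros, so $\mu=\rho(B')$. Because $B'$ is irreducible, Theorem~\ref{thm: Perron frobenius irreducible} gives a right eigenvector $x\in\mathbb{R}^n$ with strictly positive entries and $B'x=\mu x$. We also take a left Perron eigenvector $y^{T}$ of $A$, with all entries positive and $y^{T}A=\lambda y^{T}$; this exists since $A$ is irreducible.

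Next, extend $x$ by zeros to a vector $\hat x\in\mathbb{R}^{n+m}$. By hypothesis (2), $A_{i,j}=B_{i,j}$ for $i,j\le n$, so for $i\le n$ we get $(A\hat x)_i=(B'x)_i=\mu x_i=\mu\hat x_i$. For $i>n$ we get $(A\hat x)_i=\sum_{j\le n}A_{i,j}x_j\ge 0=\mu\hat x_i$. Hence $A\hat x\ge\mu\hat x$ entrywise, and so
\[
\lambda\, y^{T}\hat x \;=\; y^{T}A\hat x \;\ge\; \mu\, y^{T}\hat x ,
\]
which gives $\lambda\ge\mu$ because $y^{T}\hat x>0$.

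The step that needs care is strictness. We need some row $i>n$ with $\sum_{j\le n}A_{i,j}x_j>0$, i.e.\ some $i>n$ and $j\le n$ with $A_{i,j}>0$. Since $m\ge1$, irreducibility of $A$ gives a directed path from a vertex $n+1$ to the vertex $1$, and this path must use an edge from $\{n+1,\dots,n+m\}$ into $[n]$. That edge supplies the required $A_{i,j}>0$. Then $A\hat x-\mu\hat x$ is nonnegative and nonzero, and pairing with the strictly positive $y$ gives $(\lambda-\mu)\,y^{T}\hat x=y^{T}(A\hat x-\mu\hat x)>0$. Therefore $\mu<\lambda$.

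Note that the argument never needed primitivity, and hypothesis (2) was used only to identify the top-left blocks of $A$ and $B$.
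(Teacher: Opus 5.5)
Your proof is correct and takes essentially the same approach as the paper: pad the positive Perron vector of the top-left block of $B$ with zeros, compare $A\hat x$ with $\mu\hat x$ entrywise, and pair against the strictly positive left Perron vector of $A$. The only difference is cosmetic: you get strictness directly from an entry $A_{i,j}>0$ with $i>n$ and $j\le n$, whereas the paper argues by contradiction that equality would force all such entries to vanish, which contradicts irreducibility of $A$.
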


The second Lemma concerns a more general property of first-order restrictions of automata. It states that a strongly connected component $C$ in a first-order restriction of a strongly connected aperiodic component $C'$ is either aperiodic, or has strictly smaller spectral radius. While we felt that such a result should be known and 'folklore', we  failed to locate such a statement in the existing literature. Thus we chose to include its simple proof. The third result concerns asymptotic growth of a finite deterministic automaton whose dominant strongly connected components are aperiodic. The result is fairly standard, and thus we state it only at the end of this subsection, and give a brief sketch of a proof.

\begin{lem} \label{lemma: loose radius or maintain aperiodicity}
    Let $\mathcal{A}$ and $\mathcal{B}$ be finite deterministic automata over the same alphabet so that $\mathcal{B}$ is complete and satisfies~\eqref{eq: iterate}. Let $D$ be a strongly connected component in the automaton $\mathcal{A} \times \mathcal{B}$ containing at least one edge, and let $C$  be the strongly connected component of $\mathcal{A}$ containing the projection of $D$. 
    
    Write $\lambda_C = \rho(A_C)$ and $\lambda_D= \rho(A_D)$. Then $\lambda_D \leq \lambda_C$. If equality holds, $C$ contains a cycle and is aperiodic, then $D$ must also be aperiodic. 
\end{lem}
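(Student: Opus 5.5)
The plan is to compare $D$ with a suitable power of $C$ via the projection map $\pi:(q,r)\mapsto q$. First, the inequality $\lambda_D\le\lambda_C$ is immediate. Because the product automaton is deterministic, each edge of $D$ labelled $v$ from $(q,r)$ to $(q',r')$ projects to the unique edge of $\mathcal{A}$ labelled $v$ from $q$ to $q'$, and this edge lies inside $C$. So for each pair of states in $D$ and each length $n$, the number of closed walks of length $n$ in $D$ is at most $|Q_{\mathcal B}|$ times the number of closed walks of length $n$ in $C$. The same is true after summing over the states. Since $\lambda^n$ is, up to constants and a periodic factor, the growth of $\operatorname{tr}(A^n)$, this gives $\lambda_D\le\lambda_C$. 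Equivalently, $A_D$ is dominated entrywise by the lift of $A_C$ to $C\times Q_{\mathcal B}$, and the spectral radius of that lift equals $\lambda_C$.

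For the equality case, assume $C$ is aperiodic, so $A_C$ is primitive by Theorem~\ref{thm: Perron frobenius irreducible}, and suppose $D$ has period $p\ge 2$. We aim to show $\lambda_D<\lambda_C$. Fix a state $(q_0,r_0)\in D$. Every closed walk in $D$ through $(q_0,r_0)$ has length divisible by $p$.

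We then use \eqref{eq: iterate} on the $\mathcal B$-coordinate. For any word $v$ with $\delta_{\mathcal A}(q_0,v)=q_0$, the power $v^N$ satisfies $\delta_{\mathcal B}(r_0 \cdot v^N, v)=r_0\cdot v^N$. Hence, starting at $(q_0, r_0\cdot v^N)$, reading $v$ returns to the same state. The goal is to produce, inside $D$, closed walks whose lengths have gcd $1$, which contradicts $p\ge 2$ unless the relevant walks leave $D$. Primitivity of $C$ gives closed walks $v_1,v_2$ at $q_0$ in $C$ with $\gcd(|v_1|,|v_2|)=1$. Choose a closed walk $u$ at $(q_0,r_0)$ inside $D$, which exists since $D$ contains an edge. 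Then consider words of the form $u\,v_i^N$ and $u\,v_i^{N+1}$. By \eqref{eq: iterate}, $u v_i^N$ and $u v_i^{N+1}$ lead from $(q_0,r_0)$ to the same state $s_i$.

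If $s_i$ can return to $(q_0,r_0)$, then both walks, completed by the same return path, lie in $D$. Their lengths differ by $|v_i|$, so $p\mid |v_i|$ for $i=1,2$, which is a contradiction.

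So in the periodic case, some such completion must fail: the walk either leaves $D$ and cannot come back. This is the main obstacle, namely showing that if this "escape" happens, then $D$ misses a positive proportion of the walks of $C$ in a way that forces $\lambda_D<\lambda_C$. My approach is to argue by contradiction through growth rates.

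Suppose $\lambda_D=\lambda_C=\lambda$. Primitivity makes the number of walks in $C$ from $q_0$ to $q_0$ of length $n$ equal to $\sim c\lambda^n$ for every large $n$. The lifts of these walks starting at $(q_0,r_0)$ are finite in number of endpoint classes. The walks that stay in $D$ grow like $\lambda^n$ only along $n\equiv 0 \pmod p$. I would use Lemma~\ref{lemma: first spectral drops lemma} here. Let $A$ be the lift of $A_C$ restricted to the strongly connected piece of $C\times Q_{\mathcal B}$ containing $D$, together with the states it reaches via the $v_i^N$ detours. Let $B$ be the matrix supported on $D$. If $D$ were all of an irreducible piece with radius $\lambda_C$, the length-shift argument above would make that piece aperiodic. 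Otherwise $D$ is a proper irreducible corner of a larger irreducible matrix, and Lemma~\ref{lemma: first spectral drops lemma} gives the strict inequality $\lambda_D<\lambda$. The delicate point I expect is producing an irreducible $A$ of radius at least $\lambda_C$ that contains $D$ as a proper block. My first attempt is to take the strongly connected component of $C\times Q_{\mathcal B}$ generated by $D$ and the states $s_i$, checking irreducibility via \eqref{eq: iterate}.
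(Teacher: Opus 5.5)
Your proof of $\lambda_D\le\lambda_C$ is fine. $A_D$ is entrywise dominated by the restriction $M$ of the product transition matrix to $C\times Q_{\mathcal B}$. Since $\mathcal B$ is complete and deterministic, $M$ has the positive eigenvector $v\otimes\mathbf 1$ with eigenvalue $\lambda_C$. Your reduction of the equality case is also the right one, and it is the same use of \eqref{eq: iterate} as in the paper. If the states $s_i=(q_0,\delta_{\mathcal B}(r_0,v_i^N))$ lie in $D$, then the lifts of $v_i^N$ and $v_i^{N+1}$ give closed walks in $D$ whose lengths differ by $|v_i|$, and you are done; the prefix $u$ is not even needed.

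The gap is that you never show $s_i\in D$ when $\lambda_D=\lambda_C$, and the route you propose for this cannot work. $D$ is a strongly connected component of $\mathcal A\times\mathcal B$, hence it is also the strongly connected piece of $C\times Q_{\mathcal B}$ containing it. Once a lifted walk leaves $D$, it can never return to $D$. So no irreducible submatrix of the product graph contains $D$ as a proper block, and Lemma~\ref{lemma: first spectral drops lemma} has nothing to be applied to. Adding ``the states reached via the $v_i^N$ detours'' only produces a reducible matrix.

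Your other branch (``if $D$ were all of an irreducible piece with radius $\lambda_C$, the length-shift argument would make it aperiodic'') is circular. That length-shift argument presupposes exactly that the walks do not escape $D$. Separately, for a strict inequality you would need $\rho(A)\le\lambda_C$, not $\rho(A)\ge\lambda_C$.

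The missing idea is to compare $A_D$ directly with the lifted Perron vector, as the paper does.

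1. Let $v>0$ be a right Perron eigenvector of $A_C$, and set $v'_{(p,r)}=v_p$ for $(p,r)\in D$.
2. Determinism and completeness of $\mathcal B$ give $\sum_{r'}(A_D)_{(p,r),(p',r')}\le (A_C)_{p,p'}$. Hence $A_Dv'\le\lambda_C v'$ entrywise.
3. The inequality is strict at $(p,r)$ as soon as some labelled edge of $C$ out of $p$ lifts to an edge leaving $D$.
4. Let $w$ be a positive left Perron eigenvector of the irreducible matrix $A_D$. Pairing gives $\lambda_D\,w^Tv'=w^TA_Dv'\le\lambda_C\,w^Tv'$, with strict inequality if any escape occurs.
5. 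So $\lambda_D=\lambda_C$ forces every $C$-edge out of every state of $D$ to lift to an edge inside $D$. In particular, the walks reading $v_i^N$ and $v_i^{N+1}$ from $(q_0,r_0)$ stay in $D$, and your argument then concludes.
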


We now prove these two auxiliary results. 

\begin{proof}[Proof of Lemma~\ref{lemma: first spectral drops lemma}]
    By Perron-Frobenius (Theorem~\ref{thm: Perron frobenius irreducible}), $\lambda$ is an eigenvalue of $A$, and $A$ has a corresponding left eigenvector $v$ with positive entries. Let $B'$ denote the top-left corner of $B$. Since the last $m$ rows and columns of $B$ are identically $0$, it is easy to see that $\rho(B) = \rho(B')$, and all the eigenvectors of $B$ associated with non-zero eigenvalues are of the form $(w_1,\dots,w_n,0,\dots,0)$, where $(w_1,\dots,w_n)$ is an eigenvector of $B'$ associated with the same eigenvalue. Thus, by Perron-Frobenius there exists a right eigenvector $w$ of $B$ corresponding to $\mu$ which satisfies $w_i > 0$ for every $1 \leq i \leq n$, and $w_i = 0$ for $n+1 \leq i\leq n+m$. 

    Since $A_{i,j} \geq B_{i,j}$ for every $(i,j)$, we conclude that 
    \begin{align}\label{eq: pointwise comp}
        (Aw)_i = \sum_{j=1}^{n+m}A_{i,j}w_j \geq \sum_{j=1}^{n+m} B_{i,j}w_j = (Bw)_i = \mu \cdot w_i.
    \end{align}
    Hence, we conclude that 
    \begin{align} \label{eq: critical inequality}
        \lambda \cdot v^Tw = v^TAw =\sum_{i=1}^{n+m} v_i\cdot (Aw)_i \geq \sum_{i=1}^{n+m} \mu \cdot v_i\cdot w_i = \mu\cdot v^Tw,
    \end{align}
    where the inequality in the middle follows from~\eqref{eq: pointwise comp}. Thus, we conclude that $\lambda \geq \mu$. 

    If $\lambda = \mu$, then inequality~\eqref{eq: critical inequality} must be tight. Since $v_i > 0$ for every $i$, we conclude that the inequality~\eqref{eq: pointwise comp} in turn must be tight for every $i$. Since $w_j > 0$ for every $1 \leq j \leq n$, this in turn implies that we must have $A_{i,j} = B_{i,j}$ for every $1 \leq j \leq n$ and every $1 \leq i \leq n+m$. In particular, this means that $A_{i,j} = 0$ for every  $i \in \{n+1,\dots,n+m\}$ and $j\in \{1,\dots,n\}$, which contradicts the fact that $A$ is irreducible. Thus we must have $\mu < \lambda$. 
\end{proof}

\begin{proof}[Proof of Lemma~\ref{lemma: loose radius or maintain aperiodicity}]

    The proof follows a similar idea to the proof of Lemma~\ref{lemma: first spectral drops lemma}. Let $C$ be the strongly connected component of $\mathcal{A}$ containing the projection of $D$, and put $\lambda = \rho(A_C)$. Since $D$ contains at least one directed edge, it follows that $C$ also contains at least  one directed edge, and hence $A_C$ is irreducible. Thus Perron-Frobenius (Theorem~\ref{thm: Perron frobenius irreducible}) gives a right eigenvector $v$ associated with $\lambda$ whose all entries are positive. 

    Write $A$ for the transition matrix of the automaton $\mathcal{A} \times \mathcal{B}$, and note that $(A_D)_{(p,r),(p',r')} = A_{(p,r),(p',r')}$ for every $(p,r),(p',r') \in D$ Define a vector $v' \in \mathbb{R}^D$ by setting $v'_{(p,r)} = v_{p}$ for each $(p,r) \in D$. Note that for any $(p,r) \in D$ and $p' \in C$ we have 
    \begin{align} \label{eq:projection sum}
        \sum_{r': (p',r') \in D} (A_D)_{(p,r),(p',r')} \leq \sum_{r' \in \mathcal{B}} A_{(p,r),(p',r')} = (A_C)_{(p,p')},
    \end{align}
    as there are exactly $(A_C)_{p,p'}$ possible transitions from the state $p$ to state $p'$ in $\mathcal{A}$, and thus fixing the initial state $r \in \mathcal{B}$ while allowing the final $r'$ to be arbitrary does not restrict the choice.

    Thus for every $(p,r) \in D$ we have 
    \begin{align} \label{eq: coordwise comparison}
        (A_Dv')_{(p,r)} = \sum_{(p',r') \in D }  (A_D)_{(p,r),(p',r')} \cdot v'_{(p',r')} \leq \sum_{p' \in C} \left(v_{p'} \cdot \sum_{r' \in \mathcal{B}} A_{(p,r),(p',r')} \right)
    \end{align}
    \begin{align*}
        = \sum_{p' \in C} v_{p'} \cdot (A_C)_{p,p'} = (A_C v)_p = \lambda \cdot v_p = \lambda \cdot v'_{(p,r)},
    \end{align*}
    where the inequality between the first and second line follows from~\eqref{eq:projection sum}, and throughout the inequalities we are using the fact that all the entries of the respective matrices are non-negative, and all the entries of the vector $v$ are positive. 

    Let $w$ be the left eigenvector of $A_D$ corresponding to the  eigenvalue $\mu = \rho(A_D)$, i.e.\ $w^T A_D = \mu w^T$, and recall that all the entries of $w$ can be chosen to be positive. Then 
    \begin{align}\label{eq: between evalues}
        \mu \cdot w^T v' = w^T A_D v' = \sum_{(p,r) \in D} w_{(p,r)} \cdot (A_Dv')_{(p,r)} \leq \sum_{(p,r) \in D} w_{(p,r)} \cdot (\lambda \cdot v'_{(p,r)}) = \lambda \cdot w^T v',
    \end{align}
    where the inequality follows from inequality~\eqref{eq: coordwise comparison} and the fact that all the coordinates of $w$ and $A_Dv'$ are non-negative. In particular, we conclude that $\mu \leq \lambda$. 

    Now suppose that $\mu = \lambda$. Since each coordinate of both $v'$ and $w$ is strictly positive, it follows that inequality~\eqref{eq: coordwise comparison} is in fact equality for every $(p,r) \in D$, and hence in turn we conclude that the inequality~\eqref{eq:projection sum} must be equality for every $(p,r) \in D$ and for every $p' \in C$. In particular, every labelled directed edge in $C$ must lift to a labelled directed edge in $D$, as omitting such an edge would make the corresponding sum strictly decrease. Since $C$ is a strongly connected component, we conclude that the projection of $D$ must equal $C$.  
    
    We now prove that $D$ must be aperiodic whenever $C$ is. Suppose that $p_1,\dots,p_l$ are vertices of a closed walk of length $l$ in $C$. We claim that this lifts to a closed walk of length $l$ in $D$. Let $w = v_1\dots v_l$  denote a word which witnesses the closed walk $p_1,\dots,p_l$ with respect to $\mathcal{A}$, i.e.\ chosen so that $p_i = \delta_{\mathcal{A}}(p_1,v_1\dots v_{i-1})$ for every $2 \leq i \leq l+1$ (with $p_{l+1} = p_1$). Since $\mathcal{B}$ satisfies the condition~\eqref{eq: iterate}, it follows that there exists $N$ so that 
    $\delta_{\mathcal B}(r,w^N) = \delta_{\mathcal B}(r,w^{N+1})$ for every $r$.

    Choose $r_1'$ arbitrarily so that $(p_1,r_1') \in D$, and let $r_1,\dots,r_l$ be chosen so that $r_1 = \delta_{\mathcal{B}}(r_1',w^N)$ and $r_i = \delta_{\mathcal{B}}(r_1',w^Nv_1\dots v_{i-1})$ for every $2 \leq i \leq l$. Since
    \[
    \delta_{\mathcal{A}\times \mathcal{B}} \left((p_i,r_i), v_i\right) = (p_{i+1},r_{i+1})
    \]
    holds trivially for $1 \leq i \leq l-1$ by construction, and 
    \[\delta_{\mathcal{B}}(r_{l},v_l) = \delta_{\mathcal{B}}(r_1,w) = \delta_{\mathcal{B}}(r_1',w^{N+1}) = \delta_{\mathcal{B}}(r_1',w^N) = r_1,\]
    it follows that $\delta_{\mathcal{A}\times \mathcal{B}} \left((p_l,r_l), v_l\right) = (p_1,r_1)$. Furthermore, as all the relevant inequalities~\eqref{eq: coordwise comparison} and~\eqref{eq:projection sum} must hold with equality, we conclude that all of these points are contained in $D$. Hence $D$ contains a closed walk of length $l$. Since the same conclusion holds for all the closed walks in $C$, we conclude that $D$ must be aperiodic as well. 
\end{proof}

\begin{lem}\label{lemma: primitive dominant asymptotics}
    Let $\mathcal A$ be a finite deterministic automaton accepting   infinitely many words, and let $\mathcal{A'}$ denote the automaton obtained by deleting those states that are not on any path from the initial state to some accepting state. 

    Let $\lambda$ be the largest spectral radius of a strongly connected component in $\mathcal{A'}$, and suppose that every component with spectral radius $\lambda$ is primitive. Let $k +1$ denote the maximum number of such components along any path from the initial state to an accepting state, where $k$ is a non-negative integer. Then there exists $c > 0$ so that 
    \[
        a_n=(c+o(1))\lambda^n n^k,
    \]
    where $a_n$ denotes the number of words accepted by $\mathcal{A}$ of length $n$. 
\end{lem}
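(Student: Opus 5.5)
We will express $a_n$ through the transfer matrix. Let $A'$ be the transition matrix of $\mathcal{A'}$, $e$ the indicator vector of the initial state and $f$ the indicator vector of the accepting states. Then $a_n = e^T (A')^n f$.

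Order the states so that $A'$ is block upper triangular with respect to the strongly connected components, using a topological order of the condensation DAG. The diagonal blocks are the component matrices $A_C$. Components that are single states without loops have zero blocks.

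Next, expand $(A')^n$ over paths in the condensation DAG. The $(\text{initial},\text{accepting})$ entries of $(A')^n$ form a finite sum over chains $C_0 \to C_1 \to \dots \to C_t$ of components. Each chain contributes a term
\[
\sum_{n_0+\dots+n_t = n-t} u_0^T A_{C_0}^{n_0} E_{01} A_{C_1}^{n_1} E_{12} \cdots A_{C_t}^{n_t} f_t ,
\]
where the $E_{ij}$ are the nonnegative connecting blocks. All these terms are nonnegative, so the asymptotics of $a_n$ are determined by the chains with the largest growth.

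Now consider a single chain, and split its components into dominant ones ($\rho = \lambda$) and non-dominant ones ($\rho < \lambda$).

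For a dominant component $C$, primitivity and Perron--Frobenius (Theorem~\ref{thm: Perron frobenius irreducible}) give
\[
A_C^m = \lambda^m \bigl( r_C \ell_C^T + O(\theta^m) \bigr)
\]
for some $\theta < 1$, where $r_C$ and $\ell_C$ are positive right and left eigenvectors normalized so that $\ell_C^T r_C = 1$.

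For a non-dominant component, $\|A_C^m\| = O((\lambda-\epsilon)^m)$; for a loopless single state, $A_C^m$ vanishes for $m \geq 1$.

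Suppose the chain contains $j+1$ dominant components. Factoring out $\lambda^{n}$, its contribution becomes a convolution. The dominant blocks each contribute a factor converging to a positive rank-one term. The non-dominant blocks contribute summable geometric factors $\sum_m (A_C/\lambda)^m = (I - A_C/\lambda)^{-1}$, which has nonnegative entries. Counting compositions of $n$ into the $j+1$ dominant lengths (with the rest absorbed) gives
\[
(c_{\text{chain}} + o(1)) \frac{n^{j}}{j!} \lambda^n .
\]
The constant $c_{\text{chain}}$ is a product of terms of the form $\ell_C^T (\text{nonnegative matrix}) r_{C'}$, together with the initial vector $e^T(\cdots)r_{C_0}$ and the final vector $\ell^T(\cdots) f$. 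The error terms are $o(n^j)$ by a standard dominated-convergence estimate on the convolution.

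It remains to show that $c_{\text{chain}} > 0$ for chains achieving $j = k$. This is the main step requiring care. Since the chain consists of productive states, there are actual walks in the automaton: from the initial state into $C_0$, between consecutive components through the connecting edges, and from the last component to an accepting state. Because $r_C$ and $\ell_C$ have strictly positive entries, every factor $\ell_C^T E \cdots r_{C'}$ dominates a positive entry times positive coordinates, hence is positive. The initial and final factors are positive for the same reason.

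Summing over the finitely many chains, those with $j < k$ contribute $O(\lambda^n n^{k-1})$. At least one chain attains $j = k$ by the definition of $k$. Setting
\[
c = \sum_{\text{chains with } j = k} \frac{c_{\text{chain}}}{k!} > 0,
\]
we obtain $a_n = (c+o(1))\lambda^n n^k$.

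Finally, $\lambda \geq 1$ and some component is dominant, because $\mathcal A$ accepts infinitely many words and so $\mathcal{A'}$ contains a productive cycle.

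The one delicate point is the uniformity of the $o(1)$ in the convolution. Primitivity is exactly what removes the oscillating terms from the powers of the dominant blocks; without it, periodic components would produce modular behaviour in $a_n$.
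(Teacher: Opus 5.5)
Your proof is correct, but it argues in the coefficient domain, whereas the paper uses singularity analysis. The paper only sketches its argument:
\begin{itemize}
\item it puts $A'$ in block-triangular form;
\item it invokes Theorem V.7 of Flajolet--Sedgewick to say that, for each dominant (primitive) component, $1/\lambda$ is the unique singularity of $(I-zA_C)^{-1}$ on $|z|=1/\lambda$, and it is a simple pole;
\item it then reads off $a_n=(c+o(1))\lambda^n n^k$ from the rational generating function $e^T(I-zA')^{-1}f$, whose pole at $1/\lambda$ is built from chains of at most $k+1$ dominant blocks.
\end{itemize}
You instead expand $(A')^n$ over chains in the condensation DAG. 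You use the asymptotics $A_C^m=\lambda^m(r_C\ell_C^T+O(\theta^m))$, which is the matrix-power counterpart of the same simple-pole fact, and you evaluate the resulting convolution by counting compositions and applying dominated convergence.

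The paper's route is shorter given the transfer theorems for rational functions. However, it leaves implicit why the pole at $1/\lambda$ has order exactly $k+1$, i.e.\ why the leading Laurent coefficient does not cancel. Your route is more elementary and self-contained, and it makes exactly that point explicit. It yields $c$ as a sum over maximal chains of products $\ell_C^T(\cdots)r_{C'}$, and positivity follows from the positive Perron vectors together with an accepting walk through $k+1$ dominant components.

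Two small points would make your version airtight.
\begin{itemize}
\item The rank-one asymptotics of $A_C^m$ need $\lambda$ to be an algebraically simple eigenvalue. The version of Perron--Frobenius quoted in the paper only asserts a one-dimensional eigenspace. Algebraic simplicity is standard; alternatively, $A_C^m r_C=\lambda^m r_C$ with $r_C>0$ gives $\|A_C^m\|=O(\lambda^m)$, which rules out a Jordan block at $\lambda$.
\item You should say explicitly that chains with more than $k+1$ dominant components contribute nothing. A nonzero contribution from such a chain would give an accepting walk through them, contradicting the definition of $k$.
\end{itemize}
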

We briefly sketch the argument. Write the transition matrix of $\mathcal{A'}$ in the block triangular form, with blocks corresponding to the strongly connected components. Then Theorem V.7 in~\cite{FS} implies that $1/\lambda$ is the unique dominant singularity in each component with spectral radius $\lambda$. Thus, if the largest number of such strongly connected components along any path from the initial state to an accepting state is $k+1$, then it follows that $a_n=(c+o(1))\lambda^n n^k$.
\subsection{$C$ is contained in a copy of $\mathcal{A}_\pm$} \label{sec:geometric-component-aperiodicity}

Recall that $\mathcal{B}_{\mathcal{F}}$ is chosen so that it satisfies the conclusion of Lemma~\ref{lemma: iterate} for the first-order sentence $\Delta_{M,\#} \land \chi_\mathcal{F}$. The goal of this sub-section is to prove the following result. 
\begin{lem} \label{lemma: aperiodicity or drops for plusminus}
    Suppose that $C$ is a dominant strongly connected component in $\mathcal{A}_{\mathcal{F},\#}$ which lies in the part corresponding to $\mathcal{A}_\pm \times \mathcal{B}_\mathcal{F}$. Then $C$ is aperiodic. 
\end{lem}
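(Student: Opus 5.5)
The plan is to reduce the statement to Lemma~\ref{lemma: loose radius or maintain aperiodicity} with $\mathcal{A}=\mathcal{A}_\pm$ and $\mathcal{B}=\mathcal{B}_{\mathcal F}$. Let $C'$ be the strongly connected component of $\mathcal{A}_\pm$ containing the projection of $C$. By Lemma~\ref{lemma: directed edge creates self-loop}, $C'$ contains a state with a self-loop, so it is aperiodic and $A_{C'}$ is primitive. Lemma~\ref{lemma: loose radius or maintain aperiodicity} then gives $\rho(A_C)\le \rho(A_{C'})$, and if equality holds, $C$ is aperiodic. It therefore suffices to show that $\rho(A_C)<\rho(A_{C'})$ cannot happen when $C$ is dominant.

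I would argue by contradiction: assume $\rho(A_C)<\rho(A_{C'})$. The goal is to exhibit a regular sublanguage of words accepted by $\mathcal{A}_{\mathcal F,\#}$ whose growth rate is at least $\rho(A_{C'})$. This contradicts dominance of $C$, since dominance gives $\rho(A_C)=\rho(\mathcal{A}_{\mathcal F,\#})$.

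The construction uses the down-set property. Fix a word $x$ leading from the initial state of $\mathcal{A}_{\mathcal F,\#}$ into a state $(p,r)\in C$, and a word $z$ leading from $C$ to acceptance. Consider words $x\,y\,z$ with $y$ a closed walk in $C'$ based at $p$.
\begin{enumerate}
\item Each such $x y z$ is accepted by $\mathcal{A}_{M,\#}$, because $C'$ lies in $\mathcal{A}_\pm$, whose states are all accepting, and the $\mathcal{B}_d$ parts are untouched.
\item Since $C$ is nonempty with edges, some closed walk $y_0$ through $(p,r)$ exists, and the words $x y_0^n z$ lie in $L_{\mathcal F}$. Every closed walk $y$ of $C'$ at $p$ can be absorbed as a gridded subpattern: $g_{M_{\pm1}}(y)$ embeds in $g_{M_{\pm1}}(y_0^{N})$ for $N$ large, for instance because the component of $\mathcal{A}_\pm$ with a self-loop lets every letter appearing in $C'$ be repeated.
\item By closure of $\mathcal{F}^\#$ under gridded subpatterns, the gridded permutation $g_M(x y z)$ then lies in $\mathcal F^\#$. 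By Lemma~\ref{lemma: key thm for gM instead}, its unique lexicographically minimal representative is accepted by $\mathcal{A}_{\mathcal F,\#}$.
\end{enumerate}
Because $C'\subseteq\mathcal{A}_\pm$ accepts only lex-minimal words, distinct $y$ give distinct gridded permutations. Hence $|L^{\#}_{\mathcal F,n}|\gtrsim \rho(A_{C'})^{\,n}$, up to polynomial factors, which is the required contradiction.

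I expect the main obstacle to be the embedding step (2): showing that an arbitrary closed walk of $C'$ yields a gridded subpattern of something already known to be in $\mathcal F^\#$. The cleanest route is probably to observe that all letters labelling edges of $C'$ are repeatable (by Lemma~\ref{lemma: directed edge creates self-loop}, each edge label gives a self-loop at its target), and to compare against $x\,(v_1^{N}\cdots v_t^{N})^{N} z$. One must then check that this word stays within $C$, using the $N$-stability~\eqref{eq: iterate} of $\mathcal{B}_{\mathcal F}$ as in the proof of Lemma~\ref{lemma: loose radius or maintain aperiodicity}. If the direct embedding fails, the fallback is Lemma~\ref{lemma: first spectral drops lemma}, applied to $A_{C'}$ and to the lifted sub-matrix coming from $C$.
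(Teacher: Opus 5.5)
Your reduction via Lemma~\ref{lemma: loose radius or maintain aperiodicity} and the self-loop property of $\mathcal{A}_\pm$ is sound. The gap is what comes next. You set out to show that $\rho(A_C)<\rho(A_{C'})$ is impossible for a dominant $C$, with $C'$ the full component of $\mathcal{A}_\pm$. That statement is false, and step (2) is exactly where it fails.

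The component $C'$ may carry letters that label no edge of $C$, because $\mathcal{F}$ forbids them there. If $y$ uses such a letter, $g(y)$ has a point in a cell where every $g(y_0^N)$ is empty. Your test word $x(v_1^N\cdots v_t^N)^Nz$ is then simply rejected: \eqref{eq: iterate} only stabilises powers of a fixed word and cannot bring the $\mathcal{B}_{\mathcal F}$-coordinate back into $C$.

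Here is a concrete counterexample. Let $M$ be the $2\times 1$ matrix with $M_{1,1}=M_{2,1}=1$, write $a=a_{1,1}$, $b=a_{2,1}$, and take $\mathcal{F}=\mathrm{Av}(21)\subseteq\mathrm{Geom}(M)_{d,k}$; this $M$ is admissible in the construction.
\begin{itemize}
\item Since $a$ and $b$ share a row, $U(a)=U(b)=\emptyset$. The reachable part of $\mathcal{A}_\pm$ is therefore $q_\emptyset$ with loops $a$ and $b$, so $\rho(A_{C'})=2$.
\item On the other hand, $g(ba)$ is a gridded $21$ while $g(a^ib^j)$ is an identity, so $\mathcal{F}^\#$ corresponds to $a^*b^*$.
\item Hence every productive component of $\mathcal{A}_{\mathcal F,\#}$ uses only $a$ or only $b$, and a dominant $C$ has $\rho(A_C)=1=\rho(\mathcal{A}_{\mathcal F,\#})<\rho(A_{C'})$.
\end{itemize}
Your fallback does not help either: Lemma~\ref{lemma: first spectral drops lemma} yields strict inequalities, whereas you need an equality.

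The missing idea is a comparison object built only from letters that already label edges of $C$, since only those can be absorbed through the down-set property.

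The paper does this by perturbing $C$ itself. If $C$ contains a $v$-labelled walk of length $N$, Lemma~\ref{lemma: too long directed paths with same label} gives a self-loop. Otherwise, the paper attaches a $v$-path of length $N$ at the end $q$ of a maximal $v$-walk. It shows that closed walks of the enlarged component still give lexicographically minimal words with images in $\mathcal{F}^\#$, by replacing each $v^N$ block with $u_0^N$, where $u_0$ is a closed walk of $C$ through $v$. Lemma~\ref{lemma: first spectral drops lemma} then shows the spectral radius strictly increases, contradicting dominance.

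Your own route can also be repaired along these lines.
\begin{itemize}
\item Replace $C'$ by the component $C''$ containing the projection of $C$ in $\mathcal{A}_\pm$ restricted to the alphabet $\Sigma_C$ of letters labelling edges of $C$.
\item $C''$ still has a self-loop, because the loop given by Lemma~\ref{lemma: directed edge creates self-loop} carries the same letter.
\item Choose $y_0$ to be a closed walk traversing every edge of $C$. Then each $y\in\Sigma_C^*$ is a subword of $y_0^{|y|}$, so your steps (1)--(3) give $\rho(A_{C''})\le\rho(\mathcal{A}_{\mathcal F,\#})=\rho(A_C)$.
\item Apply Lemma~\ref{lemma: loose radius or maintain aperiodicity} over $\Sigma_C$, where $\mathcal{B}_{\mathcal F}$ is still complete and satisfies \eqref{eq: iterate}. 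This forces equality, and hence $C$ is aperiodic.
\end{itemize}
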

We start with a preliminary observation that is useful when dealing with both $\mathcal{A}_\pm$ and $\mathcal{A}_d$. 
\begin{lem} \label{lemma: too long directed paths with same label}
    Let $N \geq 2$ be an integer satisfying the conclusion of Lemma~\ref{lemma: iterate} for $\mathcal{B}_\mathcal{F}$. Suppose that $C$ is a strongly connected component of $\mathcal{A}_{\mathcal{F},\#}$ which contains a directed walk of length at least $N$ whose edges all have the same fixed letter $v$ among their labels. Then $C$ contains a self-loop, and in particular, is aperiodic. 
\end{lem}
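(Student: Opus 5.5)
The plan is to exploit the product structure $\mathcal{A}_{\mathcal{F},\#} = \mathcal{A}_{M,\#}\times\mathcal{B}_\mathcal{F}$, where $\mathcal{B}_\mathcal{F}$ is complete and satisfies~\eqref{eq: iterate} with the given $N$. Let the walk in $C$ be $(p_0,r_0)\to(p_1,r_1)\to\dots\to(p_L,r_L)$ with $L\geq N$, every edge labelled by the fixed letter $v$. Projecting to the first coordinate, the states $p_0,\dots,p_L$ form a walk in $\mathcal{A}_{M,\#}$ reading $v^L$. The first step is to show that the projected walk is eventually stationary: $\delta_{M,\#}(p_i,v)=p_i$ for large $i$. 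For the part $\mathcal{A}_\pm$ this is exactly Lemma~\ref{lemma: directed edge creates self-loop}: after one $v$-transition the state $q_B$ satisfies $\delta_\pm(q_B,v)=q_B$, so $p_1=p_2=\dots=p_L$. For the parts which are copies of $\mathcal{B}_d$, the same conclusion follows from the fact that $\mathcal{B}_d$ itself satisfies~\eqref{eq: iterate} with some constant $N'$. Enlarging $N$ to dominate $N'$ (harmless, since~\eqref{eq: iterate} for $N$ implies it for any larger exponent) gives $p_N=p_{N+1}$. Transitions between blocks of $\mathcal{A}_{M,\#}$ can occur only a bounded number of times, and never back into a block, so a single-letter walk inside one strongly connected component stays within one block.

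The second step handles the $\mathcal{B}_\mathcal{F}$ coordinate. We have $r_N=\delta_{\mathcal{B}_\mathcal{F}}(r_0,v^N)=\delta_{\mathcal{B}_\mathcal{F}}(r_0,v^{N+1})=\delta_{\mathcal{B}_\mathcal{F}}(r_N,v)$, so $r_N$ is fixed by $v$. Combined with the first step, the state $(p_N,r_N)$, which lies on the walk and hence in $C$, satisfies $\delta((p_N,r_N),v)=(p_N,r_N)$. This is a self-loop in $C$. A self-loop is a cycle of length $1$, so the gcd of cycle lengths in $C$ is $1$ and $C$ is aperiodic.

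The main obstacle is the first step for the $\mathcal{B}_d$ blocks. There one must make sure that the choice of $N$ (``satisfying the conclusion of Lemma~\ref{lemma: iterate} for $\mathcal{B}_\mathcal{F}$'') can be taken large enough to also stabilise the $\mathcal{A}_{M,\#}$ coordinate. If the lemma is to hold for an arbitrary such $N$, I would instead argue that $\mathcal{B}_\mathcal{F}$ may be taken to be the minimal automaton of a first-order language, whose transition monoid is aperiodic. The product automaton recognises a first-order definable language too, so $\mathcal{A}_{M,\#}\times\mathcal{B}_\mathcal{F}$ could itself be replaced by an automaton satisfying~\eqref{eq: iterate}. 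Failing that, I would simply choose $N$ as the maximum of the constants for $\mathcal{B}_\mathcal{F}$, $\mathcal{B}_d$, and $1$ for $\mathcal{A}_\pm$.
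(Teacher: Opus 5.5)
Your handling of the $\mathcal{B}_\mathcal{F}$ coordinate (the state $r_N=\delta_{\mathcal{B}_\mathcal{F}}(r_0,v^N)$ is $v$-fixed) and of the $\mathcal{A}_\pm$ part (via Lemma~\ref{lemma: directed edge creates self-loop}) matches the paper. The gap is in the recursive blocks.

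The lemma is asserted for \emph{every} $N\geq 2$ satisfying~\eqref{eq: iterate} for $\mathcal{B}_\mathcal{F}$, and the walk is only assumed to have length at least that $N$. Enlarging $N$ is therefore not harmless. A walk of length $N$ need not have length $N'$, where $N'$ is the constant that Lemma~\ref{lemma: iterate} supplies for $\mathcal{B}_d$, and nothing is known about $N'$. What you actually prove is the lemma with $N$ replaced by $\max(N,N')$. That weaker form would suffice for the later applications, where $N$ is chosen freely, but it is not the statement.

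Your other fallback does not work either. Replacing $\mathcal{A}_{M,\#}\times\mathcal{B}_\mathcal{F}$ by some other automaton satisfying~\eqref{eq: iterate} changes the strongly connected components the lemma is about. The subsequent arguments also rely on the concrete product structure.

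The missing idea is an explicit, uniform stabilisation exponent for the recursive factor. You followed the sentence saying the blocks are copies of $\mathcal{B}_d$. The paper's proof, like the later analysis via iterative sequences, instead works with copies of the concrete automaton $\mathcal{A}_d$; the constraint $\Delta_d$ is carried by $\mathcal{B}_\mathcal{F}$, which is built for $\Delta_{M,\#}\land\chi_\mathcal{F}$. One then checks by induction on $d$ that $\delta_{\mathcal{A}_d}(x,v^2)=\delta_{\mathcal{A}_d}(x,v^3)$ for every state $x$ and every letter $v$:
\begin{itemize}
\item For $d=1$ this is read off from the transitions of $\mathcal{A}_1$.
\item For $d\geq 2$, suppose $\rho_2(v)\in\mathcal{Q}_k$, or $\rho_2(v)\in\{\mathrm{inc},\mathrm{dec}\}$ with $\rho_4(v)=\emptyset$. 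After the first step, further $v$-steps stay in a single block (or fall into $\star$) and act on the inner coordinate by $\delta_{\mathcal{A}_{d-1}}(\cdot,\rho_1(v))$, so the inductive hypothesis applies.
\item If $\rho_4(v)=\{L,R\}$, both sides equal $q_{\rho_2(v),LR}$ or $\star$.
\item If $\rho_4(v)\in\{\{L\},\{R\}\}$, both sides equal $\star$.
\end{itemize}
This exponent $2$ is exactly why the hypothesis reads $N\geq 2$. It gives $\delta_{\mathcal{A}_d}(p_0,v^{N+1})=\delta_{\mathcal{A}_d}(p_0,v^{N})$ without touching $N$, so the state $(p_N,r_N)$ on your walk carries a $v$-self-loop.
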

\begin{proof}
     Suppose for the sake of contradiction that there exists a directed walk $(a_0,b_0),\dots,(a_N,b_N)$ so that $\delta_{\mathcal A_{\mathcal F,\#}}((a_i,b_i),v) = (a_{i+1},b_{i+1})$ for every $0 \leq i \leq N-1$. Writing $(a,b) = (a_N,b_N)$, we claim that $\delta_{\mathcal A_{\mathcal F,\#}}((a,b),v) = (a,b)$. 

    Observe that the choice of $N$ implies that 
    \[\delta_{\mathcal{B}_{\mathcal{F}}}(b,v) = \delta_{\mathcal{B}_{\mathcal{F}}}(b_0,v^{N+1}) = \delta_{\mathcal{B}_{\mathcal{F}}}(b_0,v^{N}) = b. \]
    First suppose that the projection of $C$ is contained in $\mathcal{A}_\pm$. Note that there is an incoming directed edge labelled with $v$ to $(a,b)$, and thus the same must be true for the state $a$ in $\mathcal{A}_\pm$. Thus, by Lemma~\ref{lemma: directed edge creates self-loop} we must have $\delta_{\mathcal{A}_\pm}(a,v) =a$. 
    
    Now suppose that the projection of $C$ is contained in $\mathcal{A}_{d,i',j'}$ for some $(i',j') \in \mathcal{S}_\star$. We claim that for every state $x$ in $\mathcal{A}_d$ and for every letter $v$ we have 
    \[
    \delta_{\mathcal{A}_d}(x,v^2)=\delta_{\mathcal{A}_d}(x,v^3).
    \]
    Indeed, the statement is trivial when $d = 1$. For $d \geq 2$, we note that if $\rho_2(v) = \tau \in \mathcal{Q}_k$, the statement either follows immediately by induction, or both sides equal $\star$. For $\rho_2(v) \in \{\mathrm{inc},\mathrm{dec}\}$, we note that if $\rho_4(v) = \{L,R\}$, then both sides equal the state $q_{\rho_2(v), \{L,R\}}$ or $\star$. If $\rho_4(v) \in \{\{L\},\{R\}\}$, then both sides are clearly $\star$. Finally, if $\rho_4(v) = \emptyset$, the result follows by induction. Since $N \geq 2$, this observation implies that $\delta_{\mathcal{A}_d}(a,v) = a$.  In particular, in either case we must have $\delta_{\mathcal A_{\mathcal F,\#}}((a,b),v) = (a,b)$. Thus $C$ contains a self-loop based at the vertex $(a,b)$, and hence is aperiodic.
    \end{proof}

\begin{proof}[Proof of Lemma~\ref{lemma: aperiodicity or drops for plusminus}] 
    Choose $N \geq 2$ as in Lemma~\ref{lemma: too long directed paths with same label}. The statement follows immediately from this lemma if $C$ contains a directed walk of length $N$ whose edges are labelled with the same fixed letter $v$. Thus assume that such a directed walk does not exist for any $v$. Since $C$ is dominant, it contains a directed cycle and thus at least one edge. Fix a letter $v$ for which there exists a directed edge in $C$ labelled with $v$, and consider the end-point of the longest directed walk in $C$ whose edges are labelled with $v$. Such a walk must have length at most $N$ by Lemma~\ref{lemma: too long directed paths with same label}, and thus its endpoint $q := (a,b)$ has an incoming directed edge labelled with $v$, but no outgoing directed edge labelled with $v$ inside $C$. 

    Consider automaton $C'$ whose states are the states in $C$, together with additional states $q_1',\dots,q_{N}'$, and with the following transition rules. 
    \begin{enumerate}
        \item The transitions between states inside $C$ are inherited by $C'$. 
        \item Writing $q_0' = q$, we have $\delta_{C'}(q_i',v) = q_{i+1}'$ for every $0 \leq i \leq N-1$. 
        \item For every letter $v$ and state $p$ with $\delta_C(q,v) = p$, we set $\delta_{C'}(q_N',v) = p$.
    \end{enumerate}
    Since $q$ has no outgoing edge labelled with $v$ inside $C$, the automaton $C'$ is deterministic. It is also straightforward to check that $C'$ is strongly connected. 

    Let $\mathcal{C}$ denote the words corresponding to the labels of the closed walks that are contained in $C$, based at $q$, and which do not revisit $q$ in the middle. Note that the same directed cycle can contribute multiple words if its edges are attached with several labels. We define $\mathcal{C}'$ analogously. 

    Let $G$ and $G'$ denote the sets of all the words $u_1\dots u_s$ with $u_i \in \mathcal{C}$ or $u_i \in \mathcal{C'}$ respectively, including the empty word in both cases. Note that
    \[
    |G_n| = (A_C^n)_{q,q} \qquad\text{and}\qquad |G_n'| = (A_{C'}^n)_{q,q}.
    \]
    Write $g$ for the function $g_{M_\pm}:\Sigma_{M,\pm}^*\rightarrow \mathrm{Geom}^\#(M_\pm)$, and let  $\mathcal{G}^\# = \{g(w): w \in G'\}$. 

    \begin{claim} \label{claim: important injectivity claim}
        $g$ restricted to $G'$ induces an injection, and $\mathcal{G}^\# \subseteq \mathcal{F}^\#$. 
    \end{claim}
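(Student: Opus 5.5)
The claim has two parts. I will deduce injectivity from lexicographic minimality (Lemma~\ref{lemma: no stars preimages given by equivalence relation}). For the containment $\mathcal{G}^\#\subseteq\mathcal{F}^\#$, I will use that $\mathcal{F}^\#$ is a down-set together with the order preservation of $g$ (Theorem~\ref{thm: Albert et al regular language}). Throughout, write $q=(a,b)$.

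\emph{Preliminary facts.} Since $q$ has an incoming edge labelled $v$ in $C$, its projection $a$ has an incoming $v$-edge in $\mathcal{A}_\pm$. By Lemma~\ref{lemma: directed edge creates self-loop}, $\delta_{\mathcal{A}_\pm}(a,v)=a$. Hence in $\mathcal{A}_\pm$ the detour $q=q_0'\to q_1'\to\dots\to q_N'$ just loops at $a$, and $q_N'$ has the same outgoing $\mathcal{A}_\pm$-behaviour as $a$. So every $w'\in G'$ can be read in $\mathcal{A}_\pm$ starting from $a$ and returns to $a$ without reaching $\star$. Since $C$ is productive, I fix a word $x$ labelling a walk from the initial state to $q$ and a word $y$ labelling a walk from $q$ to an accepting state. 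Then $xw'y$ is accepted by $\mathcal{A}_\pm$ for every $w'\in G'$, so by the first part of Lemma~\ref{lemma: directed edge creates self-loop} it is lexicographically minimal.

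\emph{Injectivity.} Condition (iii) of Lemma~\ref{lemma: no stars preimages given by equivalence relation} only refers to pairs of positions and to the letters strictly between them. Hence every contiguous factor of a lexicographically minimal word is again lexicographically minimal. In particular each $w'\in G'$ is lexicographically minimal, as a factor of $xw'y$. Lemma~\ref{lemma: no stars preimages given by equivalence relation} gives uniqueness of lexicographically minimal preimages under $g$, so $g$ restricted to $G'$ is injective.

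\emph{Containment in $\mathcal{F}^\#$.} Since $w'$ is a subword of $xw'y$ and $g$ is order preserving, $g(w')$ is a gridded subpattern of $g(xw'y)$. As $\mathcal{F}^\#$ is a down-set, it suffices to show $g(xw'y)\in\mathcal{F}^\#$, i.e.\ that $xw'y$ is accepted by the $\mathcal{B}_\mathcal{F}$-coordinate, i.e.\ satisfies $\chi_\mathcal{F}$. This is the step I expect to be the main obstacle. The issue is that inserting $v^N$ at a visit to $q$ changes the $\mathcal{B}_\mathcal{F}$-state from $b$ to $\delta_{\mathcal{B}_\mathcal{F}}(b,v^N)$, and \eqref{eq: iterate} alone only identifies $v^N$ with $v^{N+1}$, not with the empty word.

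My first attempt is to choose the entry walk so that $x=x_0v^t$, where $v^t$ is the longest $v$-labelled walk in $C$ ending at $q$, and $1\le t\le N$ by Lemma~\ref{lemma: too long directed paths with same label}. I then aim to compare the word with $v^{t+N}$ to one with a run of length $\ge N$, where \eqref{eq: iterate} allows free pumping of the $v$-run. I would combine this with the geometric observation that a long consecutive block of the letter $v$ only adds points on a single diagonal of one cell. Such a block can then be realised inside a gridded pattern already known to lie in $\mathcal{F}^\#$, giving membership by the down-set property. If this direct comparison fails, the fallback is to use the down-set property in the other direction. That is, exhibit for each $w'\in G'$ a word $\widetilde w$, accepted via a walk through $C$ with all $v$-runs of length $\ge N$, such that $g(xw'y)$ is a gridded subpattern of $g(x\widetilde w y)$.
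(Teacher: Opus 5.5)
Your injectivity argument is correct and is essentially the paper's. You project the walk of $w'$ in $C'$ to $\mathcal{A}_\pm$, use the $v$-self-loop at $a$ from Lemma~\ref{lemma: directed edge creates self-loop}, and note that contiguous factors of lexicographically minimal words are lexicographically minimal. One small fix: say that $xw'$, rather than $xw'y$, is accepted by $\mathcal{A}_\pm$. The reason is that $y$ may contain $\star$-cell letters, whereas every non-$\star$ state of $\mathcal{A}_\pm$ is accepting.

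The containment $\mathcal{G}^\#\subseteq\mathcal{F}^\#$, however, is not proved. You reduce it to $xw'y\models\chi_{\mathcal F}$, which asks for more than is needed, and nothing you propose establishes it.
\begin{itemize}
\item The pumping attempt cannot work. \eqref{eq: iterate} identifies $v^m$ with $v^{m'}$ only when $m,m'\geq N$. Under the standing assumption, every $v$-run along a walk inside $C$ has length less than $N$, so a run containing a detour is never identified with one that lacks it.
\item Detours can also occur in the middle of $w'$, at visits to $q$ entered through edges not labelled $v$. There the choice $x=x_0v^t$ plays no role.
\item The ``geometric observation'' does not help either. A down-set is closed under deleting points, not under inserting extra points on a diagonal.
\item The fallback has the right shape, but $\widetilde w$ is never constructed. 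Read literally, a walk through $C$ with $v$-runs of length at least $N$ even contradicts the standing assumption.
\end{itemize}

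The missing idea is short. Since $C$ is strongly connected and contains a $v$-labelled edge, fix a word $u_0$ labelling a closed walk at $q$ inside $C$ that contains the letter $v$. Along the walk of $w'$ in $C'$, replace each traversal $q\to q_1'\to\dots\to q_N'$, labelled $v^N$, by $u_0^N$. Because $q_N'$ copies the outgoing edges of $q$, the resulting word $w''$ labels a genuine closed walk at $q$ in $C$. So the $\mathcal{B}_{\mathcal F}$-coordinate is consistent automatically, and $xw''y$ is accepted by $\mathcal{A}_{\mathcal F,\#}$. Hence $g_M(xw''y)$, and therefore its restriction to the $\pm$-cells, lies in $\mathcal{F}^\#$. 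As $u_0^N$ contains $v^N$ as a scattered subword, $w'$ is a subword of $xw''y$. Order preservation of $g$ (Theorem~\ref{thm: Albert et al regular language}) together with the down-set property then gives $g(w')\in\mathcal{F}^\#$. This is exactly how the paper argues.
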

    \begin{proof}
        Given $w' \in G'$, let $p_0,\dots,p_l$ denote the corresponding closed walk based at $q$ in $C'$. Let $a_0,\dots,a_l$ be chosen so that $p_i = (a_i,b_i)$ whenever $p_i \in C$, and $a_i = a$ whenever $p_i = q_t'$ for some $1 \leq t \leq N$. Thus $a_0 = a_l = a$, and we claim that 
        \begin{align} \label{eq: directed path}
            \delta_\pm(a_{i-1},w'_i) = a_i 
        \end{align}
        holds for every $1 \leq i \leq l$. Indeed, if both $p_{i-1}$ and $p_i$ are contained in $C$ then~\eqref{eq: directed path} follows from the fact that $w_i'$ is a label on the directed edge $p_{i-1} \rightarrow p_i$. 

        If $p_i = q_t'$ for some $1 \leq t \leq N$, then $a_{i-1} = a_i=a$ and $w_i'=v$, so the statement follows easily. Indeed, there is an incoming edge inside $C$ to $q$ labelled with $v$, which implies that  there is an incoming edge to the state $a$ labelled with $v$ inside $\mathcal{A}_\pm$. Thus Lemma~\ref{lemma: directed edge creates self-loop} implies that the state $a$ has a self-loop labelled with $v$ in $\mathcal{A}_\pm$, which implies~\eqref{eq: directed path}. Finally, in the remaining case we must have $p_{i-1} = q_N'$, and the statement follows from the fact that the outgoing edges and their labels from $q_N'$ coincide with those of the state $q$. Thus~\eqref{eq: directed path} holds for all $1 \leq i \leq l$. In particular, $w'$ matches the labels of a closed walk in $\mathcal{A}_\pm$, and hence $w'$ is a contiguous sub-word of a lexicographically minimal word. Thus $w'$ itself must be lexicographically minimal, and hence the restriction of $g$ on $G'$ induces an injection. 

        Recall from Lemma~\ref{thm: Albert et al regular language} that the function $g$ is order-preserving, i.e.\ whenever $u$ is a sub-word of $w$, then $g(u)$ is a sub-pattern of $g(w)$ as a gridded permutation. Thus, it suffices to prove that for every $u \in G'$ there exists $w$ that is accepted by $\mathcal{A}_{\mathcal{F},\#}$ so that $u$ is a sub-word of $w$. 

        Let $u_0$ be a word labelling a closed walk in $C$ based at $q$ and containing the letter $v$. Such a word exists since $C$ is strongly connected and contains an edge labelled with $v$. Given $w'\in G'$, we follow its unique closed walk in $C'$. Every visit to one of the new states $q_i'$ corresponds to visiting the states $q,q_1',\dots,q_N'$ in this order. Furthermore, the final transition from $q_N'$ onward cannot be labelled with the letter $v$ as $q$ has no outgoing directed edges in $C$ with label $v$. In particular, such visit corresponds to a contiguous sub-word of the form $v^N$, and such letters must be the last $N$ instances of the letter $v$ in a contiguous sub-word of the form $v^i$ in $w'$ with $i \geq N$ (and hence we must always have $i \leq 2N-1$). 

        Given $w' \in G'$, replace each contiguous block of $v^N$ which is at the end of the word $w'$ or followed by a letter $v' \neq v$ with the word $u_0^N$, and write $w''$ for the word obtained as the outcome of this process. By the construction, $w''$ labels the edges of a closed walk based at $q$ in $C$, so there exists $u_1$ and $u_2$ so that $u_1w''u_2$ is accepted by $\mathcal{A}_{\mathcal{F},\#}$. By choosing $u_1$ and $u_2$ to be minimal words with such property, we must have $u_1w''u_2 \in \Sigma_{M,\pm}^*$, and thus $g(u_1w''u_2) \in \mathcal{F}^\#$. Since $u_0$ contains the letter $v$, $w'$ is a sub-word of both $w''$ and $u_1w''u_2$, and hence Lemma~\ref{thm: Albert et al regular language} implies that $g(w') \in \mathcal{F}^\#$. This completes the proof of Claim~\ref{claim: important injectivity claim}. 
    \end{proof}
    Given Claim~\ref{claim: important injectivity claim}, the result follows easily. Write $\mu = \rho(A_C)$ and $\mu' = \rho(A_{C'})$. Order the states of $C'$ so that those in $C$ occur at the first $|C|$ rows and columns in $A_{C'}$, and let $B$ denote the matrix obtained by adding $N$ zero rows and columns at the end of $A_C$. Then $A:=A_{C'}$ and $B$ satisfy the conditions of Lemma~\ref{lemma: first spectral drops lemma}, and thus $\mu<\mu'$. Hence Claim~\ref{claim: important injectivity claim} implies that 
    \[
    \mu < \mu' = \limsup_{n \rightarrow \infty} \left((A_{C'}^n)_{q,q}\right)^{1/n} \leq \limsup_{n \rightarrow \infty} |\mathcal{F}_n^\#|^{1/n} = \rho(\mathcal{A}_{\mathcal{F},\#}).
    \]
    In particular, $C$ cannot be dominant, which gives the required contradiction.
    
\end{proof}
\subsection{$C$ is contained in a copy of $\mathcal{A}_d$} \label{sec:recursive-component-aperiodicity}
Let $C$ be a dominant strongly connected component which is contained in $\mathcal{A}_{d,i',j'} \times \mathcal{B}_\mathcal{F}$ for some $(i',j') \in \mathcal{S}_\star$. For convenience, we drop $i'$ and $j'$ from our notation, and simply refer to this copy as $\mathcal{A}_d$. Let $D$ denote the projection of $C$ onto $\mathcal{A}_d$. Then by Lemma~\ref{lemma: iterative sequence lemma} there exists an iterative sequence $\underline{\tau}$ of depth $j$ so that $D \subseteq H_{\underline{\tau}}$ and $D \not\subseteq H_{\underline{\tau},\emptyset}$. We start with the following observation.

\begin{lem} \label{lemma: only LR implies self loop}
    If $C$ is periodic and contains at least one directed cycle, then $j\geq 2$, and $D$ contains at least one state in each of $H_{\underline{\tau},\{L\}}$ and $H_{\underline{\tau},\{R\}}$.
\end{lem}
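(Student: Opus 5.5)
We argue by contraposition: assuming $C$ contains a directed cycle but fails one of the conclusions, we exhibit a self-loop in $C$, so $C$ is aperiodic. The main tools are the block structure of $H_{\underline{\tau}}$ in Figure~\ref{fig:Ad increasing block}, Lemma~\ref{lemma: iterative sequence lemma}, and Lemma~\ref{lemma: too long directed paths with same label}.

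\textbf{Case $j=1$.} Here $H_{\underline{\tau}}$ is a copy of $H^1_\alpha=\{q_\alpha\}$ inside $\mathcal{A}_d$, a single state. By $\mathcal{A}_1$, its only non-rejecting transitions are the self-loops by $a_{\mathrm{inc}}$ or $a_{\mathrm{dec}}$, lifted through the iterative sequence to a unique letter $v$. Every edge of $C$ then carries the label $v$. A cycle in $C$ repeated gives arbitrarily long walks labelled by $v$, so Lemma~\ref{lemma: too long directed paths with same label} yields a self-loop, a contradiction. Hence $j\ge2$.

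\textbf{Case $j\ge 2$.} We read off the allowed transitions between the blocks $H_{\underline{\tau},\beta}$ of $H^j_\alpha$ from the construction of $\mathcal{A}_j$:
\[
LR\to LR,\quad LR\to\{L\},\quad \{L\}\to\emptyset,\quad \{L\}\to\{R\},\quad \emptyset\to\emptyset,\quad \emptyset\to\{R\},\quad \{R\}\to\{L\},\quad \{R\}\to LR .
\]
In the $H^*_{\mathrm{dec}}$ copy the exits go to $H_{\mathrm{dec}}$, so no cycle leaves $B^*_{\mathrm{dec},\emptyset}$. For $\tau_j=\mathrm{dec}$ we therefore work inside $H_{\mathrm{dec}}$, where the block transitions are identical.

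Since $D\not\subseteq H_{\underline{\tau},\emptyset}$ and $D$ is strongly connected, $D$ contains a state outside $\emptyset$ lying on a cycle. Inspecting the block graph above, every cycle that avoids $\{L\}$ or avoids $\{R\}$ is confined to $\emptyset$ or to $LR$.

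\begin{itemize}
\item A cycle confined to $\emptyset$ is excluded by the choice of $\underline{\tau}$.
\item If $D$ meets neither $\{L\}$ nor $\{R\}$, then $D\subseteq\{q_{\alpha,LR}\}$. This state has only the single self-loop letter $v$ with $\rho_4(v)=\{L,R\}$, the one provided by Lemma~\ref{lemma: unique single letter nonstar}. As in the case $j=1$, Lemma~\ref{lemma: too long directed paths with same label} gives a self-loop, a contradiction.
\end{itemize}

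Hence $D$ meets both $H_{\underline{\tau},\{L\}}$ and $H_{\underline{\tau},\{R\}}$.

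The main obstacle is the bookkeeping of the iterative sequence. The copy $H_{\underline{\tau}}$ sits inside nested $B_{\cdot,\emptyset}$ or $B_{\tau,i}$ blocks, so each edge label at depth $d$ is a lift of a depth-$j$ letter with fixed $\rho_2,\rho_3,\rho_4$ data at the higher levels. We must check that this lift is unique, so that a single-letter self-loop at depth $j$ corresponds to a single letter $v\in\Sigma_d$. We would verify this by induction along the definition of $H_{\underline{\tau}}$, using that each edge not ending at $\star$ carries a unique label, as noted in the proof of Lemma~\ref{lemma: iterative sequence lemma}.
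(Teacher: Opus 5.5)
Your proof is correct and follows essentially the same route as the paper: the case $j=1$ and the case where $D$ is the single state of $H_{\underline{\tau},\{L,R\}}$ both yield cycles carrying one unique label (via Lemma~\ref{lemma: unique single letter nonstar}), hence a self-loop by Lemma~\ref{lemma: too long directed paths with same label}, and the block-transition structure forces any closed walk meeting one of $H_{\underline{\tau},\{L\}}$, $H_{\underline{\tau},\{R\}}$ to meet the other. Your explicit block graph makes transparent the paper's terse observation that there are no edges between $H_{\underline{\tau},\emptyset}$ and $H_{\underline{\tau},\{L,R\}}$; your remark about $H^*_{\mathrm{dec}}$ is unnecessary, since $\tau_j\in\{\mathrm{inc},\mathrm{dec}\}$ by the definition of an iterative sequence.
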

\begin{proof}
    If $j=1$, then $D$ consists of a single state, which must contain a self-loop with a unique label on it. Similarly, if $j\geq 2$ and $D$ meets neither $H_{\underline{\tau},\{L\}}$ nor $H_{\underline{\tau},\{R\}}$, then as $D\not\subseteq H_{\underline{\tau},\emptyset}$, $D$ must consist of the single state in $H_{\underline{\tau},\{L,R\}}$, and thus it must also contain the self-loop with unique label on it. Indeed, the first observation follows from the fact that there are no directed edges between $H_{\underline{\tau},\emptyset}$ and $H_{\underline{\tau},\{L,R\}}$, and the uniqueness of the label on the self-loop follows from Lemma~\ref{lemma: unique single letter nonstar}. In both cases, $C$ contains arbitrarily long walks whose edges are all labelled with the same letter, and thus Lemma~\ref{lemma: too long directed paths with same label} implies that $C$ is aperiodic. This gives the required contradiction. 

    Thus $D$ must contain a state from at least one of $H_{\underline{\tau},\{L\}}$ or $H_{\underline{\tau},\{R\}}$. Since any closed walk that visits one of $H_{\underline{\tau},\{L\}}$ and $H_{\underline{\tau},\{R\}}$ must also visit the other, we conclude that $D$ contains a state from both $H_{\underline{\tau},\{L\}}$ and $H_{\underline{\tau},\{R\}}$. This completes the proof. 
\end{proof}

For the remainder of this section, we suppose that $C$ is periodic, and we choose a particular state  $q\in C$ whose projection belongs to $H_{\underline{\tau},\{R\}}$. Let $\mathcal{C}$ denote the set of closed walks based at $q$ in $C$. We say that a word $w$ is attached to a walk $c \in \mathcal{C}$ if $w_i$ labels the $i$-th edge of $c$, and we write $L_{\mathcal{C}}$ for the collection of such words. 

Define the function $\pi := \rho_1^{d-j}\circ \rho_5$. Note that for any $w \in L_\mathcal{C}$ we may decompose $w$ as $w = u_1\dots u_s$ so that for each $1 \leq i \leq s$
\begin{enumerate}
    \item The first letter of $\pi(u_i)$ is the unique letter whose $\rho_4$-image contains $L$, and 
    \item The last letter of $\pi(u_i)$ is the unique letter whose $\rho_4$-image contains $R$. 
\end{enumerate}
Since $C$ is a dominant component, for each $w \in L_\mathcal{C}$ there exists $u_1$ and $u_2$ so that $\mathcal{A}_{\mathcal{F},\#}$ accepts $u_1wu_2$, and thus by following the transition rules iteratively, we conclude that $\mathcal{A}_{j-1}$ accepts each $\rho_1(\pi(u_i))$, and there exists $\alpha:=\tau_j\in \{\mathrm{inc},\mathrm{dec}\}$ so that each $\rho_2$-image of the letters in $\pi(u_1),\dots,\pi(u_s)$ equals $\alpha$. Define the function $h: L_\mathcal{C} \rightarrow \widetilde{\mathcal O}_k^{[j]}$ by setting 
\begin{align} \label{eq: important direct sum}
    h(w) = \tau_{\alpha,s}[f_{j-1}(\rho_1(\pi(u_1))),\dots,f_{j-1}(\rho_1(\pi(u_s)))],
\end{align}
where $\tau_{\alpha,s}$ denotes the monotone permutation whose direction is given by  $\alpha\in \{\mathrm{inc},\mathrm{dec}\}$ and whose length is $s$. Analogously to Claim~\ref{claim: important injectivity claim} in previous subsection, we prove that the function $h$ satisfies certain important properties which allows us to prove that a dominant $C$ in fact cannot be periodic. 
\begin{lem} \label{lemma: important properties of h}
    \begin{enumerate}
        \item $|h(w)| = |w|$ for every $w \in L_\mathcal{C}$.
        \item $h$ is an injection.
        \item The image of $h$ is contained in $\mathcal F$.
        
    \end{enumerate}
\end{lem}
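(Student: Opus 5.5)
We treat the three parts in order, each by unwinding the definitions of $\pi$, $h$, and the inductive construction of $f_{j-1}$.

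\textbf{Part 1.} We use that every letter of $w$ survives the projection $\pi=\rho_1^{d-j}\circ\rho_5$ (it is letterwise), so $|\pi(u_i)|=|u_i|$. The value $f_{j-1}(\rho_1(\pi(u_i)))$ is a permutation whose size equals the word length, because $\alpha_{w'}$ is a bijection for every non-$\star$ word $w'$. Summing over the blocks in \eqref{eq: important direct sum} gives $|h(w)|=\sum_i|u_i|=|w|$.

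\textbf{Part 2.} We show that $h(w)$ determines $w$. First, $w$ is recovered from its walk, since $C$ is deterministic and $w$ labels a closed walk at the fixed state $q$. The key step is that the decomposition $\tau_{\alpha,s}[\dots]$ in \eqref{eq: important direct sum} is the left-greedy decomposition of $h(w)$, and each $\rho_1(\pi(u_i))$ is the unique $\Delta_{j-1}$-representative of its image.
\begin{itemize}
\item The uniqueness of representatives should follow from the fact that $C$ lives in the product with $\mathcal{B}_{\mathcal F}$, which enforces $\Delta_{M,\#}$ and hence $(\Delta_d)$ on the relevant cell word.
\item By Claim~\ref{claim: central for bijection}, propagated down through the levels of the iterative sequence $\underline{\tau}$, the same constraint forces the blocks $\pi(u_i)$ at level $j$ to be left-greedy consecutive blocks.
\end{itemize}
Here we must check that the first-order conditions of $\Delta_{\mathrm{inc},j}$ are local. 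They refer only to single blocks and to consecutive pairs via $\delta_2$, so they remain valid on the contiguous segment $w$ sitting inside an accepted word $u_1wu_2$. Once this is established, the left-greedy decomposition of $h(w)$, which is unique, determines $s$ and each $f_{j-1}(\rho_1(\pi(u_i)))$, hence each $\rho_1(\pi(u_i))$ by Theorem~\ref{thm:Main O_K}.

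The remaining difficulty is recovering $w$ from $\pi(w)$, i.e.\ the higher-level and cell labels that $\pi$ forgets. Along $H_{\underline{\tau}}$ these labels are constant: $\rho_6$ is fixed, the $\rho_2$ coordinates at levels above $j$ are fixed, the $\rho_3$ values equal the $i_s$ of $\underline{\tau}$, and $\rho_4=\emptyset$ at levels above $j$, since we stay in the $\emptyset$-blocks. So $\pi$ is injective on $L_{\mathcal C}$.

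\textbf{Part 3.} We mimic Claim~\ref{claim: important injectivity claim}. Choose $u_1,u_2$ with $u_1wu_2$ accepted, and use the fact that $q$ projects into $H_{\underline{\tau},\{R\}}$, so $w$ begins right after a block ends and ends at a block end. Then $\pi$ of the cell-word of $u_1wu_2$ contains the consecutive blocks $\pi(u_1),\dots,\pi(u_s)$ of an $\alpha$-monotone inflation at level $j$. It follows that $h(w)$ is an interval, hence a subpattern, of the permutation $f_j$ at the corresponding position. That permutation is in turn a subpattern, obtained by restricting to one inflated block at each level above $j$, of $f_M(u_1wu_2)\in\mathcal F$. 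Since $\mathcal F$ is closed under subpatterns, $h(w)\in\mathcal F$.

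We expect the main obstacle to be Part 2, specifically justifying that membership in $\mathcal B_{\mathcal F}$ forces the blocks to be left-greedy in the required sense.
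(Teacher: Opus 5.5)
Your proof is correct. Parts 1 and 3 follow the paper's argument: $f_{j-1}$ is length preserving, and $h(w)$ is an interval of the level-$j$ block, which sits inside $f_M$ of an accepted word, so closure of $\mathcal F$ under subpatterns finishes. For Part 2 you take a genuinely different route.

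\textbf{The paper's route.} The paper fixes one context $w_1,w_2$ for all $w\in L_{\mathcal C}$. It starts from $h_0(w)=f_d(\rho_5(w_1ww_2))$, which is injective because these words satisfy $\Delta_d$. It then descends one level at a time, using that the blocks surrounding the one containing $w$ do not depend on $w$. This ends with $h_{d-j}(w)=\sigma_1'\oplus h(w)\oplus\sigma_2'$ for fixed $\sigma_1',\sigma_2'$, so $h$ inherits injectivity. No appeal to the structure of left-greedy decompositions is needed. The left-greedy property of \eqref{eq: important direct sum} is recorded only afterwards, for Corollary~\ref{cor: no long monotone subintervals for hs}.

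\textbf{Your route.} You decode $h(w)$ directly. You show that \eqref{eq: important direct sum} is the left-greedy decomposition, so it is determined by $h(w)$. Its blocks then have unique $\Delta_{j-1}$-representatives by Theorem~\ref{thm:Main O_K}. The level-$j$ labels are recovered from block positions ($\rho_2=\alpha$, with $L$/$R$ at block ends). The labels forgotten by $\pi$ are constant along $H_{\underline{\tau}}$.

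\textbf{Comparison.} Your route needs no fixed context and yields an explicit inverse. It also proves, in passing, the left-greedy property that the paper needs next anyway. The price is reliance on Lemma~\ref{lemma: albert left greedy}.

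\textbf{Two small corrections.}
\begin{itemize}
\item The locality you invoke is semantic, not syntactic. As written, $\psi_{\mathrm{inc}}$ contains $\Gamma_d$ and translated formulas that quantify over the whole word. Argue it instead as follows. The level-$j$ word containing $\pi(w)$ satisfies $\Delta_j$, so its decomposition is left-greedy by Claim~\ref{claim: central for bijection}. By Lemma~\ref{lemma: albert left greedy}, left-greediness of a monotone decomposition is a condition on consecutive pairs of blocks. That condition passes to the run of complete blocks spanned by $\pi(w)$. Equivalently, use the semantic characterization in Lemma~\ref{lemma: sum decomposition lemma}.
\item Your remark that $w$ is recovered from its walk because $C$ is deterministic has the implication backwards: determinism gives the walk from the word, not the word from the walk. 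It is harmless, since you never use it; you recover $w$ from $\pi(w)$ via the constant labels.
\end{itemize}
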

\begin{proof}
    Note that the first property follows immediately from the fact that $f_{j-1}$ is a length-preserving function. Since $C$ lies on an accepting path, there exist fixed words $w_1$ and $w_2$ so that $w_1ww_2$ is accepted by $\mathcal A_{\mathcal F,\#}$ for every $w\in L_{\mathcal C}$, and we may choose minimal such $w_1$ and $w_2$. If the common $\rho_6$-image of the letters in $w$ is $(i',j')$, then minimality implies that the same must be true for all the letters in $w_1$ and $w_2$ as well. For each $w$, define $u(w) := \rho_5(w_1ww_2) \in \Sigma_{d}^*$ and note that $u(w) \models \Delta_d$. Set $h_0(w) = f_d(u(w))$, and we note that $h_0$ is an injection as $u(w) \models \Delta_d$ for every $w$, and its image is contained in $\mathcal{F}$.

    While the proof of the remaining two statements is rather technical and tedious, in a vague sense our goal is to show that for every $w \in L_\mathcal{C}$, the expression~\eqref{eq: important direct sum} is the left-greedy $\widetilde{\mathcal O}_k^{[j-1]}$-decomposition of $h(w)$. We prove this by showing that at depth $s > j$, the index of the block of $u(w)$ containing the letters of $w$ is independent of the choice of $w$, and at depth $s = j$, the blocks correspond to the aforementioned left-greedy decomposition.

    When $d > j$, we start by proving that the letters of $w$ in $u(w)$ must lie in the same iterated block whose index is independent of $w$. Indeed, if $u(w)$ decomposes properly as $u_1(w) \dots u_{s(w)}(w)$, then the profile is uniquely determined by $\tau_d$. If $\tau_d \in \mathcal{Q}_k$, then each letter of $w$ must be contained in the sub-word $u_{i_d}(w)$ for every $w\in L_{\mathcal{C}}$. If $\tau_d = \mathrm{dec}^*$, then each letter of $w$ must be contained in the sub-word $u_1(w)$. Finally, if $\tau_d \in \{\mathrm{inc},\mathrm{dec}\}$ and $j < d$, let $n(w_1)$ denote the number of letters in $\rho_5(w_1)$ whose $\rho_4$ image contains $L$. Since $\rho_4(\rho_5(w_i))  = \emptyset$ for each $w \in L_\mathcal{C}$ and for each $i$, we conclude that each letter of $w$ must be contained in the sub-word $u_{n(w_1)}(w)$. 
    
    When $d > j$, let $s$ be chosen so that the letters of $w$ are always contained in $u_s(w)$. As the words $u_i(w)$ for $i \neq s$ are unaffected by the choice of $w$, the injectivity of $h_0$ implies that $h_1(w) := f_{d-1}(\rho_1(u_s(w)))$ induces an injection. 

    Iterate this process until we reach depth $j$. Along the process, we have constructed injections $h_0,\dots,h_{d-j}$ as above. Write $u_j(w)$ for the appropriate projected sub-word of $u(w)$ over $\Sigma_j$. Then $u_j(w)$ decomposes properly as $u_j(w) = u_1'(w)\dots u_s'(w)$, and 
    \[h_{d-j}(w) = \tau_{\tau_j,s}[f_{j-1}(\rho_1(u_1'(w))),\dots,f_{j-1}(\rho_1(u_s'(w)))].\] 
    For convenience, we suppose that the profile $\tau_j$ in the iterative block $\underline{\tau}$ is monotone increasing. 

    Recall that $w$ decomposes as $u_1''\dots u_{t(w)}''$ so that for each $i$, \begin{enumerate}
        \item The first letter of $\pi(u_i'')$ is the unique letter whose $\rho_4$-image contains $L$, and 
        \item The last letter of $\pi(u_i'')$ is the unique letter whose $\rho_4$-image contains $R$. 
    \end{enumerate}
    In particular, we conclude that there exists $t_1$ independent of $w$ for which $\pi(u_i'') = u_{i+t_1}'(w)$ for every $1 \leq i \leq t(w)$. In particular, 
    \[h_{d-j}(w) = \sigma_1(w) \oplus \dots \oplus \sigma_{s}(w),\]
    where $\sigma_i(w) = f_{j-1}(\rho_1(u_i'(w)))$, and by the inductive construction of the functions $h_{d-j}$, the right-hand side also coincides with the left-greedy decomposition of the underlying permutation at depth $j$. 

    By construction, we have 
    \[h(w) = \sigma_{t_1 + 1}(w) \oplus \dots \oplus \sigma_{t_1 + t(w)}(w),\]
    and note that for $w, w' \in L_\mathcal{C}$ we have $\sigma_i(w) = \sigma_i(w')$ for $i \leq t_1$, and $\sigma_i(w) = \sigma_{i'}(w')$ for $i \geq t_1 + t(w) + 1$ and $i' = i - t(w) + t(w')$. Indeed, this follows by noticing that the blocks $u_i'(w)$ outside this range correspond to the letters of the fixed words $w_1$ and $w_2$, whereas the blocks inside this range correspond to the letters of $\pi(w)$. In particular, we conclude that there exist fixed permutations $\sigma_1'$ and $\sigma_2'$ so that for every $w \in L_{\mathcal{C}}$, we have 
    \begin{align}\label{eq: equation giving injectivity}
        h_{d-j}(w) = \sigma_1' \oplus h(w) \oplus \sigma_2'.    
    \end{align}
    
    Since each $h_i(w)$ is a sub-pattern of $h_{i-1}(w)$, and $h_0(w) = f_d(\rho_5(w_1ww_2))$, we conclude that $h(w)$ is a sub-pattern of $h_{d-j}(w)$, and hence is a sub-pattern of  a permutation in $\mathcal{F}$. In particular, the image of $h$ is contained in $\mathcal{F}$. Since $h_{d-j}$ is an injection, and the permutations $\sigma_1'$ and $\sigma_2'$ are independent of $w$, the equation~\eqref{eq: equation giving injectivity} implies that $h$ is also an injection. Finally, the decomposition~\eqref{eq: important direct sum} is left-greedy by Lemma~\ref{lemma: albert left greedy}, as any better decomposition of $h(w)$ as a direct sum would also improve the left-greedy decomposition of $h_{d-j}(w)$. 
\end{proof}
Our next goal is to prove that long monotone sub-intervals in the image of $h$ correspond to long contiguous sub-words consisting of a single letter. Combining this with Lemma~\ref{lemma: too long directed paths with same label} we conclude that there cannot be long monotone sub-intervals in the image of $h$. 

We start by proving an analogous statement for the functions $f_j$. For clarity, we assume that $k \geq 2$ in the following lemma. Note that this does not pose a restriction, as increasing the value of $k$ simply increases the underlying permutation class $\widetilde{\mathcal{O}}_{k}$
\begin{lem}\label{lemma: no long monotone subintervals for fs}
    Let $w \in \Sigma_d^*$ be chosen so that $w \models \Delta_d$. If $f_d(w)$ contains a monotone interval of length $s > 2^{d-1}(d+k)$, then $w$ contains a contiguous sub-word of length at least $\frac{s}{2^{d-1}}-2d$ which is constant. 
\end{lem}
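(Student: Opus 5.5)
We argue by induction on $d$. In the base case $d=1$ we have $w\models\Delta_1=\Gamma_1$. So $w$ is either $a_{\mathrm{inc}}^n$, $a_{\mathrm{dec}}^n$, or $a_{\tau,1}\dots a_{\tau,|\tau|}$ with $\tau\in\mathcal{Q}_k$. In the last case $|f_1(w)|\leq k<s$, since $\tau$ is a subpermutation of $\omega_k$ or $\varpi_k$, so no monotone interval of length $s>k+1$ exists. In the first two cases $w$ itself is a constant word of length $n\geq s\geq s-2$, which gives the claim.

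For the inductive step, write $w=u_1\dots u_\ell$ for the proper decomposition, with $f_d(w)=\tau'[\sigma_1,\dots,\sigma_\ell]$ the left-greedy decomposition and $\sigma_i=f_{d-1}(\rho_1(u_i))$. Each $\rho_1(u_i)\models\Delta_{d-1}$ by Claim~\ref{claim: central for bijection}. Let $I$ be a monotone interval of length $s$, say increasing by symmetry. Since $I$ is an interval of $f_d(w)$, its intersection with each block $\sigma_i$ is an interval of $\sigma_i$. Let $m$ be the number of blocks that $I$ meets. Then either some single block meets $I$ in at least $s/m$ points, or $m$ is large.

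\textbf{Case A: few blocks.} Suppose $m\leq 2$, or more generally some block $\sigma_i$ meets $I$ in at least $(s-2)/2$ points. I would argue that if $I$ meets at least three blocks, then every block strictly between the first and last met block is fully contained in $I$ and is therefore monotone. So at most two blocks are only partially covered. Call an intermediate block that is fully covered a \emph{whole} block.
\begin{itemize}
\item If some whole block is large, it is a monotone $\sigma_i$ of length $>2^{d-2}(d-1+k)$. The induction hypothesis, applied to $\rho_1(u_i)$, yields a constant contiguous subword of $\rho_1(u_i)$. It lifts to a constant subword of $u_i$: interior letters of $u_i$ have the same profile, $\rho_3$-value and $\rho_4=\emptyset$, and at most the two endpoint letters differ in $\rho_4$. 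This costs at most $2$ letters per level, which accounts for the $-2d$ term.
\item Otherwise one of the at most two end blocks contributes at least about $s/2$ points, and we apply the induction hypothesis to that block. This is where the halving in $2^{d-1}$ comes from.
\end{itemize}

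\textbf{Case B: many blocks.} Suppose all pieces are small and $I$ meets many blocks. I need the whole blocks to be trivial, i.e.\ singletons. Since $\tau'$ restricted to the met blocks is a monotone interval of $\tau'$, and $\tau\in\mathcal{Q}_k$ has length at most $k$, the profile must be $\mathrm{inc}$ once more than $k+2$ blocks are met. Consecutive whole blocks $\sigma_i\oplus\sigma_{i+1}$ are increasing, so left-greediness forbids $|\sigma_i|\geq 1$ being followed by a nonempty $\sigma_{i+1}$ whenever the merged block $\sigma_i\oplus\sigma_{i+1}$ lies in $\widetilde{\mathcal{O}}_k^{[d-1]}$. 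That is always true for an increasing permutation, by Lemma~\ref{lemma: albert left greedy}. Hence the whole increasing run must be absorbed into a single block. So with the profile $\mathrm{inc}$, at most three blocks can meet $I$, which reduces to Case A. If the profile is $\mathrm{dec}$, an increasing $I$ meets at most one block.

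The \textbf{main obstacle} is bookkeeping the constants so that the bound $s/2^{d-1}-2d$ survives the induction. We need $s/2\geq 2^{d-2}(d-1+k)$ to invoke the hypothesis, and then $(s/2)/2^{d-2}-2(d-1)-2=s/2^{d-1}-2d$. A further check is that the threshold $s>2^{d-1}(d+k)$ dominates the $k+2$ exceptional blocks coming from $\mathcal{Q}_k$ profiles. If decreasing intervals lying inside an $\mathrm{inc}$ profile cause trouble, we would handle them symmetrically via $\ominus$.
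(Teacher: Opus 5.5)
\textbf{Gap: your case analysis never shows that $I$ meets at most two blocks.} Your overall plan matches the paper's: induction on $d$, pigeonhole over the blocks of the left-greedy decomposition met by $I$, and lifting a constant run of $\rho_1(u_t)$ to $w$ at the cost of its two endpoint letters. The problem is in the step that makes the constants work. The bound $\frac{s}{2^{d-1}}-2d$ requires some single block to contain at least $s/2$ points of $I$.
\begin{itemize}
\item For an $\mathrm{inc}$ profile you only exclude two consecutive \emph{whole} blocks, and so conclude ``at most three blocks''.
\item For a profile in $\mathcal{Q}_k$ you only get ``at most $|\tau|\le k$ blocks''.
\end{itemize}
With three (or up to $k$) blocks of comparable size, pigeonhole gives only $s/3$ (or $s/k$) points in one block, and the induction hypothesis then yields roughly $\frac{s}{3\cdot 2^{d-2}}-2d$, which is weaker than claimed. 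Your fallback ``otherwise an end block contributes about $s/2$'' fails as soon as the whole blocks have moderate size. The threshold $s>2^{d-1}(d+k)$ cannot compensate, because the loss is multiplicative in $s$, not additive. Likewise, your Case~A threshold $(s-2)/2$ already falls short of the claimed bound by $2^{2-d}$.

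\textbf{The missing idea.} Apply the left-greedy exchange to a whole block and its successor, for every profile type.
\begin{itemize}
\item Suppose $I$ (say increasing) meets blocks $a,\dots,b$ with $b\ge a+2$. Then $\tau'$ is increasing on the interval $a,\dots,b$, and $\sigma_{a+1}\subseteq I$ is increasing.
\item $I\cap\sigma_{a+2}$ is a lower-left corner of $\sigma_{a+2}$, so $\sigma_{a+2}=(I\cap\sigma_{a+2})\oplus\theta'$.
\item Move $I\cap\sigma_{a+2}$ into $\sigma_{a+1}$. The new block $\sigma_{a+1}\oplus(I\cap\sigma_{a+2})$ is increasing, and $\theta'$ is a subpermutation of $\sigma_{a+2}$, so both lie in $\widetilde{\mathcal{O}}_k^{[d-1]}$.
\item If $\theta'$ is empty, the collapsed profile is a subpermutation of $\tau'$, so it still lies in $\widetilde{\mathcal{O}}_k$.
\item Either way you get a $<_L$-smaller decomposition, contradicting left-greediness.
\end{itemize}
This works whether $\tau'$ is monotone or lies in $\mathcal{Q}_k$. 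For monotone profiles it is exactly Lemma~\ref{lemma: albert left greedy} with $\theta_1=\sigma_{a+1}\oplus(I\cap\sigma_{a+2})$. Hence $b\le a+1$, so some block contains at least $s/2$ points of $I$. Your computation $\frac{s/2}{2^{d-2}}-2(d-1)-2=\frac{s}{2^{d-1}}-2d$ then goes through, and this is precisely the paper's argument.
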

\begin{proof}
    We prove the statement by induction on $d$. When $d = 1$, the fact that $|w| > k$ and $w \models \Delta_d$ implies that $w = a_\mathrm{inc}^n$ or $w = a_\mathrm{dec}^n$. Thus the claim follows immediately in both cases. 

    Now suppose that the statement holds for $d-1$, and suppose that $w$ decomposes properly as $w = u_1\dots u_\ell$. For each $1 \leq i \leq \ell$ write $\theta_i = f_{d-1}(\rho_1(u_i))$. Let $I$ be a monotone sub-interval of length $s$ in $f_d(w)$, and let $J \subseteq [\ell]$ be chosen so that $I$ has non-empty intersection with the block of $f_d(w)$ corresponding to $f_{d-1}(\rho_1(u_i))$ whenever $i \in J$. Note that $J$ is clearly a sub-interval $\{a,\dots,b\}$. For simplicity, we only consider the case when $I$ is monotone increasing. 

    If $b \geq a+2$, the block corresponding to $f_{d-1}(\rho_1(u_{a+1}))$ is fully contained in $I$, and hence $f_{d-1}(\rho_1(u_{a+1}))$ itself must be a monotone increasing permutation. Let $\tau$ be either the profile of $w$ or the increasing\footnote{Note that the profile cannot be $\mathrm{dec}$ as $I$ is monotone increasing and $b > a$.} permutation of length $\ell$. Since $w \models \Delta_d$, the left-greedy decomposition of $f_d(w)$ is given as 
    \[f_d(w) = \tau[\theta_1,\dots,\theta_\ell].\]
    Since $b \geq a+2$ and $\theta_{a+1}$ is monotone increasing, we may move the points from $I\cap \theta_{a+2}$ into $\theta_{a+1}$, and thus obtain a decomposition which is smaller with respect to the left-greedy order. This gives the desired contradiction, and implies that we must have $b \leq a+1$. 

    Now suppose that $b \in \{a,a+1\}$. Thus by the pigeonhole principle we have $|I\cap \theta_a|\geq \frac{s}{2}$ or $|I \cap \theta_{a+1}| \geq \frac{s}{2}$. Applying the inductive hypothesis with $w' = \rho_1(u_t)$  and to the interval $I' = I \cap \theta_t$, where $t$ is chosen so that $|I'| \geq \frac{s}{2} > 2^{d-2}((d-1)+k)$, it follows that $w'$ contains a contiguous constant sub-sequence of length $N':=\frac{s/2}{2^{d-2}} - 2(d-1)$. Since this constant contiguous subsequence is contained in one particular sub-word $u_t$, it can be lifted to a constant contiguous subsequence in $w$ of length at least $N'-2$; indeed, the $\rho_2$ and $\rho_3$-projections (if appropriate) of all the letters in this contiguous subsequence must be equal, but we may have to remove the first and last letter to ensure that their $\rho_4$-projections are equal (if the type is $\mathrm{inc}$ or $\mathrm{dec}$). This completes the proof.
\end{proof}
We now obtain the following corollary.
\begin{cor}\label{cor: no long monotone subintervals for hs}
    If $h(w)$ contains a monotone interval of length $s>2^{j-1}(j+k)$, then $w$ contains a contiguous sub-word of length at least $\frac{s}{2^{j-1}}- 2j$ which uses only a single letter $v$. 
\end{cor}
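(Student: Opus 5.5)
The plan is to transfer the monotone interval from $h(w)$ to one of the blocks $f_{j-1}(\rho_1(\pi(u_i)))$ in the decomposition~\eqref{eq: important direct sum}, and then to apply Lemma~\ref{lemma: no long monotone subintervals for fs} at depth $j-1$ to that block. This mirrors the inductive step in the proof of Lemma~\ref{lemma: no long monotone subintervals for fs}. The key input from Lemma~\ref{lemma: important properties of h} (and its proof) is that~\eqref{eq: important direct sum} is the left-greedy $\widetilde{\mathcal O}_k^{[j-1]}$-decomposition of $h(w)$. Moreover, each $\rho_1(\pi(u_i))$ is the $\Delta_{j-1}$-representative of its block. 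This follows from \eqref{eq: equation giving injectivity}: these words occur as blocks of the projected word $u_j(w)$, which satisfies $\Delta_j$ because $u(w)\models\Delta_d$. By Claim~\ref{claim: central for bijection}, this property propagates down the iterated blocks.

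The first step is to let $I$ be a monotone interval of length $s$ in $h(w)=\tau_{\alpha,s'}[\theta_1,\dots,\theta_{s'}]$ and to let $\{a,\dots,b\}$ be the indices of the blocks that $I$ meets. If $b\ge a+2$, then $\theta_{a+1}\subseteq I$ is monotone. If $I$ has the same direction as $\alpha$, the points of $I\cap\theta_{a+2}$ can be absorbed into $\theta_{a+1}$, which contradicts left-greediness via Lemma~\ref{lemma: albert left greedy}. If the direction is opposite, $I$ can meet at most one block anyway. Hence $b\le a+1$, and by pigeonhole some block $\theta_t$ satisfies $|I\cap\theta_t|\ge s/2 > 2^{j-2}((j-1)+k)$. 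The set $I\cap\theta_t$ is again a monotone interval of $\theta_t$.

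The second step treats the block $\theta_t$. If $j-1\ge1$, apply Lemma~\ref{lemma: no long monotone subintervals for fs} (with $d$ replaced by $j-1$) to $\rho_1(\pi(u_t))\models\Delta_{j-1}$. This gives a constant contiguous subword of $\rho_1(\pi(u_t))$ of length at least $\frac{s/2}{2^{j-2}}-2(j-1)=\frac{s}{2^{j-1}}-2j+2$. Lift it to $u_t$ and hence to $w$. Within one block the $\rho_2$-values agree, and after discarding the first and last letters the $\rho_4$-values agree as well. The $\rho_1$-iterates and $\rho_5,\rho_6$ data are constant on $w$ as a whole, since all letters of $w$ lie in the same iterated block and the same cell. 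So $\pi$ is injective on the relevant letters, and we lose at most $2$. This yields length at least $\frac{s}{2^{j-1}}-2j$, as required.

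The main obstacle I expect is making rigorous the claim that distinct letters of $w$ within this stretch really are distinct once their $\pi$-images and $\rho_4$-labels agree. That is, the coordinates forgotten by $\pi=\rho_1^{d-j}\circ\rho_5$ must be constant along $w$. I would argue this from the structure of $H_{\underline\tau}$: every edge of $C$ projects into $H_{\underline{\tau}}$, whose states at depths $s>j$ lie in a fixed copy $B_{\tau_s,i_s}$ or $B_{\tau_s,\emptyset}$. By the transition rules, this forces $\rho_2,\rho_3$ (or $\rho_4=\emptyset$) at those depths to be fixed by $\underline\tau$. The edge case $j=1$, where $h(w)$ is itself monotone and $w$ should be constant by Lemma~\ref{lemma: unique single letter nonstar}, is handled separately.
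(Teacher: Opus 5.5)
Your proof is correct and essentially the paper's. The paper notes that \eqref{eq: important direct sum} is left-greedy, deduces $\pi(w)\models\Delta_j$, and applies Lemma~\ref{lemma: no long monotone subintervals for fs} at depth $j$ to $\pi(w)$; you instead inline the top inductive step of that lemma on $h(w)$ and apply it at depth $j-1$ to the block $\rho_1(\pi(u_t))$, reaching the same bound $\frac{s}{2^{j-1}}-2j$.

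Your unrolled version is slightly more robust in two ways. First, it never needs $\pi(w)\models\Gamma_j$, which fails whenever $\tau_j=\mathrm{dec}$ and $w$ consists of a single block. Second, it makes explicit the lift from $\pi(w)$ to $w$, which the paper leaves implicit. Your separate $j=1$ case is vacuous: periodicity of $C$ forces $j\ge 2$ by Lemma~\ref{lemma: only LR implies self loop}, and $h$ is not defined otherwise. So the appeal to Lemma~\ref{lemma: unique single letter nonstar} there can simply be dropped.
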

\begin{proof}
    Recall that every $w \in L_{\mathcal{C}}$ decomposes as $w = u_1 \dots u_m$ so that 
    \begin{align}\label{eq: left greedy for h}
        h(w) = \sigma_1\oplus \dots \oplus \sigma_m,
    \end{align} 
    where $\sigma_i = f_{j-1}(\rho_1(\pi(u_i)))$. In addition, recall from the proof of Lemma~\ref{lemma: important properties of h} that the right-hand side of the equation~\eqref{eq: left greedy for h} also coincides with the left-greedy decomposition of $h(w)$ when viewed as an element of $\widetilde{\mathcal{O}}_{k}^{[j]}$. Thus $\pi(w) \models \Delta_{j}$, and hence the result follows from Lemma~\ref{lemma: no long monotone subintervals for fs}. 
\end{proof}

We are now ready to prove the main result of this subsection.
\begin{lem}\label{lemma: periodic drops in A_d}
    Suppose that $C$ is a dominant strongly connected component in $\mathcal A_{\mathcal F,\#}$ which lies in the part corresponding to $\mathcal A_{d,i',j'}\times\mathcal B_{\mathcal F}$. Then $C$ is aperiodic.
\end{lem}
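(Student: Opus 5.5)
We argue by contradiction, mirroring the proof of Lemma~\ref{lemma: aperiodicity or drops for plusminus}. Suppose $C$ is dominant and periodic. By Lemma~\ref{lemma: only LR implies self loop}, $j\geq 2$ and $D$ meets both $H_{\underline{\tau},\{L\}}$ and $H_{\underline{\tau},\{R\}}$. Fix the base state $q\in C$ with projection in $H_{\underline{\tau},\{R\}}$, and the map $h:L_{\mathcal C}\to\mathcal F$. By Lemma~\ref{lemma: important properties of h}, $h$ is a length-preserving injection into $\mathcal F$. Hence
\[
\limsup_n |(L_{\mathcal C})_n|^{1/n} = \rho(A_C)=\rho(\mathcal A_{\mathcal F,\#}),
\]
and $\mathcal F$ contains the direct sums $h(w)$ (say $\alpha=\mathrm{inc}$; $\mathrm{dec}$ is symmetric). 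By Lemma~\ref{lemma: too long directed paths with same label} and periodicity, $C$ contains no monochromatic walk of length $N$. So by Corollary~\ref{cor: no long monotone subintervals for hs}, no $h(w)$ contains a monotone interval longer than a constant $K=K(j,k,N)$.

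The plan is to build a strictly larger family inside $\mathcal F$, as in the $\mathcal A_\pm$ case. A closed walk at $q$ ends a block: its last letter carries $R$, so each $w\in L_{\mathcal C}$ is a concatenation of whole blocks $u_i$, and $h(w)=\bigoplus_i\sigma_i$. Since $\mathcal F$ is a down-set, we may insert an extra summand $1$ (a single point), which is the profile-$\mathrm{inc}$ block with letter $\rho_4=\{L,R\}$ (Lemma~\ref{lemma: unique single letter nonstar}), between consecutive blocks. Concretely, let $C'$ be $C$ together with $N'$ new states $q'_1,\dots,q'_{N'}$ forming a path $q\to q'_1\to\dots\to q'_{N'}$ labelled by that single-point letter. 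The outgoing edges of $q'_{N'}$ copy those of $q$. The new letter has $\rho_4=\{L,R\}$, while $q$ (in $H_{\underline{\tau},\{R\}}$) has no outgoing edge with such a label inside $C$ (it would lead to $H_{\underline\tau,\{L,R\}}\notin D$). Hence $C'$ is deterministic and strongly connected, and Lemma~\ref{lemma: first spectral drops lemma} gives $\rho(A_{C'})>\rho(A_C)$.

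Next we define $h'$ on closed walks of $C'$ at $q$ by the same formula~\eqref{eq: important direct sum}, with each inserted letter contributing a summand $1$. We need to show $h'$ is injective (up to a polynomial factor) and lands in $\mathcal F$.
\begin{itemize}
\item \emph{Containment.} Replace each inserted run by a closed walk of $C$ at $q$ whose image contains the point $1$ as a summand-subpattern, for instance by padding. Then $h'(w')$ is a subpattern of some $h(w'')\in\mathcal F$.
\item \emph{Injectivity.} Decompose $h'(w')$ into its sum-components. The original blocks $\sigma_i$ are recoverable except where a $1$ merges with a neighbouring increasing block, and the left-greedy ambiguity of Lemma~\ref{lemma: albert left greedy} only arises from monotone pieces. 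The runs of $N'$ ones form monotone intervals longer than $K$. So, taking $N'>2K$, every run of $N'$ consecutive $1$-summands is identified as inserted material, since original material contains no such monotone interval. This recovers $w'$, up to at most polynomially many boundary choices.
\end{itemize}

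Combining these, $|\mathcal F_n|$ has growth rate at least $\rho(A_{C'})>\rho(A_C)=\rho(\mathcal A_{\mathcal F,\#})\geq \limsup_n|\mathcal F_n|^{1/n}$, which is a contradiction. The main obstacle is the injectivity step, in particular showing that inserted single points do not get absorbed into neighbouring blocks $\sigma_i$ that themselves end or begin with monotone pieces. If the long-run argument is too delicate, the fallback is to insert a fixed non-monotone summand from $\mathcal Q_k$ that cannot merge with neighbours. Its membership in $\mathcal F$ is then secured because it already occurs inside some block visited by $C$.
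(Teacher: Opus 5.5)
Your overall strategy is the paper's: enlarge $C$ to a strongly connected $C'$, apply Lemma~\ref{lemma: first spectral drops lemma}, and map closed walks of $C'$ at $q$ length-preservingly and injectively into $\mathcal F$. Your containment step also works: replace each inserted run by $u_0^{N'}$ for a nonempty $u_0\in L_{\mathcal C}$.

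\textbf{The gap is injectivity.} An inserted run of $N'$ singleton summands carries no delimiter. It fuses with any increasing sum-components at the end of $h(x_i)$ and at the start of $h(x_{i+1})$. Inside each maximal increasing stretch of length between $N'$ and $N'+2K$, you can therefore only say that the run sits at one of up to $K+1$ offsets. These choices multiply over runs, and a word of length $n$ can contain up to $n/N'$ runs. So your argument bounds the fibres of $h'$ only by $(K+1)^{n/N'}$, which is exponential rather than polynomial, and it costs a factor $(K+1)^{1/N'}\approx 1+\log(K+1)/N'$ in growth rate.

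The gain from adding a path of length $N'$ is much smaller. The first-return series at $q$ changes from $F(z)$ to $F(z)(1+z^{N'})$. Hence, when $\rho(A_C)>1$, the relative increase of the spectral radius is of order $\rho(A_C)^{-N'}$. With $N'>2K$ the loss dominates, and no contradiction follows.

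Your fallback does not repair this: a single fixed non-monotone summand cannot be distinguished from an identical block of the original material.

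There is also a smaller unjustified step. Lemma~\ref{lemma: only LR implies self loop} does not exclude the $\{L,R\}$-state of $H_{\underline\tau}$ from $D$, since edges $H_{\underline\tau,\{R\}}\to H_{\underline\tau,\{L,R\}}\to H_{\underline\tau,\{L\}}$ can lie on cycles. So $q$ may have an outgoing edge inside $C$ labelled by your singleton letter, in which case $C'$ is not deterministic.

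The paper avoids both problems by combining your two ideas. It adjoins a cycle at $q$ labelled by a \emph{fresh} word $c'=c_1\cdots c_{s+4}$, so determinism and the unique factorisation $w=u_1c'u_2\cdots c'u_r$ are automatic. Each $c'$ contributes $\mu=21\oplus 12\cdots s\oplus 21$.
\begin{itemize}
\item By Corollary~\ref{cor: no long monotone subintervals for hs} and Lemma~\ref{lemma: too long directed paths with same label}, no $h(u)$ contains a monotone interval of length $s$.
\item The flanking copies of $21$ stop the run $12\cdots s$ from extending into neighbouring summands.
\item Hence the monotone intervals of length at least $s$ in $h'(w)$ are exactly the middles of the inserted copies of $\mu$. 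These are located and deleted exactly, and injectivity of $h'$ reduces to that of $h$.
\end{itemize}
Containment then follows by replacing each $c'$ with $u_0^{s+2}$ for some $u_0\in L_{\mathcal C}$ with $|u_0|\geq s$. Then $h(u_0)$ is non-monotone and so contains $21$, and therefore $h(u_0^{s+2})$ contains $\mu$.
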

\begin{proof}
    Suppose for the sake of contradiction that $C$ is periodic, and let $q$, $L_{\mathcal C}$ and $h$ be defined as before. Let 
    \[\mathcal{G} = \{h(w): w\in L_{\mathcal{C}}\}\]
    and choose an integer $s$ so that $s > 2^{j-1}(j+k)$ and $\frac{s}{2^{j-1}} - 2j \geq N$, where $N \geq 2$ is an integer for which the conclusion of Lemma~\ref{lemma: iterate} holds for $\mathcal{B}_\mathcal{F}$. 

    First suppose that there exists $\sigma =h(w) \in \mathcal{G}$ which contains a monotone sub-interval of length at least $s$. Then Corollary~\ref{cor: no long monotone subintervals for hs} implies that $w$ contains a contiguous constant sub-word of length at least $\frac{s}{2^{j-1}} - 2j \geq N$, whence Lemma~\ref{lemma: too long directed paths with same label} implies that $C$ is aperiodic. Thus no $\sigma \in \mathcal{G}$ can contain a monotone sub-interval of length at least $s$. 

    For notational clarity, we assume that $\tau_j = \mathrm{inc}$. Define the particular permutation 
    \[\mu = 21 \oplus 12\dots s\oplus 21.\]
    Let $\Sigma' = \{c_1,\dots,c_{s+4}\}$ denote an alphabet that is disjoint from $\Sigma_{\star,(i',j')}$, and define a finite deterministic automaton $C'$ as follows. The initial and the only accepting state in $C'$ is $q$. The states of $C'$ are the states of $C$ together with $\{q_1',\dots,q_{s+3}'\}$, the alphabet is $\Sigma_{\star,(i',j')}\cup \Sigma'$, and the transition rules are given as follows. 
    \begin{enumerate}
        \item The transitions between states inside $C$ are inherited by $C'$. 
        \item Writing $q_0' = q_{s+4}'= q$, we have $\delta_{C'}(q_{i-1}',c_i) = q_{i}'$ for every $1 \leq i \leq s+4$. 
    \end{enumerate}
    That is, we add $s+3$ new states which form a closed cycle based at $q$ which corresponds to the word $c':= c_1\dots c_{s+4}$. Thus it is easy to see that every word $w$ accepted by $C'$ can be decomposed uniquely as 
    \begin{align}\label{eq: unique decomposition}
        w = u_1c'u_2c'\dots c'u_r,
    \end{align}
    where $u_1,\dots,u_r\in L_\mathcal{C}$, and they are allowed to be empty. Let $L_\mathcal{C'}$ denote the set of words accepted by $C'$, and define $h': L_\mathcal{C'} \rightarrow \mathfrak{S}$ by setting 
    \begin{align}\label{eq: definition of h'}
        h'(w)= h(u_1) \oplus \mu \oplus h(u_2) \oplus \mu \oplus \dots \oplus \mu \oplus h(u_r),    
    \end{align}
    where $h(\varepsilon)$ is defined to be the empty permutation. As in the proof of Lemma~\ref{lemma: aperiodicity or drops for plusminus}, the following essentially completes the proof of Lemma~\ref{lemma: periodic drops in A_d}.  

    \begin{claim}\label{claim: important counting claim for A_d}
        The following statements are true. 
        \begin{enumerate}
            \item $h'$ is a length-preserving map.
            \item $h'$ is an injection.
            \item $h'(w) \in \mathcal{F}$ for every $w \in L_{\mathcal{C'}}$. 
        \end{enumerate}
    \end{claim}
    \begin{proof}
        We note that $h'$ is clearly length-preserving by its construction. To prove that $h'$ is an injection, we note by Corollary~\ref{cor: no long monotone subintervals for hs} and by the choice of $s$, for every $w \in L_\mathcal{C'}$, any monotone sub-interval of length at least $s$ must be contained in a copy of $\mu$ in the expression of $h'(w)$ via~\eqref{eq: definition of h'}, and thus must correspond to the contiguous sub-words of $w$ that are of the form $c'$. In particular, given $\sigma = h'(w)$, we may recover each copy of $\mu$ in the decomposition~\eqref{eq: definition of h'} from monotone sub-intervals of length $s$. Deleting such copies of $\mu$ corresponds to deleting all the sub-words $c'$ from $w$. Thus injectivity of $h'$ follows from the injectivity of $h$. 

        Let $u_0 \in L_\mathcal{C}$ be chosen so that $|u_0| \geq s$. Thus $h(u_0)$ cannot be a monotone permutation, and hence it contains $21$ as a sub-permutation. Given $w \in L_\mathcal{C'}$, suppose $w$ decomposes as 
        \[w = u_1c'u_2c'\dots c'u_r,\]
        where $u_1,\dots,u_r \in L_{\mathcal{C}}$ or $u_i = \varepsilon$. Let $u' = (u_0)^{s+2}$, and define the particular word $w'$ as 
        \[w' = u_1u'u_2u' \dots u'u_r.\]
        Since each $u_0,u_1,\dots,u_r$ corresponds to a closed walk in $C$ based at $q$, we conclude that $w' \in L_\mathcal{C}$. Furthermore, we have 
        \[h(w') = h(u_1) \oplus h(u') \oplus h(u_2) \oplus \dots \oplus h(u') \oplus h(u_r). \]
        Since $u' = (u_0)^{s+2}$, we have 
        \[h(u') = h(u_0) \oplus \dots \oplus h(u_0),\]
        where the direct sum consists of $s+2$ copies of $h(u_0)$. We claim that $\mu$ is a sub-permutation of $h(u')$. Indeed, since $h(u_0)$ is not a monotone increasing permutation, it contains $21$ as a sub-permutation. Delete all but two points forming a sub-permutation $21$ from the first and last copies of $h(u_0)$, and all but one points from the middle $s$ copies of $h(u_0)$. Then the resulting permutation is 
        \[21 \oplus 1 \oplus \dots \oplus 1 \oplus 21 = 21 \oplus 12 \dots s \oplus 21 = \mu,\]
        which verifies the fact that $\mu$ is a sub-permutation of $h(u')$. Hence 
        \[h'(w) = h(u_1) \oplus \mu \oplus h(u_2) \oplus \dots \oplus \mu \oplus h(u_r)\]
        is a sub-permutation of $h(w')$. Since $w' \in L_\mathcal{C}$, Lemma~\ref{lemma: important properties of h} implies that $h(w') \in \mathcal{F}$. Since $\mathcal{F}$ is a closed under sub-patterns, we conclude that $h'(w) \in \mathcal{F}$. This completes the proof. 
    \end{proof}

    Equipped with Claim~\ref{claim: important counting claim for A_d}, the rest of the proof follows easily. Set $\lambda=\rho(A_C)$ and $\lambda'=\rho(A_{C'})$. Since $C'$ can be easily checked to be strongly connected, it follows that the matrix $A_{C'}$ is irreducible. Let $B$ denote the matrix obtained by adding $|C'| - |C|$ rows and columns consisting of $0$'s to the matrix $A_C$. Then $A_{C'}$ and $B$ satisfy the conditions  of Lemma~\ref{lemma: first spectral drops lemma}, and hence we must have $\lambda'>\lambda$. Since $C$ is dominant, we have 
    \[\lambda = \rho(A_C) = \rho(\mathcal{A}_{\mathcal{F},\#}) = \limsup_{n\rightarrow \infty} |\mathcal{F}_n^\#|^{1/n}.\]
    
    Since Claim~\ref{claim: important counting claim for A_d} implies that we have $h'(w) \in \mathcal{F}$ for every $w \in L_{\mathcal{C'}}$, we conclude that 
    \[
        (A_{C'}^n)_{q,q}\leq |\mathcal F_n|\leq |\mathcal F_n^\#|,
    \]
    and thus 
    \[\lambda' = \limsup_{n \rightarrow \infty}((A_{C'}^n)_{q,q})^{1/n} \leq \limsup_{n \rightarrow \infty}|\mathcal F_n^\#|^{1/n} = \lambda.\]
    This contradicts the fact that $\lambda' > \lambda$, and completes the proof.
\end{proof}
\subsection{Passing to $\mathcal{F}$} \label{sec:ungridded-aperiodicity}
Combining Lemma~\ref{lemma: aperiodicity or drops for plusminus} and Lemma~\ref{lemma: periodic drops in A_d}, we conclude that every dominant strongly connected component in $\mathcal{A}_{\mathcal{F},\#}$ is aperiodic. Let $\mathcal{B}_\#$ denote the finite deterministic automaton satisfying the conditions of Lemma~\ref{lemma: iterate} and which corresponds to the first-order sentence $\Delta_M$. Then it is straightforward to check that $\mathcal{A}_\mathcal{F}:= \mathcal{A}_\mathcal{F,\#} \times \mathcal{B}_\#$ accepts precisely the words in $L_{\mathcal{F}}$. Our goal is to prove the following result. 

\begin{thm}\label{thm: automata main theorem}
    Let $C$ be a dominant strongly connected component in $\mathcal{A}_\mathcal{F}$. Then $C$ is aperiodic. 
\end{thm}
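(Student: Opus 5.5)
The plan is to reduce Theorem~\ref{thm: automata main theorem} to Lemma~\ref{lemma: loose radius or maintain aperiodicity}, applied with $\mathcal{A} := \mathcal{A}_{\mathcal{F},\#}$ and $\mathcal{B} := \mathcal{B}_\#$. By Lemma~\ref{lemma: iterate}, $\mathcal{B}_\#$ is complete and satisfies~\eqref{eq: iterate}. Let $C$ be a dominant component of $\mathcal{A}_\mathcal{F}$. Since $\mathcal{F}$ is infinite, $C$ contains a directed cycle and hence at least one edge. Let $C_0$ be the strongly connected component of $\mathcal{A}_{\mathcal{F},\#}$ containing the projection of $C$. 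Lemma~\ref{lemma: loose radius or maintain aperiodicity} then gives $\rho(A_C) \leq \rho(A_{C_0})$. Moreover, if equality holds and $C_0$ is aperiodic, then $C$ is aperiodic. So two things remain to be shown: that $\rho(A_C) = \rho(A_{C_0})$, and that $C_0$ is a dominant component of $\mathcal{A}_{\mathcal{F},\#}$. Once $C_0$ is dominant, Lemmas~\ref{lemma: aperiodicity or drops for plusminus} and~\ref{lemma: periodic drops in A_d} make it aperiodic, and Lemma~\ref{lemma: loose radius or maintain aperiodicity} transfers this to $C$.

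Both remaining facts follow from the growth-rate equality $\rho(\mathcal{A}_\mathcal{F}) = \rho(\mathcal{A}_{\mathcal{F},\#})$. The first step is to identify the spectral radii with upper growth rates. Accepted words of $\mathcal{A}_{\mathcal{F},\#}$ are in bijection with $\mathcal{F}^\#_n$ by Lemma~\ref{lemma: key thm for gM instead}, and accepted words of $\mathcal{A}_\mathcal{F}$ are in bijection with $\mathcal{F}_n$ by Theorem~\ref{thm: fM is a bijection}, with lengths preserved in both cases. Since both automata are restricted to productive states, the spectral radius equals $\limsup$ of the $n$-th root of the number of accepted words of length $n$. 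This is a standard fact: the count is bounded above by a polynomial times $\rho^n$, and it is bounded below along a subsequence by closed walks in a component attaining $\rho$, extended by fixed prefixes and suffixes. Hence $\rho(\mathcal{A}_{\mathcal{F},\#}) = \overline{\mathrm{gr}}(\mathcal{F}^\#)$ and $\rho(\mathcal{A}_\mathcal{F}) = \overline{\mathrm{gr}}(\mathcal{F})$.

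Next I would invoke Vatter's pegs-and-holes bound, cited in the outline at the start of this section: $|\mathcal{F}^\#_n| \leq p(n)\,|\mathcal{F}_n|$ for some polynomial $p$. Concretely, each permutation has at most polynomially many $M$-griddings, because a gridding is determined by the positions of boundedly many grid lines and there are $O(n^{\ell+m})$ choices. The inequality $|\mathcal{F}_n| \leq |\mathcal{F}^\#_n|$ is trivial, so the two upper growth rates coincide, giving $\rho(\mathcal{A}_\mathcal{F}) = \rho(\mathcal{A}_{\mathcal{F},\#})$. Now combine this with the chain $\rho(A_C) = \rho(\mathcal{A}_\mathcal{F})$ (dominance of $C$), $\rho(A_C) \leq \rho(A_{C_0})$ (Lemma~\ref{lemma: loose radius or maintain aperiodicity}), and $\rho(A_{C_0}) \leq \rho(\mathcal{A}_{\mathcal{F},\#})$. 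This forces equality throughout. It remains to note that $C_0$ is productive: the projection of an accepting walk through $C$ is an accepting walk in $\mathcal{A}_{\mathcal{F},\#}$ through $C_0$. Therefore $C_0$ is dominant.

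The main obstacle is the identification of the spectral radius with the upper growth rate of the accepted language, together with making sure the pegs-and-holes bound applies to $\mathrm{Geom}^\#(M)_{d,k}$. The $\star$-cells need no internal gridding, so the bound should depend only on the global grid lines, and it is this count I would check first. One more subtlety is that the components in question must be counted inside the trimmed automata, and I would verify that trimming commutes with taking products closely enough that $C_0$ remains a component of the trimmed $\mathcal{A}_{\mathcal{F},\#}$.
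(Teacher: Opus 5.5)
Your proposal is correct and is essentially the paper's proof. The paper likewise uses Vatter's pegs-and-holes bound (Lemma~\ref{lemma: gridded of by poly factor}) to get $\rho(\mathcal{A}_\mathcal{F})=\rho(\mathcal{A}_{\mathcal{F},\#})$, squeezes $\rho(A'_C)\le\rho(A_D)\le\rho(A)$ via Lemma~\ref{lemma: loose radius or maintain aperiodicity} to make the projected component dominant, applies Lemmas~\ref{lemma: aperiodicity or drops for plusminus} and~\ref{lemma: periodic drops in A_d}, and transfers aperiodicity back with Lemma~\ref{lemma: loose radius or maintain aperiodicity}; the extra points you flag (spectral radius equals upper growth rate, trimming preserving productive components) the paper takes for granted, and your justifications for them are sound.
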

We use the following simple pegs and holes -type of counting argument due to Vatter~\cite{vatter2011small}. 
\begin{lem}[Proposition 2.1 in~\cite{vatter2011small}] \label{lemma: gridded of by poly factor}
Let $M$ be a suitable matrix with $\ell$ columns and $m$ rows, and let $\mathcal{F} \subseteq \mathrm{Geom}(M)_{d,k}$. We have $|\mathcal{F}_n| \leq |\mathcal{F}^\#_n| \leq \binom{n+l}{l}\cdot \binom{n+m}{m}\cdot |\mathcal{F}_n|$. 
\end{lem}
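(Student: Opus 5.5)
The plan is the classical pegs-and-holes count: every gridded permutation in $\mathcal{F}^\#_n$ arises from some $\sigma\in\mathcal{F}_n$ together with a choice of column and row dividers. The lower bound $|\mathcal{F}_n|\le|\mathcal{F}^\#_n|$ is immediate. By definition every $\sigma\in\mathcal{F}\subseteq\mathrm{Geom}(M)_{d,k}$ is induced by at least one equivalence class in $\mathrm{Geom}^\#(M)_{d,k}$, for instance its left-greedy representation. Admissible $M$-griddings of distinct permutations are distinct, so the map $\sigma\mapsto$ (its left-greedy gridding) is an injection $\mathcal{F}_n\to\mathcal{F}^\#_n$.

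For the upper bound, I would consider the forgetful map $\pi^\#_M:\mathcal{F}^\#_n\to\mathcal{F}_n$ and bound the size of each fibre. Fix $\sigma\in\mathcal{F}_n$ together with its canonical drawing. An equivalence class of gridded figures inducing $\sigma$ is determined by which cell contains the $i$-th point, for every $i\in[n]$. The column of a point depends only on its $x$-rank. The gridlines are vertical and horizontal, and the cells $C_{i,j}$ are products of column intervals and row intervals. Hence the column assignment is a weakly increasing map from the $x$-ranks $\{1,\dots,n\}$ to $[\ell]$, and likewise the row assignment is a weakly increasing map from the $y$-ranks to $[m]$.

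A weakly increasing map $[n]\to[\ell]$ is the same thing as a placement of $\ell-1$ dividers among $n$ points, which is a multiset count equal to $\binom{n+\ell-1}{\ell-1}\le\binom{n+\ell}{\ell}$; the analogous bound $\binom{n+m}{m}$ holds for the rows. The pair of column and row assignments determines the cell of every point, and therefore determines the equivalence class. Each fibre of $\pi^\#_M$ therefore has size at most $\binom{n+\ell}{\ell}\binom{n+m}{m}$. Summing over $\sigma\in\mathcal{F}_n$ gives the claimed upper bound.

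The step needing care is checking that the equivalence class really is determined by the cell memberships. This holds by definition: two gridded figures with the same gridding are equivalent exactly when they induce the same permutation and corresponding points lie in the same cells. It remains to confirm that the weakly-increasing claim survives the definition of cells via the standard gridding. It does, since points lying left of a vertical gridline have smaller $x$-rank, and the same argument applies to the rows. No property specific to the $\star$-cells is needed, because those constraints only shrink the fibres.
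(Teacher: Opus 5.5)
Your proposal is correct and is essentially the argument the paper gives: you bound each fibre of the forgetful map by counting weakly increasing column and row assignments (divider placements), and the paper's one-line sketch, which counts the ways to choose the least point in each column and row, is the same count. The lower bound via surjectivity of $\pi^\#_M$ onto $\mathcal{F}\subseteq\mathrm{Geom}(M)_{d,k}$ also matches.
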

Indeed, the statement follows easily by considering the number of ways to choose the least point in each row and column.  
\begin{proof}[Proof of Theorem~\ref{thm: automata main theorem}]
Write $A$ for the transition matrix of $\mathcal{A}_{\mathcal{F},\#}$ once the non-productive states have been removed, and write $A'$ for the transition matrix of $\mathcal{A}_\mathcal{F}$ once the non-productive states have been removed. Write $\mu = \rho(A)$ and $\mu' = \rho(A')$, and recall that we have $\mu = \limsup_{n \rightarrow \infty} |\mathcal{F}_n^\#|^{1/n}$ and $\mu' = \limsup_{n \rightarrow \infty} |\mathcal{F}_n|^{1/n}$. As Lemma~\ref{lemma: gridded of by poly factor} implies that 
\begin{align*}
    \limsup_{n\rightarrow \infty} |\mathcal{F}_n|^{1/n} &\leq \limsup_{n \rightarrow \infty} |\mathcal{F}_n^\#|^{1/n} \leq \limsup_{n\rightarrow \infty} \left(\binom{n+l}{l}\cdot \binom{n+m}{m}\cdot |\mathcal{F}_n| \right)^{1/n} = \limsup_{n\rightarrow \infty} |\mathcal{F}_n|^{1/n},
\end{align*}
we conclude that 
\[\mu' = \limsup_{n\rightarrow \infty} |\mathcal{F}_n|^{1/n} = \limsup_{n\rightarrow \infty} |\mathcal{F}_n^\#|^{1/n} = \mu.\]

Let $C$ denote a dominant component in $\mathcal{A}_\mathcal{F}$, and let $D$ denote the strongly connected component in $\mathcal{A}_{\mathcal{F},\#}$ containing the projection of $C$. Then Lemma~\ref{lemma: loose radius or maintain aperiodicity} implies that $\rho(A'_C) \leq \rho(A_D)$, and hence
\[\mu = \rho(A'_C) \leq \rho(A_D) \leq \rho(A) = \mu.\]
Thus we must have $\rho(A'_C) = \rho(A_D) = \mu$, and hence $D$ is a dominant component in $\mathcal{A}_{\mathcal{F},\#}$. Thus $D$ is aperiodic either by Lemma~\ref{lemma: aperiodicity or drops for plusminus} or by Lemma~\ref{lemma: periodic drops in A_d}. Hence Lemma~\ref{lemma: loose radius or maintain aperiodicity} implies that $C$ must be aperiodic as well, which completes the proof. 
\end{proof}
\section{Proof of main results}\label{section: Final section}
For convenience, we restate Theorem~\ref{theorem: eeroklas}. 
\begin{thm*}
Let $\mathcal{F}$ be a small permutation class. 
    \begin{enumerate}
        \item Then there exists a regular language $L_{\mathcal{F}}$ and a length-preserving bijection
        \[f_{\mathcal{F}}: L_{\mathcal{F}} \rightarrow \mathcal{F}\]
        satisfying the following property: for every first-order logic sentence $\Psi$ on $\mathcal{F}$ there exists a first-order logic  sentence $\Phi$ on $L_{\mathcal{F}}$ so that 
        \[\{w \in L_\mathcal{F}: w \models \Phi\} = \{w \in L_\mathcal{F}: f_\mathcal{F}(w) \models \Psi\}.\]
        The same conclusion also holds for sentences in monadic second-order logic.
        \item{(sketched)} The bijection $f_\mathcal{F}$ is in fact a monadic second-order isomorphism: that is, for every sentence $\Phi$ in monadic second-order logic on $L_\mathcal{F}$ there exists a corresponding sentence in monadic second-order logic on $\mathcal{F}$ for which the conclusion above holds. In addition, it is also possible to construct a regular language $L_\mathcal{F}'$ and a length-preserving bijection $f'_\mathcal{F}: L_\mathcal{F}' \rightarrow \mathcal{F}$ which is a first-order isomorphism. 
        \item For the last three assertions, suppose that $\mathcal{F}$ is not finite. There exist a positive real number $\alpha$, a real number $\beta \geq 1$, and a non-negative integer $c$ so that 
        \[\vert \mathcal{F}_n\vert = (\alpha+o(1))\cdot \beta ^n\cdot n^c.\]
        \item Given a monadic second-order sentence $\Psi$ on $\mathcal{F}$, write \[\mathcal{F}^\Psi_n := \{\sigma \in \mathcal{F}_n: \sigma \models \Psi\}.\]
        Then the sets $\mathcal{F}^\Psi_n$ have a rational generating function.
        \item For every first-order $\mathrm{TOTO}$ sentence $\Phi$ the limit  $\lim_{n\rightarrow \infty}\Pr(\pi \models \Phi)$ exists, where $\pi$  is taken uniformly at random from $\mathcal{F}_n$. 
    \end{enumerate}
\end{thm*}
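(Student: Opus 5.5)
We assemble the main theorem from the machinery already built, treating parts 1, 3, 4, 5 in full and part 2 only at the level of a sketch.

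\textbf{Part 1.} By Proposition~\ref{prop: small implies contained inside Geom (M) dk} we fix a $\star$-isolated forest matrix $M$ and integers $d,k$ with $\mathcal{F}\subseteq \mathrm{Geom}(M)_{d,k}$. Proposition~\ref{prop: small classes FOS classifiable} together with Theorem~\ref{thm: finitely based} gives a first-order $\mathrm{TOTO}$ sentence $\hat\upsilon_{\mathcal F}$ defining $\mathcal F$. Theorem~\ref{thm: FO correspondence for M} translates it into a first-order word sentence $\chi_{\mathcal F}$ with $w\models\Gamma_M\land\chi_{\mathcal F}$ iff $f_M(w)\in\mathcal F$. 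We then set
\[L_{\mathcal F}=\{w\in\Sigma_M^*: w\models \Delta_M\land\chi_{\mathcal F}\}\quad\text{and}\quad f_{\mathcal F}=f_M|_{L_{\mathcal F}}.\]
This language is regular by Theorem~\ref{thm: closure for regular languages}. Since $\Delta_M$ contains $\Gamma_M$, Theorem~\ref{thm: fM is a bijection} shows that $f_{\mathcal F}$ is a bijection onto $\mathcal F$, and it is length preserving because $\alpha_{w,M}$ is a bijection. Given $\Psi$, we first put it in disjunctive normal form using Proposition~\ref{prop: every FOS can be written in DFN}. Applying Theorem~\ref{thm: FO correspondence for M} with no free variables then yields $\Phi$, first-order whenever $\Psi$ is, with the required equality of truth sets.

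\textbf{Part 2.} For the reverse MSO direction we would interpret a word $w$ inside $f_{\mathcal F}(w)$. Positions correspond to points via $\alpha_w$, so the task is to define in MSO-$\mathrm{TOTO}$ the word order and the letter predicates. The strategy is to guess with existential set quantifiers the cell partition of the left-greedy gridding, which $\Delta_A$ lets us recognise, and the nested block structure of each $\star$-cell. From these the letters become definable, and the position order is the lexicographically minimal trace order, which is MSO-definable. We leave this, and the alternative first-order isomorphism, as sketches.

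\textbf{Parts 3 and 5.} Part~4 is immediate: applying part~1 to $\Psi$ gives a regular language $\{w\in L_{\mathcal F}:w\models\Phi\}$ whose generating function is rational by Theorem~\ref{thm: reg languages have RGF}. For part~3, the automaton $\mathcal A_{\mathcal F}$ of Theorem~\ref{thm: automata main theorem} recognises $L_{\mathcal F}$ and all its dominant components are aperiodic, hence primitive. Lemma~\ref{lemma: primitive dominant asymptotics} then gives $|\mathcal F_n|=(\alpha+o(1))\beta^nn^c$. Here $\beta\ge1$ because $\mathcal F$ is infinite, so some dominant component contains a cycle.

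Part~5 is the main step. Take $\Phi$ from part~1 and let $\mathcal B$ be the automaton of Lemma~\ref{lemma: iterate} recognising $\Phi$; then $\mathcal A_{\mathcal F}\times\mathcal B$ recognises the truth set. Let $D$ be a component of the product with spectral radius $\beta$. By Lemma~\ref{lemma: loose radius or maintain aperiodicity}, $D$ projects into a component of $\mathcal A_{\mathcal F}$ of radius $\beta$, which is therefore dominant and aperiodic, so $D$ is aperiodic as well. There are now two cases. If some component of the product has radius $\beta$ and the maximal number $c'$ of such components along an accepting path equals $c$, then Lemma~\ref{lemma: primitive dominant asymptotics} gives count $(\alpha'+o(1))\beta^nn^{c}$, and the probability tends to $\alpha'/\alpha$. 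Otherwise the count is $o(\beta^nn^c)$ and the limit is $0$. For this we need an upper-bound version of the lemma, valid without primitivity of non-dominant components: counts are $O(\rho^n n^{r})$ with $\rho<\beta$, or $O(\beta^n n^{c'})$ with $c'<c$. This follows from the same block-triangular singularity analysis, and it is the point to verify with care.
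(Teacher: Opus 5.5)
Your proposal is correct and follows the paper's proof essentially step for step. It uses the same language $L_{\mathcal F}=\{w\in\Sigma_M^*:w\models\Delta_M\land\chi_{\mathcal F}\}$, the same appeal to Theorem~\ref{thm: automata main theorem} and Lemma~\ref{lemma: primitive dominant asymptotics} for part 3, and for part 5 the same product with the automaton of Lemma~\ref{lemma: iterate}, with Lemma~\ref{lemma: loose radius or maintain aperiodicity} forcing radius-$\beta$ components to be aperiodic and a standard $O(\rho^n n^{r})$ upper bound for the remaining cases (the paper also takes this bound as standard). One point should be made explicit: $c'>c$ is impossible because the truth-set count never exceeds $|\mathcal F_n|$, so your ``otherwise'' case really does reduce to a smaller spectral radius or a smaller polynomial exponent, exactly as in the paper.
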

Note that we omit the proof of the second part, whose proof will be sketched in Section~\ref{sec: conclusion}.
\begin{proof} [Proof of Theorem~\ref{theorem: eeroklas}]
    Let $d$ and $k$ be positive integers and let $M$ be a multiplicative $\star$-isolated forest matrix chosen so that $\mathcal{F} \subseteq \mathrm{Geom}(M)_{d,k}$. Note that such $M$ exists by Proposition~\ref{prop: small implies contained inside Geom (M) dk}. Since $\mathcal{F}$ is a small permutation class, Proposition~\ref{prop: small classes FOS classifiable} implies that there exists a first-order sentence $\hat{\upsilon}_{\mathcal{F}}$ so that $\sigma \models \hat{\upsilon}_\mathcal{F}$ if and only if $\sigma \in \mathcal{F}$. 

    Let $\chi_{\mathcal{F}}$ be the first-order sentence in $\Sigma_M^*$ obtained by applying Theorem~\ref{thm: FO correspondence for M} to the sentence $\hat{\upsilon}_{\mathcal{F}}$, and set $L_{\mathcal{F}} = \{w \in \Sigma_M^*: w \models \Delta_M \land \chi_{\mathcal{F}}\}$. Thus Theorem~\ref{thm: closure for regular languages} implies that $L_{\mathcal{F}}$ is a regular language, and Theorem~\ref{thm: FO correspondence for M} implies that the restriction $f_\mathcal{F}$ of $f_M$ on $L_{\mathcal{F}}$ is both a first-order and a monadic second-order homomorphism. Finally, since the restriction of $f_M$ on $\{w\in \Sigma_M^*: w\models \Delta_M\}$ induces a bijection whose image is $\mathrm{Geom}(M)_{d,k}$, it follows that $f_\mathcal{F}$ induces a bijection from $L_{\mathcal{F}}$ to $\mathcal{F}$. This completes the proof of the first part. 
    
     Let $\mathcal{A'}_\mathcal{F}$ denote the automaton obtained by removing the unproductive states from the automaton $\mathcal{A}_\mathcal{F}$ constructed in Theorem~\ref{thm: automata main theorem}. Thus by Theorem~\ref{thm: automata main theorem} all the dominant strongly connected components of $\mathcal{A}_\mathcal{F}$ are aperiodic, and at least one such component must exist as $\mathcal{F}$ is not finite. Hence Lemma~\ref{lemma: primitive dominant asymptotics} implies that there exists positive constant $c$, non-negative integer $k$ and $\lambda \geq 1$ so that 
    \[a_n:=|\mathcal{F}_n| = (c+o(1))\cdot \lambda^n \cdot n^k\]
    as $n\rightarrow \infty$. This completes the proof of the third part. 
    
    Let $\Psi$ be a monadic second-order sentence on $\mathcal{F}$, and let $\Phi$ denote the first-order sentence on words for which we have 
    \[\{w\in L_{\mathcal{F}}:w\models \Phi\}=\{w \in L_\mathcal{F}:f_{\mathcal{F}}(w) \models \Psi\}.\]
    Define $L_\mathcal{F}^\Phi := \{w \in L_{\mathcal{F}}: w\models \Phi\}$. Then $L_\mathcal{F}^\Phi$ is a monadic second-order restriction of a regular language, and thus itself a regular language by Theorem~\ref{thm: closure for regular languages}.  Hence the sets $(L_\mathcal{F}^\Phi)_n$ has a rational generating function by Theorem~\ref{thm: reg languages have RGF}, and thus $\mathcal{F}^\Psi_n$ also have rational generating function, which completes the proof of the fourth part. 
    
    Let $\Psi$ be a first-order sentence on permutations, and again we write $\Phi$ for the corresponding first-order sentence on words. Let $\mathcal{B}_\Phi$ denote the automaton chosen to satisfy the conclusion of Lemma~\ref{lemma: iterate}, and set $\mathcal{A}_\Psi := \mathcal{A}_\mathcal{F} \times B_\Phi$. Then $\mathcal{A}_\Psi$ accepts precisely those $w \in L_{\mathcal{F}}$ with $w \models \Phi$. 

    Define $b_n = |\mathcal{F}_n^\Psi|$. If $b_n = 0$ for sufficiently large $n$, then the limit is clearly $0$. Otherwise, let $\mathcal{A}_\Psi'$ denote the automaton obtained from $\mathcal{A}_\Psi$ by removing all the unproductive states. Then $\mathcal{A}_\Psi'$ accepts the same words as $\mathcal{A}_\Psi$, and since it accepts infinitely many words, it contains at least one directed cycle. 
    
    Let $\mu$ denote the largest spectral radius of a strongly connected component in $\mathcal{A}_\Psi'$, and let $r+1$ denote the largest number of strongly connected components with spectral radius $\mu$ along any walk from the initial state to an accepting state. Then standard properties of regular languages imply that $|\mathcal{F}_n^\Psi| \leq  C\cdot n^{r}\mu^n$ for some constant $C$. 

    If $\mu < \lambda$, then 
    \[\limsup_{n\rightarrow  \infty} \mathbb{P}\left(\boldsymbol{\sigma_n} \in \mathcal{F}_n: \boldsymbol{\sigma_n} \models \Psi\right) = \limsup_{n \rightarrow \infty} \frac{b_n}{a_n} \leq \lim_{n\rightarrow \infty} \frac{C\cdot n^r \cdot \mu^n}{c\cdot n^k \cdot \lambda^n} = 0,\] 
    and thus in particular the limit exists. If $\mu \geq \lambda$, let $C$ be a strongly connected component in $\mathcal{A}_\Psi'$ with spectral radius $\mu$, and let $D$ be the strongly connected component in $\mathcal{A'}_{\mathcal{F}}$ containing its projection. Write $A$ and $B$ for the transition matrices of $\mathcal{A'}_\mathcal{F}$ and $\mathcal{A}_\Psi'$ respectively. By Lemma~\ref{lemma: loose radius or maintain aperiodicity} we have $\rho(B_C) \leq \rho(A_D)$, and thus 
    \[\lambda \leq \mu = \rho(B_C) \leq \rho(A_D) \leq \lambda.\]
    Thus we must have $\rho(B_C) = \rho(A_D) = \lambda$, and hence $D$ must be a dominant component, and hence $D$ is aperiodic by Theorem~\ref{thm: automata main theorem}. Since $\rho(B_C) = \rho(A_D)$,  Lemma~\ref{lemma: loose radius or maintain aperiodicity} in turn implies that $C$ must be aperiodic. 
    
    Thus every dominant component in $\mathcal{A}_\Psi$ must be aperiodic, and thus Lemma~\ref{lemma: primitive dominant asymptotics} implies that there exits $c'>0$ for which $b_n =(c'+o(1))\cdot n^r \cdot \lambda^n$ as $n \rightarrow \infty$. Thus 
    \[\lim_{n\rightarrow  \infty} \mathbb{P}\left(\boldsymbol{\sigma_n} \in \mathcal{F}_n: \boldsymbol{\sigma_n} \models \Psi\right) = \lim_{n \rightarrow \infty} \frac{b_n}{a_n} = \lim_{n \rightarrow \infty} \frac{c'\cdot n^r}{c \cdot n^k}.\] 
    If $r = k$, the limit exists and equals $\frac{c'}{c}$. If $r < k$, the limit exists and equals $0$. Since $b_n \leq a_n$ for every $n$, we cannot have $r > k$. This completes the proof of the final part. 
\end{proof}
Note that as a corollary of the first statement we may deduce that small permutation classes have rational generating functions, which was first proved by Albert, Ru{\v{s}}kuc, and Vatter~\cite{albert2015inflations}.

\subsection{Monadic second-order limits}\label{msl}
By using the fact that small permutation classes and their monadic second-order restrictions are in bijection with a regular language, and hence can be described by a finite automaton with an integer transition matrix, we can prove the modular convergence law mentioned in the introduction. 
\begin{thm*} [Restatement of Theorem~\ref{thm: Monadic modular convergence}]
    Let $\mathcal F$ be an infinite small permutation class,  and let $\boldsymbol\sigma_n$ denote a permutation chosen uniformly at random from $\mathcal F_n$. For every $\mathrm{MSO}$-$\mathrm{TOTO}$ sentence $\Psi$ there exists an integer $\ell\geq1$ such that for every $0\leq r<\ell$, the limit 
    \[\lim_{n\rightarrow \infty} \mathbb{P}\left( \mathbf{\sigma_{n\ell+r}}\models \Psi\right)\] 
    exists. 
\end{thm*}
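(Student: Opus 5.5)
We reduce the statement to the structure of rational generating functions coming from finite automata. By Theorem~\ref{theorem: eeroklas} (first part), for the given $\mathrm{MSO}$-$\mathrm{TOTO}$ sentence $\Psi$ there is a monadic second-order sentence $\Phi$ on words with $\{w\in L_\mathcal{F}: w\models\Phi\} = \{w\in L_\mathcal{F}: f_\mathcal{F}(w)\models\Psi\}$. By Theorem~\ref{thm: closure for regular languages} the language $L^\Phi_\mathcal{F}$ is regular. Since $f_\mathcal{F}$ is a length-preserving bijection, $b_n := |\mathcal{F}^\Psi_n| = |(L^\Phi_\mathcal{F})_n|$, and the third part of Theorem~\ref{theorem: eeroklas} already gives $a_n := |\mathcal{F}_n| = (\alpha+o(1))\beta^n n^c$. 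It therefore suffices to show that $b_n/a_n$ converges along every residue class modulo some $\ell$.

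Let $\mathcal{A}$ be a deterministic automaton for $L^\Phi_\mathcal{F}$ with its non-productive states removed. If $\mathcal{A}$ accepts finitely many words, every limit is $0$. Otherwise $b_n$ is a coefficient sequence of $u^T A^n v$ for the nonnegative integer transition matrix $A$. We write $A$ in block triangular form by strongly connected components, exactly as in the sketch of Lemma~\ref{lemma: primitive dominant asymptotics}.

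The key input is the Perron--Frobenius structure of an irreducible nonnegative matrix of period $p$. All eigenvalues of modulus equal to its spectral radius $\rho$ are $\rho\,\omega$ with $\omega^p = 1$, and the nonnegative matrix $A_C^p$ decomposes into $p$ primitive blocks. Let $\ell$ be the least common multiple of the periods of all components $C$ of $\mathcal{A}$ containing a cycle. Then $A^\ell$ restricted to each component is a direct sum of primitive (or zero) blocks.

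Fix a residue $r$ and write $b_{n\ell+r} = u^T A^r (A^\ell)^n v$. This is the counting sequence of an automaton over the alphabet of words of length $\ell$, with initial vector $u^TA^r$, and all its components of positive spectral radius are aperiodic. Apply Lemma~\ref{lemma: primitive dominant asymptotics}, or its evident variant with a general nonnegative initial vector, which is proved the same way. This gives either $b_{n\ell+r}=0$ eventually, or $b_{n\ell+r} = (c_r+o(1))\,\mu^{n\ell} n^{k_r}$ for some $c_r>0$, $\mu>0$ and integer $k_r\geq 0$.

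Now compare with $a_{n\ell+r} = (\alpha+o(1))\beta^{n\ell+r}(n\ell+r)^c$. Since $b_n\le a_n$, we have $\mu\le\beta$. If $\mu<\beta$, or if $\mu=\beta$ and $k_r<c$, the ratio tends to $0$. If $\mu=\beta$ and $k_r=c$, the ratio tends to $c_r\beta^{-r}\ell^{-c}/\alpha$. The case $k_r>c$ is impossible, again because $b_n\le a_n$.

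The main obstacle is the justification of the asymptotics for the decimated sequence. One needs that after passing to $A^\ell$ each strongly connected component splits into primitive classes. One also needs that the "maximal chain of dominant components" count remains well-defined with the new initial vector. I would handle this by applying the cyclic-class decomposition componentwise and then rerunning the singularity analysis of Theorem V.7 in~\cite{FS} for the block-triangular matrix $A^\ell$. Everything else follows from the preceding results.
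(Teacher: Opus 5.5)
Your proposal is correct and follows essentially the paper's sketched route. Like the paper, you choose $\ell$ as an lcm of periods of strongly connected components, so that along each residue class the dominant components become primitive, and then read off the dominant asymptotics of the two counting sequences; your decimation to $A^\ell$ with the cyclic-class decomposition is a concrete way of carrying out the paper's ``poles of least absolute value'' analysis. The differences are cosmetic. You take the lcm over all cyclic components of the automaton for $L^\Phi_{\mathcal F}$, which makes the asymptotic lemma apply verbatim. You also use part 3 of Theorem~\ref{theorem: eeroklas} for $|\mathcal F_n|$ instead of including the periods of the dominant components of $L_{\mathcal F}$'s automaton in $\ell$; this is harmless, since those components are aperiodic by Theorem~\ref{thm: automata main theorem}.
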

Here $\ell$ can be taken as the least common multiple of the periods of the  dominant strongly connected components of the transition graphs for the automata for $L_{\mathcal{F}}$ and $L_{\mathcal{F}}^\Psi$.  We omit the full proof which follows by analyzing the poles of the  respective generating functions for $L_{\mathcal{F}}$ and $L_{\mathcal{F}}^\Psi$ of least absolute value. 

Furthermore, the modular behaviour cannot be excluded for all sentences. For every pair of positive integers $\ell$ and $1 \leq a \leq \ell$, we  have a monadic second-order sentence $\Psi_{\ell,a}$ so that $\sigma \models \Psi_{\ell,a}$ if and only if $|\sigma| \equiv a \text{ (mod }\ell\text{)}$. To construct such sentence, we simply define variable sets $X_1,\dots,X_\ell$, ensure that the first point belongs to $X_1$, the last point belongs to $X_a$, that every point belongs to exactly one of these sets, and that for each $1 \leq j \leq \ell$, if $x \in X_j$ and $x$ is not the last point, then the successor of $x$ belongs to $X_{j+1}$ (with the convention that $X_{\ell+1} = X_1$). We omit the full details of the construction. 

\section{The inverse mapping and a first-order version}\label{sec: conclusion}

By Theorem~\ref{theorem: eeroklas} the bijection $f_{\mathcal{F}}$ translates both first-order and monadic second-order sentences on the permutations into first-order and monadic second-order sentences on regular languages, respectively. To conclude the paper, we give a sketch of a proof that monadic second-order sentences on $L_{\mathcal{F}}$ are translated to monadic second-order sentences on $\mathcal{F}$ under $f_\mathcal{F}$.

\begin{thm}\label{thm: full monadic isomorphism}
    The function $f_{\mathcal{F}}$ constructed in the proof of Theorem~\ref{theorem: eeroklas} is a monadic second-order isomorphism.
\end{thm}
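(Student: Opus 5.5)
The forward direction is already given by Theorem~\ref{thm: FO correspondence for M} together with the description of $L_\mathcal{F}$ in the proof of Theorem~\ref{theorem: eeroklas}. It therefore suffices to show the converse. For every MSO sentence $\Phi$ on words over $\Sigma_M$ we want an $\mathrm{MSO}$-$\mathrm{TOTO}$ sentence $\Psi$ with $f_\mathcal{F}(w)\models\Psi$ iff $w\models\Phi$, for all $w\in L_\mathcal{F}$. The plan is an MSO interpretation of the word structure inside the permutation. The universe stays the same, via the bijection $\alpha_{w,M}$. We then need three things, each defined in MSO on $\sigma=f_\mathcal{F}(w)$:
\begin{enumerate}
\item[(a)] the unary predicates $P_a=\{\alpha_{w,M}(t): w_t=a\}$ for every letter $a\in\Sigma_M$;
\item[(b)] the position order $x\prec y \iff \alpha_{w,M}^{-1}(x)<\alpha_{w,M}^{-1}(y)$;
\item[(c)] a check that the defining property holds.
\end{enumerate}
Once these are in place, we replace atomic formulas $w_t=a$ by $t\in P_a$ and $s<t$ by $s\prec t$, keeping $=$, $\in$ and all quantifiers unchanged. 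This yields $\Psi$.

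To get the predicates, we guess them. We existentially quantify second-order variables $X_a$, one per letter, together with auxiliary sets, and write a formula $\Theta(\underline X)$. This formula should assert that the $X_a$ partition $\sigma$ and that, with the order $\prec$ defined from them, the labelled linear order obtained is a word $w'$ with $w'\models\Delta_M\land\chi_\mathcal{F}$ and $f_M(w')=\sigma$ under the identification $\alpha_{w',M}$. Uniqueness of the preimage (Theorem~\ref{thm: fM is a bijection}) then forces $\underline X$ to be the true labelling. The final sentence is $\Psi:=\exists\underline X\,(\Theta(\underline X)\land\Phi^{*}(\underline X))$. To express ``$w'\models\Delta_M\land\chi_\mathcal{F}$'' we apply the Büchi--Elgot--Trakhtenbrot theorem in reverse, or simply translate the first-order sentence directly using $P_a$ and $\prec$. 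To express ``$f_M(w')=\sigma$ via $\alpha$'' we use Theorem~\ref{thm: FO correspondence for M}, which says the TOTO orders are first-order definable from the word structure by $\xi_{1,M},\xi_{2,M}$. So $\Theta$ requires, for all $x,y$, that $x\le_s y\iff\xi_{s,M}^{*}(x,y)$, where $^{*}$ denotes translation back through $P_a,\prec$. This turns the forward correspondence into a consistency check.

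The real work is defining $\prec$ from $\sigma$ and the guessed labels, and this is the step I expect to be the main obstacle. I would proceed cell by cell. On $\Sigma_{M,\pm}$-points $\prec$ must reproduce the lexicographically minimal trace-monoid word of Lemma~\ref{lemma: no stars preimages given by equivalence relation}. Points in cells sharing a row or column are ordered by $\le_1$ or $\le_2$ according to $\delta_i,\varepsilon_j$, as in the definition of $g_M$. The remaining relative order is then determined by lex-minimality, which is a transitive-closure-type condition and hence MSO-expressible. Alternatively, I would quantify $\prec$ as a guessed binary relation, which is not monadic, so instead I guess it through positions of letters relative to an MSO-definable ``greedy'' scan. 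Points of $u_0$ precede points in $\star$-cells, and $\star$-cells are ordered by column. Inside a $\star$-cell I recurse on $d$. For profile $\tau\in\mathcal{Q}_k$ or monotone profiles, the blocks $u_i$ of the left-greedy decomposition are MSO-definable intervals, using $\psi_{\tau,<_L}$, Lemma~\ref{lemma: albert left greedy} and the guessed $L,R$ markers. Blocks are ordered by $\le_1$, and inside a block we use the inductively defined order at depth $d-1$; at depth $1$ it is just $\le_1$. The difficulty is verifying that these recursive definitions stay monadic, which I believe they do because at each level the order is ``block index, then inner order'' with finitely many levels. I also need to check that the trace-monoid normal form order is MSO-definable, which is exactly where plain first-order logic might fail, explaining the first-order caveat in Theorem~\ref{theorem: eeroklas}.
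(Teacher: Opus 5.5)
Your route differs from the paper's and is sound, except for one step. The paper translates each atomic word formula directly. For $w_i=a$ it locates $\alpha_w(i)$ in the cell of the left-greedy gridding, then recurses into the left-greedy blocks inside the $\star$-cell; for monotone profiles this uses an MSO-defined set $B$ of first points of blocks, which is where first-order logic breaks down. For $i<j$ it uses cells, the signs $\delta_i$, and induction on $d$. You instead guess the labelling with set variables and certify it with results already proved. Given $w'\models\Delta_M$, your consistency condition $x\le_s y\iff\xi^*_{s,M}(x,y)$ together with Lemma~\ref{lem: leq1 for M} says $\sigma\cong f_M(w')$. Theorem~\ref{thm: fM is a bijection} and rigidity of finite $\mathrm{TOTO}$-structures then force the guess to be the transport of $w$ along $\alpha_{w,M}$. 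This spares you from defining the left-greedy blocks and the $L,R$ markers by hand. The price is that the method is inherently existential-MSO, so unlike the paper's explicit translation it says nothing about where first-order logic fails. It also makes your recursion inside $\star$-cells unnecessary. Since $\alpha_w$ sends the $s$-th letter of a word in $\Sigma_d^*$ to the $s$-th point in the $\le_1$-order, and the $\star$-cells lie in distinct columns, $\prec$ on the $\star$-points is just $\le_1$, with all $\Sigma_{M,\pm}$-points preceding them.

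The step that is not yet proved is the one you flag: defining $\prec$ on the $\Sigma_{M,\pm}$-points.
\begin{itemize}
\item Pairs in cells sharing a column or row are ordered by $\le_1$ or $\le_2$ and the signs. Hence the dependence order $\le_D$ of the trace is definable, even in first-order logic, since a dependence chain can be shortcut whenever it revisits a cell.
\item For $\le_D$-incomparable points in unrelated cells, however, the order is the greedy choice of the lexicographically minimal word of Lemma~\ref{lemma: no stars preimages given by equivalence relation}. That is not a transitive closure of anything.
\item Example: take non-zero cells $(1,1),(1,2),(2,2)$, all $+1$. The letter of a $(1,1)$-point precedes that of a $(2,2)$-point unless some $(1,2)$-point lies above the latter and to the left of the former. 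Every such pair has the same relative position in $\sigma$, so the two cells do not determine the order.
\end{itemize}
What you need is that the lexicographic-normal-form order of a trace is MSO-definable over its dependence graph. This is true and follows from standard results on Mazurkiewicz traces: mark $x$ and $y$, and use that a regular set of lexicographic normal forms has a recognizable, hence MSO-definable, trace closure. But it has to be proved or cited; calling it ``transitive-closure-type'' does not establish it. The paper's sketch does not handle this either, since it treats comparisons between different cells as trivial.

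Finally, your explanation of the first-order caveat is misplaced. The paper's counterexample $12\dots m[21,\dots,21]$ lives in a single $\star$-cell. First-order logic fails at recovering the left-greedy block structure (in your scheme, the labels), not at the trace normal form.
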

\begin{proof}[Proof sketch]
    For clarity we write $\alpha_w : = \alpha_{w,M}$. The atomic formulas are of the form $w_i = a$, $i < j$, $i = j$ and $i \in A$, where $a$ is a letter, $i$ and $j$ are first-order variables, and $A$ is a second-order variable. Again, the last two formulas are simple to translate into formulas on permutations.

    For the atomic formulas of the form $w_i =a$, we may use Lemma~\ref{lemma: left-greedy drawing, which has its extremal points in A} to write down a condition for the corresponding $p:=\alpha_w(i)$ which ensures that $p$ belongs to the cell in the left-greedy representation $\sigma_\#$ of $\sigma:=f_M(w)$ described by $(i',j'):=\rho_6(a)$. If $a \in \Sigma_{M,\pm}$, this translation on its own suffices. If $a \in \Sigma_{\star,(i',j')}$, we set $b:=\rho_5(a)$ and write down the appropriate condition inductively.

    The base case $d = 1$ is easy to handle, we omit the details. If $\rho_2(b) = \tau \in \mathcal{Q}_k$, we may use the first part of Lemma~\ref{lemma: handling Q_ks for bijection} to write down a condition which ensures that $p$ belongs to the $\rho_3(b)$-th block of $(\sigma_\#)_{(i',j')}$, and proceed by induction within this block. 
    
    If $\rho_2(b) \in \{\mathrm{inc},\mathrm{dec}\}$, we write $u_{i',j'}$ for the sub-word of $w$ consisting of letters whose $\rho_6$-projection equals $(i',j')$, and suppose that $\rho_5(u_{i',j'})$ decomposes properly as $u_1\dots u_s$. By using Lemmas ~\ref{lemma: albert left greedy} and~\ref{lemmma: oplus} it is easy to construct a second-order variable $B$ which consists of the left-most points in the left-greedy decomposition of the corresponding permutation. We may then apply the inductive hypothesis inside this block. Note that at this step we have to be slightly careful with points whose $\rho_4$ image is $\{L\}$, $\{R\}$ or $\{L,R\}$, but we may again construct a first-order sentence by induction to handle this technicality.  We remark that this is the step which does not carry over to the first-order logic: the construction of the second-order variable $B$ is required even if we apply the construction to a first-order sentence. 
    
    For atomic formulas $i < j$, we write $p = \alpha_w(i)$ and $q = \alpha_w(j)$. Note that other parts of the monadic second-order sentence must impose some conditions on the $\rho_6$-images of $w_i$ and $w_j$, which correspond to the cells of the left-greedy decomposition of $f_M(w)$ containing $p$ and $q$ respectively. Since the extremal points of the left-greedy decomposition are first-order definable, the translation is trivial if the other conditions force $p$ and $q$ to be contained in different cells. If $p$ and $q$ belong to same cell indexed by $\pm 1$-entry, we may use $\delta_1,\dots,\delta_l$ to translate the condition $i<j$ to a condition comparing $p$ and $q$ with respect to the order $<_1$. For $\star$-entries, we may simply construct such a formula by induction on $d$, we omit the details as the technique is similar to the one used above. 
\end{proof}

We remark that for the encoding $f_\mathcal{F}$, the analogous statement does not hold for first-order sentences. Indeed, set $\mathcal{F} = \widetilde{\mathcal O}_6^{[2]}$ and consider the permutations $\sigma_m = 12\dots m [21,21,\dots ,21]$. We claim that there exists a first-order sentence $\Psi$ on $L_\mathcal{F}$ for which $w \models \Psi$ if and only if $f_d(w) = \sigma_{2m}$ for some positive integer $m$. This certainly cannot correspond to a first-order sentence $\Phi$ on $\mathcal{F}$, as for any $\Phi$, a standard Ehrenfeucht--Fra\"iss\'e -game argument implies that if $\sigma_{2m}\models \Phi$ for all $m$, then for all sufficiently large $m'$ we have $\sigma_{m'}\models \Phi$. 

The fact that such a sentence $\Psi$ exists can be proved as follows. Note that the left-greedy decomposition of $\sigma_{2m}$ in $\widetilde{\mathcal O}_6^{[2]}$ is $12\dots m[2143, \dots,2143]$, and thus the corresponding word $w_{2m}$ decomposes as $u_1\dots u_m$ with $\rho_1(u_i) = a_{\tau,1}a_{\tau,2}a_{\tau,3}a_{\tau,4}$, where $\tau = 2143$. In contrast, the corresponding word $w_{2m+1}$ for $\sigma_{2m+1}$ decomposes as $u_1\dots u_s$ with $u_1,\dots,u_{s-1}$ given as above, and $u_s = b_1b_2$ with $\rho_1(b_1b_2) = a_\mathrm{dec}a_\mathrm{dec}$. It is not difficult to construct a first-order sentence $\Psi$ on $L_{\mathcal{F}}$ for which $w \models \Psi$ if and only if $w = w_{2m}$ for some positive integer $m$, we omit the details.
\subsection{A first-order isomorphism}
We complete the discussion by sketching a different construction of an encoding $\Sigma_d'^* \rightarrow \widetilde{\mathcal O}_k^{[d]}\cup\{\star\}$ which allows us to construct a regular language $K_\mathcal{F}$ and a first-order isomorphism $f'_\mathcal{F}:K_\mathcal{F} \rightarrow \mathcal{F}$ for any small permutation class $\mathcal{F}$. The idea is to use the canonical substitution decomposition instead of the left-greedy decomposition.

Call an interval $I$ of a permutation \emph{strong} if every interval meeting $I$ either contains $I$, or is contained in $I$. Starting with a permutation $\sigma$, we repeatedly decompose each non-singleton strong interval of $\sigma$ into its maximal proper strong subintervals, until only intervals of length $1$ remain. This gives a unique decomposition tree, in which every profile is either monotone, or has no proper non-singleton interval. In particular, if the profile is monotone, the blocks correspond to the sum-indecomposable or skew-indecomposable components. We call this the \emph{canonical substitution decomposition} of $\sigma$. 

If $\sigma \in \widetilde{\mathcal O}_k^{[d]}$, it is not too hard to check that the height of this tree is bounded in terms of $d$ and $k$. Furthermore, the non-monotone profiles are precisely the simple permutations in $\mathcal{O}_k$, which is a finite set. Thus, we may use an analogous encoding for substitution trees: if the profile at a given depth is a simple permutation $\tau$, we use letters labelled by $\tau$ and the index of the block in which the corresponding point is contained, as is the case in our current construction. For monotone profile, we label the letters with its direction, and the first and last letters corresponding to each sum-indecomposable or skew-indecomposable block  with $L$ and $R$ respectively, as in our current construction. The critical difference from our construction is that the canonical blocks are in fact first-order definable: informally, when zooming into the blocks in the middle of the inflation,  we can detect the blocks in the canonical decomposition purely from the local before. This is not the case for the left-greedy decomposition, as for the left-greedy decomposition, all the previous points may affect the decomposition into blocks. This allows us to avoid the construction of the second-order set $B$ in the construction above. 

In particular, we conclude the following result. 
\begin{thm}
    Let $\mathcal{F}$ be a small permutation class. Then there exist a regular language $L_{\mathcal{F}}$ and a bijection $f_\mathcal{F}:L_\mathcal{F}\rightarrow \mathcal{F}$ for which $f_{\mathcal{F}}$ is both a first-order and a monadic second-order isomorphism. 
\end{thm}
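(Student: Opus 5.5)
The plan is to replace the left-greedy decomposition used in Section~3 by the canonical substitution decomposition, and then rerun the machinery of Sections~3 and~4 unchanged. Concretely, I would build a new alphabet $\Sigma'_{d}$ and map $g'_d:\Sigma_d'^*\to \mathcal{G}'\cup\{\star\}$. Profile letters at each level are either a simple permutation $\tau\in\mathcal{O}_k$ (a finite set) with a block index, or a direction in $\{\mathrm{inc},\mathrm{dec}\}$ with $L/R$ markers. The extra constraint is that the blocks of a monotone node must be sum- (respectively skew-) indecomposable. The depth $D=D(d,k)$ of the canonical tree is bounded for $\sigma\in\widetilde{\mathcal O}_k^{[d]}$; I would prove this first, by induction on $d$, showing that a canonical tree of a $d$-fold inflation has height at most a constant times $d$. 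Then the analogues of Lemma~\ref{lemma:gamma}, Lemma~\ref{lemma: inductive step of leq_1 correspondence} and Theorem~\ref{thm: FO correspondence} go through verbatim, giving the forward translation $\mathrm{TOTO}\to$ words for both FO and MSO. The translation of indecomposability of a block is itself first-order, via Theorem~\ref{thm: FO correspondence} applied to the $\mathrm{TOTO}$-statement ``no split point''. Uniqueness of representatives is now automatic, because the canonical decomposition is unique and no analogue of Lemma~\ref{lemma: albert left greedy} is needed. The cell-level part, namely the left-greedy gridding for $\mathrm{Geom}(M)_{d,k}$ and the trace-monoid normal form (Lemma~\ref{lemma: key thm for gM instead} and Proposition~\ref{prop: penultimate step}), is reused as is, with $\Delta_d$ replaced by the new sentence.

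For the reverse direction I follow the proof sketch of Theorem~\ref{thm: full monadic isomorphism}, atomic formula by atomic formula. Atoms of the forms $i=j$ and $i\in A$ are trivial.

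For an atom $w_i=a$, I express in $\mathrm{TOTO}$ that $p=\alpha_w(i)$ lies in the cell $\rho_6(a)$ of the left-greedy gridding. This uses Lemma~\ref{lemma: left-greedy drawing, which has its extremal points in A}, taking a disjunction over $A$ and existentially quantifying the extremal points. Inside a $\star$-cell I then descend the canonical tree, which requires first-order definitions of the following:
\begin{enumerate}
\item the node of depth $t$ containing $p$;
\item its profile type;
\item the index of $p$'s block when the profile is simple;
\item whether $p$ is the first or last point (in $\leq_1$) of its block when the profile is monotone.
\end{enumerate}
The key step is item~1. I define ``$x,y$ lie in the same depth-$t$ strong interval'' by induction on $t$. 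The depth-$1$ blocks of a strong interval $I$ are its maximal proper strong subintervals, and strong intervals are first-order expressible using their four extremal points: ``every interval meeting it is nested with it'' is FO, since intervals themselves are FO by extremal points. For a monotone node, the block of $p$ is the sum-indecomposable component of the enclosing strong interval containing $p$. Both this component and its first and last points are FO, because they are determined locally by the absence of a splitting point between $p$ and its neighbours. With items 1--4 in hand, the atoms $i<j$ are handled exactly as in the sketch: compare the cells first, then use $\delta_i$ inside $\pm1$ cells, and recurse down the tree inside $\star$ cells.

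I expect the main obstacle to be the combination of the left-greedy gridding with canonical decompositions inside $\star$-cells. Two things must be checked: that the rectangle of a $\star$-cell is FO-definable uniformly over all permutations, and that strong intervals of the cell's content coincide with what is FO-defined relative to that rectangle. Here I would rely on $\star$-isolation. The content of a $\star$-cell is an interval of $\sigma$, and strongness relative to a sub-permutation can be expressed by relativizing quantifiers to the rectangle, as in $\upsilon_{\mathcal F}$ of Proposition~\ref{prop: small classes FOS classifiable}. Finally, restricting to $\mathcal F$ via $\chi_{\mathcal F}$ and intersecting languages gives $L_{\mathcal F}$. The new $f_{\mathcal F}$ is a bijection because both the left-greedy gridding and the canonical tree are unique.
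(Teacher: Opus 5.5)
Your strategy is the one the paper sketches for this theorem. You replace the left-greedy decomposition inside the $\star$-cells by the canonical substitution decomposition, whose nodes (strong intervals, and sum- or skew-indecomposable components at monotone nodes) are first-order definable from extremal points. This removes the need for the second-order set of block points used in the sketch of Theorem~\ref{thm: full monadic isomorphism}, and your items 1--4 and the relativisation to the $\star$-cell rectangle are a sensible way to fill in that sketch.

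\textbf{The gap.} The step that fails as you state it is the translation of an atom $i<j$ when both $w_i$ and $w_j$ lie in $u_0$. The paper's sketch of Theorem~\ref{thm: full monadic isomorphism} makes the same unjustified claim, namely that this is trivial when the cells differ. Inside $u_0$ the word order is not determined by the cells.
\begin{itemize}
\item For letters in the same row but different columns, you must compare by $\leq_2$ oriented by $\varepsilon$, not $\delta$.
\item For letters in unrelated cells, the order is that of the lexicographic trace normal form of Lemma~\ref{lemma: no stars preimages given by equivalence relation}, and this depends on other points.
\end{itemize}

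\textbf{A concrete instance.} Take $M_{1,1}=M_{3,1}=1$, $M_{2,2}=-1$, zeros elsewhere, and $\delta_1=\delta_2=\delta_3=\varepsilon_1=1$, $\varepsilon_2=-1$, so that $a_{1,1}<a_{2,2}<a_{3,1}$. The permutations $15423$ and $35412$ each have a unique $M$-gridding, with one point in $(1,1)$, two in $(2,2)$ and two in $(3,1)$. Their normal forms are $a_{1,1}a_{2,2}a_{2,2}a_{3,1}a_{3,1}$ and $a_{2,2}a_{2,2}a_{3,1}a_{3,1}a_{1,1}$ respectively. So whether the $(1,1)$-letter precedes the $(2,2)$-letters depends on whether the $(3,1)$-points lie above or below the $(1,1)$-point.

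\textbf{What you need to add.} You must show that the normal-form order on points of the $\pm1$ cells is first-order definable in $\mathrm{TOTO}$. This is true, but it needs an argument. First, the dependence order of the trace is FO: $x$ precedes $y$ in it iff there is a chain of at most $|\mathcal{S}_\pm|$ points from $x$ to $y$ in which consecutive points share a column, compared by $\leq_1$ oriented by $\delta$, or share a row, compared by $\leq_2$ oriented by $\varepsilon$. The length bound holds because a chain revisiting a cell can be shortcut.

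Second, the normal form is the greedy linear extension that always outputs the available point of least cell. This order is FO-definable from the dependence order by induction on the number of cells:
\begin{itemize}
\item Points of the least cell are output immediately after their last predecessor.
\item Deleting them leaves the greedy order of the induced subposet. This is again a labelled poset in which equally labelled elements are comparable, so the induction continues.
\item For $x$ in the least cell and $y$ not, $y$ comes before $x$ iff $y$ is at or before, in that greedy order, some predecessor of $x$ lying outside the least cell.
\end{itemize}

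With this supplement the reverse translation goes through, for both the first-order and the monadic case.
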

\section*{Statement of AI use} 
The results of this paper were proved during 2024 and 2025, and most of the manuscript was written during the early part of 2026, and the main results were published before as an extended abstract~\cite{eeroklasconference}. In particular, all mathematical ideas and proofs were solely obtained by the authors. During the last phases of the write-up process, GPT-6 Astra was used for proofreading purposes, and to assist generating all the figures in the manuscript. 

\bibliographystyle{plain}
\bibliography{permutationsbiblio}

@article{ourotherpaper,
    author = {Falgas--Ravry, Victor and Markstr\"om, Klas and Skerman, Fiona and R\"aty, Eero},
    title = {On first-order limiting probabilities for small permutation classes},
    journal = {manuscript},
    year ={2026},
}

@book{hopcroft-motwani-ullman,
    author = {John Hopcroft and Rajeev Motwani and Jeffrey D. Ullman},
    title = {Introduction to Automata Theory, Languages, and Computation, 3rd edition},
    publisher = {Pearson},
    year ={2006},
}

@article{eeroklasconference,
    author = {Markstr\"om, Klas and R\"aty, Eero},
    title = {First-order convergence law for small
permutation classes},
    journal = {Proceedings of the 13th European Conference on Combinatorics, Graph Theory and Applications (Eurocomb’25)},
    pages = {882-887},
    year = {2025},
}

@article{braunfeld2025decidability,
    author = {Braunfeld, Samuel},
    title = {Decidability in geometric grid classes of permutations},
    journal = {Proceedings of the American Mathematical Society},
    volume = {153},
    number = {3},
    pages = {987--1000},
    year = {2025},
}

@article{bona2008combinatorics,
    title={Combinatorics of permutations},
    author={B{\'o}na, Mikl{\'o}s},
    journal={ACM SIGACT News},
    volume={39},
    number={4},
    pages={21--25},
    year={2008},
    publisher={ACM New York, NY, USA}
}

@article{albertrationality,
author = {Michael Albert and Robert Brignall and Nik Ru{\v{s}}kuc and Vincent Vatter},
title = {Rationality for subclasses of 321-avoiding permutations},
journal = {European Journal of Combinatorics},
volume = {78},
pages = {44-72},
year = {2019},
}

@incollection {Wolfgang,
    author = {Thomas, Wolfgang},
     title = {Languages, automata, and logic},
 booktitle = {Handbook of formal languages, {V}ol.\ 3},
     pages = {389--455},
 publisher = {Springer, Berlin},
      year = {1997},
}

@article{MinerPak,
author = {Sam Miner and Igor Pak},
title = {The shape of random pattern-avoiding permutations},
journal = {Advances in Applied Mathematics},
volume = {55},
pages = {86-130},
year = {2014},
}

@article {Bloom,
    author = {Bloom, Jonathan and Saracino, Dan},
     title = {On bijections for pattern-avoiding permutations},
   journal = {J. Combin. Theory Ser. A},
    volume = {116},
      year = {2009},
    number = {8},
     pages = {1271--1284},
}

@article {Claesson,
    author = {Claesson, Anders and Kitaev, Sergey},
     title = {Classification of bijections between 321- and 132-avoiding
              permutations},
   journal = {S\'em. Lothar. Combin.},
    volume = {60},
      year = {2008/09},
     pages = {Art. B60d, 30},
}

@article{Krat,
    author = {Krattenthaler, C.},
     title = {Permutations with restricted patterns and {D}yck paths},
   journal = {Adv. in Appl. Math.},
    volume = {27},
      year = {2001},
    number = {2-3},
     pages = {510--530},
}

@book {Rautenberg,
    author = {Rautenberg, Wolfgang},
     title = {A concise introduction to mathematical logic},
 publisher = {Springer, New York},
      year = {2010},
     pages = {xxii+319},
}

@article{albert2013geometric,
	title={Geometric grid classes of permutations},
	author={Albert, Michael and Atkinson, Michael and Bouvel, Mathilde and Ru{\v{s}}kuc, Nik and Vatter, Vincent},
	journal={Transactions of the American Mathematical Society},
	volume={365},
	number={11},
	pages={5859--5881},
	year={2013}
}

@article{albert2020two,
	title={Two first-order logics of permutations},
	author={Albert, Michael and Bouvel, Mathilde and F{\'e}ray, Valentin},
	journal={Journal of Combinatorial Theory, Series A},
	volume={171},
	pages={105158},
	year={2020},
	publisher={Elsevier}
}

@article{albert2024logical,
	title={A logical limit law for $231$-avoiding permutations},
	author={Albert, Michael and Bouvel, Mathilde and F{\'e}ray, Valentin and Noy, Marc},
	journal={Discrete Mathematics \& Theoretical Computer Science},
	volume={26},
	number={Special issues},
	year={2024},
	publisher={Episciences. org}
}

@article{albert2015inflations,
	title={Inflations of geometric grid classes of permutations},
	author={Albert, Michael H and Ru{\v{s}}kuc, Nik and Vatter, Vincent},
	journal={Israel Journal of Mathematics},
	volume={205},
	number={1},
	pages={73--108},
	year={2015},
	publisher={Springer}
}

@article{alon2000number,
	title={On the number of permutations avoiding a given pattern},
	author={Alon, Noga and Friedgut, Ehud},
	journal={Journal of Combinatorial Theory, Series A},
	volume={89},
	number={1},
	pages={133--140},
	year={2000}
}

@article{arratia1999stanley,
	title={On the {S}tanley-{W}ilf conjecture for the number of permutations avoiding a given pattern},
	author={Arratia, Richard},
	journal={The Electronic Journal of Combinatorics},
	pages={N1--N1},
	year={1999}
}

@article{braunfeld2021logical,
	title={Logical limit laws for layered permutations and related structures},
	author={Braunfeld, Samuel and Kukla, Matthew},
	journal={Enum.
	Combin. Appl.},
	volume={2},
	number={4},
	pages={Paper Number S4PP2},
	year={2022}
}

@inproceedings{foy1990probabilities,
	title={Probabilities of sentences about two linear orderings},
	author={Foy, John and Woods, Alan R.},
	booktitle={Feasible Mathematics: A Mathematical Sciences Institute Workshop, Ithaca, New York, June 1989},
	pages={181--193},
	year={1990},
	organization={Springer}
}

@article{pantonevatter,
    author = {Pantone, Jay and Vatter, Vincent},
    title = {Growth rates of permutation classes: Categorization up to the uncountability threshold},
    journal = {Israel Journal of Mathematics},
    year = {2020},
    volume = {236},
    pages = {1--43}, 
    publisher = {Springer}
}

@article{kaiser2002growth,
	title={On growth rates of closed permutation classes},
	author={Kaiser, Tom{\'a}{\v{s}} and Klazar, Martin},
	journal={The Electronic Journal of Combinatorics},
	pages={R10--R10},
	year={2002}
}

@article{polygrowth,
    author = {Albert, Michael and Atkinson, Michael and Brignall, Robert},
    title = {Permutation Classes of Polynomial Growth},
    journal = {Annals of Combinatorics},
    volume = {11}, 
    pages = {249 -- 264},
    year = {2007}
}

@article{kitaev2010survey,
	title={A survey on partially ordered patterns},
	author={Kitaev, Sergey},
	journal={Permutation Patterns},
	volume={376},
	pages={115--135},
	year={2010},
	publisher={Cambridge University Press}
}

@article{huczynska2006grid,
	title={Grid classes and the Fibonacci dichotomy for restricted permutations.},
	author={Huczynska, Sophie and Vatter, Vincent},
	journal={The Electronic Journal of Combinatorics},
	volume={13},
	pages={Paper R54},
	year={2006}
}

@book{linton2010permutation,
	title={Permutation patterns},
	author={Linton, Steve and Ru{\v{s}}kuc, Nik and Vatter, Vincent},
	volume={376},
	year={2010},
	publisher={Cambridge University Press}
}

@inproceedings{klazar2000furedi,
	title={The {F\"u}redi--{H}ajnal conjecture implies the {S}tanley--{W}ilf conjecture},
	author={Klazar, Martin},
	booktitle={Formal Power Series and Algebraic Combinatorics: 12th International Conference, FPSAC’00, Moscow, Russia, June 2000, Proceedings},
	pages={250--255},
	year={2000},
	organization={Springer}
}

@book{knuth1968art,
	title={The Art of Computer Programming, Volume 1},
	author={Knuth, Donald E},
	year={1968},
	publisher={Addison-Wesley Professional}
}

@article{marcus2004excluded,
	title={Excluded permutation matrices and the {S}tanley--{W}ilf conjecture},
	author={Marcus, Adam and Tardos, G{\'a}bor},
	journal={Journal of Combinatorial Theory, Series A},
	volume={107},
	number={1},
	pages={153--160},
	year={2004},
}

@article{steingrimsson2013some,
	title={Some open problems on permutation patterns.},
	author={Steingr{\'\i}msson, Einar},
	journal={Surveys in combinatorics},
	volume={409},
	pages={239--263},
	year={2013}
}

@article{vatter2011small,
	title={Small permutation classes},
	author={Vatter, Vincent},
	journal={Proceedings of the London Mathematical Society},
	volume={103},
	number={5},
	pages={879--921},
	year={2011},
	publisher={Wiley Online Library}
}

@article{vatter2010permutation,
	title={Permutation classes of every growth rate above 2.48188},
	author={Vatter, Vincent},
	journal={Mathematika},
	volume={56},
	number={1},
	pages={182--192},
	year={2010},
	publisher={London Mathematical Society}
}

@article{vatter2015permutation,
	title={Permutation classes},
	author={Vatter, Vincent},
	journal={Handbook of enumerative combinatorics},
	pages={754--833},
	year={2015},
	publisher={CRC Press Boca Raton}
}

@article{ozdemir2023first,
	title={First-order convergence for $321 $-avoiding permutations},
	author={{\"O}zdemir, Alperen},
	journal={arXiv preprint arXiv:2312.01749},
	year={2023}
}

@book{Straubing,
	title={Finite Automata, Formal Logic, and Circuit Complexity},
	author={Howard Straubing},
	volume={},
	year={1994},
	publisher={Birkh\"auser}
}

@book{FS,
	title={Analytic Combinatorics},
	author={Philippe Flajolet and Robert Sedgewick},
	volume={},
	year={2009},
	publisher={Cambridge University Press}
}
\section*{Appendix}
\label{appendix: first blocks in left greedy decompositions}

The purpose of this appendix is to provide the proof of the following technical tool which is required in~\cite{ourotherpaper}. For positive integers $k$ and $B$, write
\[
    \widetilde{\mathcal O}_{k,B}
    :=\{\tau\in\widetilde{\mathcal O}_k:|\tau|\leq B\}.
\]
When $B>k$, this is the union of $\mathcal O_k$ and the monotone permutations of length at most $B$. 

\begin{proposition}\label{prop: appendix first blocks definable}
    Let $B,d$ and $k$ be positive integers with $d \geq 2$. Then the following statements hold.
    \begin{enumerate}
        \item For every non-empty $\tau\in\widetilde{\mathcal O}_{k,B}$, there exists a first-order formula
        \[
            \psi_\tau(x_1,\dots,x_{|\tau|},y_1,\dots,y_{|\tau|})
        \]
        for which $\sigma\models\psi_\tau(a_1,\dots,a_{|\tau|},b_1,\dots,b_{|\tau|})$ if and only if the left-greedy decomposition of $\sigma\in\widetilde{\mathcal O}_k^{[d]}$ is
        \[
            \sigma=\tau[\theta_1,\dots,\theta_{|\tau|}],
        \]
        and each $\theta_i$ consists precisely of the points in $[a_i,b_i]_1$.
        \item For each $s\in\{\mathrm{inc},\mathrm{dec}\}$, there exists a first-order formula
        \[
            \psi_s(x_1,\dots,x_B,y_1,\dots,y_B)
        \]
        for which $\sigma\models\psi_s(a_1,\dots,a_B,b_1,\dots,b_B)$ if and only if the left-greedy decomposition of $\sigma\in\widetilde{\mathcal O}_k^{[d]}$ is
        \[
            \sigma=\tau'[\theta_1,\dots,\theta_{|\tau'|}],
        \]
        where $\tau'$ is monotone in direction $s$, $|\tau'|\geq B+1$, and $\theta_i$ consists precisely of the points in $[a_i,b_i]_1$ for every $1\leq i\leq B$.
    \end{enumerate}
\end{proposition}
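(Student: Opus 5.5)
The plan is to reduce both parts to the machinery already developed in Section~\ref{section: restricting to bijection}. Those results identify a block by four extremal points. Here a block should instead be identified by its $\leq_1$-extremal points $a_i,b_i$ only. The bridge is a first-order formula $\beta(x,y,x',y')$, built from the four-point data, expressing that the points of $[x,y]_1$ form an interval of $\sigma$ with $\leq_2$-least point $x'$ and $\leq_2$-greatest point $y'$. Concretely, $x\leq_1 y$, both $x'$ and $y'$ lie in $[x,y]_1$, every $z\in[x,y]_1$ satisfies $x'\leq_2 z\leq_2 y'$, and every $z$ with $x'\leq_2 z\leq_2 y'$ lies in $[x,y]_1$. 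Under $\beta$, the rectangle $[x,y]_1\times[x',y']_2$ coincides with $[x,y]_1$.

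For part~1, we will set
\[\psi_\tau(x_i,y_i):=\exists_{i} x'_i\,\exists_i y'_i:\ \bigwedge_{i}\beta(x_i,y_i,x'_i,y'_i)\land\psi_{\tau,<_L}(x_{i,u,v}),\]
where we substitute $x_{i,1,1}=x_i$, $x_{i,1,2}=y_i$, $x_{i,2,1}=x'_i$, $x_{i,2,2}=y'_i$, and $\psi_{\tau,<_L}$ is the formula from the first part of Lemma~\ref{lemma: handling Q_ks for bijection}. The proof of that statement never used that $\tau\in\mathcal Q_k$; it only used that $\tau\in\widetilde{\mathcal O}_k$ is a fixed finite permutation. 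The set $S_\tau$ is finite, since for $B$ fixed only finitely many elements of $\widetilde{\mathcal O}_k$ have length $<|\tau|$. Hence $\psi_{\tau,<_L}$ is available for every $\tau\in\widetilde{\mathcal O}_{k,B}$. Correctness in both directions is then immediate: the extremal points of each block are uniquely determined, so the existential quantifiers pick exactly them.

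For part~2, say $s=\mathrm{inc}$, we cannot name all blocks, so we use Lemma~\ref{lemma: albert left greedy} locally. The formula $\psi_{\mathrm{inc}}(x_1,\dots,x_B,y_1,\dots,y_B)$ will assert the following.
\begin{enumerate}
\item[(a)] The sets $[x_i,y_i]_1$ are intervals via $\beta$, each lies in $\widetilde{\mathcal O}_k^{[d-1]}$ via $\upsilon_{d-1,k}$, they are consecutive in both orders ($x_1$ is the $\leq_1$-minimum of $\sigma$, $x_{i+1}$ is the $\leq_1$-successor of $y_i$, and the $\leq_2$-extremal points increase), and the remainder $R=\{z: y_B<_1 z\}$ is non-empty and lies entirely above $[x_B,y_B]_1$ in $\leq_2$.
\item[(b)] The profile length is at least $B+1$, and the decomposition is left-greedy at the first $B$ blocks. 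For each $i<B$, no split $\theta_i\oplus\theta_{i+1}=\eta_1\oplus\eta_2$ with $|\eta_1|>|\theta_i|$ exists. We express this exactly as in Lemma~\ref{lemma: sum decomposition lemma}, by quantifying over the data of $\psi_\oplus$ relativised to $[x_i,y_{i+1}]_1$ and comparing the $\leq_1$-maximum of $\eta_1$ with $y_i$.
\item[(c)] The same condition holds at $i=B$, between $\theta_B$ and the next block. Here the next block is the $\leq_1$-first piece $\theta_{B+1}$ of the left-greedy decomposition of $R$. Rather than locate it, we state the equivalent condition that there is no $z\in R$ such that $[x_B,z]_1$ is an interval lying in $\widetilde{\mathcal O}_k^{[d-1]}$ and forming a sum-prefix of $\theta_B\oplus R$.
\item[(d)] $R$ itself admits a sum decomposition into $\widetilde{\mathcal O}_k^{[d-1]}$-blocks, i.e.\ $R$ lies in $\widetilde{\mathcal O}_k^{[d]}$ with a monotone increasing profile.
\end{enumerate}
Condition (d) is the delicate point, because the number of blocks is unbounded. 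We would handle it using $\upsilon$ for the class $\mathrm{Av}(21)[\widetilde{\mathcal O}_k^{[d-1]}]$ restricted to the region $R$. This class is finitely based by \cite[Theorem~4.4]{albert2015inflations}, so Proposition~\ref{prop: small classes FOS classifiable} applies.

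The main obstacle is showing that these local conditions force the global left-greedy decomposition to have prefix $\theta_1,\dots,\theta_B$. The difficulty is that the criterion of Lemma~\ref{lemma: albert left greedy} concerns the whole decomposition. The argument we would give is the following. Take the left-greedy decomposition $R=\rho_1\oplus\rho_2\oplus\cdots$ of $R$, and form $\theta_1\oplus\dots\oplus\theta_B\oplus\rho_1\oplus\cdots$. Conditions (b) and (c) rule out improving pairs among the first $B+1$ blocks. Pairs $\rho_j,\rho_{j+1}$ are not improvable by the left-greediness of $R$. By Lemma~\ref{lemma: albert left greedy}, this whole decomposition is therefore left-greedy. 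Its profile is $\mathrm{inc}$ rather than some $\tau\in\mathcal O_k$, because any decomposition with fewer blocks would be $<_L$-smaller. We then need the minimal-profile result \cite[Proposition~2.3]{albert2015inflations} to exclude a non-monotone $\tau$ of length at most $k$: for $B>k$ this holds automatically, since $|\tau'|\geq B+1$, and otherwise we add the conjunct $\bigwedge_{\tau\in\mathcal Q_k}\neg\exists\,\psi_{\tau,<_L}$. The converse direction follows by reading these conditions off a genuine left-greedy decomposition. The case $s=\mathrm{dec}$ is symmetric, with $\ominus$ and $\psi_\ominus$ in place of $\oplus$ and $\psi_\oplus$.
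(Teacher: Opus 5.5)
Your part~1 is the paper's argument. The paper existentially quantifies the $\leq_2$-extremal variables $x_{i,2,j}$ in $\psi_{\tau,<_L}$ and renames $x_{i,1,1},x_{i,1,2}$ as $x_i,y_i$. Your formula $\beta$ is harmless but redundant: a block of an inflation is contiguous in $\leq_1$, so it automatically coincides with the strip $[x_i,y_i]_1$.

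For part~2 your route is correct but genuinely different from the paper's.

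\textbf{The paper's route.} Lemmas~\ref{lemma: key inflation lemma} and~\ref{lemma: final step} never constrain the tail. The paper
\begin{enumerate}
\item forbids left-greedy profiles in $\mathcal Q_k\cup\widetilde{\mathcal O}_{k,B}$, using part~1;
\item forbids skew-decomposability, so the profile is increasing;
\item characterizes the first $B$ blocks as the sum-prefix $\sigma_1\oplus\dots\oplus\sigma_B\oplus(\text{arbitrary rest})$, with $\sigma_i\in\widetilde{\mathcal O}_k^{[d-1]}$, whose cumulative sizes are simultaneously maximal (the condition $y_{i,2,2}\leq_2 x_{i,2,2}$). This is justified by an exchange argument.
\end{enumerate}

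\textbf{Your route.} You check the local criterion of Lemma~\ref{lemma: albert left greedy} at the named boundaries. You handle the boundary with the unnamed next block through (c), and this is correct: a strictly larger sum-prefix of $\theta_B\oplus R$ lying in $\widetilde{\mathcal O}_k^{[d-1]}$ either ends inside $\rho_1$, giving an improving split, or contains $\theta_B\oplus\rho_1$, giving a merge. You pay for not naming the tail with (d). That condition is genuinely needed, since without it the tail could carry a non-monotone profile such as $132[\gamma,\alpha,\beta]$.

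\textbf{What each buys.} Your approach needs neither part~1 nor the cumulative-size exchange argument; Lemma~\ref{lemma: albert left greedy}, applied first to $R$ and then to $\sigma$, does all the global work. The paper's approach needs no description of the tail at all.

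Two small remarks.
\begin{itemize}
\item Condition (d) does not need the finite-basis theorem. A permutation is a sum of members of a class if and only if each of its sum-indecomposable components lies in the class. Those components are first-order definable via cut points, so $\upsilon_{d-1,k}$ suffices.
\item Your closing discussion of $\mathcal Q_k$ is unnecessary, and its justification for $B>k$ is backwards. A profile of length at most $k<B+1$ would be $<_L$-smaller, not automatically excluded. What rules such profiles out is already part of the content of Lemma~\ref{lemma: albert left greedy}. Directly, it follows from the unique minimal profile \cite[Proposition~2.3]{albert2015inflations}: $\sigma$ has an increasing profile, so its minimal (hence left-greedy) profile is contained in an increasing permutation and is therefore increasing.
\end{itemize}
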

We remark that the first statement follows easily from Lemma~\ref{lemma: handling Q_ks for bijection}. To prove the second statement, we start with the following preliminary lemma. 
\begin{lem}\label{lemma: key inflation lemma}
    Fix $\alpha \in \{\mathrm{inc},\mathrm{dec}\}$ and let $s$ be a positive integer. There exists a first-order formula $\psi_{\alpha,s}(x_{i,u,v})$ with $i \in [s],u,v\in \{1,2\}$ for which $\sigma \models \psi_{\alpha,s}(a_{i,u,v})$ if and only if $\sigma = \tau_{\alpha,s+1}[\sigma_1,\dots,\sigma_{s+1}]$, where $\tau_{\alpha,s+1}$ is the monotone permutation of direction $\alpha$ and length $s+1$, $\sigma_i$ consists of points in $[a_{i,1,1},a_{i,1,2}]_1\times [a_{i,2,1},a_{i,2,2}]_2$ for each $1 \leq i \leq s$, each $a_{i,u,v}$ belongs to $\sigma_i$, and $\sigma_i \in \widetilde{\mathcal O}_k^{[d-1]}$ for every $1 \leq i \leq $s. 
\end{lem}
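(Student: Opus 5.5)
We build $\psi_{\alpha,s}$ directly, modelled on $\psi_\tau$ from the lemma preceding Lemma~\ref{lemma: handling Q_ks for bijection} and on $\psi_\oplus$ from Lemma~\ref{lemmma: oplus}. The first $s$ blocks are described by their extremal points $x_{i,u,v}$. The $(s+1)$-st block is not named by variables; it is defined as the set of points lying to the right (in $\leq_1$) of $x_{s,1,2}$. We treat $\alpha=\mathrm{inc}$; the case $\mathrm{dec}$ is identical with the $\leq_2$ comparisons reversed.

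The formula $\psi_{\mathrm{inc},s}(x_{i,u,v})$ is the conjunction of the following clauses.
\begin{enumerate}
\item[(a)] For each $i\leq s$ and all $u,v$: $x_{i,u,v}\in[x_{i,1,1},x_{i,1,2}]_1\times[x_{i,2,1},x_{i,2,2}]_2$. This forces the rectangles to be nonempty and the named points to be their extremal points.
\item[(b)] For $i<i'\leq s$ and all $u,v,u',v'$: $x_{i,u,v}<_1 x_{i',u',v'}$ and $x_{i,u,v}<_2 x_{i',u',v'}$.
\item[(c)] The interval condition $\forall z$: if $z\leq_1 x_{s,1,2}$, then $\bigvee_{i\leq s} z\in[x_{i,1,1},x_{i,1,2}]_1\times[x_{i,2,1},x_{i,2,2}]_2$.
\item[(d)] $\exists z\,(x_{s,1,2}<_1 z)$, so that the last block is nonempty.
\item[(e)] $\forall z$: if $x_{s,1,2}<_1 z$, then $x_{s,2,2}<_2 z$, so the tail lies entirely above the last named block.
\item[(f)] For each $i\leq s$: $\upsilon_{d-1,k}(x_{i,1,1},x_{i,2,1},x_{i,1,2},x_{i,2,2})$, using Proposition~\ref{prop: small classes FOS classifiable}.
\end{enumerate}
Clauses (b), (c) and (e) together give: every point with $\leq_1$-position at most $x_{s,1,2}$ lies in exactly one rectangle. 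The rectangles are increasingly ordered in both coordinates, and all tail points are above and to the right of them. Hence each rectangle is an interval and $\sigma=\tau_{\mathrm{inc},s+1}[\sigma_1,\dots,\sigma_s,\sigma_{s+1}]$, where $\sigma_{s+1}$ is the tail.

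The statement constrains only $\sigma_1,\dots,\sigma_s$ to lie in $\widetilde{\mathcal O}_k^{[d-1]}$. If one also wants $\sigma_{s+1}\in\widetilde{\mathcal O}_k^{[d-1]}$, the tail's extremal points can be obtained by existentially quantifying four further variables: its $\leq_1$-maximum, its $\leq_1$-minimum (the $\leq_1$-successor of $x_{s,1,2}$), and its $\leq_2$-extremes. We then apply $\upsilon_{d-1,k}$ to this rectangle and note that the rectangle equals the tail by (e). This step is optional.

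For the converse, take a genuine decomposition $\sigma=\tau_{\mathrm{inc},s+1}[\sigma_1,\dots,\sigma_{s+1}]$ and instantiate $a_{i,u,v}$ as the extremal points of $\sigma_i$. Each clause then holds by direct inspection.

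The only delicate point is to make sure that no point of $\sigma_{s+1}$ is captured by one of the first $s$ rectangles, and that no stray point sits between blocks. Clause (b) forces disjointness. Clause (c), together with the fact that $\sigma_{s+1}$ lies beyond $x_{s,1,2}$ in $\leq_1$, gives exhaustiveness. Once these are checked, $\psi_{\alpha,s}$ is a finite conjunction of first-order formulae and hence first-order.
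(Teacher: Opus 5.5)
Apart from one clause, your formula is the paper's. Your (a), (b), (c) and (f) are literally its clauses. Your (e) is equivalent, in the presence of the other clauses, to the paper's final clause $\forall z_1\forall z_2\,\bigl((z_1\leq_1 x_{s,1,2}\land x_{s,1,2}<_1 z_2)\implies z_1<_2 z_2\bigr)$. In one direction take $z_1=x_{s,2,2}$; in the other, (b) and (c) give $z_1\leq_2 x_{s,2,2}$ for every $z_1\leq_1 x_{s,1,2}$.

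The problem is clause (d). The displayed statement does not say whether $\sigma_{s+1}$ may be empty. However, the paper fixes the reading immediately after the lemma: no restriction at all is imposed on $\sigma_{s+1}$, so it need not lie in $\widetilde{\mathcal O}_k^{[d-1]}$ and it may be empty.

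Under that reading your converse fails. Take $\sigma=\sigma_1\oplus\cdots\oplus\sigma_s$ with non-empty $\sigma_i\in\widetilde{\mathcal O}_k^{[d-1]}$, and let $a_{i,u,v}$ be their extremal points. The right-hand side of the equivalence holds with $\sigma_{s+1}$ empty. Yet no point lies $\leq_1$-above $a_{s,1,2}$, so (d) is false, and your ``direct inspection'' breaks exactly there. Your formula characterizes the variant with a non-empty tail; that variant is true, but it is not the statement asked for.

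The fix is to delete (d). For the same reason, do not add your optional condition $\sigma_{s+1}\in\widetilde{\mathcal O}_k^{[d-1]}$, which would break the converse for tails outside the class. The remaining conjunction is exactly the paper's $\psi_{\mathrm{inc},s}$, and your verification goes through unchanged, since (e) is vacuous when the tail is empty.
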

We remark that we do not impose any restriction on $\sigma_{s+1}$: it does not need to belong to $\widetilde{\mathcal O}_k^{[d-1]}$, nor needs to be non-empty. The construction and the proof are very similar to the proof of Lemma~\ref{lemma: handling Q_ks for bijection}. 
\begin{proof}
    Let $\upsilon_{d-1,k}(x_1,x_2,y_1,y_2)$ denote the first-order formula constructed in the statement of Proposition~\ref{prop: small classes FOS classifiable} with $\mathcal{F} = \widetilde{\mathcal{O}}_k^{[d-1]}$. 
    When $\alpha = \mathrm{inc}$, we claim that one can take $\psi_{\mathrm{inc},s}(x_{i,u,v})$ to be the formula 
    \begin{align*}
         &\left(\forall z  \left(z \leq_1 x_{s,1,2} \implies \bigvee_{i=1}^{s} \left(z \in [x_{i,1,1},x_{i,1,2}]_1 \times [x_{i,2,1},x_{i,2,2}]_2\right)\right)\right) \\
         & \land \bigwedge_{i,i'=1}^{s}  \bigwedge_{u,u'=1}^{2} \bigwedge_{v,v'=1}^{2} \left(i < i' \implies \left(x_{i,u,v} <_1 x_{i',u',v'} \land x_{i,u,v} <_2 x_{i',u',v'})\right)\right) \\
         & \land \bigwedge_{i=1}^{s} \upsilon_{d-1,k}(x_{i,1,1},x_{i,2,1},x_{i,1,2},x_{i,2,2})
        \\ &\land\bigwedge_{i=1}^{s}\bigwedge_{u,v=1}^{2}
        \left(x_{i,u,v}\in
        [x_{i,1,1},x_{i,1,2}]_1\times
        [x_{i,2,1},x_{i,2,2}]_2\right)
        \\ & \land \left(\forall z_1 \forall z_2: \left(z_1 \leq_1 x_{s,1,2} \land x_{s,1,2} <_1 z_2\right) \implies z_1 <_2 z_2\right)
    \end{align*}
    Let $a_{i,u,v}$ be chosen so that $\sigma \models \psi_{\mathrm{inc},s}(a_{i,u,v})$, and for $1 \leq i \leq s$ , write $\sigma_i$ for the sub-permutation induced by the points in $[a_{i,1,1},a_{i,1,2}]_1 \times [a_{i,2,1},a_{i,2,2}]_2$. We also write $\sigma_{s+1}$ for the remaining points. Then the first and last lines imply that $\sigma = \varphi \oplus \sigma_{s+1}$, where $\varphi$ consists of the points in $\sigma_{1},\dots,\sigma_{s}$. Analogously as in the proof of Lemma~\ref{lemma: handling Q_ks for bijection}, we conclude that the remaining lines imply that $\varphi = \sigma_1 \oplus \dots \oplus \sigma_s$ and that for all $1 \leq i \leq s$ we have $\sigma_i \in \widetilde{\mathcal{O}}_k^{[d-1]}$. 

    Analogously, we set $\psi_{\mathrm{dec},s}(x_{i,u,v})$ to be the formula 
    \begin{align*}
         &\left(\forall z  \left(z \leq_1 x_{s,1,2} \implies \bigvee_{i=1}^{s} \left(z \in [x_{i,1,1},x_{i,1,2}]_1 \times [x_{i,2,1},x_{i,2,2}]_2\right)\right)\right) \\
         & \land \bigwedge_{i,i'=1}^{s}  \bigwedge_{u,u'=1}^{2} \bigwedge_{v,v'=1}^{2} \left(i < i' \implies \left(x_{i,u,v} <_1 x_{i',u',v'} \land x_{i',u',v'} <_2 x_{i,u,v})\right)\right) \\
         & \land \bigwedge_{i=1}^{s} \upsilon_{d-1,k}(x_{i,1,1},x_{i,2,1},x_{i,1,2},x_{i,2,2})
        \\ &\land\bigwedge_{i=1}^{s}\bigwedge_{u,v=1}^{2}
        \left(x_{i,u,v}\in
        [x_{i,1,1},x_{i,1,2}]_1\times
        [x_{i,2,1},x_{i,2,2}]_2\right)
        \\ & \land \left(\forall z_1 \forall z_2: \left(z_1 \leq_1 x_{s,1,2} \land x_{s,1,2} <_1 z_2\right) \implies z_2 <_2 z_1\right),
    \end{align*}
    and we note that checking the details follows similarly. 
\end{proof}
As a consequence, we obtain the following lemma
\begin{lem}\label{lemma: final step}
    Let $\alpha \in \{\mathrm{inc},\mathrm{dec}\}$ and let $s$ be a positive integer. Then there exists a first-order formulas $\psi'_{\alpha,s}(x_{i,u,v})$ with $i \in [s], u,v\in \{1,2\}$ so that for $\sigma \in \widetilde{\mathcal{O}}_{k}^{[d]}$ we have  $\sigma \models \psi'_{\alpha,s}(a_{i,u,v})$ if and only if the left-greedy decomposition of $\sigma$ is given as $\tau_r[\theta_1,\dots,\theta_r]$ for some $r \geq s+1$, where $\tau_r$ is the monotone permutation of length $r$ and direction $\alpha$, and for each $1\leq i\leq s$ the permutation $\theta_i$ consists of the points in $[a_{i,1,1},a_{i,1,2}]_1 \times [a_{i,2,1},a_{i,2,2}]_2$, and each $a_{i,u,v}$ belongs to $\theta_i$. 
\end{lem}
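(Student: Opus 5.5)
The plan is to build $\psi'_{\alpha,s}$ from the formula $\psi_{\alpha,s}$ of Lemma~\ref{lemma: key inflation lemma}, together with a first-order translation of the left-greedy criterion in Lemma~\ref{lemma: albert left greedy}, applied only to the first $s$ blocks and the junction with the rest. I treat $\alpha=\mathrm{inc}$; the case $\mathrm{dec}$ is obtained by replacing $\oplus$ with $\ominus$ and $\psi_\oplus$ with $\psi_\ominus$ throughout.

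The formula $\psi'_{\mathrm{inc},s}(x_{i,u,v})$ will be the conjunction of four parts.
\begin{enumerate}
\item[(a)] $\psi_{\mathrm{inc},s}(x_{i,u,v})$. This gives $\sigma=\sigma_1\oplus\dots\oplus\sigma_s\oplus\rho$, where each $\sigma_i\in\widetilde{\mathcal O}_k^{[d-1]}$ sits in its rectangle and $\rho$ is the remainder.
\item[(b)] The statement that $\rho$ is non-empty, i.e.\ $\exists z\colon x_{s,1,2}<_1 z$.
\item[(c)] For each $1\le i\le s-1$, a relativised copy of $\psi_\oplus$ from Lemma~\ref{lemmma: oplus}. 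All quantified variables are restricted to the rectangle spanned by blocks $i$ and $i+1$. The condition says that whenever $\sigma_i\oplus\sigma_{i+1}=\theta_1\oplus\theta_2$ with $\theta_1,\theta_2\in\widetilde{\mathcal O}_k^{[d-1]}$, the $\leq_1$-maximum of $\theta_1$ is $\leq_1 x_{i,1,2}$. The degenerate case $\theta_2=\emptyset$ is excluded by $\neg\upsilon_{d-1,k}$ applied to that union, exactly as on the fifth line of $\psi_{\mathrm{inc}}$ in Lemma~\ref{lemma: sum decomposition lemma}.
\item[(d)] A condition at the junction of $\sigma_s$ with the rest. Let $\sigma_{s+1}$ be the first sum-block of $\rho$ in its left-greedy decomposition. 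Then $\sigma_s\oplus\sigma_{s+1}$ must not be rewritable as $\theta_1\oplus\theta_2$ with $|\theta_1|>|\sigma_s|$.
\end{enumerate}

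Part (d) is the obstacle, because $\sigma_{s+1}$ is not given by variables. I would handle it by quantifying existentially over four extra variables $y_{u,v}$ describing a rectangle $R$. I require:
\begin{itemize}
\item[(i)] $R$ is non-empty and consists exactly of the points immediately after $\sigma_s$, so its $\leq_1$-minimum is the successor of $x_{s,1,2}$;
\item[(ii)] $R$ is an interval that forms a sum-prefix of $\rho$, expressed by a line like the last line of $\psi_{\mathrm{inc},s}$;
\item[(iii)] $\upsilon_{d-1,k}$ holds on $R$;
\item[(iv)] $R$ is $\leq_1$-maximal with (i)--(iii), i.e.\ no larger sum-prefix of $\rho$ lies in $\widetilde{\mathcal O}_k^{[d-1]}$.
\end{itemize}
By the greedy characterisation, Lemma~\ref{lemma: albert left greedy} iterated from the left, (iv) identifies $\sigma_{s+1}$ as the left-greedy next block. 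The subtle point is that the left-greedy block after $\sigma_s$ is the maximal prefix: any extension of a block that stays in the class strictly improves the order. Then (d) is a relativised $\psi_\oplus$ on $\sigma_s\cup R$, as in (c). Part (c) also needs the same maximality for $\sigma_1$: $\sigma_1$ should be the longest sum-prefix of $\sigma$ in $\widetilde{\mathcal O}_k^{[d-1]}$. This is written the same way, as a universal statement over rectangles.

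For correctness, suppose $\sigma$ satisfies (a)--(d). Then $\sigma_1,\dots,\sigma_s$ form the first $s$ left-greedy blocks of a monotone decomposition. The remaining blocks $\sigma_{s+1},\dots$ are determined greedily from the rest, so the left-greedy decomposition is $\tau_r[\theta_1,\dots]$ with $\theta_i=\sigma_i$ for $i\le s$. Here $r\ge s+1$ by (b). Two things remain to be checked.
\begin{itemize}
\item The profile of $\sigma$ in $\widetilde{\mathcal O}_k$ must really be monotone of length $r$, and not a smaller non-monotone profile. I would rule this out by conjoining, as in the second line of $\psi_{\tau,<_L}$, the negation of $\psi_{\tau'}$ for every $\tau'\in\mathcal Q_k$. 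Since $\mathcal Q_k$ is finite, this is a finite conjunction. I would also conjoin the negation of $\psi_{\mathrm{inc},j}$-style decompositions into fewer than $s+1$ blocks; this has to be checked.
\item Left-greedy minimality of the tail follows from Lemma~\ref{lemma: albert left greedy}, because violations can only occur at adjacent pairs. The pairs among the first $s+1$ blocks are controlled by (c) and (d), and the later pairs are automatically fine by greediness.
\end{itemize}
The converse direction is routine: take the actual extremal points of the left-greedy blocks as the witnesses.
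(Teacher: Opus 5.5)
Your plan of identifying the first $s$ left-greedy blocks by local conditions and fixing the profile separately can work, but the profile step is wrong as written. You conjoin $\forall y\,\neg\psi_{\tau'}(y)$ for every $\tau'\in\mathcal Q_k$, where $\psi_{\tau'}$ is the plain decomposition formula from the second line of $\psi_{\tau,<_L}$. This forbids \emph{every} decomposition of $\sigma$ with a profile in $\mathcal Q_k$, whatever its length. In $\psi_{\tau,<_L}$ that was legitimate only because the excluded profiles were strictly shorter than $\tau$. Here $r$ is unbounded, and a longer non-monotone decomposition is compatible with an increasing left-greedy profile.

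Concretely, take $k=3$, $d=2$, $s=1$ and $\sigma=3214$. Since $\mathcal O_3$ contains only permutations of length at most $3$, we have $\sigma\notin\widetilde{\mathcal O}_3$, and the left-greedy decomposition is $12[321,1]$. So $r=2=s+1$, and the lemma demands $\sigma\models\psi'_{\mathrm{inc},1}$ with the extremal points of $321$ as witnesses; for these, (a)--(d) do hold. But $\sigma=213[21,1,1]$ with $213=\omega_3\in\mathcal Q_3$, so your extra conjunct is false and the ``routine'' converse fails.

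The conjunct cannot simply be dropped either. Conditions (a)--(d) only inspect $\sigma_1,\dots,\sigma_s$ and one further block. They are therefore also satisfied when a later sum-component of $\rho$ lies outside $\widetilde{\mathcal O}_k^{[d-1]}$. An example is $\sigma=321\oplus1\oplus43152=125364[321,1,21,1,1,1]$ with $k\ge 8$, $d=2$, $s=1$ and $\sigma_1=321$. There the left-greedy profile is not monotone, so $\sigma$ must be rejected. Your tentative negation of $\psi_{\mathrm{inc},j}$ for fewer blocks does not help. The last block in Lemma~\ref{lemma: key inflation lemma} is unconstrained, so $\psi_{\mathrm{inc},j}$ with $j<s$ is witnessed by (a) itself, and the formula becomes unsatisfiable.

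What must be excluded is that the \emph{left-greedy} profile lies in $\mathcal Q_k$. The paper does exactly this. Its first line negates the left-greedy profile formulas (from statement~1 of Lemma~\ref{lemma: handling Q_ks for bijection}) over the finite set $\mathcal Q_k\cup\widetilde{\mathcal O}_{k,s}$. Its second line forbids a skew split. Together these force the left-greedy profile to be increasing of length at least $s+1$.

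An alternative repair that fits your setup is to require that every sum-component of $\sigma$ satisfy $\upsilon_{d-1,k}$. Then $\sigma$ has an increasing-profile decomposition. By Proposition~2.3 of \cite{albert2015inflations}, the left-greedy profile is the containment-minimum of all profiles, so it is increasing.

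With either repair the rest of your argument goes through: the adjacent-pair conditions force $\sigma_1,\dots,\sigma_s$ to be the greedy blocks, and (b) gives $r\ge s+1$. The paper reaches this more economically, without your part (d). It requires $y_{i,2,2}\le_2 x_{i,2,2}$ for all $i\le s$ and every witness $y$ of $\psi_{\mathrm{inc},s}$. An exchange argument then shows the left-greedy blocks coincide with the $x$-blocks.
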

\begin{proof}
    Consider the first-order formula $\psi'_{\mathrm{inc},s}(x_{i,u,v})$ with $i \in [s], u,v\in \{1,2\}$ given by 
    \begin{align*}
        &\bigwedge_{\tau \in \mathcal{Q}_k\cup(\widetilde{\mathcal O}_{k,s}} \left(\forall_{i \in [|\tau|]} x_i,y_i: \neg \psi_{\tau}(x_1,\dots,x_{|\tau|},y_1,\dots,y_{|\tau|})\right)
        \\& \land \neg \left(\exists z\exists u\forall x\forall y:(z <_1 u)\land \left((x \leq_1 z\land z<_1y)\implies y<_2x\right)\right)
        \\& \land \psi_{\mathrm{inc},s}(x_{i,u,v}) \land \left(\forall_{i\in [s],u,v\in \{1,2\}} y_{i,u,v}: \psi_{\mathrm{inc},s}(y_{i,u,v})\implies \bigwedge_{i=1}^s \left(y_{i,2,2} \leq_2x_{i,2,2} \right)\right).
    \end{align*}
    We check that $\psi'_{\mathrm{inc},s}$ satisfies the required properties. Suppose that $\sigma \models \psi'_{\mathrm{inc},s}(a_{i,u,v})$, and let $\sigma = \tau[\theta_1,\dots,\theta_r]$ denote the left-greedy decomposition of $\sigma$. The first line excludes every non-monotone permutation and every monotone permutation of length at most $s$ as the profile $\tau$ of $\sigma$. Thus $\tau$ must be a monotone permutation of length at least $s+1$. 

    Recall that Proposition~2.3 in~\cite{albert2015inflations} implies that for any $\sigma \in \widetilde{\mathcal{O}}_{k}^{[d]}$, the set $T(\sigma)$ of possible profiles $\tau'$  of decompositions of $\sigma$ as $\sigma = \tau'[\sigma_1,\dots,\sigma_\ell]$ with $\sigma_i \in \widetilde{\mathcal{O}}_{k}^{[d-1]}$ has a unique minimum element under sub-pattern containment. 

    The second line implies that it is impossible to write down $\sigma$ as a skew-sum of two non-empty permutations. Thus, by the observation above, this condition is equivalent to the profile $\tau$ of $\sigma$ being a monotone increasing permutation, and hence a monotone increasing permutation of length at least $s+1$. 

    For each $1 \leq i\leq s$ and $u,v\in \{1,2\}$, let $b_{i,u,v}\in \theta_i$ be chosen so that $\theta_i$ consists of the points in $[b_{i,1,1},b_{i,1,2}]_1 \times [b_{i,2,1},b_{i,2,2}]_2$. Let $\sigma_1,\dots,\sigma_s$ be defined as in the proof of Lemma~\ref{lemma: key inflation lemma}.
    
    Since $\sigma \models \psi'_{\mathrm{inc},s}$, the last line implies that for all $1 \leq j \leq s$ we have $\sum_{i=1}^j|\theta_i| \leq \sum_{i=1}^j |\sigma_i|$. If the inequality is strict for some $j$, pick the least $j$ for which it is strict. Writing $\theta_i' = \theta_i$ for $i < j$, $\theta_j' = \sigma_j$ and  $\theta_i' = \theta_i \setminus \sigma_j$ for $i \geq j+1$, we have $\sigma = \theta_1' \oplus \dots \oplus \theta_r'$, omitting the empty blocks. This gives a decomposition of $\sigma$ as a direct sum of fewer blocks, or with the same number of blocks and larger $j$-th block, thus contradicting the fact that $\theta_1 \oplus \dots \oplus \theta_r$ is the left-greedy decomposition of $\sigma$.

    Thus the equality $\sum_{i=1}^j|\theta_i| =\sum_{i=1}^j |\sigma_i|$ must hold for every $1 \leq j \leq s$. Hence we conclude that $|\theta_i| = |\sigma_i|$ and hence $\sigma_i = \theta_i$ holds for each $1 \leq i\leq s$. Hence the $i$-th block $\theta_i$ in the left-greedy decomposition consists of points $[a_{i,1,1},a_{i,1,2}]_1 \times [a_{i,2,1},a_{i,2,2}]_2$. The proof of the converse direction follows similarly, we omit the details. 

    The construction of $\psi'_{\mathrm{dec},s}(x_{i,u,v})$ follows analogously, we omit the details. 
\end{proof}
We are now ready to deduce the proof of Proposition~\ref{prop: appendix first blocks definable}. 
\begin{proof}[Proof of Proposition~\ref{prop: appendix first blocks definable}]
    The first assertion follows by considering the first-order formula $\exists_{i \in [|\tau|],j\in \{1,2\}} x_{i,2,j}: \psi_{\tau,<_L}(x_{i,u,v})$ constructed in Lemma~\ref{lemma: handling Q_ks for bijection}, and renaming the variables as $x_i :=x_{i,1,1}$ and $y_i := x_{i,1,2}$. The second assertion follows analogously for each $\alpha\in\{\mathrm{inc},\mathrm{dec}\}$ by considering the first-order formula $\exists_{i \in [B], j\in \{1,2\}} x_{i,2,j} \psi'_{\alpha,B}(x_{i,u,v})$ constructed in Lemma~\ref{lemma: final step}, and renaming the variables as $x_i = x_{i,1,1}$ and $y_i = x_{i,1,2}$. This completes the proof of Proposition~\ref{prop: appendix first blocks definable}.
\end{proof}


\end{document}